\newcommand{\strequal}[2]{\pdf@strcmp{#1}{#2}==0}
\newtheorem{thm}{Theorem}[section]
\newtheorem*{thm*}{Theorem}
\newtheorem{prop}[thm]{Proposition}
\newtheorem{cor}[thm]{Corollary}
\newtheorem{lemma}[thm]{Lemma}
\newtheorem{obs}[thm]{Observation}
\newtheorem{conj}[thm]{Conjecture}
\newtheorem*{conj*}{Conjecture}
\newtheorem{question}[thm]{Question}
\theoremstyle{definition}
\newtheorem{defn}[thm]{Definition}
\newtheorem{defn*}{Definition}
\newtheorem*{example*}{Example}
\newtheorem*{comment*}{Comment}
\newenvironment{bullets} {\vspace{-9pt}\begin{itemize}\itemsep0pt} {\end{itemize}\vspace{-9pt}}
\newenvironment{bulletnums} {\vspace{-9pt}\begin{enumerate}\itemsep0pt} {\end{enumerate}\vspace{-9pt}}
\newcommand{\AAA}{\mathcal{A}}
\newcommand{\CCC}{\mathcal{C}}
\let\C\CCC
\newcommand{\DDD}{\mathcal{D}}
\newcommand{\HHH}{\mathcal{H}}
\newcommand{\III}{\mathcal{I}}
\newcommand{\OOO}{\mathcal{O}}
\newcommand{\QQQ}{\mathcal{Q}}
\newcommand{\VVV}{\mathcal{V}}
\newcommand{\av}{\mathsf{Av}}
\newcommand{\Grid}{\mathsf{Grid}}
\newcommand{\grid}[1]{\Grid\!\left(\!#1\!\right)\!}
\newcommand{\Gridhash}{\Grid^\#}
\newcommand{\gridhash}[1]{\Gridhash\!\!\left(\!#1\!\right)\!}
\newcommand{\Geom}{\mathsf{Geom}}
\DeclareMathOperator{\Span}{Span}
\DeclareMathOperator{\wkSpan}{WeakSpan}
\DeclareMathOperator{\rg}{rg}
\newcommand{\gridded}{\#}
\DeclareFontFamily{U}{musix}{}%
\DeclareFontShape{U}{musix}{m}{n}{<-15> musix16}{}%
\DeclareTextFontCommand{\textmusix}{\usefont{U}{musix}{m}{n}\selectfont}
\newcommand*\ggridded{\raisebox{.6ex}{\textmusix{5}}}
\newenvironment{smallmx}[1][{}] {\left(\!\begin{smallmatrix}} {\end{smallmatrix}\!\right)}
\newcommand{\plotptradius}{4pt}
\newcommand{\setplotptradius}[1]{\renewcommand{\plotptradius}{#1}}
\tikzset{permpt/.style={circle, draw, fill=black, inner sep=0pt, minimum width=\plotptradius}}
\tikzset{empty/.style={draw=none, fill=none}}
\newcommand\absdot[2]{
	\node[permpt] at #1 {};
}
\newcommand{\plotperm}[2][black]{ 
	\foreach \j [count=\i] in {#2} {
        \ifnum0=\j {} \else {
 		\node[permpt,fill=#1,draw=#1] (\j) at (\i,\j) {};
	} \fi
	};
}
\newcommand{\plotpermbox}[4]{
	\draw [darkgray, very thick, line cap=round, fill=white]
		({#1-0.5}, {#2-0.5}) rectangle ({#3+0.5}, {#4+0.5});
}
\newcommand{\plotpermborder}[1]{
	\foreach \i [count=\nn] in {#1} {\global\let\n\nn};    
	\plotpermbox{1}{1}{\n}{\n};
	\plotperm{#1};
}
\newcommand{\plotpermbordergrid}[1]{
	\foreach \i [count=\nn] in {#1} {\global\let\n\nn};    
	\plotpermbox{1}{1}{\n}{\n};
	\draw[step=1cm,gray!50,very thin] (1,1) grid (\n,\n);
	\plotperm{#1};
}
\newcommand{\plotgrid}[1]{
	\draw[step=1cm,gray!50,very thin] (0.5,0.5) grid (#1+0.5,#1+0.5);
}
\newcommand{\plotpermgrid}[1]{
	\foreach \i [count=\nn] in {#1} {\global\let\n\nn};    
	\plotgrid{\n};
	\plotperm{#1};
}
\newcommand{\plotpinsequence}[1]{
	\absdot{(0,0)}{};
	\edef\n{0}
	\edef\s{0}
	\edef\e{0}
	\edef\w{0}
	\edef\x{0}
	\edef\y{0}
	\foreach \pin [remember=\pin as \oldpin (initially 1), count=\i] in {#1} {
		\ifthenelse{\pin=1 \OR \pin=2}{
			\ifthenelse{\oldpin=3}{
				\xdef\x{\number\numexpr\e-1}
			}{
				\xdef\x{\number\numexpr\w+1}
			}
			\ifnum\i=1 
				\pgfmathparse{\e+1}
 				\xdef\e{\pgfmathresult}
			\fi	
		}{ 
			\ifthenelse{\oldpin=1}{
				\xdef\y{\number\numexpr\n-1}
			}{
				\xdef\y{\number\numexpr\s+1}
			}
			\ifnum\i=1 
				\pgfmathparse{\s-1}
 				\xdef\s{\pgfmathresult}
			\fi	
		}
		\ifnum\pin=1 
			\pgfmathparse{\n+2}
 			\xdef\n{\pgfmathresult}		
			\absdot{(\x,\n)}{};
			\ifnum\i>1
				\draw (\x,\n) -- (\x,\y-0.5);
			\else
				\draw[gray,very thick] (-0.5,-0.5) rectangle (\x+0.5,\n+0.5);
			\fi
		\fi
		\ifnum\pin=2 
			\pgfmathparse{\s-2}
 			\xdef\s{\pgfmathresult}
			\absdot{(\x,\s)}{};
			\ifnum\i>1
				\draw (\x,\s) -- (\x,\y+0.5);
			\else
				\draw[gray,very thick] (-0.5,0.5) rectangle (\x+0.5,\s-0.5);
			\fi
		\fi
		\ifnum\pin=3 
			\pgfmathparse{\e+2}
 			\xdef\e{\pgfmathresult}
			\absdot{(\e,\y)}{};
			\ifnum\i>1
				\draw (\e,\y) -- (\x-0.5,\y);
			\else
				\draw[gray,very thick] (-0.5,+0.5) rectangle (\e+0.5,\y-0.5);
			\fi
		\fi
		\ifnum\pin=4 
			\pgfmathparse{\w-2}
 			\xdef\w{\pgfmathresult}
			\absdot{(\w,\y)}{};
			\ifnum\i>1
				\draw (\w,\y) -- (\x+0.5,\y);
			\else
				\draw[gray,very thick] (0.5,0.5) rectangle (\w-0.5,\y-0.5);

			\fi
		\fi		
	};
}
\tikzset{
  on each segment/.style={
    decorate,
    decoration={
      show path construction,
      moveto code={},
      lineto code={
        \path [#1]
        (\tikzinputsegmentfirst) -- (\tikzinputsegmentlast);
      },
      curveto code={
        \path [#1] (\tikzinputsegmentfirst)
        .. controls
        (\tikzinputsegmentsupporta) and (\tikzinputsegmentsupportb)
        ..
        (\tikzinputsegmentlast);
      },
      closepath code={
        \path [#1]
        (\tikzinputsegmentfirst) -- (\tikzinputsegmentlast);
      },
    },
  },
  mid arrow/.style={postaction={decorate,decoration={
        markings,
        mark=at position .5 with {\arrow[#1]{stealth}}
      }}},
}
\newcommand{\gcgap}{0.18}
\newcommand{\gcscale}{0.25}
\newcommand{\gcptsize}{0.22}
\newcommand{\gcptcolor}{black}
\newcommand{\gcptgridscale}{1}
\newcommand{\gcmode}{0}         
\newcommand{\gcarrowtip}{latex}
\newcommand{\gcextra}{}
\newcommand{\gclass}[4][0.25]  
{
  \begin{tikzpicture}[scale=#1,>=\gcarrowtip]
    \draw[very thin] (0,0) grid (#2,#3);
    #4
    \gcextra
  \end{tikzpicture}
}
\newcommand{\gcrow}[2]  
{
  \foreach \d [count=\i] in {#2}
  {
    \ifnum0=\d {} \else
      \ifnum0=\gcmode 
        \ifnum0<\d
        { \draw[very thick] (\i-1+\gcgap,#1+\gcgap)--(\i-\gcgap,#1+\d-\gcgap); }
        \else
        { \draw[very thick] (\i-1+\gcgap,#1-\d-\gcgap)--(\i-\gcgap,#1+\gcgap); }
        \fi
      \else
        \ifnum1=\gcmode 
          \pgfmathparse{abs(\d)}
          \ifnum1=\pgfmathresult
            { \draw[very thick] (\i-1+\gcgap,#1+.5-\d/2+\gcgap*\d)--(\i-\gcgap,#1+.5+\d/2-\gcgap*\d); }
          \else \ifnum2=\pgfmathresult
            { \draw[very thick,->] (\i-1+\gcgap,#1+.5-\d/4+\gcgap*\d/2)--(\i-\gcgap,#1+.5+\d/4-\gcgap*\d/2); }
          \else \ifnum3=\pgfmathresult
            { \draw[very thick,<-] (\i-1+\gcgap,#1+.5-\d/6+\gcgap*\d/3)--(\i-\gcgap,#1+.5+\d/6-\gcgap*\d/3); }
          \fi \fi \fi
        \fi
      \fi
    \fi
  }
}
\newcommand{\gcpoints}[1]  
{
  \foreach \y [count=\x] in {#1}
  {
    \ifnum-1<\y
      {\fill[radius=\gcptsize,\gcptcolor] (\x/\gcptgridscale-1/\gcptgridscale,\y/\gcptgridscale) circle;}
    \fi
  }
}
\newcommand{\gcone}[3][]   {\raisebox{ -0.33pt}{\gclass[\gcscale]{#2}{1}
                             {\gcrow{0}{#3}\gcpoints{#1}}}}
\newcommand{\gctwo}[4][]   {\raisebox{ -4.00pt}{\gclass[\gcscale]{#2}{2}
                             {\gcrow{1}{#3}\gcrow{0}{#4}\gcpoints{#1}}}}
\newcommand{\gcthree}[5][] {\raisebox{ -7.67pt}{\gclass[\gcscale]{#2}{3}
                             {\gcrow{2}{#3}\gcrow{1}{#4}\gcrow{0}{#5}\gcpoints{#1}}}}
\newcommand{\gcfour}[6][]  {\raisebox{-11.33pt}{\gclass[\gcscale]{#2}{4}
                             {\gcrow{3}{#3}\gcrow{2}{#4}\gcrow{1}{#5}\gcrow{0}{#6}\gcpoints{#1}}}}
\newcommand{\gcfive}[7][]  {\raisebox{-15.00pt}{\gclass[\gcscale]{#2}{5}
                             {\gcrow{4}{#3}\gcrow{3}{#4}\gcrow{2}{#5}\gcrow{1}{#6}\gcrow{0}{#7}\gcpoints{#1}}}}
\newcommand{\gcsix}[8][]   {\raisebox{-18.67pt}{\gclass[\gcscale]{#2}{6}
                             {\gcrow{5}{#3}\gcrow{4}{#4}\gcrow{3}{#5}\gcrow{2}{#6}\gcrow{1}{#7}\gcrow{0}{#8}\gcpoints{#1}}}}
\newcommand{\floor}[1]{\left\lfloor #1 \right\rfloor}
\newcommand{\note}[1]{\marginpar{\raggedright{\scriptsize\it #1}}}
\newcommand{\commentdb}[1]{{\color{green!33!black}[\emph{\textbf{DB:}~#1}]}}
\newcommand{\noterb}[1]{\note{\color{red!80!black}\textbf{RB:}~#1}}
\newcommand{\commentrb}[1]{{\color{red!80!black}[\emph{\textbf{RB:}~#1}]}}
\definecolor{tealblue}{rgb}{0.21, 0.46, 0.53}
\definecolor{burntorange}{rgb}{0.8, 0.33, 0.0}
\newcommand{\ifstr}[4]{
  \def\ifstrA{#1}%
  \def\ifstrB{#2}%
  \ifx\ifstrA\ifstrB #3\relax
  \else #4\relax
  \fi}
\title{\textbf{On cycles in monotone grid classes of permutations}}
\author{
David Bevan\thanks{Email addresses: \texttt{david.bevan@strath.ac.uk, rbrignall@gmail.com,\\ nik.ruskuc@st-andrews.ac.uk.}}\\[10pt]
\small Department of Mathematics and Statistics\\
\small University of Strathclyde\\
\small Glasgow, G1 1XH\\
\small United Kingdom
\and
Robert Brignall\footnotemark[1]\\[10pt]
\small School of Mathematics and Statistics\\
\small The Open University\\
\small Milton Keynes, MK7 6AA\\
\small United Kingdom
\and
Nik Ru\v{s}kuc\footnotemark[1]\\[10pt]
\small School of Mathematics and Statistics\\
\small University of St Andrews\\
\small St Andrews, KY16 9SS\\
\small United Kingdom
}
\begin{document}
\maketitle


\begin{abstract}\noindent
 We undertake a detailed investigation into the structure of permutations in monotone grid classes whose row-column graphs do not contain components with more than one cycle. Central to this investigation is a new decomposition, called the $M$-sum,
which generalises the well-known notions of direct sum and skew sum, and enables a deeper understanding of the structure of permutations in these grid classes. Permutations which are indecomposable with respect to the $M$-sum play a crucial role in the structure of a grid class and of its subclasses, and this leads us to identify coils, a certain kind of permutation which corresponds to repeatedly traversing a chosen cycle in a particular manner.

Harnessing this analysis, we give a precise characterisation for when a subclass of such a grid class is labelled well quasi-ordered, and we extend this to characterise (unlabelled) well quasi-ordering in certain cases. We prove that a large general family of these grid classes are finitely based, but we also exhibit other examples that are not, thereby disproving a conjecture from 2006 due to Huczynska and Vatter.
\end{abstract}

%
%
%
%
%
%
%
%
\section{Introduction}

Throughout the study of various types of combinatorial structures, and indeed general relational structures on finite sets,
decomposition strategies play a prominent role.
Very loosely speaking, a decomposition consists of partitioning the structure in such a way that the individual parts are well understood and `simpler' than the whole, and so that the relationship between parts is also controlled.

One concrete example concerns the so-called modular decompositions. They date back to the 1950s and a talk of Fra{\"\i}ss\'e, of which only the abstract \cite{Fr:53} survives, and
their use in graph theory can be traced back to Gallai's work \cite{Ga:67,Ga:trans01}.
Since then modular decompositions have become a standard tool, especially in topics related to algorithms and complexity; see~\cite[Section 12.1]{BBS:99} for an introduction, and~\cite{BFKRT:18,BGW:12,FV:22,HMMZ:22} for some recent results.

The defining feature of modular decompositions is that the relationship between points belonging to different blocks is completely determined.
Moving away from such a strict requirement,
in extremal graph (and hypergraph) theory, various decompositions are at the heart of many significant developments: besides the classical results of Tur\'an, there is the celebrated Regularity Lemma of Szemeredi~\cite{szemeredi:regularity-lemma:}, the Container method (originally due to Kleitman and Winston~\cite{kleitman:the-asymptotic:}), and notions of $\chi$-bounded families of graphs (originating with Gy\'arf\'as~~\cite{gyarfas:problems-from:}, as a generalisation of perfect graphs).

In the study of hereditary properties of graphs, Balogh, Bollob\'as and Morris~\cite{balogh:a-jump-to-the-bell:} showed that any graph class whose speed falls below the Bell numbers can be partitioned into a union of cliques and independent sets, so that the connections between each part are either very dense, or very sparse. This was subsequently generalised by Atminas~\cite{atminas:classes-of-graphs:}, and this result is in fact the graph-theoretic equivalent of the characterisation of monotone griddability in permutations, due to Huczynska and Vatter~\cite{Huczynska2006}.

 For hereditary classes whose speed is smaller still, progressively finer structural decompositions are available, one notable example being the `letter graphs' of Petkov\v{s}ek~\cite{petkovsek:letter-graphs-a:}. These are the graph-theoretic analogue of geometrically griddable permutations~\cite{albert:geometric-grid-:}, as established by Alecu, Ferguson, Kant\'e, Lozin, Vatter and Zamaraev~\cite{alecu:letter-graphs-and-ggcs:}.
There are therefore parallels at several levels between hereditary graph classes and permutation classes.

The purpose of this paper is to advance the framework of structural decomposition in the area of permutation classes, by undertaking a detailed analysis of the so-called monotone griddable classes.  Central to our analysis is a new decomposition, called the $M$-sum (introduced in Section~\ref{sec-indiv-and-coils}), which significantly expands our understanding of this important family of permutation classes. A list of the consequences of our decomposition is provided later in this introduction.

The structural study of permutation classes has become a major area of endeavour in the last 30 years, motivated by questions in enumerative combinatorics, by parallels with the study of graph classes, and as a topic in its own right. Indeed, the phase transitions in the number and complexity of permutation classes whose growth rates are `small', as established by Kaiser and Klazar~\cite{kaiser:on-growth-rates:} and Vatter~\cite{vatter:small-permutati:,vatter:growth-rates-of:}, are exemplars of the interplay between enumeration and structure. Vatter~\cite{vatter:growth-rates-of:} also gives a good account of the parallels with the aforementioned study of speeds of hereditary graph classes, which started with Scheinerman and Zito~\cite{scheinerman:on-the-size:}.

Within this structural study, given a suitable description of some permutation class $\C$, one may ask a number of pertinent questions, such as:
\begin{itemize}
\item Does $\C$ have some kind of concise definition? For example, is it finitely based?
\item Can the structure of the permutations in $\C$ be 
succinctly represented?
\item Does $\C$ have some notable order-theoretic properties? For example, is it well quasi-ordered or even labelled well quasi-ordered?
\item
Does $\C$ have good enumerative properties, such as a tractable generating function, or at least an asymptotic growth that can be 
explicitly given or algorithmically computed?
\end{itemize}

Over recent years it has transpired that a key role in many developments in each of the above strands is played by the \emph{grid classes}. 
Roughly speaking, a grid class comprises permutations which, when plotted in the plane, can be subdivided into cells in a grid-like manner so that the entries in each cell belong to a simpler permutation class, specified by a matrix entry. 
This notion originates from the ``profile classes'' studied by Murphy and Vatter~\cite{murphy:profile-classes:}, and the name `grid classes' was introduced in \cite{Huczynska2006}.

In full generality, where there are no restrictions on the types of classes that the matrix may contain, we have little understanding about the behaviour of the resulting \emph{general} grid classes, beyond knowing their growth rates (see Albert and Vatter~\cite{albert:bevanstheorem}), and some results relating to well quasi-order and labelled well-quasi-order, see Brignall~\cite{brignall:pwo-grid-classes:,brignall:lwqo-juxt:}.

At the other end of the spectrum are the \emph{geometric grid classes}, in which not only must the entries in each cell of a subdivided permutation form a \emph{monotone} sequence,
but it is additionally required that the permutations in the class can be plotted in such a way that their points lie on the cell diagonals.
A major study, due to Albert, Atkinson, Bouvel, Ru\v{s}kuc and Vatter~\cite{albert:geometric-grid-:}, showed (among other results) that every geometric grid class is finitely based, well quasi-ordered and enumerated by a rational generating function. Brignall and Vatter~\cite{bv:lwqo-for-pp:} subsequently established that geometric grid classes are in fact labelled well quasi-ordered.

The monotone grid classes that form the basis of this article form an intermediate family of grid classes. As they are the only classes we consider in the sequel, we will henceforth refer to them simply as `grid classes'.
A \emph{gridding matrix} is a matrix $M$ whose entries are all in $\{0,\pm1\}$, and we let $\Grid(M)$ denote the \emph{grid class} of $M$, which comprises all permutations which can be divided into a rectangular grid of cells (of the same dimensions as $M$), such that each cell $ij$ of the permutation contains no entries if $M_{ij}=0$, contains entries forming a monotone increasing sequence if $M_{ij}=1$, and contains a monotone decreasing sequence if $M_{ij}=-1$.

For a gridding matrix $M$ with $m$ columns and $n$ rows, the \emph{row-column graph} of $M$, denoted $G_M$, is the bipartite graph with vertices
$\{1,\dots,m\} \,\cup\, \{1',\dots,n'\}$
such that $ij' \in E(G_M)$ whenever the corresponding entry $M_{ij}$ of $M$ is non-zero.
\footnote{A related concept, known as the \emph{cell graph}, has also been used in the literature (see, for example, Vatter~\cite{vatter:small-permutati:,vatter:growth-rates-of:}),
 but the row-column graph will prove more useful for our work.}
The graphs $G_M$ play an important role in the complexity of the class $\Grid(M)$. For example, the growth rate of $\Grid(M)$ is equal to the square of the spectral radius of $G_M$ (see Bevan~\cite{bevan:growth-rates-ggc,bevan:growth-rates:}, and also Albert and Vatter~\cite{albert:bevanstheorem} for a generalisation), while $\Grid(M)$ is well quasi-ordered if and only if $G_M$ is a forest (due to Murphy and Vatter~\cite{murphy:profile-classes:}; see also Vatter and Waton~\cite{vatter:on-partial-well:}).

With results like these in mind, the structure of the components in the graphs $G_M$ provides us with a taxonomy for the classes $\Grid(M)$, and we borrow terminology from graphs to name them.
\begin{itemize}
\item If $G_M$ contains no cycles (that is, every connected component of $G_M$ is a tree), then we say that $M$ is \emph{acyclic}.
\item If $G_M$ has exactly one cycle, then $M$ is \emph{unicyclic},\footnote{Note that our use of the term unicyclic is consistent with its general use in graph theory, but is different from how the term has been used in other studies of grid classes, notably in the PhD theses of Bevan~\cite{bevan:thesis:} and Opler~\cite{opler:phd}.} while if $G_M$ is isomorphic to a cycle then $M$ is \emph{cyclic}.
\item When every component of $G_M$ is either unicyclic or a tree, we say that $M$ is a \emph{pseudoforest}.
\item If $G_M$ contains a component with more than one cycle, then $M$ is \emph{polycyclic}.
\end{itemize}\unskip
Note that every cycle in $G_M$ necessarily has even length at least four, since $G_M$ is a bipartite graph.

We also use the above terms to describe the grid class $\Grid(M)$. While acyclic classes are already well understood (both by Murphy and Vatter~\cite{murphy:profile-classes:}, and by the fact that all such classes are geometric grid classes so the results of~\cite{albert:geometric-grid-:} apply), additional tools are required to understand more complex row-column graphs.

Our attention in this paper focuses on the structure of permutations in cyclic, unicyclic and pseudoforest grid classes. By analysing the permutations in $\Grid(M)$ that are indecomposable with respect to the $M$-sum (Definition~\ref{def-m-sum}), we identify an unavoidable family of permutations called coils (Definition~\ref{de-coil}), formed by repeatedly traversing a chosen cycle of $M$ in a particular manner.
Harnessing the analysis in Section~\ref{sec-indiv-and-coils}, our main results are as follows.

\begin{description}
  \item[Theorem~\ref{thm-lwqo-char}] A subclass of a pseudoforest grid class is labelled well quasi-ordered if and only if it contains only bounded length coils.
  \item[Theorem~\ref{thm-lwqo-decidable}] It is possible to decide whether a finitely based subclass of a pseudoforest grid class is labelled well quasi-ordered.
  \item[Theorem~\ref{thm-finite-basis}] Every unicyclic grid class is finitely based.
  \item[Proposition~\ref{prop-bicyclic-inf-basis}] There exist pseudoforest grid classes that are not finitely based.
  \item[Theorem~\ref{thm-cycle-wqo-char}] A subclass of a cyclic grid class is well quasi-ordered if and only if it contains only finitely many end-inflated coils.
\end{description}

Proposition~\ref{prop-bicyclic-inf-basis} disproves a 2006 conjecture due to Huczynska and Vatter~\cite[Conjecture~2.3]{Huczynska2006}, which claimed that all monotone grid classes must be finitely based. On the other hand, Theorem~\ref{thm-finite-basis} establishes that the conjecture does hold in a smaller family of grid classes.

For the rest of this section we provide some further basic definitions, and set our work in the context of the existing literature. Section~\ref{sec-grid-class-structure} initiates our in-depth study of grid classes with further introductory material and some new concepts, such as the ``orientation digraph''. This section also contains a proof of a result concerning ``negative cycles'' that previously only existed in Waton's PhD thesis. Section~\ref{sec-indiv-and-coils} establishes our underpinning decomposition of pseudoforest grid classes by means of the $M$-sum, and introduces coils.

Section~\ref{sec:lwqo} covers all the introductory material concerning labelled and unlabelled well quasi-ordering, before presenting the proofs of Theorems~\ref{thm-lwqo-char} and~\ref{thm-lwqo-decidable}. Section~\ref{sec-basis} contains the proofs of Theorem~\ref{thm-finite-basis} and Proposition~\ref{prop-bicyclic-inf-basis}, and Section~\ref{sec-wqo} establishes Theorem~\ref{thm-cycle-wqo-char}.

\paragraph{Permutations and permutation classes}
The uninitiated reader may wish to refer to Bevan~\cite{bevan2015defs} or Vatter~\cite{vatter:survey} for a broader introduction to the area of permutation classes than that given here.
A permutation $\pi = \pi(1)\cdots\pi(n)$ of length $|\pi|=n$ is an ordering of the numbers $1,\dots,n$, though we frequently refer to a permutation $\pi$ by its point set $\{(i,\pi(i)) \,:\,i=1,\dots,n\}$. Indeed, this point set leads us to the crucial graphical perspective upon which the notion of grid classes depends: the \emph{plot} of $\pi$ is the plot of its point set in the plane.

Given two permutations $\pi$ and $\sigma$ of lengths $n$ and $k$, respectively, we say that $\sigma$ is \emph{contained} in $\pi$ if there exists a subsequence $i_1,\dots,i_k$ such that $1\leq i_1<i_2<\cdots<i_k\leq n$, and the sequence $\pi(i_1)\cdots\pi(i_k)$ forms a set of points in the same relative order as those in $\sigma$ (that is, $\pi(i_1)\cdots\pi(i_k)$ is \emph{order isomorphic} to $\sigma$). A witness of this containment forms a \emph{copy}, or an \emph{embedding}, of $\sigma$ in $\pi$. Containment forms a partial order on the set of all permutations, and when $\sigma$ is contained in $\pi$ we can write $\sigma\leq\pi$; if, on the other hand, there is no copy of $\sigma$ in $\pi$, then we write $\sigma\not\leq \pi$, and say that $\pi$ \emph{avoids} $\sigma$.

A \emph{permutation class} is a set of permutations that is closed under containment. That is, if $\pi$ belongs to a permutation class $\C$ and $\sigma\leq\pi$, then $\sigma\in\C$. The \emph{basis} of a class $\C$ is the set of minimal forbidden permutations not in $\C$; such a set is unique, but need not be finite.
For a set $B$ we write $\av(B)$ for the permutation class consisting of all permutations that avoid all elements of~$B$.
If $B$ is the basis of $\C$, then of course $\C=\av(B)$, and if $B$ is finite then we say that $\C$ is \emph{finitely based}.

\paragraph{The bases of grid classes} Prior to this work, relatively little general progress had been made on Huczynska and Vatter's conjecture, other than the results mentioned earlier for geometric grid classes in~\cite{albert:geometric-grid-:}.
In his PhD thesis~\cite[Theorem 4.7.5]{waton:on-permutation-:} Waton showed that one $2\times 2$ grid class is finitely based, and subsequently Albert and Brignall~\cite{ab:grid-basis} extended Waton's argument to cover all $2\times 2$ grid classes.

\paragraph{Well quasi-ordering in grid classes}
A permutation class is \emph{well quasi-ordered} if it contains no infinite antichains with respect to the containment ordering.
We defer the core definitions regarding this property and its labelled version until Section~\ref{sec:lwqo}, but give here some context for our work.
 In general, the property of being well quasi-ordered is seen as an indicator of `tameness', as described by Cherlin~\cite{cherlin:forbidden-subst:}. For example, every well quasi-ordered permutation class contains at most countably many distinct subclasses, whereas those that are not well quasi-ordered must have uncountably many subclasses (and, therefore, also uncountably many subclasses whose enumeration sequences are intractably difficult). As such, the presence (or otherwise) of infinite antichains dictates much of what we know regarding the structure and growth rates of the so-called ``small'' permutation classes, see Vatter~\cite{vatter:small-permutati:,vatter:growth-rates-of:}.

As mentioned earlier, Murphy and Vatter~\cite{murphy:profile-classes:} established that a grid class is well quasi-ordered if and only if its graph is acyclic. It may seem something of a surprise, therefore, to note how much of this paper is dedicated to the study of well quasi-ordering in grid classes. Our interest here is in the \emph{subclasses} of pseudoforest grid classes, and there are two principal reasons for this:
\begin{itemize}
	\item Our result that unicyclic grid classes are finitely based fundamentally relies on the characterisation of labelled well quasi-order given in Theorem~\ref{thm-lwqo-char}; in fact, our counterexamples to the 2006 conjecture are also unlikely to have been discovered without the development of $M$-sums and coils.
	\item Our work here is intended to initiate a programme of study similar to that undertaken for subclasses of the 321-avoiding permutations in Albert, Brignall, Ru\v{s}kuc and Vatter~\cite{abrv:321-subclasses:}.
Indeed, it is shown in~\cite{abrv:321-subclasses:} that every finitely based or well quasi-ordered subclass of $\av(321)$ is enumerated by a rational generating function, even though $\av(321)$ itself is not.\footnote{The 321-avoiding permutations are famously one of the classes of combinatorial structures enumerated by the Catalan numbers, which means it has an algebraic, but not rational, generating function.}

The comparison with $\av(321)$ is not a mere coincidence: although $\av(321)$ is not a monotone grid class in our sense, one could view it as the class of the ``infinite staircase'' matrix
\[
\setlength\arraycolsep{3pt}
\scriptsize\begin{pmatrix}
&&&&&&\iddots\\
&&&&1&1&\\
&&&1&1&&\\
&&1&1&&&\\
&1&1&&&&\\
\iddots&&&&&&
\end{pmatrix}.
\]
The resulting ``staircase decomposition'' of $\av(321)$, which is one of the main tools in~\cite{abrv:321-subclasses:}, very strongly resembles the decomposition we obtain here from coils.
\end{itemize}

We stop short of attempting to derive enumerative consequences of our work for subclasses of pseudoforest grid classes, but refer the curious reader to our concluding remarks on the topic.


%
%
%
%
%
%
%
%
\section{Grid class structure}\label{sec-grid-class-structure}

In this section, we review several concepts  from the literature (including some work that has previously only appeared in Waton's PhD thesis~\cite{waton:on-permutation-:}) relating to grid classes, and set these alongside a novel concept -- the orientation digraph.

\subsection{Grid classes and gridded permutations}

Recall that a \emph{gridding matrix}
is any matrix whose entries come from $\{0,\pm 1\}$.
In order to reflect the way we view permutations graphically, we deviate from the standard convention for indexing matrix entries:
we index them starting from the lower left corner, and we record the column number first, followed by the row number.
Thus, an $m\times n$ matrix has $m$ columns and $n$ rows, and, for example, $M_{21}$ is the entry in the second column from the left in the bottom row of the matrix $M$.

It is often helpful to adopt a more graphical representation of gridding matrices, by replacing the non-zero entries of each cell with small increasing or decreasing line segments as appropriate. For example, if $M={}${\setlength\arraycolsep{2pt}\small $\begin{pmatrix} -1&0&1\\[-2pt] 0&1&1\\[-2pt] 1&-1&0\end{pmatrix}$}, then we might alternatively write $M=\gcthree{3}{-1,0,1}{0,1,1}{1,-1,0}$.

Given an $m\times n$ gridding matrix $M$, an \emph{$M$-gridding} of a permutation $\pi$ of length $\ell$ is a division of the points in the plot of $\pi$ into a grid of $m\times n$ rectangles,  called \emph{cells}, such that the cell $(i,j)$ contains no points if $M_{ij}=0$, and otherwise the points in this cell form an increasing sequence if $M_{ij}=1$, or a decreasing sequence if $M_{ij}=-1$.
The latter two options also include the possibility of the cell containing no points.
More formally, such a gridding can be viewed as a pair
$(\VVV,\HHH)$ where $\VVV=\{v_1,\dots,v_{m-1}\}$ and $\HHH=\{h_1,\dots,h_{n-1}\}$ are collections of $m-1$ vertical and $n-1$ horizontal lines, represented by non-integers
\begin{gather*}
\tfrac12 \:=\: v_0 \:<\: v_1\:<\: \cdots \:<\: v_{m-1} \:<\: v_m \:=\: \ell+\tfrac12\text{, ~~and} \\
\tfrac12 \:=\: h_0 \:<\: h_1\:<\: \cdots \:<\: h_{n-1} \:<\: h_n \:=\: \ell+\tfrac12 .\phantom{\text{ ~~and}}
\end{gather*}
The permutation $\pi$ together with the gridding is referred to as a \emph{gridded permutation}, and we write
$\pi^\gridded=(\pi,\VVV,\HHH)$.
The rectangle $\{ (x,y)\::\: v_{i-1}<x<v_i,\ h_{j-1}<y<h_j\}$ will be referred to as the \emph{cell} $C_{ij}$.
Sometimes we will identify a cell with the set of all points of $\pi^\gridded$ that belong to $C_{ij}$.
A permutation $\pi\in\Grid(M)$ may possess several $M$-griddings; see Figure~\ref{fig:multi-grids}. We will occasionally use $\pi^\natural$ to refer to a second gridding of the permutation $\pi$.

The collection of all permutations that possess an $M$-gridding forms a permutation class known as the \emph{grid class} of $M$, which we denote by $\Grid(M)$.
If $\C$ is a permutation class such that every $\pi\in\C$ possesses an $M$-gridding,
i.e. if $\C\subseteq \Grid(M)$, then we say that $\C$ is $M$-\emph{griddable}.

\begin{figure}
{\centering
\begin{tikzpicture}[scale=0.3]
	\plotpermgrid{8,1,2,5,4,3,6,9,7}
	\draw[thick] (0.5,5.5) -- (9.5,5.5);
	\draw[thick] (0.5,7.5) -- (9.5,7.5);
	\draw[thick] (4.5,0.5) -- (4.5,9.5);
	\draw[thick] (6.5,0.5) -- (6.5,9.5);
\begin{scope}[shift={(14,0)}]
	\plotpermgrid{8,1,2,5,4,3,6,9,7}
	\draw[thick] (0.5,5.5) -- (9.5,5.5);
	\draw[thick] (0.5,7.5) -- (9.5,7.5);
	\draw[thick] (3.5,0.5) -- (3.5,9.5);
	\draw[thick] (7.5,0.5) -- (7.5,9.5);
\end{scope}
\begin{scope}[shift={(28,0)}]
	\plotpermgrid{8,1,2,5,4,3,6,9,7}
	\draw[thick] (0.5,4.5) -- (9.5,4.5);
	\draw[thick] (0.5,7.5) -- (9.5,7.5);
	\draw[thick] (3.5,0.5) -- (3.5,9.5);
	\draw[thick] (7.5,0.5) -- (7.5,9.5);
\end{scope}
\end{tikzpicture}\par}
\caption[]{The permutation $812543697$ possesses six griddings in $\Grid(M)$ where
$M={}$\protect\gcthree{3}{-1,0,1}{0,1,1}{1,-1,0},
three of which are shown here.}\label{fig:multi-grids}
\end{figure}

We denote the collection of all $M$-gridded permutations by $\Gridhash(M)$.
Note that the elements of $\Gridhash(M)$ are \emph{not} permutations, and if $\sigma^\gridded$, $\pi^\gridded$ are two $M$-gridded permutations such that $\sigma\leq \pi$, then it need not be the case that $\sigma$ can be embedded in $\pi$ in such a way as to respect the two given griddings. However, there is the notion of \emph{gridded containment}: we say that $\sigma^\gridded \leq \pi^\gridded$ if there exists a subsequence $1\leq i_1\leq\cdots\leq i_{|\sigma|}\leq |\pi|$ of $\pi$ that is order isomorphic to $\sigma$, such that the entries
$(j,\sigma(j))$ of $\sigma^\gridded$ and $(i_j,\pi(i_j))$ of $\pi^\gridded$
lie in the cells that correspond to the same entry of~$M$, for each $j=1,\dots,|\sigma|$.

Equipped with gridded containment, we observe that for any $M$-griddable class $\C\subseteq\Grid(M)$, the set of gridded permutations $\C^\gridded$ is a downset: that is, if $\pi^\gridded\in\C^\gridded$ and $\sigma^\gridded\le\pi^\gridded$, then $\sigma^\gridded\in\C^\gridded$.

%
%
%
\subsection{Partial multiplication matrices and griddings}\label{subsec-pmms}

Let $M$ be an $m\times n$ gridding matrix, and recall that the vertices of $G_M$ are
$\{1,\dots,m\} \,\cup\, \{1',\dots,n'\}$.
For any cycle $C$ in $G_M$, the \emph{sign} of the cycle is the product
$\prod_{i j'\in E(C)}M_{ij}$,
where $E(C)$ is the set of edges of $C$.
Thus, each cycle in $G_M$ is either positive or negative, and one of our aims in this subsection is to demonstrate that matrices whose graphs possess negative cycles can be removed from our considerations.


We say that an $m\times n$ gridding matrix $M$ is a \emph{partial multiplication matrix} if there exist sequences $c_1,\dots,c_m$ and $r_1,\dots,r_n$ with entries $\pm 1$ such that each non-zero entry $M_{ij}$ of $M$ is equal to $c_ir_j$. Such matrices are characterised by the following proposition.

\begin{prop}[Vatter and Waton~\cite{vatter:on-partial-well:}]\label{prop-pmm-neg-cycles}
A gridding matrix $M$ is a partial multiplication matrix if and only if its row-column graph $G_M$ contains no negative cycles.\end{prop}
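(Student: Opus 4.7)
The plan is to prove both implications by analysing how signs of cycles behave under the partial multiplication structure, and then using a spanning tree construction for the converse.

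For the forward direction, suppose $M$ is a partial multiplication matrix with sequences $c_1,\dots,c_m$ and $r_1,\dots,r_n$ in $\{\pm1\}$ witnessing $M_{ij}=c_ir_j$ whenever $M_{ij}\neq0$. Take any cycle $C$ in $G_M$. Since $G_M$ is bipartite and $C$ is a cycle, every vertex of $C$ has degree exactly $2$ in $C$, so each column vertex $i$ and each row vertex $j'$ of $C$ appears in exactly two of the edges of $C$. Hence
\[
\prod_{ij'\in E(C)} M_{ij} \;=\; \prod_{ij'\in E(C)} c_i r_j \;=\; \Bigl(\prod_{i\in V(C)} c_i\Bigr)^{2}\Bigl(\prod_{j'\in V(C)} r_j\Bigr)^{2} \;=\; 1,
\]
so $C$ is positive. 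Since this was an arbitrary cycle, $G_M$ has no negative cycles.

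For the converse, assume $G_M$ has no negative cycles. It suffices to construct $c_i, r_j \in \{\pm 1\}$ one connected component of $G_M$ at a time (setting $c_i=r_j=1$ for vertices in components containing no edges). So fix a connected component $H$ of $G_M$, choose a spanning tree $T$ of $H$ and any root vertex $v_0 \in V(H)$; set its corresponding value ($c_{v_0}$ or $r_{v_0}$) to $1$. Then extend along $T$ inductively: for each edge $ij'$ of $T$ joining an already-assigned vertex to an unassigned one, define the new value so that $c_i r_j = M_{ij}$. This is possible and uniquely determined at each step since $M_{ij}\in\{\pm1\}$. By construction, $M_{ij}=c_ir_j$ holds for every tree edge of $H$.

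It remains to verify $M_{ij}=c_ir_j$ for every non-tree edge $i_0j_0'$ of $H$. The path $P$ from $i_0$ to $j_0'$ in $T$ together with the edge $i_0j_0'$ forms a cycle $C_0$; by hypothesis its sign is $+1$. The product of $M_{ij}$ along tree edges of $P$ equals, by the tree relations, $\prod_{ij'\in E(P)} c_ir_j$. Every internal vertex of $P$ contributes its $c$- or $r$-value twice (squaring to $1$) while the endpoints $i_0$ and $j_0'$ each contribute once, giving $\prod_{ij'\in E(P)}M_{ij}=c_{i_0}r_{j_0}$. Combining with $M_{i_0j_0}$ we get $M_{i_0j_0}\cdot c_{i_0}r_{j_0} = \operatorname{sgn}(C_0) = 1$, and since $c_{i_0}r_{j_0}\in\{\pm1\}$ this yields $M_{i_0j_0}=c_{i_0}r_{j_0}$, as required. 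Applying this argument to every component produces the desired sequences $c_1,\dots,c_m$ and $r_1,\dots,r_n$.

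The only mildly delicate point is the endpoint/internal-vertex bookkeeping in the final step, which is just a parity observation about how many times each vertex of the cycle $C_0$ appears among the endpoints of its edges; everything else is routine propagation of sign assignments along a spanning tree.
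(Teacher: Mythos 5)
Your proof is correct. The paper itself cites Proposition~\ref{prop-pmm-neg-cycles} to Vatter and Waton without reproducing an argument, so there is no in-text proof to compare against; your spanning-tree construction with telescoping cancellation along paths is the standard and natural route to this result, and both directions are handled cleanly, including the parity bookkeeping on the fundamental cycle $C_0$.
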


The sequences $c_1,\dots,c_m$ and $r_1,\dots,r_n$ from the above definition will be called the \emph{column} and \emph{row} sequences, respectively. They are not uniquely determined by $M$:
for example, if $c_1,\dots,c_m$ and $r_1,\dots,r_n$ are column and row sequences for $M$, then so are $-c_1,\dots,-c_m$ and $-r_1,\dots,-r_n$.
In what follows, whenever we have a gridding matrix that is a partial multiplication matrix, we will assume that a particular choice of row and column sequences has been fixed.

Column and row sequences lead to the notion of
\emph{orientation} for cells, columns and rows.
For example, in the case that $c_i=r_j=1$, we order the points in cell $ij$ from bottom left (the \emph{first point}) to top right (the \emph{last point}), and we can succinctly denote this orientation by $\nearrow$. All four cases are given in the following table.\par
{\centering
\begin{tabular}{rr|c}
$c_i$&$r_j$&orientation of $M_{ij}$\\\hline
$1$&$1$&$\nearrow$\\
$-1$&$-1$&$\swarrow$\\
$1$&$-1$&$\searrow$\\
$-1$&$1$&$\nwarrow$
\end{tabular}\par}
Note that when $r_i=1$, then the vertical component of orientation for all non-empty cells in that row is the same, namely from bottom to top~($\uparrow$), and when $r_i=-1$, it is from top to bottom~($\downarrow$).
An analogous statement can be made in relation to the horizontal orientation within columns. See Figure~\ref{fig:orientation}.

\begin{figure}
	{\centering\begin{tikzpicture}[scale=0.3]
	\draw[->,ultra thick, black!20] (1.5,1.5) -- ++(2,2);
	\draw[<-,ultra thick, black!20] (5.5,3.5) -- ++(2,-2);
	\draw[<-,ultra thick, black!20] (5.5,5.5) -- ++(2,2);
	\draw[<-,ultra thick, black!20] (9.5,5.5) -- ++(2,2);
	\draw[<-,ultra thick, black!20] (9.5,11.5) -- ++(2,-2);
	\draw[->,ultra thick, black!20] (1.5,9.5) -- ++(2,2);
	\plotpermgrid{10,1,12,2,5,4,6,3,7,11,9,8}
	\draw[thick] (0.5,4.5) -- (12.5,4.5);
	\draw[thick] (0.5,8.5) -- (12.5,8.5);
	\draw[thick] (4.5,0.5) -- (4.5,12.5);
	\draw[thick] (8.5,0.5) -- (8.5,12.5);	
	\draw[->] (1.5,0) -- (3.5,0);
  \draw[<-] (5.5,0) -- (7.5,0);
  \draw[<-] (9.5,0) -- (11.5,0);
  \draw[->] (0,1.5) -- (0,3.5);
  \draw[<-] (0,5.5) -- (0,7.5);
  \draw[->] (0,9.5) -- (0,11.5);	
	\end{tikzpicture}\par}
\caption{A gridding of the permutation $10\,1\,12\,2\,5\,4\,6\,3\,7\,11\,9\,8$ in $\Grid(M)$ where $M={}$\protect\gcthree{3}{1,0,-1}{0,1,1}{1,-1,0} is a partial multiplication matrix with column sequence $1,-1,-1$ and row sequence $1,-1,1$. The inherited orientations for each non-empty cell are shown in grey.}\label{fig:orientation}
\end{figure}

We will use this notion of orientation to define (in Section~\ref{sec-indiv-and-coils}) a decomposition of gridded permutations called the $M$-sum. However, before we can do this, we need to consider matrices that \emph{cannot} be expressed as partial multiplication matrices, since we have not yet said how (or, indeed, whether it is possible) to orient these. 

For any $m\times n$ gridding matrix $M$, the \emph{doubling} of $M$, denoted $M^{\times 2}$, is the $2m\times 2n$ matrix obtained from $M$ using the substitution rules
\[ 0 \longmapsto \begin{pmatrix}
	0&0\\0&0
\end{pmatrix} \qquad
1 \longmapsto \begin{pmatrix}
	0&1\\1&0
\end{pmatrix} \qquad
-1 \longmapsto \begin{pmatrix}
	-1&0\\0&-1
\end{pmatrix}.
\]
Note that each row of $M$ gives rise to two rows of $M^{\times 2}$, and the number of non-zero entries in each of the two rows of $M^{\times 2}$ is the same as the number in the corresponding row of $M$. A similar comment applies to the columns of $M$ and $M^{\times 2}$.

For example, if $M=\begin{pmatrix}1&1\\1&1\end{pmatrix}$ and $N=\begin{pmatrix}1&-1\\1&1\end{pmatrix}$, then
\[M^{\times2} = \begin{pmatrix}
	0&1&0&1\\
	1&0&1&0\\
	0&1&0&1\\
	1&0&1&0
\end{pmatrix}
\qquad
N^{\times2} = \begin{pmatrix}
	0&1&-1&0\\
	1&0&0&-1\\
	0&1&0&1\\
	1&0&1&0
\end{pmatrix}.\]
Although $G_M$ and $G_N$ are both isomorphic to the 4-cycle $C_4$, the graphs $G_{M^{\times 2}}$ and $G_{N^{\times 2}}$ are not isomorphic: specifically, $G_{N^{\times 2}}$ is a cycle of length 8 (of positive sign), while $G_{M^{\times 2}}$ comprises two disjoint copies of $C_4$. Indeed, the submatrix formed on the rows and columns of $M^{\times 2}$ corresponding to the vertices from either of the two components of $G_{M^{\times 2}}$ is equal to $M$. These observations hold more generally.


\begin{prop}\label{prop-cycles-in-doubled-matrix}
	Let $M$ be a gridding matrix. If  $C$ is is a cycle of length $\ell$ in the graph $G_M$  then the following hold:
\begin{enumerate}[(i)]
	\item If $C$ has positive sign, then the vertices in $G_{M^{\times 2}}$ that arise from $C$ form two disjoint cycles each of length $\ell$ and of positive sign.
	\item If $C$ has negative sign, then the vertices in $G_{M^{\times 2}}$ that arise from $C$ form a single cycle of length $2\ell$, of positive sign.
\end{enumerate}
Furthermore, if $M$ is a pseudoforest, then every cycle of $M^{\times 2}$ arises in this way.
\end{prop}

\begin{proof}
Denote the vertices around the cycle $C$ of $G_M$ by $x_1,\dots,x_\ell$. Each vertex $x_i$ gives rise to two vertices in the row-column graph of $G_{M^{\times 2}}$, which we will label as follows: If $x_i$ is a row vertex, then we denote the lower of the two corresponding vertices by $x_i^A$ and say it is of \emph{type} $A$, and the upper by $x_i^B$ (of type $B$), while if $x_i$ is a column vertex, then $x_i^A$ (type $A$) refers to the leftmost, and $x_i^B$ (type $B$) the rightmost.

Now consider a non-zero entry $M_{pq}$ that corresponds to the edge between $x_i$ and $x_{i+1}$, where we reduce indices modulo $\ell$ as appropriate.%
\footnote{Throughout this paper, we will adopt the convention that $(\text{mod~}\ell)$ reduces an integer to the set of residues $\{1,\dots, \ell\}$, rather than to $\{0,\dots,\ell-1\}$.} %
If $M_{pq}=1$, then in $G_{M^{\times 2}}$ we find the edges $x_i^A x_{i+1}^A$ and $x_i^B x_{i+1}^B$, while if $M_{pq}=-1$ then we instead have the edges $x_i^A x_{i+1}^B$ and $x_i^B x_{i+1}^A$.

Now consider the walk, starting from $x_1^A$, that sequentially follows these edges in turn, until we return to one of the two vertices $x_1^A$ or $x_1^B$. Note that this walk moves between vertices of type $A$ and type $B$ precisely when the corresponding edge of $G_M$ arises from an entry of $M$ that is equal to $-1$.

Consequently, if the cycle $C$ has positive sign, then the walk switches between vertices of types $A$ and $B$ an even number of times, and thus returns to $x_1^A$ after $\ell$ steps. Similarly, the walk that starts at $x_1^B$ will return to $x_1^B$ after $\ell$ steps. Thus we have two cycles of length $\ell$, both of positive sign.

 On the other hand, if the cycle $C$ has negative sign, then the walk makes an odd number of switches between vertices of type $A$ and type $B$, and thus reaches vertex $x_1^B$ after $\ell$ steps. Furthermore, the walk starting at $x_1^B$ reaches vertex $x_1^A$ after $\ell$ steps, and by combining these two walks we find a cycle of length $2\ell$. This cycle necessarily passes through all of the $2\ell$ vertices in $G_{M^{\times2}}$ arising from the vertices on the cycle $C$ of $G_M$, and since this collection of $2\ell$ vertices contains an even number of $-1$s, the cycle has positive sign.
 
 For the final assertion, the case when $G_M$ is connected is verified by direct inspection. For the general case we observe that $G_{M^{\times 2}}$ is the disjoint union of the graphs $G_{N^{\times 2}}$ where $N$ runs through all submatrices of $M$ corresponding to the connected components of $G_M$.
\end{proof}

The following result, and the proof we present, first appeared in Waton's PhD thesis.

\begin{prop}[Waton~{\cite[Theorem 4.5.8]{waton:on-permutation-:}}]\label{prop-doubling-equal}
Let $M$ be a gridding matrix. If every component of $G_M$ that contains a negative cycle contains no other cycle, then $\Grid(M) = \Grid(M^{\times 2})$.
\end{prop}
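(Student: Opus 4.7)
The plan is to prove the two inclusions separately.

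For $\Grid(M^{\times 2})\subseteq\Grid(M)$, which requires no hypothesis on $M$, I would take any $\pi\in\Grid(M^{\times 2})$ with its gridding and merge each $2\times 2$ block of sub-cells arising from a non-zero entry of $M$ back into a single cell. When $M_{ij}=1$, the block has non-zero sub-cells only at its lower-left and upper-right, and their two increasing runs concatenate into one increasing sequence on the merged cell (the lower-left points lie strictly below and to the left of the upper-right points). The case $M_{ij}=-1$ is handled symmetrically via a decreasing sequence from the upper-left and lower-right sub-cells.

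For the converse inclusion $\Grid(M)\subseteq\Grid(M^{\times 2})$, fix an $M$-gridding $\pi^\gridded$ of $\pi$ and reformulate the refinement problem as a labeling problem on the points of $\pi$: assign each point $p$ a pair $(r(p),c(p))\in\{0,1\}^2$ indicating whether $p$ lies above the new horizontal line of its row of $M$ and to the right of the new vertical line of its column. The required conditions are (i) $r$ is non-decreasing in $y$ within each row of $M$, (ii) $c$ is non-decreasing in $x$ within each column, and (iii) $r(p)\oplus c(p)=0$ on every $+1$-cell and $=1$ on every $-1$-cell. Since rows and columns in different components of $G_M$ impose independent constraints, I would handle each component of $G_M$ separately.

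For a component $K$ of $G_M$ with no negative cycles, Proposition~\ref{prop-pmm-neg-cycles} supplies sign sequences $c_i,r_j$; the constant labeling $r(p)=[r_j=1]$, $c(p)=[c_i=1]$ satisfies (i)--(iii) trivially, since every point in a given row (respectively column) receives the same label, and (iii) follows from $M_{ij}=c_ir_j$.

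The main obstacle is a component $K$ containing a negative cycle. Here the hypothesis is crucial: $K$ is unicyclic, with the negative cycle as its unique cycle. I would pick any edge $e^*=(i^*,j^*)$ of the cycle, delete it to obtain a spanning tree $T$ of $K$ (a partial multiplication matrix), and apply the constant labeling above to $T$. The resulting labeling satisfies (i)--(iii) on every cell of $K$ except possibly $e^*$, with a forced parity mismatch there caused by the negative sign accumulated around the cycle. To repair the mismatch while preserving (i) and (ii), I would modify the labeling on a single row or column of the cycle by letting $r$ (or $c$) transition from $0$ to $1$ at a chosen rank within that row (or column), and then cascade the necessary compensating flips of $c$-labels (or $r$-labels) in the neighbouring cells around the cycle. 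Proposition~\ref{prop-cycles-in-doubled-matrix}(ii) shows that the negative cycle of length $\ell$ becomes a single positive cycle of length $2\ell$ in $G_{M^{\times 2}}$, so in the doubled matrix each original cycle vertex has two copies that can be independently split, providing exactly the extra degree of freedom needed to absorb the sign-flip via one round-trip around the cycle. Verifying that a consistent transition rank can always be chosen---a finite combinatorial statement driven by the interleaving of points among the cycle's cells---is the technical core of the argument, and it relies on the hypothesis via the absence of further cycles in $K$ that would otherwise impose competing constraints on the propagation.
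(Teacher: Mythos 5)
Your first inclusion and your $\{0,1\}^2$-labelling reformulation are both sound, and the reformulation is a pleasant alternative picture of what a refinement into $M^{\times 2}$ amounts to. The constant labelling from a partial multiplication matrix handles components with no negative cycle, just as the paper does.

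However, your treatment of a component containing a negative cycle has a genuine gap at exactly the step you flag as the technical core. You assert, but do not prove, that after deleting one cycle edge and taking the constant labelling on the resulting spanning tree, a single transition rank can be introduced and cascaded around the cycle so that the parity defect at the deleted edge is repaired consistently. That assertion is the entire content of the negative-cycle case, and it is where the paper does the real work. Concretely, the paper defines for each non-empty cell $(i,j)$ on the cycle a monotone piecewise-linear function $f_{ij}$ interpolating the points of that cell together with the cell's corners, composes these and their inverses once around the cycle to produce a self-map $f$ of a single column interval, and notes that $f$ is strictly decreasing precisely because the cycle is negative. The Intermediate Value Theorem then yields a real fixed point, and propagating it through the $f_{ij}$'s produces all the new slicing lines simultaneously, with the consistency you need built in (a small perturbation removes any coincidence with permutation points). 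Your ``finite combinatorial'' version is not obviously correct as stated: a strictly decreasing self-map of a finite set need not have a fixed point (e.g.\ $x\mapsto n+1-x$ on $\{1,\dots,n\}$ for $n$ even), and it is not evident that the natural discrete cut-position map avoids this parity obstruction. Working with real-valued slicing lines and the IVT sidesteps the issue cleanly; if you insist on a rank-based argument, you must actually verify that a compatible cut exists.

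A second, smaller gap: you cascade compensating flips only ``around the cycle,'' but a unicyclic component generally has non-cycle cells sharing rows or columns with the cycle. Once a cycle row or column is split, those off-cycle cells acquire a forced $r$- or $c$-transition, which must be compensated by a free choice in the perpendicular direction and then propagated further out along the tree. The paper handles this by a separate induction that repeatedly peels off a cell whose row or column contains no other non-empty cell; your proposal does not address these cells at all.
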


\begin{proof}
Suppose $M$ has $m$ columns and $n$ rows.
First, observe that if $\pi\in\Grid(M^{\times 2})$, then there is some $\pi^\gridded = (\pi,\VVV,\HHH)\in\Gridhash(M^{\times 2})$ where $\VVV=\{v_1,\dots,v_{2m-1}\}$ and $\HHH=\{h_1,\dots,h_{2n-1}\}$.
The gridding $(\pi,\VVV',\HHH')$ where $\VVV'=\{v_2,v_4,\dots,v_{2m-2}\}$ and $\HHH' =\{h_2,h_4,\dots,h_{2n-2}\}$ (that is, the sets formed by removing all the odd-numbered lines from $\VVV$ and $\HHH$) demonstrates that $\pi\in\Grid(M)$.
Thus $\Grid(M^{\times 2})\subseteq\Grid(M)$.

For the other direction, without loss of generality, we may assume that $G_M$ contains a single component, otherwise the following argument can be applied to each component separately.

First, if $M$ contains no negative cycle, then by Proposition~\ref{prop-pmm-neg-cycles}, $M$ can be expressed as a partial multiplication matrix, and we identify sequences $c_1,\dots,c_m$ and $r_1,\dots,r_n$ that witness this fact.
Define maps $\kappa:[m]\to[2m]$ and $\rho:[n]\to [2n]$ as follows. For $i\in [m]$, $j\in[n]$,
\begin{align*}
\kappa(i) &= \begin{cases}2i-1&\text{if }c_i=1\\
 2i&\text{otherwise.}	
 \end{cases}
\\
\rho(j) &= \begin{cases}2j-1&\text{if }r_j=1\\
 2j&\text{otherwise.}	
 \end{cases}
\end{align*}
We have $M_{ij}=M^{\times 2}_{\kappa(i)\rho(j)}$, and furthermore the submatrix of $M^{\times 2}$ defined on the columns $\{\kappa(1),\dots,\kappa(m)\}$ and the rows $\{\rho(1),\dots,\rho(n)\}$ is equal to $M$.
Thus, any permutation that belongs to $\Grid(M)$ also belongs to $\Grid(M^{\times 2})$.

It remains to consider the case that $G_M$ contains a negative cycle, which we take to have length $2\ell$.
Consider $\pi\in\Grid(M)$, and fix a specific gridding $\pi^\gridded=(\pi,\VVV,\HHH)\in\Gridhash(M)$. Our aim is to refine this gridding by slicing each row and column of $\pi^\gridded$ in order to show that $\pi\in\Grid(M^{\times 2})$, but some care is required. Suppose, for example, that we have identified a horizontal line that slices some row of $\pi^\gridded$. For any non-empty cell $c$ we must identify a vertical line to slice $c$, so that the points in $c$ occupy only the lower-left and upper-right regions if they form an increasing sequence, and the upper-left and lower-right regions if they form a decreasing sequence. The vertical line we identify slices the whole column containing $c$, and for any other non-empty cells in this column we will similarly need to identify suitable horizontal lines. We call this process \emph{propagation}, and it presents no issues unless we revisit an already-sliced cell, at which point we need to ensure that this cell is sliced only once horizontally and once vertically. This situation arises when we propagate slices around the cells corresponding to the edges of the cycle of $G_M$, so our first task is to show how to slice the cells of the negative cycle.

For $i\in[m],j\in[n]$ such that $M_{ij}\neq 0$, define the continuous function $f_{ij} :[v_{i-1},v_i]\to [h_{j-1},h_j]$ formed by the piecewise linear map between the entries of $\pi$ in cell $ij$, together with the corners of the cell, namely $(v_{i-1},h_{j-1})$, $(v_i,h_j)$ if $M_{ij}=1$, and $(v_{i-1},h_{j})$, $(v_i,h_{j-1})$ if $M_{ij}=-1$. See Figure~\ref{fig-piecewise}, and note that $f_{ij}$ is monotone increasing if $M_{ij}=1$, and monotone decreasing if $M_{ij}=-1$.

\begin{figure}
{\centering
\begin{tikzpicture}[scale=0.3]
  \plotpermgrid{2,11,14,16,3,6,4,7,1,8,12,5,9,13,10,15}
  \draw[thick] (5.5,0.5) -- (5.5,16.5);
  \draw[thick] (10.5,.5) -- (10.5,16.5);
  \draw[thick] (.5,4.5) -- (16.5,4.5);
  \draw[thick] (.5,10.5) -- (16.5,10.5);
  \draw[black!50,thin] (0.5,10.5) -- (11) -- (14) -- (16) -- (5.5,16.5);
  \draw[black!50,thin] (0.5,0.5) -- (2) -- (3) -- (5.5,4.5)
  						 -- (4) -- (1) -- (10.5,0.5);
  \draw[black!50,thin] (5.5,4.5) -- (6) -- (7) -- (8) -- (10.5,10.5)
 						 -- (12) -- (13) -- (15) -- (16.5,16.5);
  \draw[black!50,thin] (10.5,4.5) -- (5) -- (9) -- (10) -- (16.5,10.5);
  \node[empty] at (3,1.5) {$f_{11}$};
  \node[empty] at (3.5,12) {$f_{13}$};
  \node[empty] at (9,3) {$f_{21}$};
  \node[empty] at (8,8.3) {$f_{22}$};
  \node[empty] at (13.5,7) {$f_{32}$};
  \node[empty] at (14,14.5) {$f_{33}$};
\end{tikzpicture}\par}
\caption{The piecewise maps $f_{ij}$, described in the proof of Proposition~\ref{prop-doubling-equal}, for a gridding of the permutation 2\,11\,14\,16\,3\,6\,4\,7\,1\,8\,12\,5\,9\,13\,10\,15 in $\Grid(M)$, where $M={}$\protect\gcthree{3}{1,0,1}{0,1,1}{1,-1,0}.}\label{fig-piecewise}
\end{figure}

Denote the negative cycle of $M$ by a sequence of alternating rows and columns, \[i_1,j_1,i_2,j_2,\dots,i_\ell,j_\ell.\]
Thus the cells of the cycle in $\pi^\gridded$ are $i_1j_1$, $i_2j_1$, $i_2j_2$, \dots, $i_\ell j_\ell$, $i_1 j_\ell$.
Define the map $f: [v_{j_1-1},v_{j_1}]\to [v_{j_1-1},v_{j_1}]$ as the composition (taken from right to left)
\[f = f_{i_1 j_\ell}^{-1} \circ \cdots \circ f_{i_2j_2}\circ f^{-1}_{i_2j_1}\circ f_{i_1j_1},\]
formed by following the functions and their inverses around the cycle.
Note that since the cycle has negative sign, the function $f$ is a decreasing function. As such, there exists $x_1\in (v_{j_1-1},v_{j_1})$ such that $f(x_1)=x_1$ (this follows, for example, by applying the Intermediate Value Theorem to the function $f(x)-x$). Let $y_1 = f_{i_1j_1}(x_1)$, and for $k=2,\dots,\ell$ iteratively define
\[ x_{k} = f_{i_{k}j_{k-1}}^{-1}(y_{k-1}),\qquad y_{k} = f_{i_{k}j_{k}}(x_{k}).\]
Note that, for $1\le k \le \ell$, by construction $y_k$ is a point on the vertical axis (which we can associate with a horizontal line) that is propagated by $x_k$, and $x_{k+1\pmod{\ell}}$ is a point on the horizontal axis (associated with a vertical line) that is propagated by $y_k$ (since $x_1 = f_{i_1 j_\ell}(y_\ell)$).
Thus, the $2\ell$ vertical and horizontal lines defined by $x_1,\dots,x_\ell$ and $y_1,\dots,y_\ell$ slice the rows and columns of $\pi^\gridded$ corresponding to the vertices on the cycle of $G_M$, as required. Note that if any point of $\pi^\gridded$ shares one or both coordinates with these lines, then we can adjust the coordinates of the point by a small amount so that it still lies on the curve defined by the appropriate piecewise map $f_{ij}$, but no longer meets any of the horizontal or vertical lines, and so that the plot is still order isomorphic to $\pi^\gridded$.

It remains to show that any non-empty cells of $\pi^\gridded$ not on the cycle can be sliced both horizontally and vertically which we establish by induction on the number of such cells. In the case there are no such cells, then all non-empty cells of $\pi^\gridded$ lie on the cycle, and are sliced as described above. So we now suppose there is at least one cell not on the cycle.

Identify a non-empty cell $c$ of $\pi^\gridded$ that either has no other non-empty cells in its column or its row. (Such a cell exists since any cell that is not on the cycle corresponds to an edge in $G_M$ on a unique path from the cycle to some leaf of $G_M$, and hence we may choose the last non-empty cell along this path.) By symmetry, we may assume that $c$ is the only non-empty entry in its column, and furthermore that the points in $c$ form an increasing sequence.

By induction, the gridded subpermutation of $\pi^\gridded$ formed by removing the entries of $c$ has a refinement that gives an $M^{\times 2}$-gridding. In $\pi^\gridded$, this refinement creates a horizontal line $h$ that slices the cell $c$. We now propagate in $c$: choose a vertical line $v$ slicing $c$ so that each entry in $c$ belongs to the lower-left or upper-right regions. Since there are no other non-empty cells in the column, the line $v$ slices no other non-empty cells of $\pi^\gridded$, and thus by including the line $v$ we have an $M^{\times 2}$-gridding of $\pi$, as required.
\end{proof}

The previous three propositions now enable us to conclude the following.

\begin{prop}\label{prop-pseudoforest-on-a-pmm}
If $M$ is an acyclic, unicyclic or pseudoforest gridding matrix, then there exists, respectively, an acyclic, unicyclic or pseudoforest partial multiplication matrix $N$ such that $\Grid(M)=\Grid(N)$.
\end{prop}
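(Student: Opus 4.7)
The plan is to handle the three cases uniformly by splitting on whether $G_M$ contains any negative cycles. If $G_M$ has no negative cycles, then by Proposition~\ref{prop-pmm-neg-cycles}, $M$ is itself a partial multiplication matrix, so we can take $N = M$: this trivially preserves the acyclic, unicyclic, or pseudoforest structure of $G_M$. In particular, this immediately disposes of the acyclic case (where $G_M$ has no cycles at all), and handles the subcases of unicyclic and pseudoforest matrices in which every cycle of $G_M$ happens to be positive.

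The remaining cases have $G_M$ containing at least one negative cycle. Here I would take $N = M^{\times 2}$. Since $M$ is a pseudoforest (and every acyclic or unicyclic matrix is also a pseudoforest), every component of $G_M$ contains at most one cycle, so the hypothesis of Proposition~\ref{prop-doubling-equal} is satisfied, giving $\Grid(M) = \Grid(M^{\times 2})$. It then remains to establish that (a) $M^{\times 2}$ has no negative cycles (so it is a partial multiplication matrix by Proposition~\ref{prop-pmm-neg-cycles}), and (b) $M^{\times 2}$ retains the same cyclic structure we started with.

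For both (a) and (b), I would analyse the effect of doubling on each component of $G_M$ separately, using Proposition~\ref{prop-cycles-in-doubled-matrix}. A tree component doubles to two disjoint trees, since a tree is trivially balanced as a signed graph. A unicyclic component with a positive cycle splits, by part (i) of Proposition~\ref{prop-cycles-in-doubled-matrix}, into two disjoint unicyclic components, each carrying one copy of the (positive) cycle together with its attached trees. A unicyclic component with a negative cycle of length $\ell$ remains connected by part (ii), becoming a single unicyclic component with one positive cycle of length $2\ell$ and the attached trees duplicated. The main technical point to watch is ensuring no unexpected cycles arise in $G_{M^{\times 2}}$; this is handled by an edge-vertex count on each component (a pseudoforest component on $k$ vertices has at most $k$ edges, and the counts $2V, 2E$ together with the component count above force the cycle rank to match the number predicted by Proposition~\ref{prop-cycles-in-doubled-matrix}). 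In every case, $G_{M^{\times 2}}$ is a pseudoforest with only positive cycles, and in the unicyclic-with-negative-cycle case the single cycle of $G_M$ produces exactly one cycle in $G_{M^{\times 2}}$, giving the required unicyclic PMM.
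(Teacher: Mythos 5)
Your proof is correct and follows essentially the same route as the paper's: split on whether $G_M$ has a negative cycle, take $N=M$ (with Proposition~\ref{prop-pmm-neg-cycles}) in the first case, and take $N=M^{\times 2}$ (via Propositions~\ref{prop-cycles-in-doubled-matrix} and~\ref{prop-doubling-equal}) in the second. You supply more detail than the paper on why doubling preserves the acyclic/unicyclic/pseudoforest structure — the paper simply asserts "if $M$ is unicyclic so is $M^{\times2}$" — but this is a fleshing-out of the same argument, not a different one.
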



\begin{proof}
If $M$ is acyclic
we can take $N=M$ by
Proposition~\ref{prop-pmm-neg-cycles}. 
The same is true when $M$ is unicyclic and the unique cycle is positive.
When $M$ is unicyclic and the unique cycle is negative we can take $N=M^{\times 2}$.
Indeed, $M^{\times 2}$ has a unique cycle which is positive by Proposition~\ref{prop-cycles-in-doubled-matrix}, is connected by inspection, and $\Grid(M^{\times 2})=\Grid(M)$ by Proposition~\ref{prop-doubling-equal}.
Finally, if $M$ is a pseudoforest, $G_{M^{\times 2}}$ is the disjoint union of the 
graphs $G_{P^{\times 2}}$, where $P$ runs through all submatrices of $M$ corresponding to the connected components of $G_M$, and we can again take $N=M^{\times 2}$.
\end{proof}

In light of this, without loss of generality we may now restrict our attention only to partial multiplication matrices.
%
%
%
%
%
%
%
%
%
\section{Indivisibility and coils}\label{sec-indiv-and-coils}

In this section we develop a detailed structural understanding of grid classes through a method of decomposition that we call the $M$-sum. This method can be viewed of as an adaptation to gridded permutations of the classical constructions of direct and skew sums.

\subsection{\texorpdfstring{$M$}{M}-sums and \texorpdfstring{$M$}{M}-indivisibility}\label{subsec-m-sums}

Let $M$ be an $m\times n$ partial multiplication matrix, and fix column and row sequences $c_1,\dots,c_m$ and $r_1,\dots,r_n$, respectively. As noted in the proof of Proposition~\ref{prop-doubling-equal}, the submatrix of $M^{\times 2}$ defined on the columns $\kappa(1),\dots,\kappa(m)$ and rows $\rho(1),\dots,\rho(n)$ is equal to $M$; we call this the \emph{first} copy of $M$ in $M^{\times 2}$. The submatrix defined on the other rows and columns of $M^{\times 2}$ is also equal to $M$, and we call this the \emph{second} copy. Together these two submatrices include all the non-zero elements of $M^{\times 2}$: Each non-zero entry $M_{ij}$ of $M$ corresponds to two non-zero entries in $M^{\times 2}$, one belonging to each of the two submatrices.

\begin{defn}[$M$-sum]\label{def-m-sum}
Let $M$ be an $m\times n$ partial multiplication matrix with fixed column and row sequences, and let $\sigma^\gridded$ and $\tau^\gridded$ be two $M$-gridded permutations. The \emph{$M$-sum} of $\sigma^\gridded$ and $\tau^\gridded$, denoted $\sigma^\gridded\boxplus\tau^\gridded$, is the $M$-gridded permutation obtained from the following process: in a $2m\times 2n$ grid (which we associate with the doubled matrix $M^{\times 2}$), insert the points of $\sigma^\gridded$ into the cells that correspond to the entries of the first copy of $M$, and the points of $\tau^\gridded$ into the cells corresponding to the second copy. Now remove the odd-numbered grid lines to be left with the $M$-gridded permutation $\sigma^\gridded\boxplus\tau^\gridded$.

We say that a gridded permutation $\pi^\gridded$ is \emph{$M$-divisible} if it can be expressed as the $M$-sum of two non-empty gridded permutations, and \emph{$M$-indivisible} otherwise. Where the matrix $M$ is clear from the context, we may simply refer to \emph{divisible} and \emph{indivisible} permutations.
\end{defn}

Alongside this notion of $M$-divisibility, it will be convenient to construct an auxiliary directed graph in the following way. Let $M$ be a partial multiplication matrix, and let $\pi^\gridded$ be an $M$-gridded permutation. The \emph{orientation digraph} $D_{\pi^\gridded}$ has vertex set equal to the points of $\pi^\gridded$, with directed edges as follows: $x\rightarrow y$ if $x$ and $y$ share a row and/or column in $\pi^\gridded$, and $x$ precedes $y$ in the common orientation of these two points. Thus, the directed edges between the points of $D_{\pi^\gridded}$ within a single row or column define a total order. See Figure~\ref{fig:digraph}.

\begin{figure}
	{\centering\begin{tikzpicture}[scale=0.4]
	\plotpermgrid{4,1,5,3,6,2}
	\draw[thick] (0.5,3.5) -- (6.5,3.5);
	\draw[thick] (2.5,0.5) -- (2.5,6.5);
	\draw[->] (.75,0) -- (2.25,0);
  \draw[<-] (3.5,0) -- (5.5,0);
  \draw[->] (0,1) -- (0,3);
  \draw[<-] (0,4) -- (0,6);
  \begin{scope}[shift={(10,0)}]
	\plotperm{4,1,5,3,6,2}
\foreach \s/\t in {1/2,1/3,2/3,2/5,2/6,3/5,4/1,5/4,6/3,6/5}
	\draw (\s) edge[mid arrow] (\t);
  \draw (6) edge[mid arrow,out=180,in=60] (4);
  \end{scope}
	\end{tikzpicture}\par}
\caption{A gridding of the permutation $415362$ in $\Grid(M)$ where $M={}$\protect\gctwo{2}{-1,1}{1,-1} is a partial multiplication matrix, together with its orientation digraph.}\label{fig:digraph}
\end{figure}

Given any gridded permutation $\pi^\gridded$ and a gridded subpermutation $\sigma^\gridded$ obtained by removing a single point $p$, the digraph $D_{\sigma^\gridded}$ is an induced subdigraph of $D_{\pi^\gridded}$ obtained by removing the vertex $p$. Iterating this process, we have the following.

\begin{obs}\label{obs-pi-to-D-pi}
	The mapping $\pi^\gridded \mapsto D_{\pi^\gridded}$ is order-preserving.
\end{obs}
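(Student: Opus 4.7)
The plan is to reduce the statement to its single-point version, which is essentially already spelled out in the paragraph preceding the observation, and then iterate.

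First, I would verify the base case: if $\sigma^\gridded$ is obtained from $\pi^\gridded$ by deleting a single point $p$, then $D_{\sigma^\gridded}$ is precisely the induced subdigraph of $D_{\pi^\gridded}$ on $V(D_{\pi^\gridded}) \setminus \{p\}$. The key observation is that an edge $x \to y$ in either digraph is determined by two data: (i) whether $x$ and $y$ share a row or a column of the gridding, and (ii) the common orientation induced on that row/column by the fixed column and row sequences $c_1,\dots,c_m$ and $r_1,\dots,r_n$ of the (partial multiplication) matrix $M$. Neither datum is affected by the presence or absence of the third point $p$: removing $p$ does not alter which cell any other point occupies, nor does it change the orientations inherited from $M$. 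Hence the edges among the points of $\sigma^\gridded$ are exactly the edges of $D_{\pi^\gridded}$ restricted to those points.

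Next I would handle a general gridded containment $\sigma^\gridded \leq \pi^\gridded$ by decomposing it into a sequence of single-point deletions. By definition, the embedding witnessing $\sigma^\gridded \leq \pi^\gridded$ selects a subset $S$ of the points of $\pi^\gridded$, lying in the same cells of $M$ as the corresponding points of $\sigma^\gridded$. Restricting the vertical and horizontal lines of $\pi^\gridded$ to $S$ therefore produces an $M$-gridded permutation isomorphic to $\sigma^\gridded$, whose orientation digraph is likewise isomorphic to $D_{\sigma^\gridded}$. We may then remove the points of $\pi^\gridded$ outside $S$ one at a time; at each step the base case yields an induced subdigraph, and induced subdigraph is transitive.

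I do not anticipate a genuine obstacle here: the argument is really just a careful unpacking of the definitions, together with the point that the orientations used to define $D_{\pi^\gridded}$ are globally prescribed by $M$ rather than locally by the point set. The only thing worth stating explicitly is this invariance of orientation under deletion, after which iteration gives the result.
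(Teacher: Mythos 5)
Your argument is correct and follows exactly the route the paper indicates: the paragraph preceding the observation already states the single-point-deletion case and says "iterating this process," which is precisely your base case plus induction. Your elaboration of why deletion of a third point leaves the remaining edges unchanged (cell membership and orientation are both globally fixed by $M$) is a sound unpacking of what the paper takes as read.
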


Note that this mapping from gridded permutations to directed graphs is not injective.

The orientation digraph $D_{\sigma^\gridded\boxplus\tau^\gridded}$ of an $M$-sum has the property that for any $x\in \sigma^\gridded$ and $y\in\tau^\gridded$, either $x\rightarrow y$ or there is no edge between $x$ and $y$. Consequently, $D_{\sigma^\gridded\boxplus\tau^\gridded}$ is not strongly connected. Conversely, if the orientation digraph $D_{\pi^\gridded}$ of some $M$-gridded permutation $\pi^\gridded$ is not strongly connected, then we may express $\pi^\gridded$ as an $M$-sum of two smaller permutations by considering a partition of the vertices of $D_{\pi^\gridded}$ into non-empty sets $S$ and $T$ such that there is no edge starting in $T$ and ending in~$S$. Thus we have:

\begin{lemma}\label{lem-Dpi-strongly-connected}
Let $M$ be a partial multiplication matrix, and let $\pi^\gridded\in\Gridhash(M)$. Then $\pi^\gridded$ is indivisible if and only if $D_{\pi^\gridded}$ is strongly connected.
\end{lemma}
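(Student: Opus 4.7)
Both directions are essentially sketched in the discussion preceding the lemma; the task is to convert those sketches into a rigorous argument by tracking what happens inside individual rows, columns and cells of the gridding.

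For the contrapositive of the forward direction (divisible implies $D_{\pi^\gridded}$ not strongly connected), suppose $\pi^\gridded = \sigma^\gridded \boxplus \tau^\gridded$ with both summands non-empty. Write $S$ for the image of $\sigma^\gridded$ in $\pi^\gridded$ and $T$ for the image of $\tau^\gridded$. The plan is to show that for every $x \in S$ and $y \in T$ that share a row or column in $\pi^\gridded$, the orientation edge between them is $x \to y$, which immediately prevents strong connectivity. Fix a row $j$ of $M$: by construction, the $S$-points lie in row $\rho(j)$ of $M^{\times 2}$, while the $T$-points lie in the other row of the pair $\{2j-1,2j\}$. A brief case check on $r_j = \pm 1$ shows that, using the definition of $\rho$ from the proof of Proposition~\ref{prop-doubling-equal}, the first-copy row always sits earlier than the second-copy row in the orientation $\uparrow$ or $\downarrow$ of the combined row of $M$; hence every $S$-point precedes every $T$-point in that row's total order. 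The analogous case check on $c_i$ handles columns. Therefore no edge in $D_{\pi^\gridded}$ goes from $T$ to $S$, so the digraph is not strongly connected.

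For the reverse direction, assume $D_{\pi^\gridded}$ is not strongly connected. The plan is to take the condensation of $D_{\pi^\gridded}$, which is a DAG on at least two vertices, pick a non-trivial down-set of strongly connected components to form $S$, and let $T$ be its complement, so that no edge of $D_{\pi^\gridded}$ goes from $T$ to $S$. I will then exhibit a refinement of $\pi^\gridded$ into an $M^{\times 2}$-gridding in which the points of $S$ occupy precisely the first-copy cells and the points of $T$ occupy precisely the second-copy cells; unforgetting the odd-numbered grid lines then realises $\pi^\gridded$ as $\sigma^\gridded \boxplus \tau^\gridded$, where $\sigma^\gridded$ and $\tau^\gridded$ are the $M$-griddings induced on $S$ and $T$ respectively. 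Within each row of $\pi^\gridded$, any $S$-point and $T$-point are joined by an edge of $D_{\pi^\gridded}$ (they lie in a common row of $M$), and this edge cannot run $T \to S$; so the $S$-points form an initial segment in the row's orientation, allowing a horizontal line to separate them. Columns are split analogously.

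The main obstacle, and where care is needed, is the compatibility of these row- and column-splits inside each non-empty cell $C_{ij}$: the two splits must jointly place the $S$-points into the first-copy sub-cell of the $2\times 2$ block in $M^{\times 2}$ and the $T$-points into the second-copy sub-cell. The plan is to reduce this to a finite case analysis over the four combinations of $(c_i,r_j) \in \{\pm 1\}^2$ together with the sign of $M_{ij}$. In each case one reads off, from the substitution rules defining $M^{\times 2}$ and from $\kappa,\rho$, which diagonal of the $2\times 2$ block carries the first and second copies, and checks that the condition ``$S$ precedes $T$'' along both the cell's horizontal and vertical orientations forces $S$ into exactly the first-copy sub-cell, and $T$ into the second. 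Once this local compatibility is verified the refined gridding is a valid $M^{\times 2}$-gridding, and $\pi^\gridded = \sigma^\gridded \boxplus \tau^\gridded$ follows directly from Definition~\ref{def-m-sum}.
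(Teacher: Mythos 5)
Your argument is correct and follows the same route the paper itself sketches in the paragraph immediately preceding the lemma (edges of $D_{\sigma^\gridded\boxplus\tau^\gridded}$ between $\sigma^\gridded$-points and $\tau^\gridded$-points only run one way; conversely a partition with no $T\to S$ edges yields an $M$-sum). You have simply supplied the rigour the paper leaves implicit, correctly identifying that the crux is the local compatibility check inside each cell of the $M^{\times 2}$ refinement, which indeed reduces to the finite verification over $(c_i,r_j,M_{ij})$ that you describe.
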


As a consequence of Lemma~\ref{lem-Dpi-strongly-connected}, we make a few remarks. First, we make a simple observation concerning subpermutations of divisible permutations.

\begin{obs}\label{obs-divisible-subpermutation}
Let $\pi^\gridded=\sigma^\gridded\boxplus\tau^\gridded$ be a divisible $M$-gridded permutation. Any gridded subpermutation of $\pi^\gridded$ that contains at least one point from each of $\sigma^\gridded$ and $\tau^\gridded$ is divisible.
\end{obs}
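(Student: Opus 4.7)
The plan is to deduce this directly from Lemma~\ref{lem-Dpi-strongly-connected}, by transferring the ``no backwards edge'' property of the $M$-sum's orientation digraph down to the subpermutation.

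First I would recall the structural feature of the $M$-sum that was highlighted just before Lemma~\ref{lem-Dpi-strongly-connected}: in $D_{\pi^\gridded}$, whenever $x\in\sigma^\gridded$ and $y\in\tau^\gridded$, the only possible edge between them goes from $x$ to $y$. This is because, in the construction of $\sigma^\gridded\boxplus\tau^\gridded$ inside the doubled matrix $M^{\times 2}$, within each shared row or column the points of $\sigma^\gridded$ precede the points of $\tau^\gridded$ with respect to the common orientation.

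Next I would let $\rho^\gridded\le\pi^\gridded$ be the given gridded subpermutation, and set $S=\rho^\gridded\cap\sigma^\gridded$ and $T=\rho^\gridded\cap\tau^\gridded$, both non-empty by hypothesis. By Observation~\ref{obs-pi-to-D-pi} (applied iteratively to the removed points), $D_{\rho^\gridded}$ is the induced subdigraph of $D_{\pi^\gridded}$ on the vertex set $S\cup T$. In particular, $D_{\rho^\gridded}$ inherits the property that every edge between $S$ and $T$ is directed from $S$ to $T$, and so there is no directed path in $D_{\rho^\gridded}$ from any vertex of $T$ to any vertex of $S$ (such a path would need to use a $T\to S$ edge at some step). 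Hence $D_{\rho^\gridded}$ is not strongly connected, and Lemma~\ref{lem-Dpi-strongly-connected} then gives that $\rho^\gridded$ is $M$-divisible.

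There isn't really a hard step; the only thing to be careful about is the direction of the cross-edges in $D_{\pi^\gridded}$, which is exactly the point already exploited in the proof of Lemma~\ref{lem-Dpi-strongly-connected} and can simply be invoked. If desired, one could further note that an explicit witnessing decomposition is $\rho^\gridded = \rho^\gridded|_S \boxplus \rho^\gridded|_T$, where $\rho^\gridded|_S$ and $\rho^\gridded|_T$ are the induced gridded subpermutations on $S$ and $T$, which makes the divisibility constructive rather than merely existential.
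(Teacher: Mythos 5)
Your proof is correct and follows exactly the route the paper has in mind: the observation is introduced immediately after Lemma~\ref{lem-Dpi-strongly-connected} with the phrase ``as a consequence of Lemma~\ref{lem-Dpi-strongly-connected},'' and you deduce it from that lemma together with the fact (established just before Observation~\ref{obs-pi-to-D-pi}) that passing to a gridded subpermutation induces the corresponding subdigraph. The only tiny blemish is that you cite Observation~\ref{obs-pi-to-D-pi} for the induced-subdigraph fact, whereas that observation only records the weaker order-preserving statement --- what you actually use is the preceding sentence of the paper, which you then iterate.
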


Next, by considering the decomposition of $D_{\pi^\gridded}$ into strongly connected components, we can express any $\pi^\gridded$ as an $M$-sum of indivisibles. This decomposition is illustrated in Figure~\ref{fig-decomposing}.

\begin{lemma}\label{lem-grid-decomp}
	Let $\pi^\gridded$ be an $M$-gridded permutation. Then $\pi^\gridded$ can be expressed as an $M$-sum of indivisible permutations,
	\[\pi^\gridded = \pi_1^\gridded \boxplus \cdots \boxplus\pi_k^\gridded.\]
	Furthermore, any other expression of $\pi^\gridded$ as an $M$-sum of indivisible permutations,
	\[\pi^\gridded = \varphi_1^\gridded \boxplus \cdots \boxplus\varphi_{\ell}^\gridded,\]
	satisfies $k=\ell$ and $\{\varphi_1^\gridded,\dots,\varphi_{\ell}^\gridded\} = \{\pi_1^\gridded,\dots,\pi_k^\gridded\}$ as multisets.
	\end{lemma}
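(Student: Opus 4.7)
My plan is to invoke Lemma~\ref{lem-Dpi-strongly-connected} throughout, which identifies indivisibility of a gridded permutation with strong connectivity of its orientation digraph, and then read off both existence and uniqueness from the canonical decomposition of a digraph into strongly connected components (SCCs).

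For existence, I would induct on $|\pi|$. If $\pi^\gridded$ is indivisible the trivial one-summand decomposition suffices, and otherwise the definition of $M$-divisibility (combined with the remark preceding Lemma~\ref{lem-Dpi-strongly-connected} that any vertex partition $S,T$ of $D_{\pi^\gridded}$ with no arcs from $T$ to $S$ realises a binary $M$-sum) gives a split $\pi^\gridded = \sigma^\gridded \boxplus \tau^\gridded$ into non-empty factors, to which the inductive hypothesis applies.

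For uniqueness, suppose $\pi^\gridded = \pi_1^\gridded \boxplus \cdots \boxplus \pi_k^\gridded$ with each $\pi_i^\gridded$ indivisible, and set $V_i := V(D_{\pi_i^\gridded})$, regarded as a subset of $V(D_{\pi^\gridded})$. Unwinding the definition of $\boxplus$, three properties should follow: (i)~the $V_i$ partition $V(D_{\pi^\gridded})$; (ii)~the subdigraph induced on each $V_i$ is precisely $D_{\pi_i^\gridded}$; and (iii)~every arc joining distinct parts runs from some $V_i$ to some $V_j$ with $i<j$. Property (iii) is immediate for $k=2$ from the construction of $\boxplus$ via $M^{\times 2}$, and extends to general $k$ by a short induction on the number of factors, independent of the bracketing used to parse the iterated $\boxplus$. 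By Lemma~\ref{lem-Dpi-strongly-connected} each $V_i$ is strongly connected.

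It then remains to show that $\{V_1,\dots,V_k\}$ coincides with the collection of SCCs of $D_{\pi^\gridded}$, which is an intrinsic invariant of $\pi^\gridded$. Each $V_i$, being strongly connected, is contained in some SCC $C$; conversely, if $C$ met any other $V_j$ then $C$ would contain a directed cycle crossing the partition, contradicting (iii). Hence each $V_i$ is itself an SCC, so $\{V_1,\dots,V_k\}$ matches the set of SCCs, and since each $\pi_i^\gridded$ is determined by its vertex set as a sub-gridded-permutation of $\pi^\gridded$, the multiset $\{\pi_1^\gridded,\dots,\pi_k^\gridded\}$ is uniquely determined. I expect the only mildly delicate point to be verifying property (iii) cleanly for iterated $\boxplus$ under arbitrary bracketings; everything else reduces to standard SCC machinery combined with Lemma~\ref{lem-Dpi-strongly-connected}.
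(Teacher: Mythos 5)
Your proposal is correct and takes essentially the same route the paper does: the paper presents Lemma~\ref{lem-grid-decomp} without a written proof, merely remarking beforehand that it follows ``by considering the decomposition of $D_{\pi^\gridded}$ into strongly connected components,'' which is exactly the SCC-plus-Lemma~\ref{lem-Dpi-strongly-connected} argument you spell out. Your write-up simply makes explicit the details (associativity of $\boxplus$ under iterated bracketing, the three properties (i)--(iii), and the identification of the $V_i$ with the SCCs) that the paper leaves implicit.
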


\begin{figure}
\begin{center}
\begin{tabular}{m{4.2cm}m{0.3cm}m{2.8cm}m{0.3cm}m{1.0cm}m{0.3cm}m{1.9cm}}
\begin{tikzpicture}[scale=0.25]
  \foreach \x/\y/\z/\w in {1/1/3/2,9/1/10/2,9/8/10/10,13/8/16/10,13/13/16/16,1/13/3/16}
      \draw[blue, thin, fill=blue!10] (\x-0.5,\y-0.5) rectangle (\z+0.5,\w+0.5);
  \foreach \x/\y/\z/\w in {8/7/8/7,13/7/12/7,8/3/8/2}
      \draw[green, thin, fill=green!10] (\x-0.5,\y-0.5) rectangle (\z+0.5,\w+0.5);
  \foreach \x/\y/\z/\w in {4/3/5/4,6/3/7/4,6/5/7/6,11/5/12/6,11/11/12/12,4/11/5/12}
      \draw[red, thin, fill=red!10] (\x-0.5,\y-0.5) rectangle (\z+0.5,\w+0.5);
  \plotpermgrid{2,16,14,11,3,6,4,7,1,8,12,5,9,13,10,15}
  \draw[thick] (5.5,0.5) -- (5.5,16.5);
  \draw[thick] (10.5,.5) -- (10.5,16.5);
  \draw[thick] (.5,4.5) -- (16.5,4.5);
  \draw[thick] (.5,10.5) -- (16.5,10.5);
  \draw[->] (1.5,0) -- (4.5,0);
  \draw[<-] (6.5,0) -- (9.5,0);
  \draw[<-] (11.5,0) -- (15.5,0);
  \draw[->] (0,1.5) -- (0,3.5);
  \draw[<-] (0,5.5) -- (0,9.5);
  \draw[<-] (0,11.5) -- (0,15.5);
\end{tikzpicture}%
&$=$&
\begin{tikzpicture}[scale=0.3]
\draw[fill=blue!10,draw=none] (0.5,0.5) rectangle (9.5,9.5);
\plotpermgrid{2,9,7,1,3,4,6,5,8}
\draw[thick] (3.5,0.5) -- (3.5,9.5);
\draw[thick] (5.5,0.5) -- (5.5,9.5);
\draw[thick] (0.5,2.5) -- (9.5,2.5);
\draw[thick] (0.5,5.5) -- (9.5,5.5);
\end{tikzpicture}
&$\boxplus$&%
\begin{tikzpicture}[scale=0.3]
\draw[fill=green!10,draw=none] (0.5,0.5) rectangle (3.5,3.5);
\draw[step=1cm,gray!50,very thin] (0.5,0.5) grid (3.5,3.5);
\node[permpt] at (2,2) {};
\draw[thick] (1.5,0.5) -- (1.5,3.5);
\draw[thick] (2.5,0.5) -- (2.5,3.5);
\draw[thick] (0.5,1.5) -- (3.5,1.5);
\draw[thick] (0.5,2.5) -- (3.5,2.5);
\end{tikzpicture}
&$\boxplus$&
\begin{tikzpicture}[scale=0.3]
\draw[fill=red!10,draw=none] (0.5,0.5) rectangle (6.5,6.5);
\plotpermgrid{5,1,4,2,6,3}
\draw[thick] (2.5,0.5) -- (2.5,6.5);
\draw[thick] (4.5,0.5) -- (4.5,6.5);
\draw[thick] (0.5,2.5) -- (6.5,2.5);
\draw[thick] (0.5,4.5) -- (6.5,4.5);
\end{tikzpicture}%
\end{tabular}
\end{center}
\caption{For $M={}$\protect\gcthree{3}{-1,0,1}{0,1,1}{1,-1,0}, the decomposition into $M$-indivisibles of the $M$-gridded permutation $\pi^\gridded = 2\,16\,14\,11\,3\,6\,4\,7\,1\,8\,12\,5\,9\,13\,10\,15$. 
}
\label{fig-decomposing}
\end{figure}

We will refer to any decomposition of $\pi^\gridded$ into $M$-indivisibles as guaranteed by Lemma~\ref{lem-grid-decomp} as an \emph{$M$-decomposition of $\pi^\gridded$}.
Note that in the $M$-decomposition $\pi^\gridded = \pi_1^\gridded \boxplus \cdots \boxplus\pi_k^\gridded$, the order in which the indivisibles appear may not be uniquely determined. For example, if the points of $\pi_i^\gridded $ and $\pi_{i+1}^\gridded$ lie in cells that correspond to edges of $G_M$ in different components, then $\pi_i^\gridded\boxplus \pi_{i+1}^\gridded = \pi_{i+1}^\gridded\boxplus \pi_{i}^\gridded$. This gives:

\begin{obs}\label{obs-single-component}
	The points of any $M$-indivisible permutation must belong to cells that induce a connected subgraph of $G_M$.
\end{obs}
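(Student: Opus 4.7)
The plan is to argue the contrapositive via Lemma~\ref{lem-Dpi-strongly-connected}: if the cells occupied by an $M$-gridded permutation $\pi^\gridded$ correspond to edges lying in two or more distinct components of $G_M$, then the orientation digraph $D_{\pi^\gridded}$ fails to be strongly connected, and hence $\pi^\gridded$ is $M$-divisible.

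The key observation is that an arc of $D_{\pi^\gridded}$ can only exist between two points that share a row or a column of the grid. If points $x \in C_{ij}$ and $y \in C_{kl}$ are joined by an arc of $D_{\pi^\gridded}$, then either $i=k$ or $j=l$, so the edges $ij'$ and $kl'$ of $G_M$ share a vertex (either the column vertex $i=k$ or the row vertex $j'=l'$) and therefore lie in the same connected component of $G_M$. Equivalently, points whose cells correspond to edges in different components of $G_M$ are non-adjacent in $D_{\pi^\gridded}$.

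Now suppose the points of $\pi^\gridded$ lie in cells corresponding to edges of at least two distinct components of $G_M$. Partition the vertex set of $D_{\pi^\gridded}$ into non-empty subsets $S$ and $T$ according to which part of this component-split the containing cell lands in. By the previous paragraph, $D_{\pi^\gridded}$ has no arcs with one endpoint in $S$ and the other in $T$, so in particular $D_{\pi^\gridded}$ is not strongly connected. By Lemma~\ref{lem-Dpi-strongly-connected}, $\pi^\gridded$ is $M$-divisible, completing the contrapositive. No substantive obstacle is anticipated; the entire argument rides on Lemma~\ref{lem-Dpi-strongly-connected} together with the elementary fact that adjacency in $D_{\pi^\gridded}$ requires a shared row or column, which in turn forces incidence in $G_M$.
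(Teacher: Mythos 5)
Your proof is correct, and it follows the same underlying idea the paper relies on (the paper states the observation without an explicit proof, but its preceding discussion about $M$-sums across components commuting is motivated by exactly the fact you isolate): points lying in cells whose corresponding edges of $G_M$ are in different components can share neither a row nor a column, hence are non-adjacent in $D_{\pi^\gridded}$, so the orientation digraph is disconnected and Lemma~\ref{lem-Dpi-strongly-connected} gives divisibility.
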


For ease of presentation, we will refer to $M$-indivisibles as being \emph{associated with} a component of $G_M$.

In any strongly connected component of some $D_{\pi^\gridded}$ that contains a directed cycle (and hence more than one point), the points on the directed cycle must belong to cells that correspond to edges of $G_M$ in some closed path. This gives:

\begin{obs}\label{obs-non-singleton}
	Any non-singleton $M$-indivisible must contain at least one point from each cell around some cycle of $M$.
\end{obs}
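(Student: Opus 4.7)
The plan is to leverage Lemma~\ref{lem-Dpi-strongly-connected}: since $\pi^\gridded$ is $M$-indivisible and non-singleton, the orientation digraph $D_{\pi^\gridded}$ is strongly connected and has at least two vertices, so it contains a simple directed cycle. I would take a \emph{shortest} such simple directed cycle $p_1 \to p_2 \to \cdots \to p_t \to p_1$ in $D_{\pi^\gridded}$, and write $C_i$ for the cell containing $p_i$. Note that $t \geq 3$, because a $2$-cycle would require $p_1$ to both precede and follow $p_2$ in the common orientation of their shared row/column.

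The first key step is to verify that $C_1, \ldots, C_t$ are pairwise distinct. Suppose $C_i = C_j$ for some $i \ne j$. Since the points inside a cell are in monotone order, agreeing simultaneously with the row and column orientations, one of $p_i, p_j$ precedes the other, giving a direct edge in $D_{\pi^\gridded}$ between them. If $i$ and $j$ are not cyclically adjacent, this edge immediately shortcuts the cycle to a strictly shorter simple directed cycle, contradicting minimality. If $j = i+1$ (or cyclically so), then $p_i$ and $p_{i+1}$ share both their row and column, so $p_i$ shares the row or column used by the edge $p_{i+1} \to p_{i+2}$; combined with the fact that $p_i$ precedes $p_{i+1}$ in that orientation, this produces an edge $p_i \to p_{i+2}$ that again shortcuts the cycle.

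With the cells distinct, they correspond to $t$ distinct edges $e_1, \ldots, e_t$ of $G_M$, with each consecutive pair (cyclically) sharing a vertex $v_i$ equal to the common row or column of $p_i$ and $p_{i+1}$, so that $e_i = \{v_{i-1}, v_i\}$. The main step is to extract an actual cycle of $G_M$ from this closed edge-walk, for which I would use a standard even-degree argument: for any vertex $w$ of $G_M$, its degree in the subgraph $H$ spanned by $\{e_1, \ldots, e_t\}$ equals $2\cdot|\{i : v_i = w\}|$, which is even. Since $H$ has at least $t \geq 3$ edges and all even degrees, it contains a simple cycle of $G_M$. Each edge of this cycle is some $e_i$, corresponding to the cell $C_i$ containing the point $p_i$ of $\pi^\gridded$, giving the desired conclusion.

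I expect the main subtlety to be the verification that the cells of a shortest cycle are distinct, as the cyclically adjacent case relies on the crucial fact that the monotone order within a cell aligns with both the row and column orientations (a feature of partial multiplication matrices). Everything else reduces to standard combinatorial bookkeeping.
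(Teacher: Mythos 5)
Your proof is correct and follows the same approach as the paper's brief justification of this observation: extract a directed cycle from the strongly connected digraph $D_{\pi^\gridded}$ (via Lemma~\ref{lem-Dpi-strongly-connected}) and observe that the cells it visits correspond to edges of $G_M$ forming a closed walk, from which a cycle of $G_M$ must emerge. The shortest-cycle device, the check that its vertices lie in pairwise distinct cells (where the cyclically adjacent case correctly exploits the agreement of row and column orientations inside a cell of a partial multiplication matrix), and the even-degree extraction are careful elaborations of what the paper compresses into a single preceding sentence, and they are all sound.
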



Before we proceed, we note here that the $M$-indivisible permutations that belong to the \emph{geometric} grid class
 $\Geom(M)$, which was discussed in the introduction, 
 are necessarily singletons. It is essentially this fact (although cast in different terms) that underpins the key results concerning geometric grid classes from~\cite{albert:geometric-grid-:}.

With Observation~\ref{obs-non-singleton} in mind, we now uncover some more information about the structure of non-singleton $M$-indivisibles. Recall that the last point of a cell is the final one according to the orientation of that cell.

\begin{lemma}\label{lem-lastpoints}Let $M$ be a partial multiplication matrix, and let $\pi^\gridded$ be a non-singleton indivisible $M$-gridded permutation. Then there exists an indivisible subpermutation of $\pi^\gridded$ formed from the last points in cells of $\pi^\gridded$ that correspond to the edges of some cycle of $G_M$.\end{lemma}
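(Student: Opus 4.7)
The plan is to use Lemma~\ref{lem-Dpi-strongly-connected} to translate everything into strong connectivity of orientation digraphs, and then to construct a directed cycle in $D_{\pi^\gridded}$ consisting entirely of cell-last-points. Since $\pi^\gridded$ is non-singleton and indivisible, $D_{\pi^\gridded}$ is strongly connected on at least two vertices, so every vertex has both out- and in-edges, and $D_{\pi^\gridded}$ contains directed cycles.

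The key observation I would prove first is the following ``promotion'' property: if $p$ is the last point of some cell $A$ and $p\to q$ is an edge of $D_{\pi^\gridded}$, then $q$ lies in some cell $B\ne A$, and $p\to q^*$ is also an edge, where $q^*$ is the last point of $B$. Here is why: the edge $p\to q$ must use a row or column $u$ that $A$ and $B$ share, and (WLOG) $p$ precedes $q$ in the orientation of $u$; since $q^*$ is the last point of $B$, it equals or exceeds $q$ in the orientation of $u$ (and of the other direction as well), so $p$ still precedes $q^*$. That $B\ne A$ follows since otherwise $p$ would be the last point of $A$ yet precede another point $q$ of $A$, a contradiction.

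Using this promotion property, I would construct an infinite walk $p_0 \to p_1 \to p_2 \to \cdots$ in $D_{\pi^\gridded}$ where every $p_i$ is a cell-last-point: start at any last point $p_0$, follow an arbitrary outgoing edge (which exists by strong connectivity) and then ``promote'' its endpoint to the last point of its cell. Because there are only finitely many cells, the walk revisits some last point, producing a directed cycle $p_{i_1}\to p_{i_2}\to\cdots\to p_{i_m}\to p_{i_1}$ of last points. Choosing this cycle to be as short as possible, the $p_{i_j}$ (and hence their cells $A_{i_j}$) are all distinct: any repetition of a last point within the cycle would immediately produce a strictly shorter directed cycle.

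Next I would show that the sequence of cells $A_{i_1},\ldots,A_{i_m}$ is a simple cycle of $G_M$, not just a closed walk: by minimality, no row or column vertex of $G_M$ can be visited twice, for if cells $A_{i_a}, A_{i_{a+1}}$ and $A_{i_b}, A_{i_{b+1}}$ both met along the same row or column $u$, then the edge of $D_{\pi^\gridded}$ in that row/column between $p_{i_{a+1}}$ and $p_{i_b}$ (in whichever direction it goes) yields a shorter cycle of last points. Thus the cells $A_{i_j}$ trace out a cycle $C$ in $G_M$. Finally, the subpermutation $\lambda^\gridded$ formed by the last points of the cells of $C$ inherits its gridding, so the edges of the directed cycle $p_{i_1}\to\cdots\to p_{i_m}\to p_{i_1}$ are also edges of $D_{\lambda^\gridded}$; this is a Hamiltonian directed cycle on the vertex set of $\lambda^\gridded$, forcing $D_{\lambda^\gridded}$ to be strongly connected, and therefore $\lambda^\gridded$ to be indivisible by Lemma~\ref{lem-Dpi-strongly-connected}. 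The main subtlety to get right is the promotion step; once that is in hand, the rest is standard ``shortest cycle'' manipulation.
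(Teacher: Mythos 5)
Your argument is correct and takes a genuinely different route from the paper's. The paper constructs the auxiliary gridded permutation $\alpha^\gridded$ consisting of \emph{all} cell-last-points, shows by contradiction that its $M$-decomposition cannot consist solely of singletons (using the strong connectivity of $D_{\pi^\gridded}$ and the linear order on the $M$-sum components), and then extracts an induced directed cycle from the strongly connected digraph of the resulting non-singleton indivisible, invoking Observation~\ref{obs-non-singleton} to match it with a cycle of $G_M$. You instead prove a local ``promotion'' property for edges out of cell-last-points, use it to walk entirely along last points, take a shortest directed cycle of last points, and argue directly that the cells it visits trace a simple cycle of $G_M$. Your approach is more constructive and self-contained, leaning only on Lemma~\ref{lem-Dpi-strongly-connected}; the paper's approach is shorter because it reuses Lemma~\ref{lem-grid-decomp} and Observation~\ref{obs-non-singleton}.

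The promotion lemma is correct as stated, and the final strong-connectivity step (Hamiltonian directed cycle on the inherited gridding) is fine. The one place that needs more care is the claim that minimality forces the row/column vertices $u_j$ to be distinct. Your ``chord'' $p_{i_{a+1}}\rightarrow p_{i_b}$ or $p_{i_b}\rightarrow p_{i_{a+1}}$ only yields a strictly shorter cycle when $b-a\geq 3$; the cases $b-a=1$ and $b-a=2$ must be ruled out separately. For $b-a=1$, the three points $p_{i_a},p_{i_{a+1}},p_{i_{a+2}}$ lie in the same row or column, so transitivity gives a chord $p_{i_a}\rightarrow p_{i_{a+2}}$. For $b-a=2$, consecutive edges $A_{i_{a+1}}$ and $A_{i_{a+2}}$ of $G_M$ would then share two endpoints, hence be the same cell, contradicting distinctness. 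With those two lines added, the argument closes (and one should also note $m\geq 3$ and, after the above, that $m$ is even because $G_M$ is bipartite, so the extracted cycle is genuinely a cycle of $G_M$). These are routine, but worth spelling out since they are exactly where the ``standard shortest cycle manipulation'' does real work.
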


\begin{proof}
Consider the permutation $\alpha^\gridded$, formed by taking the last point from every non-empty cell of $\pi^\gridded$. We first show that $\alpha^\gridded$ must contain a non-singleton indivisible.

Suppose to the contrary that $\alpha^\gridded$ can be decomposed as an $M$-sum of singleton gridded permutations, $\alpha^\gridded = \alpha_1^\gridded \boxplus \cdots \boxplus\alpha_k^\gridded$ with each gridded $\alpha_i^\gridded$ being a gridding of the permutation 1. Now let $a_i$ denote the vertex in $D_{\pi^\gridded}$ corresponding to the singleton entry of $\alpha_i^\gridded$. Note that $a_i\rightarrow a_j$ only if $i<j$.

%

Since $D_{\pi^\gridded}$ is strongly connected, there exists $v\in D_{\pi^\gridded}$ such that $a_k\rightarrow v$.
As $a_k$  is the last entry in its cell, $v$ cannot lie in that same cell.
Let $a_i$ (for some $i\neq k$) be the last entry in the cell containing $v$, so that either $v=a_i$, or $v\rightarrow a_i$. In either case, since $v$, $a_i$ and $a_k$ share a row or column, we conclude that $a_k\rightarrow a_i$, a contradiction with $i<k$. Thus indeed $\alpha^\gridded$ contains a non-singleton indivisible.

The digraph of this non-singleton indivisible in $\alpha^\gridded$ is strongly connected, and so it contains an induced subdigraph that is isomorphic to a directed cycle, which we write as $b_1\rightarrow b_2\rightarrow\cdots\rightarrow b_\ell\rightarrow b_1$ for some $\ell$. The corresponding gridded permutation $\beta^\gridded$ on the points $b_1,\dots,b_\ell$ is indivisible, and by Observation~\ref{obs-non-singleton} it has at least one point (and hence, by the definition of $\alpha^\gridded$, exactly one point) in each cell around some cycle of $G_M$.
But this cycle must consist of all of the $\ell$ non-empty cells in $\beta^\gridded$, or else $D_{\beta^\gridded}$ would have a proper subdigraph isomorphic to a cycle, which is not possible because $D_{\beta^\gridded}$ is a 
cycle.
Thus, $\beta^\gridded$ satisfies the claim in the lemma.
\end{proof}

As a consequence of Lemma~\ref{lem-lastpoints}, for a fixed cycle in a grid $M$ there are exactly two $M$-indivisible permutations that contain precisely one entry in each cell of the cycle, formed by traversing the cycle in one direction or the other. Since the only proper indivisible subpermutations of these $M$-indivisibles are singletons, we will refer to them as the \emph{minimal} indivisibles. For example, in Figure~\ref{fig-decomposing}, the third
component in the $M$-decomposition
is one of the two minimal indivisible permutations.

%
%
%
%
\subsection{Coils}\label{subsec-coils}

In this subsection and the next, we give a description of the structure of $M$-indivisible permutations in the specific case that $M$ is a pseudoforest partial multiplication matrix. Our aim is Theorem~\ref{thm-coil-regridding}, which establishes that an $M$-indivisible permutation $\pi^\gridded$ can be regridded as an element of some $\Gridhash(N)$ for an \emph{acyclic} matrix $N$, the dimensions of which are determined by the size of a substructure of $\pi^\gridded$ known as a \emph{coil}.

This description of $M$-indivisible permutations can in part be regarded as a generalisation of the decomposition given by Albert and Vatter~\cite{albert:generating-and-:} of the skew-merged permutations, \[\av(2143,3412) = \grid{\gctwo{2}{-1,1}{1,-1}}.\] Note that this class is the grid class of a pseudoforest (indeed, cyclic) partial multiplication matrix.

To begin with, we restrict our attention to cyclic gridding matrices. Thus, throughout this subsection and unless stated otherwise, we fix $M$ to be a partial multiplication matrix whose graph $G_M$ is a cycle of length $\ell$. Recall that $\ell\ge 4$, and is even.

\begin{defn}
\label{de-coil}
A \emph{gridded $M$-coil} (or, where the context is clear, \emph{gridded coil}) is an $M$-gridded permutation $\pi^\gridded$ of length $n>\ell$ for which there exists an ordering $v_1,\dots, v_n$ on the vertices $D_{\pi^\gridded}$, and a labelling $1,\dots,\ell$ of the non-empty cells such that:
\begin{enumerate}[label=\textbf{C\arabic*}]
	\item\label{c1} $v_i$ lies in cell $i\pmod{\ell}$ for all $1\le i\le n$;
	\item\label{c2} $v_{i-1}\rightarrow v_i$ for all $1 < i\le n$;
	\item\label{c3} $v_i\rightarrow v_{i-\ell-1}$ for all $\ell+1<i\le n$;
	\item\label{c4} $v_{\ell+1}\rightarrow v_1$.
\end{enumerate}
An \emph{$M$-coil} (or simply \emph{coil}) is an ungridded permutation $\pi$ with the property that some $M$-gridding $\pi^\gridded$ of $\pi$ is a gridded coil.
\end{defn}

It is worth noting that coils and gridded coils are conceptually strongly related to the sequences of points used by Murphy and Vatter~\cite{murphy:profile-classes:} to construct infinite antichains in grid classes that contain a cycle.

Notice that conditions \ref{c1}--\ref{c3} specify the placement of the point $v_i$ in relation to earlier points in the sequence, which gives us an iterative method to construct coils. We begin by constructing the base case of length $\ell+1$: by~\ref{c1} we have  $v_1\rightarrow v_2\rightarrow  \cdots \rightarrow v_{\ell+1}$, and by~\ref{c4} $v_{\ell+1}\rightarrow v_1$.
Since $v_1$, $v_\ell$ and $v_{\ell+1}$ share a row or column, we must also have $v_\ell\rightarrow v_1$. Thus the points $v_1,\dots,v_\ell$ form a directed cycle (corresponding to a minimal indivisible), and the point $v_{\ell+1}$ must be placed in cell 1, in the horizontal or vertical strip defined by $v_\ell$ and $v_1$. All placements of $v_{\ell+1}$ in this region yield the same result.

Now suppose that $i>\ell+1$, and that $v_1,\dots,v_{i-1}$ have been placed. The following discussion is accompanied by Figure~\ref{fig-coil-placement}. By ~\ref{c1}, the point $v_i$ must be placed in cell $i\pmod{\ell}$. Since $i-1\equiv i-\ell-1\pmod{\ell}$, the two vertices $v_{i-1}$ and $v_{i-\ell-1}$ both lie in cell $i-1\pmod{\ell}$, and this cell shares a row or column with cell $i\pmod{\ell}$. Without loss of generality, suppose that these cells share a row.

\begin{figure}
{\centering
\begin{tikzpicture}[scale=0.5]
\foreach \x/\y in {0/0,0/4,0/6,0/10,8/0,8/4}
	\draw[dotted] (\x,\y) -- ++(-2,0) (\x+4,\y) -- ++(2,0);
\foreach \x/\y in {0/0,4/0,8/0,12/0,0/6,4/6}
	\draw[dotted] (\x,\y) -- ++(0,-1) (\x,\y+4) -- ++(0,1);
\draw [draw=none,fill=black!20] (0,1) rectangle ++ (1.5,1);
\draw[dashed] (-0.5,1) -- ++(13,0) (-0.5,2) -- ++(13,0) (1.5,-0.5) -- ++ (0,11);
\draw[->] (1,-1) -- ++(2,0);
\draw[->] (9,-1) -- ++(2,0);
\draw[->] (-1,1) -- ++(0,2);
\draw[->] (-1,7) -- ++(0,2);
\draw (0,0) rectangle (4,4);
\draw (8,0) rectangle ++(4,4);
\draw (0,6) rectangle ++(4,4);
\node[permpt,label={[label distance=-3pt]below right:\footnotesize$v_{i-1}$}] at (9,1) {};
\node[permpt,label={[label distance=-5pt]above right:\footnotesize$v_{i-\ell-1}$}] at (10,2) {};
\node[permpt,label={[label distance=-5pt]above right:\footnotesize$v_{i-2\ell-1}$}] at (11,3) {};
\node[permpt,label={[label distance=-3pt]right:\footnotesize$v_{i-\ell}$}] at (1.5,2.5) {};
\node[permpt,label={[label distance=-3pt]right:\footnotesize$v_{i-2\ell}$}] at (2.5,3.5) {};
\node[permpt,label={[label distance=-3pt]right:\footnotesize$v_{i-\ell+1}$}] at (2,8) {};
\node[permpt,label={[label distance=-3pt]right:\footnotesize$v_{i-2\ell+1}$}] at (3,9) {};
\end{tikzpicture} \par}
\caption{Building a coil: for $i> \ell+1$, the point $v_i$ must be placed in the shaded region.}\label{fig-coil-placement}	
\end{figure}

Conditions~\ref{c2} and~\ref{c3} together require that $v_{i-1}\rightarrow v_i\rightarrow v_{i-\ell-1}$, which means that $v_i$ must be placed in the horizontal strip between $v_{i-1}$ and $v_{i-\ell-1}$. As there are only two non-empty cells in this row, there are no other points in this strip: by~\ref{c2} and~\ref{c3}, the points in these two cells satisfy
\[ v_{i-1}\rightarrow v_{i-\ell-1}\rightarrow v_{i-\ell} \rightarrow v_{i-2\ell-1} \rightarrow v_{i-2\ell}\rightarrow \cdots \]
Thus, any vertical placement of $v_i$ in cell $i$ between $v_i$ and $v_{i-\ell-1}$ is permitted, and equivalent.

We now consider the horizontal placement of $v_i$. The column containing cell $i\pmod{\ell}$ has precisely two non-empty cells, namely $i\pmod{\ell}$ and $i+1\pmod{\ell}$. By~\ref{c2} and~\ref{c3}, the points in this column satisfy the following linear ordering,
\[v_{i-\ell}\rightarrow v_{i-\ell+1} \rightarrow v_{i-2\ell}\rightarrow v_{i-2\ell+1} \rightarrow\cdots\]
Since $v_{i-\ell-1}\rightarrow v_{i-\ell}$, and $v_i\rightarrow v_{i-\ell-1}$, it follows that $v_i$ must be placed earlier than all of the points in this column. Thus, any horizontal placement of $v_i$ before $v_{i-\ell}$ satisfies the criteria, and all such placements are equivalent.

The above discussion establishes that coils exist, and that the placement of each successive point $v_i$ is determined by $v_1,\dots,v_{i-1}$. More precisely, after fixing the cell in which $v_1$ is placed ($\ell$ choices) and then the cell that contains $v_2$ (two choices), the rest of the coil is uniquely determined. In particular, there are therefore precisely $2\ell$ distinct coils of any length $n>\ell$, although we do not require this fact in what follows. Finally,  observe that this collection of $2\ell$ coils naturally partitions into two sets depending on the direction in which the cycle is traversed.
In some simple grids this partition is determined by whether each coil proceeds clockwise or anticlockwise around the cycle, but in general we consider the order in which the cells of the cycle are visited by each coil. We call this order the \emph{chirality} of a coil. An example of a coil is illustrated in Figure~\ref{fig-coil-example}.


\begin{figure}
{\centering
\begin{tikzpicture}[scale=0.25]
\plotpermgrid{2,4,19,6,17,7,15,5,8,3,10,1,12,9,14,11,16,13,18}
\draw[thick] (7.5,0.5) -- (7.5,19.5);
\draw[thick] (13.5,0.5) -- (13.5,19.5);
\draw[thick] (0.5,7.5) -- (19.5,7.5);
\draw[thick] (0.5,13.5) -- (19.5,13.5);
\path [draw=gray!75,postaction={on each segment={mid arrow=gray}}]
          (7)--(15)--(14)--(9)--(8)--(5)
        --(6)--(17)--(16)--(11)--(10)--(3)
        --(4)--(19)--(18)--(13)--(12)--(1)--(2);
  \draw[->] (1.5,0) -- (6.5,0);
  \draw[<-] (8.5,0) -- (12.5,0);
  \draw[<-] (14.5,0) -- (18.5,0);
  \draw[->] (0,1.5) -- (0,6.5);
  \draw[<-] (0,8.5) -- (0,12.5);
  \draw[<-] (0,14.5) -- (0,18.5);
\end{tikzpicture} \par}
\caption{A gridded $M$-coil of length 19, where $M={}$\protect\gcthree{3}{-1,0,1}{0,1,1}{1,-1,0}. The grey edges have been drawn to illustrate the order of the vertices $v_1,\dots,v_{19}$, where $v_1$ corresponds to the uppermost entry in the bottom-left cell.}
\label{fig-coil-example}
\end{figure}

We now record a number of other observations about coils. First, in any row or column, the vertices are linearly ordered according to the ordering
\[v_{i-1}\rightarrow v_i\rightarrow v_{i-\ell-1}\rightarrow v_{i-\ell}\rightarrow \cdots\]
and when we restrict to cell $i\pmod{\ell}$ we have $v_i\rightarrow v_{i-\ell}\rightarrow v_{i-2\ell}\rightarrow\cdots$. This implies that the points $v_1,\dots,v_\ell$ are the last points in their cells, and they form a directed cycle.

The following properties readily follow:
\begin{enumerate}[label=\textbf{\Alph*}]
	\item\label{coil-a} Every set of $\ell$ consecutive vertices $v_i,\dots,v_{i+\ell-1}$ forms a directed cycle.
	\item\label{coil-b} $D_{\pi^\gridded}$ is strongly connected for every gridded coil $\pi^\gridded$.
	\item\label{coil-c} For every $v_i$, the only 
                        $j>i$ for which $v_i\rightarrow v_j$ is $j=i+1$.
\end{enumerate}

From~\ref{coil-a}, the points corresponding to a sequence of $\ell$ consecutive vertices of a coil must form one of the two minimal indivisible permutations.

From~\ref{coil-b} and by Lemma~\ref{lem-Dpi-strongly-connected}, gridded coils are $M$-indivisible. Furthermore,~\ref{coil-c} shows that the removal of any interior entry results in a divisible subpermutation:

\begin{lemma}\label{lem-coil-splits}
Let $\pi^\gridded$ be a gridded $M$-coil with points $v_1,\dots,v_n$ in $\Gridhash(M)$ where $M$ is a cycle partial multiplication matrix. Then the removal of a single entry $v_i$, where $1< i< n$, produces a gridded permutation that is $M$-divisible. Specifically,
\[\pi^\gridded - v_i \;=\; \sigma^\gridded\boxplus\tau^\gridded\]
where $\sigma^\gridded$ and $\tau^\gridded$ are the gridded permutations on the points $v_{i+1},\dots,v_n$ and $v_1,\dots,v_{i-1}$, respectively.
\end{lemma}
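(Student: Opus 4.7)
My plan is to exhibit a partition of the vertex set of the orientation digraph $D_{\pi^\gridded - v_i}$ into two non-empty parts across which no edge points in one direction, and then invoke the criterion from the paragraph immediately preceding Lemma~\ref{lem-Dpi-strongly-connected} to deduce the claimed $M$-sum decomposition. This reduces the statement to a routine verification using the observations about coils gathered above, especially property~\ref{coil-c}.

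Concretely, I would take $S = \{v_{i+1}, \dots, v_n\}$ and $T = \{v_1, \dots, v_{i-1}\}$; both are non-empty because $1 < i < n$. The key claim is that no edge of $D_{\pi^\gridded - v_i}$ runs from $T$ to $S$. To verify this, fix any $v_j \in T$, so $j < i$. Property~\ref{coil-c} says that the only edge in $D_{\pi^\gridded}$ from $v_j$ to a vertex of strictly greater index is $v_j \to v_{j+1}$; for such an edge to land inside $S$ we would need $j+1 \ge i+1$, i.e.\ $j \ge i$, a contradiction. Every other edge out of $v_j$ terminates at some $v_{j'}$ with $j' < j < i$, and hence lies in $T$. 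Consequently no edge crosses from $T$ to $S$.

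Given this, the remark before Lemma~\ref{lem-Dpi-strongly-connected} produces an $M$-sum decomposition of $\pi^\gridded - v_i$ whose summands are precisely the sub-gridded permutations induced on $S$ and on $T$. To pin down the correct order, I would recall that in any $M$-sum $\sigma^\gridded \boxplus \tau^\gridded$ all cross edges of the orientation digraph must go from the first summand to the second; since our cross edges go from $S$ to $T$, we must take $\sigma^\gridded$ to be the restriction to $\{v_{i+1}, \dots, v_n\}$ and $\tau^\gridded$ to be the restriction to $\{v_1, \dots, v_{i-1}\}$, yielding exactly the decomposition stated.

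I do not foresee any substantive obstacle here; the main thing to be careful about is the ordering convention of the $M$-sum, which is forced by the direction of the cross edges in $D_{\pi^\gridded - v_i}$ rather than by the indexing of the $v_j$'s.
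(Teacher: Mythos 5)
Your proof is correct and takes essentially the same route as the paper: both reduce to the observation (property~\ref{coil-c}) that the only forward edge from $v_j$ is $v_j\to v_{j+1}$, conclude that no edge (hence no path) crosses from $\{v_1,\dots,v_{i-1}\}$ to $\{v_{i+1},\dots,v_n\}$ after deleting $v_i$, and then invoke the discussion surrounding Lemma~\ref{lem-Dpi-strongly-connected}. Your extra remark pinning down the order of the summands from the direction of the cross edges is a welcome clarification but does not change the argument.
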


\begin{proof}
By~\ref{coil-c}, the only path from $v_1$ to $v_n$ in $D_{\pi^\gridded}$ is $v_1\rightarrow v_2\rightarrow \cdots \rightarrow v_n$. Thus, the removal of an interior point $v_i$ from $\pi^\gridded$ renders a digraph with no directed path from any vertex in $\{v_1,\dots,v_{i-1}\}$ to any vertex in $\{v_{i+1},\dots,v_n\}$. The decomposition stated in the lemma now follows by Lemma~\ref{lem-Dpi-strongly-connected}.
\end{proof}

Note that it is not necessarily the case that the gridded permutations
$\sigma^\gridded$ and $\tau^\gridded$
in Lemma~\ref{lem-coil-splits} are themselves $M$-indivisible: this will occur if and only if each part contains at least $\ell$ points. Furthermore, if either contains at least $\ell+1$ points, then it is itself a coil.

%
%
%
%
\subsection{Coils and the structure of indivisibles}\label{subsec-coils-and-indivs}

We now turn our attention to the role of coils in the structure of $M$-indivisible permutations. For now, $M$~is still a partial multiplication matrix whose graph $G_M$ is a cycle of length $\ell$. Let $\pi^\gridded\in\Gridhash(M)$ be a non-singleton $M$-indivisible permutation. By Lemma~\ref{lem-lastpoints}, the last points of $\pi$ in each cell of the cycle form a directed cycle in $D_{\pi^\gridded}$. Denote these points by $z_1,z_2,\dots,z_\ell$ in such a way that in $D_{\pi^\gridded}$ we have $z_1\rightarrow z_2\rightarrow\cdots\rightarrow z_\ell\rightarrow z_1$, and label the cells of $\pi^\gridded$ on the cycle as $1,\dots,\ell$ so that $z_i$ belongs to cell $i$.

Since $\pi^\gridded$ is indivisible, $D_{\pi^\gridded}$ is strongly connected, so there exists a path from $z_1$ to any other vertex obtained by following only forward edges. We partition the points of $\pi^\gridded$ based on their distance from $z_1$.


Formally speaking, set $B_1=\{z_1\}$, and for $i\ge 1$ let
\[
B_{i+1} \;=\; \bigg\{ v \in V(D_{\pi^\gridded})\setminus\bigcup_{j\le i} B_j \::\: u\rightarrow v\text{ for some }u\in B_{i} \bigg\}.
\]
Thus, $B_i$ denotes the set of vertices of $D_{\pi^\gridded}$ (or, equivalently, of points of $\pi^\gridded$) whose shortest path from $z_1$ has length $i-1$. Since $\pi^\gridded$ is indivisible, every vertex of $D_{\pi^\gridded}$ belongs to some set $B_i$, and we let $k$ be the index of the last non-empty set $B_k$.

\begin{prop}
In the above partition of $\pi^\gridded$, the set $B_i$ is contained in cell $i\pmod{\ell}$ of $\pi^\gridded$, for all $1\le i\le k$.
\end{prop}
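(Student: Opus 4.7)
My plan is to proceed by strong induction on $i$, proving the stronger statement that $B_1 = \{z_1\}$ and, for each $i \geq 2$,
\[
B_i \;=\; C_{i \pmod{\ell}} \;\setminus\; \bigl(B_1 \cup \cdots \cup B_{i-1}\bigr),
\]
where $C_c$ denotes the set of points of $\pi^\gridded$ lying in cell $c$. The proposition follows at once. Strengthening the hypothesis in this way is essential: a naive induction on the containment alone runs aground on within-cell edges, since a priori a later point of cell $i\pmod{\ell}$ could first appear in $B_{i+1}$ via such an edge, violating the containment.

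The driving structural fact is that the directed cycle $z_1 \to z_2 \to \cdots \to z_\ell \to z_1$ determines the orientation of every shared row or column between consecutive cells of $G_M$. Since cells $c$ and $c+1 \pmod{\ell}$ jointly inhabit a single row strip or column strip, and $z_c \to z_{c+1}$ forces the direction along that shared axis, every point of cell $c$ precedes every point of cell $c+1 \pmod{\ell}$. Two consequences follow and will be used repeatedly: $D_{\pi^\gridded}$ has no forward edge from cell $c+1$ back to cell $c$; and conversely, every point of cell $c$ has a forward edge to every point of cell $c+1\pmod{\ell}$.

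For the inductive step, suppose the formula holds through level $i$, and take $v \in B_{i+1}$, arising as the head of a forward edge from some $u \in B_i \subseteq C_{i\pmod{\ell}}$. The structural observation confines $v$ to $C_{i\pmod{\ell}}$ (later than $u$ in the cell orientation) or to $C_{(i+1)\pmod{\ell}}$, ruling out $C_{(i-1)\pmod{\ell}}$. Any later cell-$(i\pmod{\ell})$ point already belongs to $B_1\cup\cdots\cup B_i$ by the inductive formula, so contributes nothing to $B_{i+1}$. Meanwhile, every point of $C_{(i+1)\pmod{\ell}}$ not already in $B_1\cup\cdots\cup B_i$ is a forward neighbour of $u$, by the second consequence of the structural observation, and therefore lies in $B_{i+1}$. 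This yields the desired formula for $B_{i+1}$, and in particular $B_{i+1} \subseteq C_{(i+1)\pmod{\ell}}$.

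The principal subtlety, as noted, is the within-cell edges from $B_i$; the strengthened inductive hypothesis is tailored precisely to show that these edges point into territory already covered by levels $1,\dots,i$, so no genuinely new cell-$(i\pmod{\ell})$ points leak into $B_{i+1}$. As a by-product, the argument also shows that $k \leq \ell+1$, though this is not needed for the stated proposition.
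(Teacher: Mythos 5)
The strengthened claim $B_i = C_{i\pmod{\ell}} \setminus (B_1 \cup \cdots \cup B_{i-1})$ is false, and the ``by-product'' you mention — that $k \leq \ell+1$ — is the smoking gun: the example accompanying this very proposition in Figure~\ref{fig-grid-refinement} has a cycle of length $\ell=6$ yet a decomposition with $k=19$ non-empty parts. More generally, the coil decomposition of a gridded coil of length $n$ has $k=n$, which is unbounded. The root cause is the ``driving structural fact'' you invoke: it is simply not true that every point of cell $c$ precedes every point of cell $c+1 \pmod{\ell}$ along their shared row or column. The directed cycle $z_1 \to \cdots \to z_\ell \to z_1$ only constrains the \emph{last} points of each cell; the remaining points of two adjacent cells on the cycle may interleave freely in the shared strip. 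Indeed, for a gridded coil this interleaving is forced — see the discussion after Definition~\ref{de-coil}, where the common row or column of cells $i-1$ and $i$ carries the alternating order $v_{i-1} \to v_{i-\ell-1} \to v_{i-\ell} \to v_{i-2\ell-1} \to \cdots$ — so the total separation you assert between consecutive cells never holds for coils, which are precisely the objects this machinery is built to describe.

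Your premise that ``naive induction on the containment alone runs aground on within-cell edges'' is also mistaken. The paper's induction on the bare containment closes cleanly, and handles the within-cell case by the same device as the backward case: if $w \in B_{i+1}$ lay in cell $i \pmod{\ell}$ (or in cell $i-1 \pmod{\ell}$), pick $v \in B_i$ with $v \to w$ and $u \in B_{i-1}$ with $u \to v$; by the inductive hypothesis $u$, $v$ and $w$ all lie in the row or column shared by cells $i-1$ and $i$, whose total order then gives $u \to w$, so a shortest $z_1$--$w$ path has length at most $i-1$, contradicting $w \in B_{i+1}$. No equality is needed; the transitivity within the shared strip does all the work that your (false) separation claim was supposed to do.
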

\begin{proof}
We proceed by induction, noting that $z_1$ belongs to cell 1, by definition.

Suppose that the sets $B_1,\dots, B_i$ satisfy the hypothesis, and consider the set $B_{i+1}$. By construction, since $B_{i+1}$ is defined in terms of vertices that can be reached by following a single directed edge from $B_i$, it follows that $B_{i+1}$ must be contained in the union of cells $i-1$, $i$ and $i+1 \pmod{\ell}$.	

Now suppose that $B_{i+1}$ contains an entry $w$ in cell $i-1\pmod{\ell}$, and note that the shortest path from $z_1$ to $w$ has length $i$. By definition, there is some $v\in B_i$ such that $v\rightarrow w$, and also some $u\in B_{i-1}$ such that $u\rightarrow v$.
Thus we have $u\rightarrow v\rightarrow w$, and (by the inductive hypothesis) $u$ belongs to cell $i-1\pmod{\ell}$, along with $w$, which implies that $u\rightarrow w$. Now, any shortest path from $z_1$ to $u$ of length $i-2$ can be extended to a path of length $i-1$ to $w$, which means that the path of length $i$ from $z_1$ to $w$ was not the shortest one, which is a contradiction. A similar analysis applies in the case that $B_{i+1}$ contains a point in cell $i\pmod{\ell}$, and the result follows.
\end{proof}

\begin{figure}
{\centering
\begin{tikzpicture}[scale=0.55]
\plotgrid{13}
\foreach \i/\j/\k/\n in {1/1/E/19,2/2/E/13,3/3/E/7,
					  6/3/D/6,7/2/D/12,8/1/D/18, 6/6/B/5,7/7/B/11,8/8/B/17,
					  10/6/C/4,11/7/C/10,12/8/C/16, 10/10/B/3,11/11/B/9,12/12/B/15,
					  2/12/A/14,3/11/A/8,4/10/A/2}{%
	\ifthenelse{\equal{\k}{A}}{%
		\draw[thick] (\i+0.3,\j) rectangle ++ (0.7,.9);
		\node[permpt,label={[label distance=-3pt]above:\tiny$\ v_{\n}$}] at (\i+0.3,\j+.9) {};
		\node[empty] at (\i+0.65,\j+0.45) {\tiny $B_{\n}$};	
		}{}
	\ifthenelse{\equal{\k}{B}}{%
		\draw[thick] (\i,\j) rectangle ++ (.9,0.7);
		\node[permpt,label={[label distance=-4pt]above left:\tiny$v_{\n}$}] at (\i+.9,\j+0.7) {};
		\node[empty] at (\i+0.45,\j+0.35) {\tiny $B_{\n}$};	
		}{}
	\ifthenelse{\equal{\k}{C}}{%
		\draw[thick] (\i,\j) rectangle ++ (0.7,.9);
		\node[permpt,label={[label distance=-4pt]below right:\tiny$v_{\n}$}] at (\i+0.7,\j+.9) {};
		\node[empty] at (\i+0.35,\j+0.45) {\tiny $B_{\n}$};	
		}{}
	\ifthenelse{\equal{\k}{D}}{%
		\draw[thick] (\i,\j+.1) rectangle ++ (0.7,.9);
		\node[permpt,label={[label distance=-3pt]below:\tiny$v_{\n}\ \ $}] at (\i+0.7,\j+.1) {};
		\node[empty] at (\i+0.35,\j+0.55) {\tiny $B_{\n}$};	
		}{}
	\ifthenelse{\equal{\k}{E}}{%
		\draw[thick] (\i+.1,\j+0.3) rectangle ++ (.9,0.7);
		\node[permpt,label={[label distance=-4pt]below right:\tiny$v_{\n}$}] at (\i+.1,\j+0.3) {};
		\node[empty] at (\i+0.55,\j+0.65) {\tiny $B_{\n}$};	
		}{}
}
\node[permpt] at (4.1,4.3) [label={[label distance=-4pt]above:{\tiny $B_1=\{v_1\}$}}] {};
\draw[thick] (5.5,0.5) -- (5.5,13.5);
\draw[thick] (9.5,0.5) -- (9.5,13.5);
\draw[thick] (0.5,5.5) -- (13.5,5.5);
\draw[thick] (0.5,9.5) -- (13.5,9.5);
  \draw[->] (1.5,0) -- (4.5,0);
  \draw[<-] (6.5,0) -- (8.5,0);
  \draw[<-] (10.5,0) -- (12.5,0);
  \draw[->] (0,1.5) -- (0,4.5);
  \draw[<-] (0,6.5) -- (0,8.5);
  \draw[<-] (0,10.5) -- (0,12.5);
\end{tikzpicture} \par}
\caption{A coil decomposition, showing the partition $B_1,\dots,B_{19}$ and the corresponding (gridded) coil $v_1,\dots,v_{19}$, for an indivisible permutation in $\gridhash{\protect\gcthree{3}{-1,0,1}{0,1,1}{1,-1,0}}$.}\label{fig-grid-refinement}
\end{figure}

Since each set $B_i$ belongs to cell $i\pmod{\ell}$, the vertices in $B_i$ are linearly ordered, and we denote by $v_i$ the vertex of $B_i$ that comes first in this ordering, that is, $v_i\rightarrow v$ for every $v\in B_i$ distinct from $v_i$. Note that $v_1=z_1$. See Figure~\ref{fig-grid-refinement} for an example.

We record the following observation.

\begin{obs}\label{obs-coil-decomposition}
	If $k>\ell$, the sequence $v_1,\dots,v_{k}$ constructed above forms a gridded $M$-coil.
\end{obs}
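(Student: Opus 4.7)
The plan is to verify conditions~(C1)--(C4) of Definition~\ref{de-coil} for the sequence $v_1,\dots,v_k$, using the cell labelling already fixed so that $v_i$ lies in cell $i\pmod\ell$.

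Conditions~(C1) and~(C4) will be immediate. (C1) follows at once from the preceding proposition together with $v_i\in B_i$. For (C4), note that $v_{\ell+1}\in B_{\ell+1}$ lies in cell~$1$, the same cell as $v_1=z_1$; since $z_1$ was chosen as the last point in cell~$1$ in its cell orientation, every other point of that cell precedes $z_1$, and in particular $v_{\ell+1}\to v_1$.

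For~(C2) I would first pick some $u\in B_{i-1}$ with $u\to v_i$, which exists by the definition of $B_i$. If $u=v_{i-1}$ we are done. Otherwise, by the defining property of $v_{i-1}$ as the first point of $B_{i-1}$ in the orientation of cell $(i-1)\pmod\ell$, we have $v_{i-1}\to u$. The key observation is that cells $(i-1)\pmod\ell$ and $i\pmod\ell$ are adjacent on the cycle of $G_M$, so they share either a row or a column whose orientation agrees with the corresponding horizontal or vertical component of each of the two cell orientations. Consequently the edge $v_{i-1}\to u$ inside cell $(i-1)\pmod\ell$ forces $v_{i-1}$ to precede $u$ in the shared row or column, and composing with $u\to v_i$ yields $v_{i-1}\to v_i$ by transitivity in that row or column.

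Condition~(C3) is cleanest, by contradiction. Since $v_i$ and $v_{i-\ell-1}$ lie in the adjacent cells $i\pmod\ell$ and $(i-1)\pmod\ell$, they share a row or column and so are linearly ordered in the orientation digraph. If $v_{i-\ell-1}\to v_i$ then $v_i$ is reachable in a single step from a vertex at BFS-distance $i-\ell-2$ from $z_1$, so $v_i\in B_j$ for some $j\le i-\ell<i$; since the sets $B_j$ are pairwise disjoint, this contradicts $v_i\in B_i$. Therefore $v_i\to v_{i-\ell-1}$. The main subtlety will lie in (C2), where the transitivity step depends on correctly matching the cell orientations of two adjacent cells with the orientation of their shared row or column; once this is done, the remaining conditions are direct consequences of the BFS layering and the choices of $v_i$ and $z_1$.
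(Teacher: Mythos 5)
Your proposal is correct, and it supplies exactly the routine verification of conditions \ref{c1}--\ref{c4} that the paper leaves implicit (the statement is presented as an unproved ``Observation'' following the construction). Each step is sound: (C1) is immediate from the preceding proposition; (C4) uses that $z_1$ is the last point of its cell; (C3) is the BFS-distance contradiction; and (C2) correctly uses that $v_{i-1}$ is the earliest point of $B_{i-1}$, that $v_{i-1}$ and $u$ lie in the same cell (so $v_{i-1}\to u$ in the cell orientation also holds in the shared row or column), and then transitivity of the total order within that row or column.
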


Note that the requirement that $k>\ell$ is required simply because coils are defined to have length at least $\ell+1$. However, if $k\leq \ell$, then in fact it must be the case that $k=\ell$ or $k=1$, by Observation~\ref{obs-non-singleton}.

We refer to the partition $B_1,\dots,B_{k}$ as a \emph{coil decomposition} of $\pi^\gridded$ of \emph{length} $k$. Note that this decomposition is not unique (the choice of $B_1=\{z_1\}$ was arbitrary, and could have been replaced with any of the last vertices in the cells $1,\dots,\ell$), but every $M$-indivisible must possess a coil decomposition.

The coil decomposition of $\pi^\gridded$ can be used to define a refinement $N$ of $M$ whose dimensions are bounded by the size of the longest coil in $\pi^\gridded$, whose row-column graph is a path, and such that $\pi\in\Grid(N)$. Indeed, in Figure~\ref{fig-grid-refinement}, the feint grey lines indicate such a refined gridding. We formalise this in the following lemma.

\begin{lemma}\label{lem-griddable-acyclic}
Let $M$ be a cyclic partial multiplication matrix, and let $\pi^\gridded$ be an $M$-indivisible permutation that contains no coil of length greater than $k$. Then $\pi$ is $N$-griddable where $N$ is an acyclic matrix containing at most $k$ non-zero entries.
\end{lemma}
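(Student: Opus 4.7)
The plan is to use the coil decomposition $B_1, \ldots, B_m$ of $\pi^\gridded$ developed in Section~\ref{subsec-coils-and-indivs}, and to refine the $M$-gridding so that each $B_i$ occupies its own cell of $N$. By Observation~\ref{obs-coil-decomposition}, if $m > \ell$ then the first vertices $v_1, \ldots, v_m$ form a gridded $M$-coil of length $m$, whence $m \leq k$ by hypothesis; the boundary cases $m = 1$ and $m = \ell$ (singletons and minimal indivisibles) are handled directly by cutting the cycle of $G_M$ at one edge and laying its cells out as a path of length at most $\ell$ (assuming the natural side-condition $k \geq \ell$, as otherwise the hypothesis forces $\pi^\gridded$ to be a singleton). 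Since each $B_i$ is a monotone subset of cell $i \pmod{\ell}$ of $M$, the refined matrix $N$ will contain exactly $m \leq k$ non-zero entries, one per $B_i$.

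The key tool is a \emph{distance-monotonicity principle}: whenever two distinct points $w, w'$ of $\pi^\gridded$ share a row or column of $M$ and $w$ has distance from $z_1$ in $D_{\pi^\gridded}$ at least two greater than that of $w'$, then $w \to w'$ in their shared orientation. This is immediate: $w' \to w$ would yield a path from $z_1$ to $w$ of length at most one more than the distance to $w'$, contradicting the assumed distance of $w$. Applied within a single cell, this gives a \emph{staircase property}: if $B_j, B_i$ both lie in cell $i \pmod{\ell}$ with $j > i$ (so $j \geq i + \ell$), then every point of $B_j$ strictly precedes every point of $B_i$ in the cell's orientation. Applied between adjacent cells sharing a row, it yields a \emph{global alignment}: the ``layers'' in adjacent cells are vertically aligned, so a single horizontal grid line separates layer $j$ from layer $j+1$ in both cells simultaneously (and analogously for columns).

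With staircase and global alignment in hand, we can choose horizontal and vertical grid lines that consistently subdivide the $M$-gridding into the $B_i$'s. Assigning a sub-row and sub-column of $N$ to each $B_i$ so that consecutive $B_i, B_{i+1}$ share exactly the sub-row (respectively sub-column) corresponding to the row (respectively column) of $M$ shared by their cells of the cycle, and introducing fresh values otherwise, yields $G_N$ as the path $B_1 B_2 \cdots B_m$ with $m \leq k$ edges, which is acyclic as required. The main obstacle is verifying global alignment: one needs to ensure that the grid lines drawn in one cell do not cut through any $B_j$ lying in an adjacent cell sharing a row or column of $M$, which is precisely what the distance-monotonicity principle guarantees when applied systematically across all pairs of adjacent cells.
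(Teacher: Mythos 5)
Your proof is correct and follows essentially the same approach as the paper's: both fix a coil decomposition $B_1,\dots,B_{k'}$, show via distance-from-$z_1$ arguments in $D_{\pi^\gridded}$ (your ``distance-monotonicity principle'') that the $B_i$ in each shared row or column are ordered so that consistent slicing lines can be drawn, and conclude that the refined matrix $N$ has a path as its row--column graph. One small inaccuracy in your aside: if $k<\ell$, the hypothesis does \emph{not} force $\pi^\gridded$ to be a singleton---a minimal $M$-indivisible of length $\ell$ contains no coils at all, so it vacuously satisfies the hypothesis yet would need $\ell>k$ non-zero entries---so the implicit side condition is $k\geq\ell$, which the paper's own appeal to Observation~\ref{obs-coil-decomposition} tacitly assumes as well.
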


\begin{proof}
Fix any coil decomposition of $\pi^\gridded$, say $B_1,\dots,B_{k'}$ where $k'$ denotes the length of the decomposition. By Observation~\ref{obs-coil-decomposition}, we must have $k'\leq k$. We claim that this partition of $\pi^\gridded$ defines a refined gridding whose corresponding gridding matrix is acyclic.

By construction, each non-empty cell in $\pi^\gridded$ is partitioned into a finite number of sets $B_i,B_{i+\ell},\dots$, so that $B_i\leftarrow B_{i+\ell}$, $B_{i+\ell}\leftarrow B_{i+2\ell}$, and so on.
To see this, suppose that there is $u\in B_i$, $v\in B_{i+\ell}$ such that $u\rightarrow v$. Then there would be a shortest path from $z_1$ to $v$ of length $i$, which is a contradiction since the shortest path to $v$ should have length $i+\ell-1 > i+2$.
Similarly, we also have $B_{i+j\ell\pm 1}\rightarrow B_i$, and $B_i\rightarrow B_{i-j\ell\pm 1}$ for all $j\geq 1$.
This tells us that non-consecutive sets in the coil decomposition interact in a uniform way in the following sense: If $B_i$ and $B_{i+1}$ share a row or column, then every $B_{i-j\ell}$ and $B_{i+1-j\ell}$ precede both $B_i$ and $B_{i+1}$ in the consistent orientation, while every $B_{i+j\ell}$ and $B_{i+1+j\ell}$ succeed both.

Consider a pair of consecutive sets $B_{i}$ and $B_{i+1}$ which share a common row.
Define two (horizontal) slicing lines to separate by value the points in $B_i\cup B_{i+1}$ from the other points in their common row, i.e.\ the remaining points  in cells $i,i+1\pmod{l}$.
For consecutive sets that share a common column, we may similarly define two vertical slicing lines to separate the points by position.

This process yields a gridding that is a refinement of that for $\pi^\gridded$, and we let $N$ denote the corresponding matrix whose non-zero entries correspond to the non-empty cells of this refinement. (Note that these non-empty cells are precisely the sets $B_i$.)
Since the refined gridding is constructed such that each $B_i$ has at most one other entry in its row and column (namely $B_{i-1}$ and $B_{i+1}$), it follows that $G_N$ is a path of length $k'\le k$.
\end{proof}

Our final task for this section is to extend Lemma~\ref{lem-griddable-acyclic} from the case where the matrix $M$ is cyclic to the case where $M$ can be any pseudoforest partial multiplication matrix. To begin this task, we first need to extend the notion of a coil to these matrices.

Let $M$ be a pseudoforest partial multiplication matrix. A \emph{gridded $M$-coil} is an $M$-gridded permutation $\pi^\gridded$ for which there exists an ordering $v_1,\dots,v_n$ of the vertices in $D_{\pi^\gridded}$, and a labelling $1,\dots,\ell$ on the cells corresponding to the edges of some cycle of $G_M$, such that conditions \ref{c1}--\ref{c4} on page~\pageref{c1} are satisfied. Similarly, an \emph{$M$-coil} is an ungridded permutation $\pi$ such that there exists some $M$-gridding $\pi^\gridded$ which forms a gridded $M$-coil.

Note that while $M$ may have entries that do not lie on the cycle, by \ref{c1} the corresponding cell in an $M$-coil contains no points. Indeed, $M$ may contain more than one cycle in different components of $G_M$ (and each cycle can be used to construct $M$-coils), but any single $M$-coil must reside entirely within the cells corresponding to edges of just one cycle. Thus, gridded coils defined by pseudoforest partial multiplication matrices are exactly the same as gridded coils on cyclic partial multiplication matrices. As such, many properties of gridded coils are the same irrespective of which type of matrix is being used.

\begin{thm}\label{thm-coil-regridding}
Let $M$ be a pseudoforest partial multiplication matrix containing $T$ non-zero entries, and let $\pi^\gridded$ be an $M$-indivisible permutation that contains no gridded coil of length greater than~$k$. Then $\pi$ is $N$-griddable where $N$ is an acyclic matrix containing at most $k(T-3)$ non-zero entries.
\end{thm}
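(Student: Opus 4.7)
By Observation~\ref{obs-single-component}, the points of $\pi^\gridded$ lie in cells corresponding to a single component $H$ of $G_M$. If $H$ is acyclic, then by Observation~\ref{obs-non-singleton} the permutation $\pi^\gridded$ must be a singleton, and a $1 \times 1$ matrix $N$ satisfies the bound. So assume $H$ is unicyclic with cycle of length $\ell \ge 4$ and at most $T$ non-zero entries, of which at most $T-\ell$ correspond to tree cells. The plan is to build $N$ by adapting the coil-decomposition refinement of Lemma~\ref{lem-griddable-acyclic} to this pseudoforest setting.

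Picking $z_1$ to be a last point in some cycle cell $C_1$ (as provided by Lemma~\ref{lem-lastpoints}) and forming the BFS partition $B_1,\ldots,B_{k'}$ of $D_{\pi^\gridded}$, I first argue that the cyclic-position property from the proof of Lemma~\ref{lem-griddable-acyclic} continues to hold: any cycle-cell point in $B_i$ lies in cell $i \pmod \ell$. The key point is that a tree cell only shares rows and columns with the cycle cells to which it is attached (which are themselves adjacent in the cycle), so BFS excursions into tree cells neither advance the cyclic position nor return to an earlier cycle cell---the latter being blocked by the orientation constraints that make $z_1$ a last point. Thus tree-cell side-trips cannot shorten BFS distances between cycle-cell points, the inductive argument from Lemma~\ref{lem-griddable-acyclic} transfers essentially verbatim, and slicing the cycle cells as in that proof produces at most $k' \le k$ sub-cells forming a path in $G_N$.

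The tree cells are then refined by propagating the slicing outwards from the cycle. A tree cell $T$ attached to a parent cell via a shared row inherits the horizontal slicing lines through that row, cutting $T$ into at most $\lceil k/\ell \rceil$ horizontal sub-cells. Leaving $T$'s column unsliced would place these sub-cells in a common sub-column, which together with the already-built cycle path would create a cycle in $G_N$ of the form $X \to S_i \to R_i \to \cdots \to R_j \to S_j \to X$. To avoid this, we also slice $T$ vertically at matching $x$-coordinates, which is possible because the monotonicity within $T$ allows horizontal and vertical slicings to be aligned. These new vertical slicing lines propagate through $T$'s column to any deeper tree cells, which may require further horizontal slicing, and so on. By induction on depth in the tree, every tree cell finishes with at most $\lceil k/\ell \rceil$ sub-cells, each occupying a unique (sub-row, sub-column) pair, keeping $G_N$ a forest.

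Finally, the cycle contributes at most $k$ sub-cells and each of the at most $T-\ell$ tree cells contributes at most $\lceil k/\ell \rceil \le k$, so the total is at most
\[
k + (T-\ell)\lceil k/\ell \rceil \;\le\; k + (T-\ell)k \;=\; k(T-\ell+1) \;\le\; k(T-3),
\]
using $\ell \ge 4$. The main obstacle is justifying the cyclic-preservation claim (verifying carefully that tree-cell excursions cannot shortcut BFS distances between cycle cells) and showing the acyclicity of $G_N$ after the tree-cell refinement; both rely on induction arguments grounded in the orientation constraints at $z_1$ and the monotonicity within each tree cell.
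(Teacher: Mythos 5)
Your proposal takes a genuinely different route from the paper. The paper's proof is an induction on the number $t-\ell$ of non-cycle cells in the relevant component: it repeatedly identifies a ``leaf'' cell $c$ (the only non-empty cell in its row or column, chosen as the last cell along a path from the cycle to a leaf of $G_M$), deletes its points, applies the inductive hypothesis, and then propagates the inherited slicing lines diagonally through $c$. The base case $t=\ell$ is exactly Lemma~\ref{lem-griddable-acyclic}. This leaf-peeling strategy means the BFS/coil decomposition is only ever run on a pure cycle, and all questions about how tree cells interact with it never arise. The inductive invariant it carries is slightly cruder than yours (at most $k$ sub-cells per cell rather than $\lceil k/\ell\rceil$), yielding the same final bound $k(t-\ell+1)\le k(T-3)$.

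Your approach instead extends the coil-decomposition BFS directly to the entire pseudoforest component, and then propagates slicing outward from the cycle through the tree. This is a plausible strategy, but as you concede in your final paragraph it leaves the two key claims unproved, and these are genuine gaps rather than routine verifications. First, the cyclic-position invariant (``cycle-cell points in $B_i$ lie in cell $i\pmod\ell$'') must be re-established in the presence of tree-cell detours; the argument in the proof of the proposition inside Lemma~\ref{lem-griddable-acyclic} relies on the fact that cell $i\pmod\ell$ shares rows and columns only with cells $i\pm1\pmod\ell$, which fails once tree cells attach to cycle vertices. One can push a transitivity argument through (if $u\to v\to w$ with $u$ and $w$ in the same cell then $u,v,w$ lie in a common row, column, or cell, hence $u\to w$), but the general case where the BFS path makes a long excursion into a branching subtree and re-emerges at a cycle cell requires care and is not ``essentially verbatim''. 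Second, the claim that recursive diagonal slicing of tree cells keeps every tree cell to $\lceil k/\ell\rceil$ sub-cells while preserving a forest structure in $G_N$ needs its own induction (on tree depth) that tracks how sub-rows and sub-columns are shared when a tree vertex has multiple children; you gesture at this but do not carry it out. The paper's leaf-peeling induction cleanly sidesteps both of these obligations, which is why it is preferable; if you want to complete your route, the cyclic-preservation lemma in particular would need to be stated and proved explicitly, as it is the load-bearing step.
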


\begin{proof}
By Observation~\ref{obs-single-component}, the points of the $M$-indivisible permutation $\pi^\gridded$ belong to a single component $H$ of $G_M$. If $\pi^\gridded$ is a singleton then the statement in the theorem is clear, so we may assume that  $\pi^\gridded$ is not a singleton. By Observation~\ref{obs-non-singleton}, $\pi^\gridded$ contains at least one point from each cell around some cycle, which means that the component $H$ must contain a cycle. Let $\ell \ge 4$ denote the length of this cycle, and let $t\le T$ denote the number of edges in $H$.

We now proceed inductively on the number $t-\ell$ of non-zero entries of $M$ that belong to $H$ but do not lie on the cycle. Our hypothesis is that $\pi^\gridded$ has a refined gridding using an acyclic matrix that has at most $k(t-\ell+1)$ non-zero entries, and that this refined gridding divides each row and column using at most $k-1$ additional horizontal and vertical lines, respectively. Since $\ell\geq 4$, we see that $k(t-\ell+1) \leq k(T-3)$ and so this induction will complete the proof.

The base case, in which $t=\ell\ge 4$, follows by Lemma~\ref{lem-griddable-acyclic}, since the number of non-zero entries in this case is $k =k(t-\ell+1)$, and the refined gridding of $\pi^\gridded$ in this acyclic grid divides the points in each cell on the cycle among at most $\lceil k/\ell\rceil \le k$ smaller cells, which is achieved by adding at most $k-1$ horizontal lines in each row, and at most $k-1$ vertical lines in each column.

Now suppose that $M$ has $t-\ell > 0$ non-zero entries in $H$ that are not on the cycle. Identify a non-empty cell $c$ of $\pi^\gridded$ that has no other non-empty cells in either its column or its row. (Such a cell must correspond to an edge in $H$ since all points of $\pi^\gridded$ belong to $H$. Furthermore, $c$ exists since each edge of $H$ not on the cycle lies on a unique path from the cycle to some leaf of $G_M$, and so we may choose the last non-empty cell along this path.) Without loss of generality, we may assume that $c$ is the only non-empty cell in its column, and that the points in $c$ form an increasing sequence.

The gridded subpermutation of $\pi^\gridded$ formed by removing the points in $c$ is also $M$-indivisible, since for any directed path in $D_{\pi^\gridded}$ that passes through a point $x$ in $c$, there exists a directed path on all the same vertices but with $x$ removed.  By induction, this gridded subpermutation has a refined gridding into an acyclic matrix with at most $k(t-\ell)$ non-zero entries. Furthermore, in $\pi^\gridded$ the row containing $c$ has been divided by at most $k-1$ horizontal lines of this refined gridding. We now propagate these lines through $c$: for each of the $k-1$ additional horizontal lines, choose a vertical line so that the points in $c$ occupy the regions up the diagonal from the lower left to the upper right. Since there are no other non-empty cells in the column containing $c$, this process introduces no cycles and thus ensures that the resulting gridding is acyclic. This gridding has at most $k + k(t-\ell) =k(t-\ell+1)$ non-zero entries, and each cell has been divided into at most $k$ smaller cells. This completes the inductive step.
\end{proof}

%
%
%
%
%
%
%
%
\section{Labelled well quasi-ordering}\label{sec:lwqo}

In this section, we work towards the following characterisation for subclasses of pseudoforest grid classes. Recall that an $M$-coil is an ungridded permutation $\pi$ for which there is a gridding $\pi^\gridded$ that forms a gridded $M$-coil.

\begin{thm}\label{thm-lwqo-char}Let $M$ be a pseudoforest partial multiplication matrix. 
Then a subclass $\C\subseteq\Grid(M)$ is labelled well quasi-ordered if and only if 
there is a bound on the length of the longest $M$-coils that are contained in $\C$.
\end{thm}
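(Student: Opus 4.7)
The plan is to prove both directions separately, drawing on the $M$-sum decomposition and coil analysis developed in Section~\ref{sec-indiv-and-coils}.

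For the ``bounded coils $\Rightarrow$ lwqo'' direction, suppose every $M$-coil in $\C$ has length at most $k$. By Theorem~\ref{thm-coil-regridding}, every $M$-indivisible gridded permutation appearing in $\C^\gridded$ can be re-gridded as an element of $\Gridhash(N)$ for some acyclic partial multiplication matrix $N$ drawn from a finite family (each with at most $k(T-3)$ non-zero entries, where $T$ is the number of non-zero entries of $M$). Each $\Grid(N)$ is an acyclic grid class, hence a geometric grid class~\cite{albert:geometric-grid-:}, which is labelled well quasi-ordered by Brignall and Vatter~\cite{bv:lwqo-for-pp:}. Consequently the set of ungridded $M$-indivisibles that appear in $\C$ lies in a finite union of lwqo classes, and so is itself lwqo. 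To lift this to $\C$ I would apply Lemma~\ref{lem-grid-decomp} to express each gridded permutation of $\C^\gridded$ as a sequence of labelled indivisibles, and invoke a labelled form of Higman's lemma to conclude lwqo for $\C^\gridded$; a standard argument then transfers this to $\C$, since the forgetful map from gridded to ungridded permutations preserves both labels and containment.

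For the converse direction, suppose $\C$ contains $M$-coils of arbitrarily large length. Since a shorter coil embeds in a longer coil as an ungridded permutation, labels are essential. I would take an infinite sequence of coils $(\pi_{n_i})_{i\ge 1}$ in $\C$ with $n_1 < n_2 < \cdots$, and use pigeonhole to pass to a sub-sequence in which all coils share a common chirality, a common underlying cycle of $G_M$, and a common residue class modulo $\ell$, so that the endpoint pair $(v_1, v_n)$ has a common two-point pattern across the sequence. Label $v_1$ and $v_n$ of each coil with two incomparable labels $s$ and $e$, and label all interior points with a common label $c$, using the three-element antichain as the label wqo. Any label-preserving embedding $(\pi_{n_i},\ell_i) \hookrightarrow (\pi_{n_j},\ell_j)$ with $i<j$ must send endpoints to endpoints; property~\ref{coil-c} together with Lemma~\ref{lem-coil-splits} then constrain the image to trace out a ``path'' from $v_1$ to $v_{n_j}$ in $\pi_{n_j}$ using exactly $n_i$ points, and a careful analysis of the internal structure of coils rules out such a tracing unless $n_i = n_j$, yielding the required bad sequence.

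The main obstacle I expect lies in the first direction, in setting up Higman's lemma over the $M$-sum decomposition. The decomposition of Lemma~\ref{lem-grid-decomp} is unique only as a multiset, with indivisibles from different components of $G_M$ commuting; moreover, a labelled gridded containment $\sigma^\gridded \le \pi^\gridded$ need not respect the indivisible decompositions literally, as the indivisibles of $\sigma^\gridded$ may cut across several indivisibles of $\pi^\gridded$. Establishing a suitable compatibility lemma --- perhaps showing that any containment can be refined to one respecting the decomposition, or expressing the relationship via an intermediate structure --- will be the technical crux. In the converse direction, the delicate step is verifying that endpoint-labelling alone suffices to rule out all label-preserving embeddings with $n_i<n_j$; if not, the labelling can be enriched to track the point $v_2$ (pinning down the chirality) or the cell each point occupies, at the cost of a slightly larger but still finite label wqo.
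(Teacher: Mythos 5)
Your proposal follows essentially the same route as the paper's proof: forward direction via Theorem~\ref{thm-coil-regridding} and the finiteness of the family of acyclic re-gridding matrices, converse via an endpoint-labelling of coils together with Lemma~\ref{lem-coil-splits}. Two of the ``obstacles'' you flag are, however, not obstacles at all. The Higman step does not require any compatibility between the indivisible decompositions of $\sigma^\gridded$ and $\pi^\gridded$: the paper's Lemma~\ref{lem-indiv-wqo} simply constructs an \emph{order-preserving surjection} $(\III^\gridded\wr L)^* \to \C^\gridded\wr L$ (a word of labelled indivisibles maps to its $M$-sum), and then Lemma~\ref{lem-order-preserving-reflecting}(i) does the rest --- one only needs that subword containment implies $M$-sum containment, never the converse, so the fact that indivisibles may ``cut across'' one another is irrelevant, as is the multiset-ambiguity of the decomposition. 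Similarly, in the converse direction the pigeonhole passage to a common chirality and residue class is unnecessary: the paper labels each point by its cell together with an endpoint/interior marker (so $G\times\{\bullet,\circ\}$), and then the argument of Lemma~\ref{lem-coil-antichains} is short and uniform --- if $m<n$ then some interior $v_k$ of the longer coil is missed, $\pi^\gridded-v_k$ is divisible with $v_1,v_n$ in different parts (Lemma~\ref{lem-coil-splits}), and so any image containing both $v_1$ and $v_n$ is divisible by Observation~\ref{obs-divisible-subpermutation}, contradicting the indivisibility of the smaller coil. Your phrase ``a careful analysis of the internal structure of coils rules out such a tracing'' is precisely this step; once you appeal to divisibility rather than to a path-tracing argument, there is nothing further to verify.
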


This section is organised as follows. Subsection~\ref{subsec-wqo-toolkit} contains the
introductory definitions
and auxilliary results from the literature, and sets the context for the section. The proof of Theorem~\ref{thm-lwqo-char} is given in Subsection~\ref{subsec-lwqo-char-proof}. Finally, Subsection~\ref{subsec-lwqo-decide} uses Theorem~\ref{thm-lwqo-char} to derive a decision procedure for the following question: given a pseudoforest partial multiplication matrix $M$, is a given finitely based subclass $\C\subseteq\Grid(M)$ labelled well quasi-ordered?

%
%
%
%
\subsection{The well quasi-ordering toolkit}\label{subsec-wqo-toolkit}

In this introductory subsection, we give only the minimal terminology required for our purposes, and refer the reader to Brignall and Vatter~\cite{bv:lwqo-for-pp:} for a fuller treatment.

 Given a quasi-ordered set%
\footnote{Recall that a \emph{quasi-ordered} set $(X,\le)$ is a set $X$ equipped with a binary relation $\le$ that is both reflexive and transitive. In particular, every partial order is a quasi order.} %
 $(X,\leq)$, we say that $X$ is \emph{well quasi-ordered} (wqo) if in every infinite sequence $x_1,x_2,\dots$ of entries from $X$, there exists a pair $x_i,x_j$ with $i<j$ such that $x_i\leq x_j$.
A useful equivalent characterisation is given by the following folklore proposition. An \emph{antichain} in a quasi-order $(X,\leq)$ is a set $\{x_1,x_2,\dots\}$ such that $x_i\not\leq x_j$ for all $i\neq j$.

\begin{prop}
A quasi-order $(X,\leq)$ is well quasi-ordered if and only if $X$ contains neither an infinite antichain nor an infinite strictly descending chain, $x_1 >x_2 >\cdots$.	
\end{prop}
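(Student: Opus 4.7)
The plan is to handle the two directions of the iff separately. The forward direction is essentially immediate from the definition of well quasi-order (with a small argument to handle the strict-chain case), while the reverse direction uses a standard two-case extraction argument and requires no Ramsey-theoretic machinery.

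For $(\Rightarrow)$, I would assume $(X,\leq)$ is wqo. An infinite antichain $x_1,x_2,\dots$ satisfies $x_i\not\leq x_j$ for every $i<j$ by definition, directly contradicting wqo. For an infinite strictly descending chain $x_1>x_2>\cdots$ (where $x_k>x_{k+1}$ means $x_{k+1}\leq x_k$ and $x_k\not\leq x_{k+1}$), I would argue that no $i<j$ can satisfy $x_i\leq x_j$: transitivity along the chain gives $x_j\leq x_{i+1}$, so $x_i\leq x_j\leq x_{i+1}$ would contradict $x_i\not\leq x_{i+1}$.

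For $(\Leftarrow)$, I would argue the contrapositive: assuming $X$ is not wqo, I construct either an infinite antichain or an infinite strictly descending chain. Fix an infinite \emph{bad} sequence $x_1,x_2,\dots$, meaning $x_i\not\leq x_j$ for all $i<j$. Call an index $i$ \emph{minimal in the sequence} if no $j>i$ satisfies $x_j\leq x_i$. If infinitely many indices $i_1<i_2<\cdots$ are minimal, then $(x_{i_k})$ is an infinite antichain: $x_{i_k}\not\leq x_{i_l}$ by badness, and $x_{i_l}\not\leq x_{i_k}$ by minimality of $i_k$ (since $i_l>i_k$). Otherwise, past the largest minimal index every $i$ admits some $j>i$ with $x_j\leq x_i$; combined with $x_i\not\leq x_j$ from badness, this gives $x_j<x_i$ strictly, and iterating produces an infinite strictly descending chain.

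The only subtlety to watch for is that $(X,\leq)$ is merely a quasi-order, so distinct equivalent elements satisfying both $x\leq y$ and $y\leq x$ may exist. The arguments above work entirely with the relation $\leq$ and its strict counterpart $<$ (defined by $y<x$ iff $y\leq x$ and $x\not\leq y$), without appealing to antisymmetry, so the conclusion holds at the level of quasi-orders. I do not foresee any genuine obstacle here; the proposition is folklore precisely because the minimal-index dichotomy renders the reverse direction transparent once a bad sequence is in hand.
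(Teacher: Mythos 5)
Your proof is correct. The paper states this proposition without proof, labelling it folklore, so there is no argument in the text to compare against; your minimal-index dichotomy is the standard elementary route, and you handle the quasi-order setting properly (distinctness of the extracted elements follows from badness together with reflexivity, and nowhere do you invoke antisymmetry).
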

In the context of collections of combinatorial structures with some notion of embedding, the requirement that there are no infinite strictly descending chains is trivially satisfied, since such quasi-orders are always well-founded.

One widely-used tool in the study of wqo is Higman's Lemma~\cite{higman:ordering-by-div:}, which we now state. Given a quasi-order $(X,\leq)$, let $X^*$ denote the set of all finite sequences of elements of $X$ (which can be thought of as the set of all words over the alphabet $X$). Given two sequences $x_1\cdots x_k$ and $y_1\cdots y_n$ in $X^*$, we say that $x_1\cdots x_k$ is \emph{contained} in $y_1\cdots y_n$ if there exists a subsequence $1\leq i_1< \cdots < i_k\leq n$ such that $x_j\leq y_{i_j}$ for all $j=1,\dots,k$. We call this ordering the \emph{generalised subword ordering} on $X^*$.

\begin{thm}[Higman's Lemma~\cite{higman:ordering-by-div:}]\label{thm-higman}
If $(X,\leq)$ is wqo, then $X^*$ is wqo under the generalised subword ordering.
\end{thm}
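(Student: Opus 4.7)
The plan is to use the classical Nash-Williams style minimal bad sequence argument, which is the standard route to Higman's Lemma. Suppose for contradiction that $X^*$ is not wqo under generalised subword ordering. Since $X^*$ is well-founded (lengths are non-negative integers, with ties broken by considering the letters, all of which lie in a well-founded quasi-order), the failure of wqo forces the existence of an infinite \emph{bad} sequence $w_1, w_2, \dots$ in $X^*$, meaning there is no pair $i < j$ with $w_i \leq w_j$.

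Among all bad sequences, I would construct a \emph{minimal} one recursively: having chosen $w_1, \dots, w_{i-1}$ so that they extend to some bad sequence, pick $w_i$ of minimum possible length subject to that constraint. Each $w_i$ must be non-empty, else the empty word would embed into every later $w_j$, contradicting badness. Write $w_i = x_i v_i$ where $x_i \in X$ is the first letter and $v_i \in X^*$ is the remaining suffix.

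Next, since $(X,\leq)$ is wqo, the infinite sequence $x_1, x_2, \dots$ admits an infinite ascending subsequence $x_{i_1} \leq x_{i_2} \leq x_{i_3} \leq \cdots$ with $i_1 < i_2 < \cdots$. I would then examine the hybrid sequence
\[
w_1, w_2, \dots, w_{i_1-1},\; v_{i_1}, v_{i_2}, v_{i_3}, \dots,
\]
which is strictly shorter than the minimal bad sequence at position $i_1$ because $|v_{i_1}| < |w_{i_1}|$. By minimality, this hybrid sequence cannot be bad, so some pair of its entries must be comparable. Three cases must be ruled out, all leading to a contradiction: (i) a pair $w_a \leq w_b$ with $a < b < i_1$ contradicts badness of the original sequence; (ii) a pair $w_a \leq v_{i_k}$ with $a < i_1 \leq i_k$ gives $w_a \leq v_{i_k} \leq x_{i_k} v_{i_k} = w_{i_k}$, again contradicting badness; (iii) a pair $v_{i_k} \leq v_{i_l}$ with $k < l$ combines with $x_{i_k} \leq x_{i_l}$ to yield $w_{i_k} = x_{i_k} v_{i_k} \leq x_{i_l} v_{i_l} = w_{i_l}$, the final contradiction.

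These contradictions show that no bad sequence in $X^*$ can exist, so $X^*$ is wqo. The main subtlety in the argument is the recursive construction of the minimal bad sequence, which formally relies on dependent choice; I would make one brief remark acknowledging this, as is standard. No step requires grinding calculation: the whole proof is short once the minimal bad sequence is in place, and the three-case analysis of the hybrid sequence is mechanical.
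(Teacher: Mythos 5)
Your proof is correct; it is the classical Nash-Williams minimal bad sequence argument. The paper itself simply cites Higman's Lemma without proof, as it is a standard result from the literature, so there is no in-paper proof to compare against; you have supplied the canonical one. One small presentational slip: the failure of wqo \emph{by definition} yields an infinite bad sequence (a sequence with no $i<j$ and $w_i\leq w_j$), so well-foundedness is not needed at that stage. Where well-foundedness of lengths is actually used is in the recursive minimisation, to guarantee at each stage that a word of minimum length extending the prefix to a bad sequence exists. The rest of the argument — the splitting $w_i = x_i v_i$, the extraction of an increasing subsequence of first letters (which uses the standard Ramsey-type consequence of wqo), the hybrid sequence, and the three-case contradiction — is exactly right.
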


Another tool we require is the following well-known fact (for a proof, see, for example,~\cite[Proposition~1.3]{bv:lwqo-for-pp:}.

\begin{prop}\label{prop-direct-prod-wqo}
	Let $(X,\leq_X)$ and $(Y,\leq_Y)$ be quasi orders. If $(X,\leq_X)$ and $(Y,\leq_Y)$ are wqo, then so is $(X\times Y,\leq_X\times\leq_Y)$.
\end{prop}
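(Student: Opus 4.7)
The plan is to use the standard equivalent characterisation of well quasi-ordering: a quasi-order $(X,\leq_X)$ is wqo if and only if every infinite sequence in $X$ admits an infinite non-decreasing subsequence $x_{i_1}\leq_X x_{i_2}\leq_X\cdots$ with $i_1<i_2<\cdots$. This follows from Ramsey's theorem applied to the $2$-colouring of pairs $\{i,j\}$ ($i<j$) by whether $x_i\leq_X x_j$ or not: a monochromatic infinite set in the first colour yields the desired chain, while the second colour would give an infinite antichain or strictly descending chain, contradicting wqo. I would begin by stating this characterisation (with a brief justification as above, or citing it as folklore).

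Given this tool, the proof is a two-step extraction. First I would take an arbitrary infinite sequence $(x_1,y_1),(x_2,y_2),\ldots$ in $X\times Y$. Applying the characterisation to the sequence $x_1,x_2,\ldots$ in $X$ yields an infinite subsequence of indices $i_1<i_2<\cdots$ with $x_{i_1}\leq_X x_{i_2}\leq_X\cdots$. Then I would apply the defining property of wqo to the corresponding sequence $y_{i_1},y_{i_2},\ldots$ in $Y$ to obtain indices $j<k$ with $y_{i_j}\leq_Y y_{i_k}$. Combining these gives
\[
(x_{i_j},y_{i_j})\;\leq_X\!\times\!\leq_Y\;(x_{i_k},y_{i_k}),
\]
which verifies the wqo property for $X\times Y$.

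There is no real obstacle here; the only subtlety is the preliminary characterisation of wqo in terms of infinite non-decreasing subsequences, which is the key reduction that lets us avoid doing a simultaneous extraction on both coordinates. The proof is short enough that one could alternatively argue by contradiction from an infinite bad sequence, again using Ramsey to first make the $X$-coordinates non-decreasing and then deriving a contradiction from $Y$ being wqo; both presentations are essentially identical in content.
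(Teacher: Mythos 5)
Your proof is correct, and it uses the standard argument (Ramsey-style extraction of an infinite non-decreasing subsequence in the first coordinate, followed by a single application of wqo in the second coordinate). The paper does not actually give its own proof of this proposition; it simply cites~\cite[Proposition~1.3]{bv:lwqo-for-pp:}, and the argument there is the same standard one you outline. One small remark: when you justify the intermediate characterisation, the cleanest route is to note that an infinite monochromatic set in the second colour (no $x_i\leq_X x_j$ for $i<j$) is literally an infinite bad sequence, which directly contradicts the definition of wqo; invoking ``infinite antichain or strictly descending chain'' is also fine but requires one more line of argument (or an appeal to the folklore proposition stated just above in the paper).
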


Let $(L,\leq_L)$ be any quasi-order. For a permutation $\pi$ of length $n$, an \emph{$L$-labelling} of $\pi$ is a function $\ell_\pi$ from the indices of $\pi$ to $L$, and we call the pair $(\pi,\ell_\pi)$ an \emph{$L$-labelled permutation}. Informally speaking, one may regard the label $\ell_\pi(i)$ as belonging to the point $(i,\pi(i))$ of $\pi$. We may similarly consider $L$-labellings of other combinatorial structures, most notably in our context gridded permutations: given an $M$-gridded permutation $\pi^\gridded$ and an $L$-labelling $\ell_\pi$ of the (ungridded) permutation $\pi$, we call the pair $(\pi^\gridded,\ell_\pi)$ an \emph{$L$-labelled $M$-gridded permutation}.

Given two $L$-labelled permutations $(\pi,\ell_\pi)$ and $(\sigma,\ell_\sigma)$ of lengths $n$ and $k$, respectively, we say that $(\sigma,\ell_\sigma)$ is contained in $(\pi,\ell_\pi)$ if there exists a subsequence $1\leq i_1\leq \cdots\leq i_k\leq n$ such that $\pi(i_1)\pi(i_2)\cdots\pi(i_k)$ is order isomorphic to $\sigma$, and such that $\ell_\sigma(j)\leq_L\ell_\pi(i_j)$ for all $j=1,\dots,k$. We refer to this ordering as the \emph{($L$-)labelled containment ordering}.

Given a set of permutations $\C$, we use $\C\wr L$ to denote the collection of all $L$-labelled permutations from $\C$. If $\C$ is a permutation class, then $\C\wr L$ is downwards-closed set under labelled containment: if $(\pi,\ell_\pi)\in\C\wr L$ and $(\sigma,\ell_\sigma)$ is contained in $(\pi,\ell_\pi)$, then $(\sigma,\ell_\sigma)$ is also in $\C\wr L$. We say that a set of permutations $\C$ is \emph{labelled well quasi-ordered} (lwqo) if $\C\wr L$ is well quasi-ordered under the labelled containment ordering for \emph{any} well quasi-ordered set $(L,\leq_L)$.

Similarly, for a set of $M$-gridded permutations $\C^\gridded$, we write $\C^\gridded\wr L$ for the collection of all $L$-labelled $M$-gridded permutations from $\C^\gridded$, and we can use the term \emph{labelled well quasi-ordered} in the obvious manner.

Let $(X,\leq_X)$ and $(Y,\leq_Y)$ be two quasi-orders, and let $\phi:X\to Y$ be a mapping. We say that $\phi$ is \emph{order-preserving} if $x_1\leq_X x_2$, implies $\phi(x_1)\leq_Y\phi(x_2)$. Conversely, we say that $\phi$ is \emph{order-reflecting} if $\phi(x_1)\leq_Y\phi(x_2)$ implies $x_1\leq_X x_2$.

\begin{lemma}\label{lem-order-preserving-reflecting}
Let $\phi:X\to Y$ be a mapping between two quasi-ordered sets.
\begin{enumerate}[(i)]
\item	If $\phi$ is an order-preserving surjection, then $Y$ is (labelled) well quasi-ordered whenever $X$ is.
\item If $\phi$ is order-reflecting, then $X$ is (labelled) well quasi-ordered whenever $Y$ is.
\end{enumerate}
\end{lemma}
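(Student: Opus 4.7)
The plan is to prove both statements by short, essentially one-line arguments using the equivalent characterisation of well quasi-order in terms of infinite sequences: a quasi-ordered set $(Z,\leq_Z)$ is wqo if and only if for every infinite sequence $z_1,z_2,\ldots$ in $Z$ there exist indices $i<j$ with $z_i\leq_Z z_j$.

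For part (i), assume $X$ is wqo and let $y_1,y_2,\ldots$ be any infinite sequence in $Y$. Using surjectivity of $\phi$, choose preimages $x_i\in\phi^{-1}(y_i)$ for each $i$, producing an infinite sequence in $X$. Well quasi-ordering of $X$ yields indices $i<j$ with $x_i\leq_X x_j$, and order-preservation then gives $y_i=\phi(x_i)\leq_Y\phi(x_j)=y_j$, as required. For part (ii), assume $Y$ is wqo and let $x_1,x_2,\ldots$ be any infinite sequence in $X$. Apply the wqo property of $Y$ to the sequence $\phi(x_1),\phi(x_2),\ldots$ to obtain indices $i<j$ with $\phi(x_i)\leq_Y\phi(x_j)$; order-reflection then gives $x_i\leq_X x_j$.

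For the labelled versions, the plan is to reduce to the unlabelled arguments just given. Fix any wqo set $(L,\leq_L)$. In the contexts in which this lemma will be applied (permutations, gridded permutations, or other structures carrying labels), the map $\phi$ naturally induces a map $\phi_L$ between the labelled versions of $X$ and $Y$ by applying $\phi$ to the underlying structure and carrying labels across unchanged. Under this induced map, an order-preserving surjection lifts to an order-preserving surjection on the labelled sets, and an order-reflecting map lifts to an order-reflecting map; applying parts (i) and (ii) to $\phi_L$ in place of $\phi$ then yields the labelled conclusions.

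I do not expect any genuine obstacle here: both parts are essentially immediate from the definitions, and this lemma is a standard tool whose role in the paper is simply to streamline later transfers of (labelled) well quasi-ordering between classes related by suitable maps.
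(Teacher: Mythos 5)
Your argument for the unlabelled parts is the standard one and is correct: lift a $Y$-sequence along the surjection for~(i), push an $X$-sequence forward for~(ii), extract a good pair, and transfer it by order-preservation or order-reflection. The paper itself does not prove this lemma but instead cites Brignall and Vatter~\cite[Propositions~1.10 and~1.13]{bv:lwqo-for-pp:}, so there is no in-paper proof to compare against; your proof is the obvious one that the reference almost certainly also uses.

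The only point worth underlining is the one you already flag. The lemma is stated for abstract quasi-ordered sets $X$ and $Y$, but ``labelled wqo'' is only defined for sets of combinatorial objects whose elements carry points to which labels can be attached, and the reduction to the unlabelled argument requires $\phi$ to induce a map $\phi_L : X\wr L \to Y\wr L$ that is again an order-preserving surjection (resp.\ order-reflecting). For~(i) this needs, in addition to surjectivity of $\phi$, that labels on $\phi(x)$ can be pulled back along a correspondence between the points of $x$ and those of $\phi(x)$. This is true in every use the paper makes of the lemma: either $\phi$ preserves the ground set (the grid-forgetting map in Lemma~\ref{lem-gridded-ungriddded-lwqo}), or the labels are explicitly carried along (the $M$-sum map in Lemma~\ref{lem-indiv-wqo}). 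Spelling out $\phi_L$ and its surjectivity would make the argument fully self-contained, but there is no genuine gap.
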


For a proof, see~\cite[Propositions 1.10 and 1.13]{bv:lwqo-for-pp:}.

The notion of labelling also offers us an alternative viewpoint for gridded permutations. Let $M$ be an $m\times n$ gridding matrix, and let $G$ be the antichain on the elements $[m]\times[n]$. For a permutation $\pi\in\Grid(M)$ and a corresponding gridded version $\pi^\gridded$, define the function $g_\pi:\pi\to G$ by
$g_\pi(k) = (i,j)$ where in $\pi^\gridded$ the point $(k,\pi(k))$ lies in cell $ij$ of $\pi^\gridded$. It is then clear that $\phi: \Gridhash(M) \to \Grid(M)\wr G $ given by
\[ \phi(\pi^\gridded) = (\pi,g_\pi)\]
is an injective map that is both order-preserving and order-reflecting. (Note, however, that it is not surjective except in trivial cases.)

It is a well-known fact that the set of all permutations under the containment ordering is not wqo, and hence also not lwqo. On the other hand, it is still possible for permutation classes to be wqo or lwqo, which leads to the following general question.

\begin{question}\label{q-wqo-decidable}
Given a permutation class $\C$ (specified, for example, by its basis), can one decide whether $\C$ is well quasi-ordered, labelled well quasi-ordered, or neither?
\end{question}

This question is open, and seems hard at present.

Theorem~\ref{thm-lwqo-char} (proved in the next subsection) characterises lwqo for any class $\C$ contained in a pseudoforest grid class, and we then answer the question above in the affirmative for these classes in Subsection~\ref{subsec-lwqo-decide}.

%
%
%
%
\subsection{The proof of Theorem~\ref{thm-lwqo-char}}\label{subsec-lwqo-char-proof}

Our first task in proving Theorem~\ref{thm-lwqo-char} is to convert the question of labelled well quasi-ordering for a subclass $\C$ of some grid class to one for the $M$-indivisibles in $\C^\gridded$ corresponding to a single component of $G_M$. We do this in a succession of lemmas.

\begin{lemma}\label{lem-gridded-ungriddded-lwqo}
Let $M$ be a 
gridding matrix, and let $\C\subseteq \Grid(M)$. The set of $M$-gridded permutations $\C^\gridded\subseteq\Gridhash(M)$ is labelled well quasi-ordered if and only if $\C$ is.	
\end{lemma}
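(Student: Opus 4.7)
The plan is to prove the equivalence by exhibiting suitable maps between $\C^\gridded \wr L$ and labelled copies of $\C$, and applying Lemma~\ref{lem-order-preserving-reflecting} in each direction. The key observation is that the excerpt has already essentially done the bookkeeping: the map $\phi \colon \Gridhash(M) \to \Grid(M) \wr G$ sending $\pi^\gridded \mapsto (\pi, g_\pi)$ (where $G = [m]\times[n]$ is treated as an antichain) is injective, order-preserving, and order-reflecting. All that remains is to promote this observation to work uniformly across all wqo labelling sets $L$.

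For the direction ``$\C^\gridded$ lwqo implies $\C$ lwqo'', I would use the trivial ``forget the gridding'' map. Given any wqo set $(L, \leq_L)$, define $\psi \colon \C^\gridded \wr L \to \C \wr L$ by $\psi((\pi^\gridded, \ell_\pi)) = (\pi, \ell_\pi)$. This is surjective since every $\pi \in \C$ admits at least one $M$-gridding, and it is order-preserving because any labelled gridded embedding $\sigma^\gridded \hookrightarrow \pi^\gridded$ is in particular a labelled (ungridded) embedding $\sigma \hookrightarrow \pi$. Lemma~\ref{lem-order-preserving-reflecting}(i) then yields that $\C \wr L$ is wqo whenever $\C^\gridded \wr L$ is, so $\C$ is lwqo.

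For the direction ``$\C$ lwqo implies $\C^\gridded$ lwqo'', given a wqo set $(L, \leq_L)$, I would combine the label with the cell location. Define $\tilde\phi \colon \C^\gridded \wr L \to \C \wr (L \times G)$ by
\[
\tilde\phi((\pi^\gridded, \ell_\pi)) \;=\; (\pi, \tilde\ell_\pi), \qquad \tilde\ell_\pi(k) = (\ell_\pi(k),\, g_\pi(k)),
\]
where $L \times G$ is ordered coordinatewise (with $G$ as an antichain). This map is order-reflecting: if the right-hand sides satisfy a labelled containment via indices $i_1 < \cdots < i_k$, then coordinatewise we get $\ell_\sigma(j) \leq_L \ell_\pi(i_j)$ and $g_\sigma(j) = g_\pi(i_j)$ (equality, since $G$ is an antichain), so the embedding respects cells and hence is a labelled \emph{gridded} embedding of $(\sigma^\gridded, \ell_\sigma)$ into $(\pi^\gridded, \ell_\pi)$. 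Since $G$ is finite it is wqo, so by Proposition~\ref{prop-direct-prod-wqo} the set $L \times G$ is wqo; the lwqo hypothesis on $\C$ then makes $\C \wr (L \times G)$ wqo, and Lemma~\ref{lem-order-preserving-reflecting}(ii) gives that $\C^\gridded \wr L$ is wqo. Since $L$ was arbitrary, $\C^\gridded$ is lwqo.

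No step here poses a real obstacle: the proof is essentially a one-line application of Lemma~\ref{lem-order-preserving-reflecting} in each direction, with the only subtlety being to remember to absorb the cell-location information into the label alphabet via the product construction from Proposition~\ref{prop-direct-prod-wqo}. The argument is identical in spirit to the standard equivalence between wqo of a grid class and wqo of its gridded version, but needs the product-labelling trick because the labelling alphabet $L$ cannot simply be replaced by the cell alphabet $G$.
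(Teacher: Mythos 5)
Your proposal is correct and matches the paper's own argument essentially verbatim: the easy direction uses the "forget the gridding" surjection together with Lemma~\ref{lem-order-preserving-reflecting}, and the hard direction uses the order-reflecting map into $\C\wr(L\times G)$ after observing via Proposition~\ref{prop-direct-prod-wqo} that $L\times G$ is wqo. The only cosmetic difference is that for the first direction the paper works with the unlabelled map $\C^\gridded\to\C$ and lets the parenthetical "(labelled)" in Lemma~\ref{lem-order-preserving-reflecting} do the work, whereas you spell out the labelled version explicitly; the content is the same.
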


\begin{proof}
Let $\pi,\sigma\in\C$. If there exist griddings of $\pi$ and $\sigma$ such that $\sigma^\gridded\leq\pi^\gridded$, then any embedding as gridded permutations induces an embedding of $\sigma$ in $\pi$ as ungridded permutations, and hence $\sigma\leq\pi$. Thus the surjection $\phi:\C^\gridded\to\C$ given by $\phi(\pi^\gridded)=\pi$ that ``removes'' the gridding is order-preserving, and the converse implication is complete by Lemma~\ref{lem-order-preserving-reflecting}.

For the direct part, let $(L,\leq_L)$ be an arbitrary wqo set of labels. Suppose that $M$ is an $m\times n$ matrix, and let $G=[m]\times [n]$ denote the indices for the cells of $M$. We consider $G$ to be an antichain, and since it is finite it is wqo. By Proposition~\ref{prop-direct-prod-wqo}, the product $L\times G$ is also wqo.
Since $\C$ is lwqo, it follows that $\C\wr (L\times G)$ is wqo. 

We now adopt the viewpoint in which each gridded permutation is interpreted as a labelled permutation. Consider the mapping $\phi:\C^\gridded\wr L \to \C\wr (L\times G)$ given by
\[ \phi((\pi^\gridded,\ell_{\pi^\gridded})) = (\pi,(\ell_{\pi^\gridded},g_\pi))\]
where $g_\pi(k) = (i,j)$ records that the point $(k,\pi(k))$ lies in cell $ij$. This mapping is clearly order-reflecting, and since $\C\wr(L\times G)$ is wqo, so is $\C^\gridded\wr L$, which shows that $\C^\gridded$ is lwqo.
\end{proof}

We now restrict to the $M$-indivisible permutations of $\C^\gridded$.

\begin{lemma}\label{lem-indiv-wqo}
Let $M$ be a partial multiplication matrix, let $\C\subseteq \Grid(M)$, let $\III^\gridded$ denote the $M$-indivisible elements of $\C^{\gridded}$, and let $L$ be any non-empty ordered set. Then $\III^\gridded\wr L$ is well-quasi-ordered if and only if $\C^\gridded\wr L$ is well quasi-ordered.
%
\end{lemma}

\begin{proof} First, trivially, if $\C^\gridded\wr L$ is wqo, then so is any subset of $\C^\gridded\wr L$, such as $\III^\gridded\wr L$.

Conversely, suppose that $\III^\gridded\wr L$ is wqo. Higman's Lemma (Theorem~\ref{thm-higman})  tells us that $(\III^\gridded\wr L)^*$ is wqo as well.

Consider the sequence $(\pi_1^\gridded,\ell_{\pi_1^\gridded}),(\pi_2^\gridded,\ell_{\pi_2^\gridded}),\dots,(\pi_k^\gridded,\ell_{\pi_k^\gridded})$, and define $\pi^\gridded = \pi_1^\gridded\boxplus \cdots\boxplus\pi_k^\gridded$. Each point of $\pi^\gridded$ belongs to the natural copy of some $\pi_i^\gridded$ in $\pi^\gridded$. To such a point, we assign a label using $\ell_{\pi_i^\gridded}$. This gives a labelling $\ell_{\pi^\gridded}$ of $\pi^\gridded$, and we define $\psi:(\III^\gridded\wr L)^*\to \C^\gridded\wr L$ by
\[\psi((\pi_1^\gridded,\ell_{\pi_1^\gridded})\cdots(\pi_k^\gridded,\ell_{\pi_k^\gridded})) = (\pi,\ell_{\pi^\gridded}).\]
This map is clearly order-preserving. It is also surjective since every permutation $\pi^\gridded\in\C^\gridded$ possesses a decomposition into $M$-indivisibles by Lemma~\ref{lem-grid-decomp}. Thus, by Lemma~\ref{lem-order-preserving-reflecting}(i), $\C^\gridded\wr L$ is wqo, as required.
%
\end{proof}

Note that by Observation~\ref{obs-single-component}, any gridded $M$-indivisible is associated with a single component of $G_M$. Since $M$ has only finitely many components, we conclude the following.

\begin{lemma}\label{lem-one-component}
Let $M$ be a partial multiplication matrix and let $\C\subseteq \Grid(M)$. Then $\C^\gridded$ is labelled well quasi-ordered if and only if, for every component of $M$, the $M$-indivisibles in $\C^\gridded$ associated with this component are labelled well quasi-ordered.
\end{lemma}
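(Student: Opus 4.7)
The plan is to reduce the statement to a standard ``finite union'' fact about (labelled) well quasi-orders. Throughout, containment is the gridded containment $\leq$ on $\Gridhash(M)$, lifted in the usual way to labelled gridded permutations.

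For the forward direction, suppose $\C^\gridded$ is lwqo. For any component $H$ of $G_M$, the set $\III_H^\gridded$ of $M$-indivisibles in $\C^\gridded$ associated with $H$ is a subset of $\C^\gridded$, and subsets of lwqo sets are lwqo (for any label set $L$, $\III_H^\gridded\wr L$ embeds order-reflectively into $\C^\gridded \wr L$, so we can invoke Lemma~\ref{lem-order-preserving-reflecting}(ii)). Hence each $\III_H^\gridded$ is lwqo.

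For the converse, first recall from Lemma~\ref{lem-indiv-wqo} that $\C^\gridded$ is lwqo if and only if the set $\III^\gridded$ of all $M$-indivisibles in $\C^\gridded$ is lwqo, so it suffices to establish the latter. By Observation~\ref{obs-single-component}, every element of $\III^\gridded$ is associated with a unique component of $G_M$, so $\III^\gridded$ partitions as
\[\III^\gridded \;=\; \bigcup_{H} \III_H^\gridded,\]
the union taken over the (finitely many) components $H$ of $G_M$, with each $\III_H^\gridded$ assumed lwqo. It remains to observe that a finite union of lwqo sets is lwqo. Concretely, fix any wqo label set $(L,\leq_L)$ and consider an infinite sequence $(\pi_1^\gridded,\ell_1),(\pi_2^\gridded,\ell_2),\dots$ in $\III^\gridded\wr L$. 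Since there are only finitely many components, by the pigeonhole principle some infinite subsequence lies in $\III_H^\gridded\wr L$ for a common component $H$; and since $\III_H^\gridded$ is lwqo, this subsequence contains a pair $(\pi_i^\gridded,\ell_i)\leq(\pi_j^\gridded,\ell_j)$, which also witnesses non-badness in the original sequence. As $L$ was arbitrary, $\III^\gridded$ is lwqo, and the result follows.

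No step presents a significant obstacle; the only thing to be careful about is citing the correct reduction lemmas (Lemma~\ref{lem-indiv-wqo} for passing between $\C^\gridded$ and its indivisibles, Observation~\ref{obs-single-component} for the component partition, and Lemma~\ref{lem-order-preserving-reflecting} for the passage to subsets), and noting explicitly that ``lwqo'' is preserved under finite unions via a pigeonhole argument applied for each choice of label set.
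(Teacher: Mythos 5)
Your proof is correct and follows essentially the same route the paper intends: reduce to the $M$-indivisibles via Lemma~\ref{lem-indiv-wqo}, partition them by component via Observation~\ref{obs-single-component}, and use that a finite union of (labelled) wqo sets is (labelled) wqo by pigeonhole. The paper states this lemma with only the preceding remarks as justification, so you have in effect spelled out the paper's own argument.
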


By our earlier observation that any non-singleton $M$-indivisible must contain at least one point from each cell around some cycle of $M$, we can now state the following strengthening of one half of the main result of Murphy and Vatter~\cite{murphy:profile-classes:}.

\begin{prop}\label{prop-no-cycles}
Let $M$ be a matrix for which $G_M$ is a forest. Then the set of $M$-indivisibles in $\Gridhash(M)$ is equal to the set of all possible $M$-griddings of the singleton permutation.
Consequently, $\Grid(M)$ is labelled well quasi-ordered.
\end{prop}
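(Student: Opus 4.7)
The plan is to prove the two parts in turn, each following readily from results already developed in the excerpt.

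For the first part, observe that by Observation~\ref{obs-non-singleton}, any non-singleton $M$-indivisible permutation must contain at least one point from each cell around some cycle of $G_M$. Since $G_M$ is assumed to be a forest, it contains no cycles, so no non-singleton $M$-indivisible can exist. Conversely, every singleton gridded permutation (i.e.\ a gridded version of the permutation~$1$, with its unique point placed in some cell $ij$ where $M_{ij}\neq 0$) is trivially indivisible, since an $M$-sum of two non-empty gridded permutations has length at least~$2$. Thus the set of $M$-indivisibles coincides precisely with the set of all possible $M$-griddings of the singleton permutation.

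For the second part, let $(L,\leq_L)$ be an arbitrary wqo set of labels. By Lemma~\ref{lem-gridded-ungriddded-lwqo}, it suffices to show that $\Gridhash(M)$ is lwqo, and by Lemma~\ref{lem-indiv-wqo} it further suffices to show that the set $\III^\gridded$ of $M$-indivisibles in $\Gridhash(M)$ is lwqo. By the first part, every element of $\III^\gridded$ is a gridding of the singleton permutation, and so (up to the choice of line positions, which is irrelevant for the containment ordering) is determined solely by the cell $(i,j)$ with $M_{ij}\neq 0$ in which its unique point lies. Hence an $L$-labelled element of $\III^\gridded$ is determined by the pair consisting of such a cell index and a label from $L$. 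The set of nonzero cell indices is finite, and is therefore wqo as an antichain; combined with the wqo set $(L,\leq_L)$ via Proposition~\ref{prop-direct-prod-wqo}, their product is wqo. Since the labelled containment ordering on $\III^\gridded\wr L$ is order-reflected by the obvious mapping into this product, Lemma~\ref{lem-order-preserving-reflecting}(ii) shows that $\III^\gridded\wr L$ is wqo. As $L$ was arbitrary, $\III^\gridded$ is lwqo, completing the proof.

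I do not anticipate any serious obstacle: the proof reduces essentially to combining Observation~\ref{obs-non-singleton} (the absence of cycles forces indivisibles to be singletons) with the reduction lemmas and the trivial fact that a finite antichain is wqo. The only subtlety is a bookkeeping one, namely noting that up to the combinatorially relevant data, a singleton has only finitely many griddings (one per nonzero cell of~$M$), so that the indexing set is genuinely finite.
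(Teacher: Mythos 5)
Your argument is correct and follows exactly the route the paper intends: the paper states the proposition immediately after observing (via Observation~\ref{obs-non-singleton}) that non-singleton indivisibles require a cycle, and your second half just makes explicit the routine reduction through Lemma~\ref{lem-gridded-ungriddded-lwqo}, Lemma~\ref{lem-indiv-wqo}, Proposition~\ref{prop-direct-prod-wqo}, and Lemma~\ref{lem-order-preserving-reflecting}(ii) that the paper leaves implicit. The only tiny point worth flagging is that the hypotheses of Lemma~\ref{lem-indiv-wqo} and the $M$-sum machinery require $M$ to be a partial multiplication matrix; this is automatic here because a forest has no (negative) cycles, so $M$ admits row and column sequences by Proposition~\ref{prop-pmm-neg-cycles}, but it would be worth a half-sentence to say so.
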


We can now complete one half of the proof of Theorem~\ref{thm-lwqo-char}: Suppose that the longest coil contained in $\C$ has length $k$. By Lemma~\ref{lem-gridded-ungriddded-lwqo}, $\C$ is lwqo if and only if $\C^\gridded$ is. Furthermore, by Lemma~\ref{lem-one-component}, it suffices to show that the $M$-indivisible permutations in $\C^\gridded$ associated with each component of $G_M$ are lwqo. Fix any component of $G_M$, and consider the set $\III^\gridded$ of $M$-indivisible permutations in $\C^\gridded$ associated with this component. If the component is a tree, then $\III^\gridded$ is lwqo by Proposition~\ref{prop-no-cycles}. Thus we may now assume that the component is unicyclic.

By hypothesis, any $M$-indivisible $\pi^\gridded\in\III^\gridded$ contains a coil of length at most $k$, and thus by Theorem~\ref{thm-coil-regridding} the corresponding ungridded permutation $\pi$ is $N_\pi$-griddable for some acyclic matrix $N_\pi$ containing a bounded number of nonzero entries. This gives us a superset of the \emph{ungridded} permutations corresponding to $M$-indivisibles in $\III^\gridded$:
\[ \III \subseteq \bigcup_{\pi^\gridded\in\III^\gridded} \Grid(N_\pi).\]
The set $\{N_\pi:\pi^\gridded\in\III^\gridded\}$ must be finite, since it comprises a collection of acyclic gridding matrices each with a bounded number of non-zero entries.
Furthermore, each class $\Grid(N_\pi)$ is lwqo by Proposition~\ref{prop-no-cycles}, and hence we conclude that $\III$ is contained in a finite union of lwqo sets, and thus $\III$ is lwqo. A final application of Lemma~\ref{lem-gridded-ungriddded-lwqo} shows that $\III^\gridded$ is lwqo, as required.

To complete the proof of Theorem~\ref{thm-lwqo-char}, it remains to show that classes containing arbitrarily long coils contain infinite labelled antichains. In fact, we construct the elements of the antichain directly from the gridded coils.

\begin{lemma}\label{lem-coil-antichains}
Let $M$ be a unicyclic partial multiplication matrix, let $\pi^\gridded$ be a coil with points $v_1,\dots,v_n$, and let $\sigma^\gridded$ a coil with points $u_1,\dots,u_m$, where $m<n$.
If $\pi^\gridded$, $\sigma^\gridded$ are labelled via
\[
\ell_{\pi^\gridded}(v_i)=\begin{cases}	\bullet&1<i<n\\
\circ & i\in\{1,n\} 
,
\end{cases}
\qquad\text{and}\qquad
\ell_{\sigma^\gridded}(u_i)=\begin{cases}	\bullet&1<i<m\\
\circ & i\in\{1,m\}, 
\end{cases}
\]
where $L=\{\bullet,\circ\}$ is a $2$-element antichain,
then there is no labelled embedding of $(\sigma^\gridded,\ell_{\sigma^\gridded})$ in $(\pi^\gridded,\ell_{\pi^\gridded})$.
\end{lemma}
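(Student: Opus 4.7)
The plan is to derive a contradiction from the assumption that a labelled embedding $f$ of $(\sigma^\gridded,\ell_{\sigma^\gridded})$ into $(\pi^\gridded,\ell_{\pi^\gridded})$ exists. First I would unpack the label constraint: since $L=\{\bullet,\circ\}$ is an antichain, the labelled containment ordering forces $f$ to preserve labels exactly. Hence $f$ sends the two $\circ$-labelled vertices $u_1,u_m$ bijectively to $\{v_1,v_n\}$, and the $\bullet$-labelled interior vertices $u_2,\ldots,u_{m-1}$ into $\{v_2,\ldots,v_{n-1}\}$.

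Next, I would verify that a gridded embedding $f$ induces a copy of $D_{\sigma^\gridded}$ as an \emph{induced} subdigraph of $D_{\pi^\gridded}$. Two vertices $u_i,u_j$ share a row (respectively, column) in $\sigma^\gridded$ iff the cells containing them lie in a common row (column) of $M$; since $f$ preserves cells, this is equivalent to $f(u_i),f(u_j)$ sharing a row (column) in $\pi^\gridded$. The orientation within such a shared row or column is determined by the corresponding sign in the partial multiplication matrix $M$, and $f$ preserves horizontal and vertical order, so the direction of every arrow is preserved as well. Thus the edges of $D_{\sigma^\gridded}$ are in bijection with the edges of $D_{\pi^\gridded}$ between image vertices.

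Now I would combine two facts about coils. On one hand, $D_{\sigma^\gridded}$ is strongly connected by property~\ref{coil-b}, so the induced subdigraph on $\{f(u_1),\ldots,f(u_m)\}$ in $D_{\pi^\gridded}$ is strongly connected. On the other hand, property~\ref{coil-c} applied to $\pi^\gridded$ says that the only edge $v_i\to v_k$ with $k>i$ is the one with $k=i+1$. This yields the key structural observation: for every interior index $j$ with $1<j<n$, there is no edge in $D_{\pi^\gridded}$ from any vertex in $\{v_1,\ldots,v_{j-1}\}$ to any vertex in $\{v_{j+1},\ldots,v_n\}$, because every forward edge from an index below $j$ reaches at most $v_j$.

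To conclude, I would observe that the image of $f$ contains both $v_1$ and $v_n$, which lie on opposite sides of every interior vertex $v_j$. If the image omitted some interior $v_j$, then no directed path in the induced subdigraph could connect $v_1$ to $v_n$, contradicting strong connectivity. Hence the image must contain every $v_j$ with $1\leq j\leq n$, giving $m\geq n$ and contradicting $m<n$. The main obstacle is the verification in the second step that gridded embeddings \emph{reflect} as well as preserve orientation-digraph edges; this is routine given the uniform role of $M$, but it is the hinge on which the entire argument turns.
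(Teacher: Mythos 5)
Your proof is correct and follows essentially the same route as the paper's. The paper derives the contradiction by citing Lemma~\ref{lem-coil-splits} (removing an interior $v_k$ makes $\pi^\gridded$ divisible with $v_1,v_n$ in different parts) together with Observation~\ref{obs-divisible-subpermutation}, and then contradicts the indivisibility of $\sigma^\gridded$; you instead re-derive the content of Lemma~\ref{lem-coil-splits} inline from property~\ref{coil-c} and phrase the contradiction as a failure of strong connectivity of the image's induced subdigraph, which is the same thing via Lemma~\ref{lem-Dpi-strongly-connected}. The ``hinge'' you flag --- that a gridded embedding induces $D_{\sigma^\gridded}$ as an \emph{induced} subdigraph --- is already noted just before Observation~\ref{obs-pi-to-D-pi}, so nothing is missing.
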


\begin{proof}
Suppose to the contrary that $\phi$ is an embedding of $(\sigma^\gridded,\ell_{\sigma^\gridded})$ into $(\pi^\gridded,\ell_{\pi^\gridded})$.
Then the image $\phi((\sigma^\gridded,\ell_{\sigma^\gridded}))$ in $(\pi^\gridded,\ell_{\pi^\gridded})$ must include $v_1$ and $v_n$, because they are the only entries whose labels match those of $u_1$ and $u_m$ in $(\sigma^\gridded,\ell_{\sigma^\gridded})$. Since $m<n$, this means that there is at least one entry $v_k$ (with $1<k<n$) not in the image $\phi((\sigma^\gridded,\ell_{\sigma^\gridded}))$.

By Lemma~\ref{lem-coil-splits}, the removal of $v_k$ from $\pi^\gridded$ leaves an $M$-divisible gridded subpermutation in which $v_1$ and $v_n$ are in different components. The image $\phi((\sigma^\gridded,\ell_{\sigma^\gridded}))$ is contained in $\pi^\gridded-v_k$, and it contains $v_1,v_n$; hence the image is also $M$-divisible by Observation~\ref{obs-divisible-subpermutation}. This is a contradiction, since $\sigma^\gridded$ is indivisible.
\end{proof}


Note that the following result applies to \emph{any} permutation class, but in particular it completes our proof of Theorem~\ref{thm-lwqo-char}.

\begin{prop}\label{prop-long-coils}
Let $\C$ be a permutation class. If there exists a cyclic partial multiplication matrix $M$ such that $\C$ contains arbitrarily long $M$-coils, then $\C$ is not labelled well quasi-ordered.
\end{prop}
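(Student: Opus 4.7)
The plan is to reduce the statement to the gridded setting so that Lemma~\ref{lem-coil-antichains} can be applied directly. The first step is to observe that if $\mathcal{D} \subseteq \C$ is a subclass, then any labelled antichain in $\mathcal{D} \wr L$ is automatically an antichain in $\C \wr L$. Consequently, to prove that $\C$ is not lwqo it suffices to prove that the subclass $\C \cap \Grid(M)$ is not lwqo. Every $M$-coil in $\C$ lies in $\Grid(M)$ by definition, so the whole sequence of arbitrarily long coils still sits inside $\C \cap \Grid(M)$, and we may replace $\C$ by $\C \cap \Grid(M)$ and assume throughout that $\C \subseteq \Grid(M)$.

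With $\C$ now inside a grid class, Lemma~\ref{lem-gridded-ungriddded-lwqo} reduces the problem to exhibiting an infinite labelled antichain in $\C^\gridded \wr L$ for some wqo label set $L$. Take $L = \{\bullet, \circ\}$ as a two-element antichain, which is trivially wqo. By hypothesis there is a sequence of $M$-coils $\pi_1, \pi_2, \ldots \in \C$ with $|\pi_1| < |\pi_2| < \cdots$. For each $i$, fix an $M$-gridding $\pi_i^\gridded \in \C^\gridded$ that realises $\pi_i$ as a gridded coil, with coil sequence $v_1^{(i)}, \ldots, v_{n_i}^{(i)}$, and let $\ell_i$ be the labelling assigning $\circ$ to the two extremal vertices $v_1^{(i)}$ and $v_{n_i}^{(i)}$ and $\bullet$ to every other vertex of $\pi_i^\gridded$.

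For any $i \neq j$, say $i < j$, Lemma~\ref{lem-coil-antichains} --- applied with the shorter coil $\sigma^\gridded = \pi_i^\gridded$ and the longer coil $\pi^\gridded = \pi_j^\gridded$, and with the labellings given above --- rules out any labelled gridded embedding of $(\pi_i^\gridded, \ell_i)$ into $(\pi_j^\gridded, \ell_j)$, while the reverse direction fails on length grounds. Hence $\{(\pi_i^\gridded, \ell_i) : i \geq 1\}$ is an infinite antichain in $\C^\gridded \wr L$, so $\C^\gridded$ is not lwqo, and therefore (by Lemma~\ref{lem-gridded-ungriddded-lwqo}) neither is $\C$. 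The proof is essentially a bookkeeping application of the two cited lemmas, so the only real obstacle is making the reduction from arbitrary $\C$ to $\C \subseteq \Grid(M)$ (and then to the gridded setting) fully explicit.
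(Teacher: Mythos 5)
Your proof is correct and essentially matches the paper's: both rest entirely on Lemma~\ref{lem-coil-antichains} and the idea of encoding the gridding as a label so that the endpoint markers $\circ$ force a gridded embedding. The only cosmetic difference is that the paper inlines the cell-labelling by using $G\times\{\bullet,\circ\}$ directly on ungridded coils, whereas you factor that step through the ``direct part'' of Lemma~\ref{lem-gridded-ungriddded-lwqo} (after first restricting to $\C\cap\Grid(M)$), which amounts to the same trick.
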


\begin{proof}
Suppose that $M$ is an $m\times n$ matrix, and let $G=[m]\times [n]$ denote the indices for the cells of $M$ (which, as before, we consider to be an antichain). Denote the (infinite) set of $M$-coils in $\C$ by~$\OOO$.

For each $\pi\in\OOO$, we fix an $M$-gridded version, $\pi^\gridded$, of $\pi$, chosen so that $\pi^\gridded$ is a gridded $M$-coil. Now define a labelling $\ell_\pi:\pi\to G\times \{\bullet,\circ\}$ as follows. For a point $p$ belonging to cell $(i,j)$ in $\pi^\gridded$, we have
\[
\ell_\pi(p) = \begin{cases}\left((i,j),\circ\right)&\text{if }p\text{ is the first or last point of the coil}\\
 	\left((i,j),\bullet\right)&\text{otherwise.}
 \end{cases}
\]
For distinct $\sigma,\pi\in \OOO$, the labelled permutations $(\sigma,\ell_\sigma)$ and $(\pi,\ell_\pi)$ are not comparable by Lemma~\ref{lem-coil-antichains}, and thus $\C$ is not lwqo.
\end{proof}

%
%
%
%
%
\subsection{Decidability}\label{subsec-lwqo-decide}

Before we move on to consider the bases of pseudoforest grid classes, we take a brief diversion to show that lwqo in finitely based subclasses of pseudoforest grid classes is decidable, thus answering an instance of Question~\ref{q-wqo-decidable}.

\begin{thm}\label{thm-lwqo-decidable}
	There exists an algorithm that answers the following:\\
\strut\hspace{.5in} Given a pseudoforest 
gridding matrix $M$ and permutations $\pi_1,\dots,\pi_k\in\Grid(M)$,\\
\strut\hspace{.5in} is $\C=\av(\pi_1,\dots,\pi_k)\cap \Grid(M)$ labelled well quasi-ordered?
\end{thm}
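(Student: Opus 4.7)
The plan is to invoke Theorem~\ref{thm-lwqo-char} to reduce the decision to: given $M$ and $\{\pi_1,\ldots,\pi_k\}$, does $\C$ contain $M$-coils of arbitrarily large length? Every $M$-coil is supported on a single cycle of $G_M$ (of which there are finitely many). Fixing such a cycle of length $\ell$, the coils on it fall into $2\ell$ chains, one for each choice of (cell of $v_1$, cell of $v_2$). Each chain is a nested sequence $\pi_{\ell+1}^{(j)}\leq\pi_{\ell+2}^{(j)}\leq\cdots$ under containment, with the containment witnessed by omitting the endpoint $v_{n+1}$ (which yields a coil of length $n$ in the same chain, as checked directly from Definition~\ref{de-coil}).

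By the monotonicity within each chain, the set of basis elements contained in $\pi_n^{(j)}$ is non-decreasing in $n$ and, being bounded by the finite basis, stabilises. So either (i) some $\pi_i$ is contained in every sufficiently long coil of the chain, or (ii) no $\pi_i$ is ever contained in any coil of the chain. In case (ii) the entire chain lies in $\C$, witnessing arbitrary-length coils in $\C$; hence by Theorem~\ref{thm-lwqo-char}, $\C$ is lwqo if and only if every chain (across every cycle of $G_M$) eventually acquires a basis element.

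To make this test effective we require a computable bound $N_0 = N_0(M,\pi_1,\ldots,\pi_k)$ such that if any $\pi_i$ is contained in some coil of a given chain, then it is already contained in $\pi_n^{(j')}$ for some $n\leq N_0$ and some chain $j'$ of the same cycle. Given $N_0$, the algorithm simply enumerates, for each cycle, each of its $2\ell$ chains and each $n\leq N_0$, the coil $\pi_n^{(j)}$ (each computable in polynomial time from the construction rules given after Definition~\ref{de-coil}), tests containment against each $\pi_i$, and outputs \emph{lwqo} precisely when every chain yields some $n\leq N_0$ for which $\pi_n^{(j)}$ contains a basis element, and \emph{not lwqo} otherwise.

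The main obstacle is establishing the bound $N_0$. The strategy is as follows: given an embedding of $\pi_i$ (of length $r$) at positions $i_1 < \cdots < i_r$ in $\pi_n^{(j)}$, one can strip the head $v_1,\ldots,v_{i_1-1}$ and tail $v_{i_r+1},\ldots,v_n$, each removal preserving the coil property (possibly shifting the starting cell), so as to assume $i_1=1$ and $i_r=n$. If every gap $i_{k+1}-i_k$ is at most $\ell$, then $n\leq (r-1)\ell+1$ and we may set $N_0=r\ell$. The hard case is when some gap exceeds $\ell$: here the periodic layer structure of coils -- the fact that the points $v_c, v_{c+\ell}, v_{c+2\ell},\ldots$ within each cell $c$ on the cycle play closely related roles -- should allow one to extract a shorter coil (possibly in a different chain of the same cycle) still containing the embedded pattern, via a careful excision or layer-shift argument in the orientation digraph. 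Verifying this rigorously requires managing how the orientation relations \ref{c2} and \ref{c3} translate across the transformation, and is the principal technical work; once achieved, iterating yields a bound $N_0 = O(r\ell)$ in terms of $r=\max_i|\pi_i|$ and the longest cycle length $\ell$ in $G_M$, completing the decision procedure.
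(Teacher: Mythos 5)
Your proposal correctly reduces the problem, via Theorem~\ref{thm-lwqo-char}, to deciding whether $\C$ contains arbitrarily long coils, correctly organises coils into finitely many chirality/start-cell chains with the monotonicity observation that longer coils in a chain contain shorter ones, and correctly identifies that the whole problem collapses to exhibiting a computable bound $N_0$ such that if any $\pi_i$ embeds in some coil, it already embeds in one of length at most $N_0$. This is the same overall plan as the paper. However, you have not actually proven the bound, and you say so yourself: the ``hard case'' (some gap between consecutive embedded points exceeding $\ell$) is dispatched with a sketch of a ``careful excision or layer-shift argument'' that you explicitly label ``the principal technical work.'' That step \emph{is} the theorem; everything else is routine scaffolding.

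Worse, the sketch as written points in a direction that does not quite work. You cannot simply ``excise'' a block of $\ell$ (or $\ell+1$) consecutive coil points and hope to remain a coil: by Lemma~\ref{lem-coil-splits}, removing an interior segment of a coil produces an $M$-\emph{divisible} gridded permutation, i.e.\ an $M$-sum $\beta^\gridded\boxplus\alpha^\gridded$ whose orientation digraph is disconnected, which is not a coil. Nor does ``shifting layers'' alone reconnect it. The mechanism the paper actually uses is: (i) by a pigeonhole count on the interior range $v_{2\ell+1},\dots,v_{m-2\ell}$ (staying at least $2\ell$ from each end so all required neighbours exist), find $\ell+1$ consecutive coil points disjoint from the embedding; (ii) delete them, producing a divisible permutation; (iii) \emph{insert one new point} $z$ into the cell $j\pmod\ell$ where the deleted run starts and ends, positioned so that $v_{j+2\ell+1}\rightarrow z\rightarrow v_{j+\ell+1}$ and $v_{j-1}\rightarrow z\rightarrow v_{j-\ell-1}$, so that \ref{c1}--\ref{c4} are restored and the result is a coil of length exactly $m-\ell$ still containing the embedding. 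The insertion step is essential and does not appear in your sketch; without it there is no way to stay within the class of coils. Your head-and-tail stripping is harmless but also not needed once the pigeonhole argument is set up on the interior. Finally, the chirality remark in the paper (any coil of the same chirality and length at least $|\xi|+\ell$ contains $\xi$) is what turns ``$\pi_i$ embeds in some bounded-length coil'' into the finite test across $2\ell$ chains; you appeal to chain monotonicity, which conveys the same idea.
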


\begin{proof}
Without loss of generality and by the discussions in Section~\ref{sec-grid-class-structure}, we may assume that $M$ is a partial multiplication matrix.
 By Theorem~\ref{thm-lwqo-char}, we need to check whether $\C$ contains arbitrarily long coils.
 Since every coil stays within a single unicyclic component, we may assume without loss that $G_M$ is connected, and has a cycle; let $\ell$ denote the length of this cycle. Let $n=\max\{|\pi_1|,\dots,|\pi_k|\}$. We claim the following:

\textbf{Claim.} \emph{If some $\pi_i$ embeds in a coil of length $m > (n+5)\ell+n$, then it embeds in a coil of length $m-\ell$. }

Put another way, if $\pi_i$ embeds in a coil, then it embeds in a coil of length at most $(n+5)\ell+n$.
   The underlying idea behind proving this claim is that if a `long' coil is `disconnected' into the sum of two coils by the removal of some `middle' subcoil which starts and ends in the same cell $c$, it can be `reconnected' into a coil again by the addition of one new point in $c$.

Given $\pi_i$ embeds in a coil of length $m > (n+5)\ell+n$, there exists an $M$-gridding $\pi_i^\gridded$ that embeds in a gridded coil $\xi^\gridded$.
Let the points of $\xi^\gridded$ be $v_1,v_2,\dots,v_m$, and recall Definition~\ref{de-coil} and the notation used therein.
The embedding of $\pi_i^\gridded$ into $\xi^\gridded$ corresponds to a subsequence $\sigma$ of $v_1,\dots,v_m$ of length $n'\leq n$.

We now consider points of the coil $\xi^\gridded$, other than the first $2\ell$ or the final $2\ell$ points, that are \emph{not} part of the subsequence $\sigma$.
The points of $\sigma$ split the other points of $v_{2\ell+1},\dots, v_{m-2\ell}$ into at most $n'+1$ contiguous subsequences, some of which may be empty. The total number of points in these subsequences is at least
\[
m-4\ell-n'\geq m-4\ell-n>(n+1)\ell\geq (n'+1)\ell.
\]
Hence, by the pigeonhole principle, we can find a contiguous subsequence of $v_{2\ell+1},\dots, v_{m-2\ell}$ of length at least $\ell+1$ which contains no points of $\sigma$. Thus, fix an index $j$ with $2\ell<j\leq m-3\ell$ such that $v_j,\dots,v_{j+\ell}$ contains no points of $\sigma$.

The subpermutation $\zeta^\gridded$ of $\xi^\gridded$ corresponding to the sequence
$v_1,\dots ,v_{j-1},v_{j+\ell+1},\dots,v_m$ also contains the same embedding of $\pi_i^\gridded$.
Furthermore, by Lemma~\ref{lem-coil-splits}, $\zeta^\gridded$ can be written as an $M$-sum
\[\zeta^\gridded = \beta^\gridded \boxplus \alpha^\gridded\]
where $\alpha^\gridded$ is the permutation defined on the points $v_1,\dots,v_{j-1}$, and $\beta^\gridded$ is the permutation defined on the points $v_{j+\ell+1},\dots,v_m$.

Note that the sequence $v_j,\dots,v_{j+\ell}$ begins and ends in cell $j\pmod{\ell}$.
We now insert a single point $z$ into  $\zeta^\gridded$, which will reconnect $\alpha$ and $\beta$ into a coil, as illustrated in Figure~\ref{fig-coil-reinsertion}.
Specifically, the point $z$ goes into cell $j\pmod{\ell}$, and satisfies
\[
v_{j+2\ell+1}\rightarrow z\rightarrow v_{j+\ell+1}\quad\text{and}\quad v_{j-1}\rightarrow z\rightarrow v_{j-\ell-1}.
 \]
 We note that the four points  $v_{j+2\ell+1},v_{j+\ell+1},v_{j-1}, v_{j-\ell-1}$ all exist and are present in $\zeta^\gridded$
 because $2\ell<j\leq m-3\ell$.
Therefore,  the sequence $v_1,\dots,v_{j-1},z,v_{j+\ell+1},\dots,v_m$ satisfies~\ref{c1}--\ref{c4}, and thus forms a gridded coil of length $m-\ell$. Furthermore, this coil contains $\pi_i^\gridded$, as witnessed by the subsequence $\sigma$, which has not been affected by the construction. This establishes the claim.

\begin{figure}
{\centering
\begin{tikzpicture}[scale=0.4]
\foreach \x/\y in {0/0,0/5,0/7,0/12,9/0,9/5}
	\draw[dotted] (\x,\y) -- ++(-2,0) (\x+5,\y) -- ++(2,0);
\foreach \x/\y in {0/0,5/0,9/0,14/0,0/7,5/7}
	\draw[dotted] (\x,\y) -- ++(0,-1) (\x,\y+5) -- ++(0,1);
\draw[->] (1,-1) -- ++(3,0);
\draw[->] (10,-1) -- ++(3,0);
\draw[->] (-1,1) -- ++(0,3);
\draw[->] (-1,9) -- ++(0,3);
\draw (0,0) rectangle (5,5);
\draw (9,0) rectangle ++(5,5);
\draw (0,7) rectangle ++(5,5);
\node[permpt,thick,fill=white,label={[label distance=-3pt]above left:\footnotesize$z$}] (z) at (2,3.5) {};
\draw[black!45] (z) -- ++(11.3,0) (z) -- ++(0,5.3);
\begin{scope}[label distance=-3pt,
		black label/.style={label={below right:\footnotesize#1}}]
\node[permpt,black label={$v_{j+2\ell-1}$}] at (10.5,1) {};
\node[permpt,black label={$v_{j-1}$}] at (12.5,3) {};
\node[permpt,black label={$v_{j-\ell-1}$}] at (13.5,4) {};
\node[permpt,black label={$v_{j+2\ell}$}] at (1,1.5) {};
\node[permpt,black label={$v_{j-\ell}$}] at (4,4.5) {};
\node[permpt,black label={$v_{j+2\ell+1}$}] at (1.5,8) {};
\node[permpt,black label={$v_{j+\ell+1}$}] at (2.5,9) {};
\node[permpt,black label={$v_{j-\ell+1}$}] at (4.5,11) {};
\end{scope}
\begin{scope}[label distance=-3pt,
		grey label/.style={label={[text=black!45]below right:\footnotesize#1}},
		permpt/.append style={fill=black!45,draw=black!45}]
\node[permpt,grey label={$v_{j+1}$}] at (3.5,10) {};
\node[permpt,grey label={$v_{j+\ell-1}$}] at (11.5,2) {};
\node[permpt,grey label={$v_{j+\ell}$}] at (2,2.5) {};
\node[permpt,grey label={$v_{j}$}] at (3,3.5) {};
\end{scope}

\end{tikzpicture} \par}
\caption{Inserting the entry $z$ into $\zeta^\gridded$ to construct a coil of length $m-\ell$. Some of the $\ell+1$ entries removed from $\xi^\gridded$ to form $\zeta^\gridded$ are shown in grey
(proof of Theorem \ref{thm-lwqo-decidable}).}\label{fig-coil-reinsertion}	
\end{figure}

We now complete the proof of the theorem. Recall that there are at most $2\ell$ coils of any length $n>\ell$, and these are partitioned into two sets by their chirality.
Furthermore, note that if some coil $\xi$ contains some element $\pi_i$, then every coil of the same chirality as $\xi$ and of length at least $|\xi|+\ell$ also contains $\pi_i$ (since any such coil also contains $\xi$).

Thus, $\C$ contains arbitrarily long coils if and only if it contains all the coils of length $(n+5)\ell+n$ of one chirality.

This leads to the following decision procedure in the case where $G_M$ is connected:
for each coil $\gamma$ of length $(n+5)\ell+n$ in $\Grid(M)$, check whether $\gamma$ contains $\pi_i$ for each $1\le i\le k$. If, for one chirality, none of the coils of that chirality contains any $\pi_i$, then return that $\C$ admits arbitrarily long coils and is not lwqo. Otherwise at least one coil of each chirality contains a basis element, and we return that $\C$ admits only bounded length coils and is lwqo.

The decision procedure for an arbitrary pseudoforest partial multiplication matrix $M$ consists of using the preceding procedure on each submatrix of $M$ corresponding to a connected component of $G_M$. If for any single component we find that $\C$ admits arbitrarily long coils, then $\C$ is not lwqo; if $\C$ admits only bounded length coils in every component, then $\C$ is lwqo.
\end{proof}

%
%
%
%
%
%
%
%
\section{Bases of pseudoforest grid classes}\label{sec-basis}

We now turn our attention to the bases of grid classes.
If $M$ is acyclic, then, by the comments in the introduction, $\Grid(M)$ is a geometric grid class, and so the results of~\cite{albert:geometric-grid-:} apply. In particular Theorem~6.2 of~\cite{albert:geometric-grid-:} states that $\Grid(M)$ is finitely based, although the proof is not constructive and no bounds on the lengths of the basis elements are known.

We exhibit two pseudoforest grid classes that are not finitely based (thus disproving Conjecture 2.3 of~\cite{Huczynska2006}), but we also show that unicyclic grid classes \emph{are} finitely based. The consequence of these two results is that we now have a reasonably precise understanding regarding where the (now false) conjecture `first' breaks down.

Our counterexamples can be found in Subsection~\ref{subsec-not-finitely-based}. Our positive result, whose proof is given in Subsection~\ref{subsec-proof-finite-basis}, is as follows.

\begin{thm}\label{thm-finite-basis}
Let $M$ be a unicyclic gridding matrix. Then $\Grid(M)$ is finitely based.
\end{thm}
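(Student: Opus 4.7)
The plan is to proceed by contradiction using the labelled well quasi-ordering characterisation in Theorem~\ref{thm-lwqo-char}. By Proposition~\ref{prop-pseudoforest-on-a-pmm} we may assume that $M$ is a unicyclic partial multiplication matrix; write $\ell$ for the length of the unique cycle of $G_M$ and suppose, for contradiction, that the basis $B$ of $\Grid(M)$ is infinite.

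First I would encode basis elements as labelled griddable permutations. For each $\beta \in B$, select a canonical point $p_\beta$ of $\beta$ (for example, the leftmost) and let $\gamma_\beta = \beta \setminus \{p_\beta\}$; by minimality of $\beta$, $\gamma_\beta \in \Grid(M)$. Label each point of $\gamma_\beta$ by its quadrant relative to $p_\beta$, producing a labelled permutation $(\gamma_\beta, \lambda_\beta) \in \Grid(M) \wr L$, where $L$ is the four-element antichain of compass labels. The assignment $\beta \mapsto (\gamma_\beta, \lambda_\beta)$ is order-reflecting: a labelled embedding $(\gamma_\beta, \lambda_\beta) \le (\gamma_{\beta'}, \lambda_{\beta'})$ preserves the relative quadrant of each point with respect to the distinguished point, so extending it by $p_\beta \mapsto p_{\beta'}$ yields $\beta \le \beta'$. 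Since $B$ is an antichain, the image $\{(\gamma_\beta, \lambda_\beta) : \beta \in B\}$ is an infinite antichain in $\Grid(M) \wr L$.

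The technical core of the argument is to establish a uniform coil bound: there exists $K = K(M)$ such that for every $\beta \in B$, the permutation $\gamma_\beta$ contains no $M$-coil of length greater than $K$. Granted this, the permutations $\gamma_\beta$ lie in the subclass $\C \subseteq \Grid(M)$ of those whose $M$-coils have length at most $K$. By Theorem~\ref{thm-lwqo-char}, $\C$ is labelled well quasi-ordered, so $\C \wr L$ is well quasi-ordered; this contradicts the infinite antichain constructed above.

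The coil bound is proved by contrapositive, adapting the coil-shrinking construction from the proof of Theorem~\ref{thm-lwqo-decidable}. If $\gamma_\beta$ contains a sufficiently long $M$-coil $\xi$, a pigeonhole argument isolates a long interior segment of $\xi$ that lies far from $p_\beta$, and the abundance of coil points in the corresponding cells of the unique cycle of $G_M$ provides enough freedom to re-slice the $M$-gridding of $\gamma_\beta$ so as to admit $p_\beta$ as well. This yields an $M$-gridding of $\beta$ itself, contradicting $\beta \in B$. This coil-absorption step is the main obstacle of the proof, requiring a careful geometric argument that crucially exploits the uniqueness of the cycle in $G_M$; that this approach should fail in the presence of two or more interacting cycles is precisely what the counterexamples underlying Proposition~\ref{prop-bicyclic-inf-basis} demonstrate.
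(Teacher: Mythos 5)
Your overall framework is essentially the same as the paper's: bound the length of coils appearing in basis elements, invoke Theorem~\ref{thm-lwqo-char} to conclude that permutations with bounded coils form an lwqo subclass, and then encode each basis element $\beta$ via a one-point deletion plus extra information so that an infinite basis would violate well quasi-ordering. Your quadrant labelling is exactly the standard proof of the fact that the one-point extension $\DDD^{+1}$ of an lwqo class $\DDD$ is lwqo (the paper cites this as Lemma~\ref{lem-lwqo-one-point-extension}), so that part is fine, if a little redundant: fixing the leftmost point means only two of the four quadrant labels are ever used, and one must also confirm the re-inserted point lands in the correct position, but this works out.

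The gap is in what you call the ``technical core,'' which is precisely the content of the paper's Lemma~\ref{lem-basis-bound}. Your sketch asserts that when $\gamma_\beta$ contains a long coil, ``the abundance of coil points in the corresponding cells of the unique cycle of $G_M$ provides enough freedom to re-slice the $M$-gridding of $\gamma_\beta$ so as to admit $p_\beta$ as well.'' This intuition is backwards. Long coils do not create gridding flexibility --- quite the opposite: by Lemma~\ref{lem-coil-unique-gridding}, a sufficiently long coil has a \emph{unique} gridding on the cycle, and by Lemma~\ref{lem-coil-gridding-off-cycle} only a bounded number of its points can escape the cycle cells. The paper exploits this \emph{rigidity} as an obstruction, not as wiggle room: it deletes a carefully chosen interior coil point $p=v_m$ (not a canonical leftmost point), shows that the two halves of the coil must grid into the same cell $c$ on either side of where $p$ must go, deduces the existence of a witness $p'\in c$ forming a $21$ with $p$, and then makes a \emph{second} deletion $\beta - v_{m-2\ell}$ to force $p'$ into a position incompatible with the monotonicity of $c$. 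The coil-shrinking device you borrow from Theorem~\ref{thm-lwqo-decidable} does not transfer: there, one removes $\ell+1$ consecutive coil points and re-inserts a single new point to reconnect two coil halves, a surgery on the coil itself; here, you want to re-insert a point $p_\beta$ that is not a coil point and whose position is fixed by $\beta$, and there is no freedom to choose where it goes. Finally, your canonical choice of the leftmost point is exactly what must be abandoned: the argument needs to pick the deleted point as a function of the coil's geometry, not a priori.
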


As usual, we can assume that $M$ is a partial multiplication matrix, but note here that we must appeal to the unicyclic case of Proposition~\ref{prop-pseudoforest-on-a-pmm}, rather than the more general assumptions for pseudoforest gridding matrices that we have been using thus far.

%
%
\subsection{Unique griddings}

Before we can prove Theorem~\ref{thm-finite-basis}, we need some results concerning possible griddings of coils. The first result in this direction shows that sufficiently long coils grid uniquely in their cycles.

\newcommand{\gridbound}{(\ell+1)\ell^2+1}
\newcommand{\gridboundp}{(\ell+1)\ell^2+2}
\begin{lemma}[Murphy and Vatter~{\cite[Lemma 4.2]{murphy:profile-classes:}}]\label{lem-coil-unique-gridding}
Let $M$ be a cyclic partial multiplication matrix containing $\ell$ non-empty cells. Then every coil of length at least $\gridbound$ has only one $M$-gridding.
\end{lemma}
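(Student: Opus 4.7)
The plan is to show that any two $M$-griddings of a sufficiently long coil must coincide in their assignment of points to cells. Let $\pi$ be an $M$-coil of length $n \ge \gridbound$, with canonical gridding $\pi^\gridded$ provided by the coil construction and points $v_1,\dots,v_n$ satisfying conditions~\ref{c1}--\ref{c4}, and let $\pi^\natural$ be any second $M$-gridding of $\pi$. The aim is to prove that $\pi^\natural$ and $\pi^\gridded$ assign each $v_i$ to the same cell of $M$.

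The first step is to apply the pigeonhole principle twice to locate a long stack of coil points trapped together in a single cell of $\pi^\natural$. Since $n \ge (\ell+1)\ell^2+1$ and $M$ has $\ell$ non-empty cells, some cell of $\pi^\natural$ contains at least $(\ell+1)\ell+1$ points of $\pi$. A second pigeonhole across the $\ell$ residue classes modulo $\ell$ (the canonical cell indices) then yields a residue $r \in \{1,\dots,\ell\}$ such that at least $\ell+2$ points from the canonical stack $S_r = \{v_i : i \equiv r \pmod{\ell}\}$ all lie in a single cell $c$ of $\pi^\natural$.

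The second step identifies $c$ with cell $r$ of the canonical gridding. By property~\ref{coil-a}, every $\ell$ consecutive vertices form a directed cycle in the canonical digraph, so the points of $S_r$ form a tightly nested monotone sequence within cell $r$ of $\pi^\gridded$, with orientation equal to $M_r$. These same points must form a monotone sequence in $c$ under $\pi^\natural$ with orientation $M_c$. The matching orientations, together with the physical location of the $\ell+2$ points (which lie inside the bounding box of cell $r$ in $\pi^\gridded$), force $c$ to occupy the same row and column as $r$ in $M$.

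The final step is to propagate this identification around the cycle of $G_M$. With the cell $c$ pinned down, the grid lines of $\pi^\natural$ separating $c$ from its two cyclic neighbours in $G_M$ are forced by the interlocking of the stack $S_r$ with the adjacent stacks $S_{r-1}$ and $S_{r+1}$ along their shared row and column; iterating around the cycle determines all remaining grid lines. The main obstacle is the second step: making rigorous the claim that the orientation and geometric position of the stack force $c$ to be $r$ rather than some other cell in $M$ with matching orientation. This relies crucially on $M$ being cyclic — so that the cells sit in a single cycle whose interlocking structure, established in Section~\ref{subsec-coils}, uniquely determines the correspondence — and on the constant $\ell+2$ being large enough to make the stack's bounding box align uniquely with cell $r$.
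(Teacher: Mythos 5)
The paper does not actually prove this lemma: it is cited as Lemma~4.2 of Murphy and Vatter~\cite{murphy:profile-classes:}, and the authors invoke it as an external result. So there is no ``paper's own proof'' to compare against; what follows evaluates your attempt on its merits.

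Your first step (the double pigeonhole) is sound and gives the same count one gets from a single pigeonhole over the $\ell^2$ pairs (cell in $\pi^\gridded$, cell in $\pi^\natural$): with $n\ge(\ell+1)\ell^2+1$ points one obtains at least $\ell+2$ coil points that share a cell in both griddings. The gap is precisely in your step~2, and you have correctly diagnosed it as the main obstacle, but the two reasons you offer for closing it do not hold up. First, ``matching orientations'' only tells you that the sign $M_c$ equals the sign $M_r$; in a cyclic gridding matrix many cells share a sign, so this narrows the candidates but does not identify $c$ with $r$. Second, the ``physical location'' argument is circular: the bounding box of a cell is determined by the grid lines, and the grid lines of $\pi^\natural$ are exactly what you are trying to pin down. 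Two cells with \emph{different} row-column indices in $M$ can have overlapping bounding rectangles once the two sets of grid lines differ, so a stack of points lying inside both rectangles does not force $c=r$. (Indeed Figure~\ref{fig-coil-non-unique-gridding}(i)--(ii) in the paper exhibits a coil with two distinct $M$-griddings, showing that some length hypothesis is genuinely needed; any argument for step~2 must consume that hypothesis in a non-trivial way.)

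Your step~3 is really where the identification $c=r$ has to be forced, not a cleanup once $c=r$ is known. The right shape of argument is: from the $\ell+2$ coincident points of $S_r$, use Observation~\ref{obs-3-in-a-coil} to locate their horizontal and vertical separators, which must lie in cells adjacent to both $r$ (in $\pi^\gridded$) and $c$ (in $\pi^\natural$) in $G_M$; iterate around the cycle to build a correspondence between the two cell labellings; and then rule out the possibility that this correspondence is a non-trivial rotation or reflection of the cycle. That last exclusion is the substantive combinatorial content of the lemma, and it is absent from your write-up. As the authors note in their working materials, the original Murphy--Vatter proof of this fact is quite long, which is a hint that the missing step is not a one-liner. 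In summary: the pigeonhole set-up is right, the propagation idea is the right one, but the argument that the propagated correspondence is the identity on cells has not been supplied, so the proof as written has a genuine gap.
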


We note here that the bound above is almost certainly not optimal. Indeed, consideration of the short coils on relatively small cycles suggests the following should hold.

\begin{conj}\label{conj-unique-griddings}
	Let $M$ be a cyclic partial multiplication matrix containing $\ell$ non-empty cells. Then every coil of length at least $2\ell+1$ has only one $M$-gridding.
\end{conj}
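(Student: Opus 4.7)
The plan is to exploit the rigidity of the coil's geometric pattern to show that a coil of length $n \geq 2\ell+1$ admits no second gridding. Recall the canonical gridding $\pi^\gridded$: point $v_i$ lies in cell $i\pmod\ell$, and within each cell the points form a staircase $v_i \to v_{i+\ell} \to v_{i+2\ell} \to \cdots$ along the cell's orientation. For $n\geq 2\ell+1$, every cell on the cycle contains at least two points, and one distinguished cell (the one containing $v_1$) contains at least three, namely $v_1$, $v_{\ell+1}$, and $v_{2\ell+1}$. I would suppose $\pi^\natural$ is any other $M$-gridding of the same underlying permutation and argue that it must coincide with $\pi^\gridded$ on every cell assignment.

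The argument would proceed layer by layer, from the outside in. The outermost layer $v_1, \dots, v_\ell$ consists of the last points in their cells in the canonical gridding, and hence occupy extreme corner positions in the plot with respect to their rows and columns. Any valid gridding is forced to assign these extreme points to the corresponding corner cells, because the adjacent cells on the cycle have orientations that alternate, and mis-assigning one of the $v_i$ would necessarily produce a non-monotone configuration somewhere. With the outer layer pinned, I would propagate inward using the interleaving relation from Subsection~\ref{subsec-coils}: in the row shared by cells $i$ and $i-1$, the points of the first two layers satisfy
\[ v_{i-1} \to v_{i-\ell-1} \to v_{i-\ell} \to v_{i-2\ell-1} \to v_{i-2\ell} \to \cdots, \]
which leaves exactly one valid position for the vertical line separating these cells. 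Iterating this cell by cell around the cycle, and then layer by layer inward, forces $\pi^\natural = \pi^\gridded$.

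The bound $2\ell+1$ is essential: for coils of length $\ell+1$ or $\ell+2$, some cell contains only a single point, which can be reassigned to an adjacent cell without breaking monotonicity, yielding alternative griddings. Once $n \geq 2\ell+1$, every cell contains at least two points and one contains three, providing enough \emph{ballast} to block all such shifts. The main obstacle is ruling out alternative griddings that rearrange cell assignments globally rather than just locally; the outside-in layer propagation, seeded by the extreme outer points whose positions are forced by the plot itself (independently of any choice of grid lines), is the tool I would rely on to close this off. Extra care is needed around the distinguished cell containing $v_1, v_{\ell+1}, v_{2\ell+1}$, where the stack has height three rather than two; however, these three extra constraints work in favour of uniqueness rather than against it, since they further pin down the horizontal and vertical lines incident to this cell.
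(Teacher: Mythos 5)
The statement you are trying to prove is not proved in the paper at all: it is stated as Conjecture~\ref{conj-unique-griddings}. The authors only have Lemma~\ref{lem-coil-unique-gridding} (Murphy--Vatter), with the much weaker bound $(\ell+1)\ell^2+1$, and remark that the sharper bound $2\ell+1$ ``should hold'' without offering a proof. So there is no proof in the paper to compare against; what you are attempting is an open problem.

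As a candidate proof, your write-up is a strategy sketch whose two pivotal steps are asserted rather than established. First, you claim $v_1,\dots,v_\ell$ must be assigned to their canonical cells in any gridding because they ``occupy extreme corner positions in the plot.'' But these $\ell$ points form a directed cycle $v_1\to v_2\to\cdots\to v_\ell\to v_1$ in the orientation digraph, so each $v_i$ is \emph{not} extremal in the row or column it shares with $v_{i+1}$; and ``last in its cell'' is a statement relative to $\pi^\gridded$, whereas a competing gridding $\pi^\natural$ is free to draw different cell boundaries. Indeed, Figure~\ref{fig-coil-non-unique-gridding}(i)--(ii) of the paper exhibits a coil of length exactly $2\ell$ (so every cell already holds two points) that admits two distinct griddings which reassign outer-layer points. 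Your argument therefore cannot rest on the ``two points per cell'' observation alone; it must isolate precisely why the single extra point $v_{2\ell+1}$ kills the alternative, and the informal appeal to ``ballast'' does not do that. Second, the inward propagation step — that the interleaving order in a shared row ``leaves exactly one valid position for the vertical line'' — is essentially a restatement in one row of the very uniqueness you are trying to prove, and even granting it row-by-row you still have to show that the resulting local constraints close up consistently around the cycle (the cycle is exactly what makes monotone grid classes harder than acyclic ones, and exactly where Murphy and Vatter paid the $(\ell+1)\ell^2$ price). Neither gap can be filled with a sentence; each is the actual content of the missing proof.
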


This result would be the best possible in general, since there exists a coil of length 8 in a $2\times 2$ matrix that has more than one gridding; see Figure~\ref{fig-coil-non-unique-gridding} parts (i) and (ii).

\begin{figure}
	{\centering
	\begin{tikzpicture}[scale=0.3]
	\plotpermgrid{2,8,4,7,6,1,5,3}
	\draw[thick] (0.5,4.5) -- ++(8,0);
	\draw[thick] (4.5,0.5) -- ++(0,8);
	\draw[->] (1.5,0) -- ++(2,0);
	\draw[->] (5.5,0) -- ++(2,0);
  	\draw[->] (0,1.5) -- ++(0,2);
  	\draw[<-] (0,5.5) -- ++(0,2);
  	\draw (5) -- (3) -- (4) -- (7) -- (6) -- (1) -- (2) -- (8);
  	\node[draw=none] at (4.5,-1.2) {\small (i)};
	\end{tikzpicture}
	\qquad
	\begin{tikzpicture}[scale=0.3]
	\plotpermgrid{2,8,4,7,6,1,5,3}
	\draw[thick] (0.5,2.5) -- ++(8,0);
	\draw[thick] (3.5,0.5) -- ++(0,8);	
  	\node[draw=none] at (4.5,-1.2) {\small (ii)};
	\end{tikzpicture}
	\qquad
	\begin{tikzpicture}[scale=0.3]
	\plotpermgrid{2,9,4,7,8,6,1,5,3}
	\draw[thick] (0.5,4.5) -- ++(9,0);
	\draw[thick] (4.5,0.5) -- ++(0,9);
	\draw[->] (1.5,0) -- ++(2,0);
	\draw[->] (5.5,0) -- ++(2,0);
  	\draw[->] (0,1.5) -- ++(0,2);
  	\draw[<-] (0,5.5) -- ++(0,2);
  	\draw (5) -- (3) -- (4) -- (7) -- (6) -- (1) -- (2) -- (9)--(8);
  	\node[draw=none] at (5,-1.2) {\small (iii)};
	\end{tikzpicture}
	\qquad
	\begin{tikzpicture}[scale=0.3]
	\plotpermgrid{2,9,4,7,8,6,1,5,3}
	\draw[thick] (0.5,5.5) -- ++(9,0);
	\draw[thick] (4.5,0.5) -- ++(0,9);
	\draw[thick] (7.5,0.5) -- ++(0,9);
  	\node[draw=none] at (5,-1.2) {\small (iv)};
	\end{tikzpicture}	\par}
	\caption[]{(i)~A gridded coil in $\gridhash{\gctwo{2}{-1,-1}{1,1}}$ of length 8, order isomorphic to 28476153. (ii)~A different gridding of 28476153 in the same grid. (iii)~A gridded coil of length 9 in $\gridhash{\gctwo{2}{-1,-1}{1,1}}$ order isomorphic to 294786153. (iv)~A gridding of 294786153 in $\gridhash{\gctwo{3}{-1,-1,0}{1,1,-1}}$ with two misplaced points.}\label{fig-coil-non-unique-gridding}
\end{figure}

Given a gridding of a coil in a unicyclic grid class, we say that a point is \emph{misplaced} if it is placed in a cell that does not correspond to an edge of the cycle.
Lemma~\ref{lem-coil-unique-gridding} does not extend directly to unicyclic classes, since griddings with misplaced points are possible;
see Figure~\ref{fig-coil-non-unique-gridding} parts (iii) and (iv). Rather than attempting to exclude misplaced points entirely, we will instead bound how many such points there can be in coils gridded in unicyclic classes.

Given a permutation (or gridded permutation), we say that two points $u$ and $v$ are \emph{horizontally separated} if there exists a third point $w$ such that $w$ lies between $u$ and $v$ by value, but not by position. Similarly, if there exists a point $x$ that lies between $u$ and $v$ by position, but not by value, then we say that $u$ and $v$ are \emph{vertically separated}.
Note that neither of the sets of points $\{u,v,w\}$ or $\{u,v,x\}$ can form a monotone pattern, and thus if $u$ and $v$ belong to the same cell of a gridded permutation, then neither $w$ nor $x$ can belong to that cell.

The following observation follows readily from the axioms defining coils, \ref{c1}--\ref{c4}.

\begin{obs}\label{obs-3-in-a-coil}
If $u,v$ are nonconsecutive points in some cell of a gridded $M$-coil, then $u$ and $v$ are separated both horizontally and vertically.	
\end{obs}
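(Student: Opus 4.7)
The plan is to exhibit, for each of the two required separations, an explicit witness coming from a cycle-neighbour cell of the cell containing $u$ and $v$. Let $\pi^\gridded$ be a gridded $M$-coil with points $v_1,\dots,v_n$ and cells labelled $1,\dots,\ell$ as in Definition~\ref{de-coil}. Suppose $u,v$ are nonconsecutive points of some cell $c$; then $u=v_a$ and $v=v_b$ with $a\equiv b\equiv c\pmod{\ell}$, and without loss of generality $|a-b|\geq 2\ell$, since by the cell-orientation chain $\dots\rightarrow v_{c+2\ell}\rightarrow v_{c+\ell}\rightarrow v_c$ derived in Subsection~\ref{subsec-coils}, two points are consecutive in a cell exactly when their indices differ by~$\ell$.

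Because the cycle of $G_M$ is bipartite, cell $c$ has exactly one cycle-neighbour $c^{\mathrm{row}}$ sharing a row with $c$ and exactly one cycle-neighbour $c^{\mathrm{col}}$ sharing a column with $c$. Applying C2 and C3 in exactly the same way as in the row-ordering derivation given when coils were introduced shows that the points in the row through $c$ and $c^{\mathrm{row}}$ form a total order under the row-orientation in which the cells $c$ and $c^{\mathrm{row}}$ strictly alternate. In particular, between the two cell-$c$ points $v_a$ and $v_b$, which are not consecutive in cell $c$, there lies at least one cell-$c^{\mathrm{row}}$ point $w$. The analogous argument in the column through $c$ and $c^{\mathrm{col}}$ yields a cell-$c^{\mathrm{col}}$ point $w'$ lying strictly between $v_a$ and $v_b$ in that column's linear order.

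To conclude, every row-orientation is monotone in the $y$-coordinate (either $\uparrow$ or $\downarrow$), so $w$ being strictly between $v_a$ and $v_b$ in the row-order forces the value of $w$ to lie strictly between those of $v_a$ and $v_b$; since $w$ sits in cell $c^{\mathrm{row}}$, which occupies a different column-strip from $c$, the position of $w$ is not between those of $v_a$ and $v_b$, and hence $w$ witnesses horizontal separation. Symmetrically $w'$ has position strictly between those of $v_a$ and $v_b$ but value outside the row-strip of $c$, witnessing vertical separation. I do not anticipate any substantive obstacle: the argument is a direct unpacking of the row- and column-linear orders already derived in Subsection~\ref{subsec-coils}, together with the simple observation that these orders strictly alternate between the two relevant cells.
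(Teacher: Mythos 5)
The paper gives no proof of this observation, stating only that it follows readily from the coil axioms, and your argument is the natural direct one: nonconsecutiveness forces the indices of $u$ and $v$ to differ by at least $2\ell$, and the row and column total orders implied by C2 and C3 then supply a cycle-neighbour point strictly between $u$ and $v$ in each linear order, which witnesses the two separations.

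One small caveat regarding your justification: the claim that the row order \emph{strictly} alternates between cells $c$ and $c^{\mathrm{row}}$ is slightly too strong. For example, with $\ell=4$ and a coil of length $n=9$, cell $1$ contains $v_1,v_5,v_9$ and its row-neighbour contains $v_2,v_6$, and the row order (first to last in orientation) is $v_9,v_5,v_6,v_1,v_2$, so the cells read $1,1,2,1,2$ rather than strictly alternating. The only possible defect is a single extra point at the very start of the order (arising when one of the two cells has one more point than the other and the last-indexed point of the coil lies there), so the alternation is strict throughout the interior. Consequently between any two nonconsecutive cell-$c$ points there is still at least one cell-$c^{\mathrm{row}}$ point, and your conclusion is unaffected. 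A further minor wording point: $|a-b|\ge 2\ell$ is a direct consequence of nonconsecutiveness, not a ``without loss of generality'' reduction.
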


Next, we establish a technical proposition that provides a bound, for any gridding of a coil, on the number of points in any cell, in terms of the number of points in other cells in the same row or column.

\begin{prop}\label{prop-row-bound}
Let $M$ be a partial multiplication matrix with at least one cycle, and let $\pi^\natural$ be a gridded $M$-coil defined on a cycle in $M$ of length $\ell$.

Let $\pi^\gridded$ be any other $M$-gridding of $\pi$, and denote by $c_1,\dots,c_r$ the non-empty cells of $\pi^\gridded$ that belong to some single row or column of $M$. If cell $c_i$ contains $k_i$ points for $1\le i\le r$, and $S_i=\sum_{j\neq i}k_j$, then
$
	k_i \le 2\ell (S_i+ 1).
$
\end{prop}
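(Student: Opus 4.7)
The plan is to bound separately, for each cycle cell $D_j$ of $\pi^\natural$, the size $a_j = |c_i \cap D_j|$ of its contribution to $c_i$, and then sum. Since $\pi^\natural$ is a coil with all points lying in the $\ell$ cells of its cycle, $k_i = \sum_j a_j$ has at most $\ell$ nonzero summands.

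The first step is to observe that $c_i \cap D_j$ forms a consecutive interval in $D_j$'s monotone sequence: if $u, v \in c_i \cap D_j$ and $z \in D_j$ lies between $u$ and $v$ in $D_j$'s monotone order, then $z$ sits in the bounding box of $u$ and $v$, which is contained in the rectangular cell $c_i$, so $z \in c_i \cap D_j$. Writing these points in $D_j$'s order as $p_1, \ldots, p_{a_j}$, I would then apply Observation~\ref{obs-3-in-a-coil} to each pair $(p_s, p_t)$ with $t - s \ge 2$. This produces a point $w$ of $\pi$ whose value lies strictly between those of $p_s$ and $p_t$ and whose position lies outside the position interval of $p_s$ and $p_t$. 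Such a $w$ must lie in the row of $\pi^\gridded$ containing $c_i$, since its value falls in that row's value range, but it cannot lie in $c_i$, because the monotonicity of the cell $c_i$ would otherwise force its position to lie between those of $p_s$ and $p_t$. Hence $w$ is one of the $S_i$ points in the other cells of the row.

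To convert this into a quantitative bound on $a_j$, for each $1 \le s < a_j$ define the gap $G_s$ to be the open value interval between $p_s$ and $p_{s+1}$, and call $G_s$ \emph{good} if some point of $\bigcup_{i' \neq i} c_{i'}$ has value in $G_s$. The separator produced for the pair $(p_s, p_{s+2})$ must have value in $G_s \cup G_{s+1}$, because it cannot coincide with $p_{s+1}$ (whose position lies between those of $p_s$ and $p_{s+2}$, violating the separator's position condition). So at least one of $G_s, G_{s+1}$ is good, which means the set of bad gaps forms an independent set in the path $G_1 - G_2 - \cdots - G_{a_j - 1}$, yielding at least $\lfloor (a_j - 1)/2 \rfloor$ good gaps. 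Distinct gaps have disjoint value ranges and therefore contain distinct points, so $S_i \ge \lfloor (a_j - 1)/2 \rfloor$, which rearranges to $a_j \le 2(S_i + 1)$. Summing over the at-most-$\ell$ contributing $D_j$ gives
\[
k_i \;=\; \sum_j a_j \;\le\; \ell \cdot 2(S_i + 1) \;=\; 2\ell(S_i + 1),
\]
as required; the case in which $c_1, \ldots, c_r$ lie in a single column is symmetric, with the roles of position and value exchanged. The main subtlety is the step showing that the horizontal separator must lie in the row of $c_i$ but outside $c_i$ — both conclusions rest on the monotone structure of the cell $c_i$ — and the combinatorial gap-counting argument, which is what turns the mere existence of separators from Observation~\ref{obs-3-in-a-coil} into a quantitative bound.
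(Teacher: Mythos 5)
Your proof is correct and follows essentially the same route as the paper's: both arguments come down to finding many nonconsecutive pairs from a single coil cell $D_j$ of $\pi^\natural$ inside $c_i$, applying Observation~\ref{obs-3-in-a-coil} to obtain horizontal separators which must lie in the same row of $\pi^\gridded$ but outside $c_i$ (by the monotonicity of $c_i$), and counting these against the $S_i$ available points. The paper aggregates via pigeonhole into one cell and argues by contradiction with a direct ``nonoverlapping nonconsecutive pairs'' count, whereas you prove the per-cell bound $a_j\le 2(S_i+1)$ directly via the gap/independent-set bookkeeping and then sum over the at most $\ell$ cells; these are minor presentational variants of the same argument.
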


\begin{proof}
Without loss of generality, suppose that $c_1,\dots,c_r$ are the non-empty cells belonging to some row of $\pi^\gridded$. For a contradiction, suppose that the cell $c_i$ contains at least $2\ell (S_i+ 1)+1$ points. By the pigeonhole principle, this implies that $c_i$ must contain at least
$2(S_i+ 1)+1$
points that belong to a single cell in the gridded $M$-coil $\pi^\natural$.

Within this collection of points we can identify at least $S_i+ 1$ nonoverlapping nonconsecutive pairs of points, as illustrated in Figure~\ref{fig-many-pairs}. By Observation~\ref{obs-3-in-a-coil} each of these pairs must be separated in $\pi^\natural$ by some other entry in the same row, giving a collection of at least $S_i+ 1$ distinct separating points in $\pi^\natural$. In the gridding $\pi^\gridded$, all these separating points must belong to the cells $c_1,\dots,c_r$, except none can lie in $c_i$. However, there are only $S_i$ points available in the other cells of this row.
\begin{figure}
\centering
\begin{tikzpicture}[scale=0.25]
\draw (0,0) rectangle (10,10);
\foreach \i in {1,3,5,7,9}
	\node[permpt] at (\i,\i) {};
\foreach \i in {2,4,6,8} {
	\node[permpt,fill=black!30,draw=black!30] at (\i,\i) {};
	\node[permpt] (p\i) at (\i+10,\i+.5) {};
	\draw[dashed] (p\i) -- ++(-10.5,0);
}
\end{tikzpicture}
\caption{Nine points of a coil in a cell can be divided into four nonoverlapping nonconsecutive pairs of points, each of which must be separated horizontally
(proof of Proposition \ref{prop-row-bound}).}\label{fig-many-pairs}
\end{figure}
\end{proof}

Now let $M$ be a unicyclic partial multiplication matrix, and define $\pi^\natural$ and $\pi^\gridded$ as in Proposition~\ref{prop-row-bound}. Consider a cell $c$ of $\pi^\gridded$ that corresponds to an edge incident with a leaf of $G_M$. Such a cell is necessarily the only non-empty cell in its row or column, and by applying Proposition~\ref{prop-row-bound} to this row or column, we conclude that cell $c$ contains at most $2\ell$ (misplaced) points.

Similarly, if $c_1,\dots, c_r$ are the non-empty cells of a row of $\pi^\gridded$ with the property that $c_1,\dots,c_{r-1}$ correspond to edges incident with leaves of $G_M$, then the remaining non-empty cell $c_r$ in that row can contain at most
\[
	2\ell \big(2\ell(r-1)+ 1\big) = 4\ell^2(r-1)+ 2\ell
\]
misplaced points.

We now iterate this process: as soon as all but one of the non-empty cells in a row or column have a bound on the number of misplaced points they contain, we can apply Proposition~\ref{prop-row-bound} to establish a bound for the misplaced points in the final non-empty cell (which by definition is not part of the cycle). This process will eventually bound the number of points in each non-empty cell of $\pi^\gridded$ that is not associated with an edge from the single cycle of $M$. While these bounds are surely a significant overestimate of the actual number of points that the cells can contain, this quantity depends only on properties of the matrix $M$, rather than of the coil $\pi$. Considering the sum of the bounds on all such cells, we conclude the following.

\begin{lemma}\label{lem-coil-gridding-off-cycle}
Let $M$ be a unicyclic partial multiplication matrix.
There is a bound $B$, depending only on $M$, such that the following holds:
if $\pi^\gridded$ is any $M$-gridding of an $M$-coil $\pi$, then $\pi^\gridded$ has at most $B$ misplaced points.
\end{lemma}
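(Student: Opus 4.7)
My plan is to bound the number of misplaced points in $\pi^\gridded$ by iteratively applying Proposition~\ref{prop-row-bound} to the cells corresponding to off-cycle edges of $G_M$. Since $M$ is unicyclic, the off-cycle edges of $G_M$ form a forest of trees, each rooted at a vertex of the unique cycle. Every misplaced point lies in a cell of $\pi^\gridded$ associated with one of these off-cycle edges, so it suffices to establish a bound (depending only on $M$) on the number of points in each such cell, and then sum these bounds to obtain $B$.

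The base case handles any cell $c$ corresponding to an edge of $G_M$ incident to a leaf vertex $w$: since $w$ has degree one, the row or column of $M$ indexed by $w$ contains only $c$, and Proposition~\ref{prop-row-bound} with $r=1$ (and hence $S=0$) immediately yields that $c$ contains at most $2\ell$ points.

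For the inductive step I would process off-cycle edges from deepest to shallowest within each tree. Given an edge $e$ whose deeper endpoint is a non-cycle vertex $u$, the row or column of $M$ indexed by $u$ contains precisely $e$ together with the cells corresponding to the child edges of $u$ in the tree (since $u$ is off the cycle, all of its incident edges are tree edges). By the inductive hypothesis each child cell is bounded by a quantity depending only on $M$, so their combined point count $S$ is similarly bounded, and Proposition~\ref{prop-row-bound} gives $k_e \le 2\ell(S+1)$. The same reasoning applies to the root edges of each tree (those with one endpoint on the cycle) because we invoke the proposition using the row or column of the \emph{non-cycle} endpoint, which still contains only tree-edge cells. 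Once every off-cycle cell has been bounded in this way, summing their individual bounds produces the desired $B$.

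The one point requiring care, rather than a genuine obstacle, is verifying that Proposition~\ref{prop-row-bound} can always be legitimately applied: this needs every non-empty cell other than $e$ in the chosen row/column to have already received a bound. The key observation is that selecting the non-cycle endpoint $u$ of $e$ ensures that all non-empty cells in $u$'s row/column correspond to tree edges incident to $u$, and the depth-first ordering guarantees these siblings have been processed already. The resulting $B$ is a finite expression depending only on $\ell$ and on the structure of the off-cycle trees of $G_M$, and in particular is independent of the coil $\pi$.
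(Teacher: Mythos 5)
Your proof is correct and follows essentially the same approach as the paper: both iteratively apply Proposition~\ref{prop-row-bound} to off-cycle cells, starting from leaves of $G_M$ and working inward, accumulating bounds that depend only on $M$. Your version is simply slightly more explicit about choosing the non-cycle endpoint's row or column at each step, which is the implicit reason the paper's ``iterate as soon as all but one cell is bounded'' process succeeds.
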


As well as there being a bound on the number of misplaced points, there are also restrictions on which points can be misplaced, as a consequence of the following simple observation concerning points sandwiched between two others in a cell.

\begin{obs}\label{obs-sandwich}
Given any gridding matrix $M$, let $\pi^\natural$ and $\pi^\gridded$ be two $M$-griddings of an arbitrary  $\pi\in \Grid(M)$.
Suppose $u,v,w$ is a monotone triple of points from a single cell $c$ of $\pi^\natural$.
If $u$ and $w$ also share a common cell $c'$ in $\pi^\gridded$, then $v$ lies in $c'$ too.
\end{obs}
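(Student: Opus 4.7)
The plan is to exploit the fact that in any gridding, cells are axis-aligned open rectangles, so membership in a cell is determined by coordinate-wise comparisons with the gridding lines. Hence, once we observe that $v$ is sandwiched between $u$ and $w$ in both coordinates, its membership in cell $c'$ will follow at once from the membership of $u$ and $w$.

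First I would record the consequence of $u,v,w$ lying in a common cell $c$ of $\pi^\natural$: the entries in any non-empty cell form a monotone (increasing or decreasing) sequence, so a triple of entries from $c$ is monotone, and the middle entry $v$ of this monotone triple satisfies
\[
\min(x_u,x_w) < x_v < \max(x_u,x_w) \quad\text{and}\quad \min(y_u,y_w) < y_v < \max(y_u,y_w),
\]
where $(x_p,y_p)$ denotes the planar coordinates of a point $p$ in the plot of $\pi$. That is, $v$ lies strictly between $u$ and $w$ both in position and in value.

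Next I would recall the formal definition of a gridding $(\VVV,\HHH)$ given in Subsection~2.1: the cell $c' = C_{i'j'}$ in $\pi^\gridded$ is the open rectangle
\[
\{(x,y)\::\: v_{i'-1} < x < v_{i'},\ h_{j'-1} < y < h_{j'}\}.
\]
Since $u$ and $w$ both lie in $c'$, their $x$-coordinates belong to the interval $(v_{i'-1},v_{i'})$ and their $y$-coordinates to $(h_{j'-1},h_{j'})$. Combined with the sandwiching inequalities above, this forces $(x_v,y_v)$ to lie in the same open rectangle, so $v\in c'$.

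There is no genuine obstacle here: the observation is a purely geometric consequence of the fact that cells are axis-aligned rectangles and cell contents are monotone. The one point worth being precise about is that the sandwiching inequalities are \emph{strict}, so $v$ cannot accidentally land on a gridding line of $\pi^\gridded$ and thereby fail to lie in the open rectangle $c'$; strictness holds because $u,v,w$ are distinct points of a permutation, hence have pairwise distinct $x$- and $y$-coordinates.
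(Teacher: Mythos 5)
Your proof is correct, and the argument you give is exactly the natural one: the paper states this as an unproved observation precisely because it follows immediately from the fact that cells are axis-aligned open rectangles and that the middle entry of a monotone triple is coordinate-wise strictly sandwiched between the outer two. You are also right to flag and justify the strictness of the inequalities, and your reading of ``$u,v,w$ is a monotone triple'' as fixing $v$ to be the middle entry is the intended one (it is what Corollary~\ref{cor-sandwich} relies on). Nothing to add.
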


In the case that $M$ is unicyclic and $\pi$ is a coil, we have the following specialisation, given that $\pi$ has a standard gridding on the cycle.

\begin{cor}\label{cor-sandwich}
  Let $M$ be a unicyclic partial multiplication matrix whose cycle has length $\ell$, and
  let $\pi^\gridded$ be any gridded $M$-coil with points $v_1,\ldots,v_n$.
  For any suitable $i,j,k$, if $v_{i-j\ell}$ and $v_{i+k\ell}$ share a common cell in $\pi^\gridded$, then $v_i$ also lies in that cell in $\pi^\gridded$.
\end{cor}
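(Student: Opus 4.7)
The plan is to read the corollary off as a direct specialisation of Observation~\ref{obs-sandwich}, with the two griddings playing the roles of $\pi^\natural$ and $\pi^\gridded$ being, respectively, the defining coil gridding of $\pi$ and the arbitrary $M$-gridding of $\pi$ named $\pi^\gridded$ in the statement. The proof splits into three bookkeeping steps and no technically substantial step.

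First I would use condition \ref{c1} of Definition~\ref{de-coil} applied to the defining coil gridding $\pi^\natural$ of $\pi$: each $v_a$ sits in the cell labelled $a \pmod{\ell}$, so the three points $v_{i-j\ell}$, $v_i$ and $v_{i+k\ell}$ all lie in one and the same cell $c$ of $\pi^\natural$ (the one labelled $i \pmod{\ell}$, which is part of the cycle of $G_M$). Second I would invoke the within-cell ordering recorded immediately after Definition~\ref{de-coil}, namely that within cell $i \pmod{\ell}$ the orientation digraph restricts to the chain $\cdots \to v_{i+2\ell} \to v_{i+\ell} \to v_i \to v_{i-\ell} \to v_{i-2\ell} \to \cdots$. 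Since $j,k \ge 1$ are such that both $v_{i-j\ell}$ and $v_{i+k\ell}$ exist, this chain immediately places $v_i$ strictly between $v_{i+k\ell}$ and $v_{i-j\ell}$ in the orientation of $c$; because the entries of any cell form a monotone sequence, this means that $v_{i+k\ell}$, $v_i$, $v_{i-j\ell}$ form a monotone triple in $c$ with $v_i$ as the middle element.

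Finally I would apply Observation~\ref{obs-sandwich} with $u = v_{i+k\ell}$, $v = v_i$, $w = v_{i-j\ell}$, $\pi^\natural$ the coil gridding and $\pi^\gridded$ the gridding from the corollary statement. The hypothesis that $v_{i-j\ell}$ and $v_{i+k\ell}$ share a common cell $c'$ in $\pi^\gridded$ is exactly the hypothesis of the observation, whose conclusion yields $v_i \in c'$ in $\pi^\gridded$, as required.

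There is no real obstacle. The only thing that needs care is confirming that the within-cell chain above does put $v_i$ strictly between $v_{i-j\ell}$ and $v_{i+k\ell}$ for every legal pair $(j,k)$; but this is built into condition \ref{c3} together with the explicit description of how consecutive points in one cell are interleaved with the surrounding row/column, which the paper has already carried out when constructing coils. Hence the corollary is essentially a one-line consequence of Observation~\ref{obs-sandwich} once the coil's cell-by-cell linear order is written down.
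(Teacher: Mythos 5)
Your argument is correct and is exactly the reasoning the paper leaves implicit: the corollary is stated without proof, prefaced only by the remark that it specialises Observation~\ref{obs-sandwich} when $\pi$ has a standard gridding on the cycle. You have supplied precisely the missing details, namely taking the standard coil gridding as $\pi^\natural$, using condition~\ref{c1} together with the within-cell chain $\cdots\to v_{i+\ell}\to v_i\to v_{i-\ell}\to\cdots$ to exhibit $(v_{i+k\ell},v_i,v_{i-j\ell})$ as a monotone triple in one cell of $\pi^\natural$, and then invoking the observation.
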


%
%

%
%
%
\subsection{Proof of Theorem~\ref{thm-finite-basis}}\label{subsec-proof-finite-basis}

The proof of Theorem~\ref{thm-finite-basis} will be completed by considering the longest possible coil that can be contained in a basis element $\beta$ of some $\Grid(M)$, where $M$ is a unicyclic partial multiplication matrix.
We will establish a bound on the length of such coils that depends only on $M$ and not on $\beta$. We can then conclude that each basis element can be formed by adding a single point to a permutation that belongs to an lwqo subclass of the given grid class. In turn, this is sufficient to establish that the basis elements themselves belong to an lwqo class, and consequently the basis (which forms an unlabelled antichain) must be finite. The bulk of the remaining work is in the following lemma.

\begin{lemma}\label{lem-basis-bound}
Let $M$ be a unicyclic partial multiplication matrix, and let $\C=\Grid(M)$.
There is a bound, which depends only on $M$, on the length of the longest coil contained in any basis element $\beta$ of $\C$.
\end{lemma}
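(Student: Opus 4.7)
The plan is a proof by contradiction. Assume the lemma fails, so that for every $K$ there is a basis element $\beta$ of $\Grid(M)$ containing an $M$-coil $\pi = v_1 v_2 \cdots v_n$ with $n > K$. I will construct an $M$-gridding of $\beta$ for $K$ sufficiently large in terms of $M$, contradicting $\beta \notin \Grid(M)$. Let $\ell$ denote the length of the unique cycle of $G_M$, and let $B$ be the constant from Lemma~\ref{lem-coil-gridding-off-cycle}. Pick $b = v_{i_0}$ to be an interior point of $\pi$ with $i_0 \approx n/2$. Since $\beta$ is a basis element, $\beta - b$ lies in $\Grid(M)$, and so admits some $M$-gridding $G_b$. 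The strategy is to extend $G_b$ to an $M$-gridding of $\beta$ by inserting $b$ into its canonical cell $c^* := i_0 \pmod \ell$ on the cycle.

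The key structural fact I would establish is that $G_b$ grids $\pi - b$ almost canonically. By Lemma~\ref{lem-coil-splits}, $\pi - b$ decomposes in $G_b$ as an $M$-sum $\alpha^\gridded \boxplus \gamma^\gridded$, where $\alpha$ and $\gamma$ are the two halves of the coil on either side of $b$, each itself an $M$-coil of length roughly $n/2$. Applying Lemma~\ref{lem-coil-gridding-off-cycle} to each half yields that at most $B$ points of each are misplaced in $G_b$. A pigeonhole argument then locates a consecutive run of non-misplaced points of length at least $\gridbound$; Lemma~\ref{lem-coil-unique-gridding} applied to the cycle submatrix of $M$ forces this run into its canonical cells, and Corollary~\ref{cor-sandwich} iteratively propagates canonicality to every non-misplaced point of $\alpha^\gridded$ and $\gamma^\gridded$. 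For $n$ large, the canonical cell $c^*$ consequently already contains $v_{i_0 \pm \ell}, v_{i_0 \pm 2\ell}, \dots$ in their canonical positions in $G_b$, and the coil axioms guarantee that $b$ lies between $v_{i_0 - \ell}$ and $v_{i_0 + \ell}$ in the canonical order of $c^*$, so inserting $b$ into $c^*$ respects its orientation.

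The main obstacle is the final insertion step: verifying that the vertical and horizontal lines of $G_b$ can be adjusted locally near $b$'s position and value so that $b$ falls inside $c^*$, without displacing any other point of $\beta - b$ from its cell. The difficulty is twofold, coming from (i) the at most $2B$ misplaced points of $\pi - b$ in $G_b$, which may lie in the neighborhood of $b$, and (ii) the points of $\beta \setminus \pi$ that $G_b$ may place in distant cells but that nonetheless constrain the available line positions. My plan for handling this obstacle is to exploit the canonical placement of the coil in a wide neighborhood of $i_0$ (valid for $n$ sufficiently large, since only boundedly many coil points are misplaced), combined with the separation property of coils given by Observation~\ref{obs-3-in-a-coil}, to carve out enough local room around $b$'s canonical position and value to perform the required line adjustments without interference.
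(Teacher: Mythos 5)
Your overall strategy — delete an interior coil point $b$, grid $\beta - b$, show the two half-coils are gridded essentially canonically up to $B$ misplaced points each, propagate uniqueness via Lemma~\ref{lem-coil-unique-gridding} and Corollary~\ref{cor-sandwich}, then try to re-insert $b$ — matches the opening moves of the paper's proof. The canonicality analysis you sketch is sound and is genuinely needed.

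However, the final step as you describe it cannot work, and not merely because of technical difficulty: the goal of ``inserting $b$ into $c^*$ without interference'' is structurally impossible. Since $\beta$ is a basis element, $\beta \notin \Grid(M)$, so \emph{no} gridding of $\beta - b$ can be extended by reinserting $b$. That is not a nuisance to be engineered around by local line adjustments; it is forced. In the paper's argument this obstruction is made explicit: after establishing that $b$ (the paper's $p$) must land in a specific cell $c$ sandwiched between canonical coil points, they note that if the cell's contents plus $b$ were still monotone, $\beta$ would be griddable, a contradiction. Hence there must exist a genuine blocking point $p' \in S_c$, a point of $\beta$ that forms an inversion with $b$ inside that cell and lies in the sandwich interval. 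Your plan does not account for the necessary existence of $p'$, and Observation~\ref{obs-3-in-a-coil} gives no power here: $p'$ need not be a coil point at all, and it is positioned exactly where you hope to have room.

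The missing idea is a \emph{second} deletion. The paper next deletes $r = v_{m-2\ell}$ (two coil steps ``before'' $p$ in the same cell), runs the same canonicality analysis on the two coils this creates, and shows that in any gridding of $\beta - r$, the cell $c$ must contain $v_{m+\ell}$, $p$, $q = v_{m-\ell}$, and — crucially — the blocking point $p'$, because $p'$ is sandwiched between $v_{m+\ell}$ and $q$. But now $\{p, p'\}$ is a $21$ while $\{p', q\}$ is a $12$, violating monotonicity of the cell, and that is the contradiction. So the proof does not end by constructing a gridding of $\beta$; it ends by showing that a gridding of $\beta - r$ cannot exist, which is absurd since $\beta$ is a basis element. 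To repair your proposal you would need to replace the ``carve out room and insert'' endgame with an argument of this two-deletion type; as written, the proposal has a real gap precisely at the step you flagged as the obstacle.
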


\begin{proof}
Let the length of the cycle in $M$ be $\ell$, and let $L=\gridbound$.
Recall that, by Lemma~\ref{lem-coil-unique-gridding}, any coil of length at least $L$ has only one gridding on the cycle of $M$.
Also, let $B$ denote the bound from Lemma~\ref{lem-coil-gridding-off-cycle} on the total number of misplaced points in any $M$-gridding of a coil.
We claim that any coil contained in $\beta$ has length less than $K=(4B+2)L+2\ell+1$.

To derive a contradiction, suppose that $\beta$ contains an $M$-coil $\xi$ with at least $K$ points.
Note that $\beta$ has at least one point not on $\xi$ because $\xi$ is $M$-griddable; indeed $\xi$ can be gridded on the cycle.
Note also that, by Lemma~\ref{lem-coil-unique-gridding}, $\xi$ only has one gridding, $\xi^\gridded$ say, on the cycle.

Fix an embedding $v_1,\dots,v_K$ of $\xi$ in $\beta$.
Let $m=(2B+1)L+2\ell+1$ and let $p$ be the point $v_m$.
Let $\sigma$ be the subpermutation of $\beta$ formed of the points of $\xi$ before $p$, and $\tau$ the subpermutation formed of the points of $\xi$ after $p$.
Note that both $\sigma$ and $\tau$ are $M$-coils of length at least $(2B+1)L$, this bound being tight for $\tau$.

Consider $\beta_p=\beta-p$, and note that since $\beta$ is a basis element of $\Grid(M)$, we have $\beta_p\in\Grid(M)$.
Fix any $M$-gridding $\beta_p^\gridded$ of $\beta_p$, and let $\sigma^\gridded$ and $\tau^\gridded$ denote the inherited $M$-griddings of the subpermutations $\sigma$ and $\tau$ of $\beta_p$.

Let us now consider in detail the structure of $\sigma^\gridded$; exactly the same analysis will apply to~$\tau^\gridded$.
By Lemma~\ref{lem-coil-gridding-off-cycle}, at most $B$ points of $\sigma$ are misplaced in $\sigma^\gridded$,
the remaining points of $\sigma$ being gridded on the cycle of~$\C$.
These points are divided (by the removal of the misplaced points) into at most $B+1$ contiguous portions.

If one of these portions of $\sigma$ has length $L$ or greater, then by Lemma~\ref{lem-coil-unique-gridding}, there is only one way in which it can be gridded on the cycle.
Suppose there were two such portions, $\varphi=v_a,\ldots,v_b$ and $\psi=v_c,\ldots,v_d$ (where $b<c-1$), each of length at least $L$.
Then each would have a unique gridding, $\varphi^\gridded$ and $\psi^\gridded$ say, on the cycle.
Moreover, since their $M$-sum $\psi^\gridded\boxplus\varphi^\gridded$ is contained in $\xi^\gridded$ (the unique gridding of $\xi$ on the cycle), we know that this is how $\varphi$ and $\psi$ would be gridded.
Thus, by Corollary~\ref{cor-sandwich}, each point $v_i$, $b<i<c$, would also be gridded on the cycle, and so would not be misplaced, since for each such $i$ there would be a point of $\varphi^\gridded$ and a point of $\psi^\gridded$ sandwiching $v_i$ in a common cell of the cycle.
Hence only one portion of $\sigma$ can have length $L$ or greater.

As a consequence, the only points of $\sigma$ that can be misplaced in any $M$-gridding are its first $BL$ points and its last $BL$ points.
In particular, there is a contiguous portion of $\sigma$ of length at least $L$ all of whose points are gridded on the cycle in any $M$-gridding.
Similarly, there is a contiguous portion of $\tau$ of length at least $L$ that contains no misplaced points in any $M$-gridding.
Call these portions $\alpha$ and $\omega$, respectively.
Because of their length and the fact that they are gridded on the cycle, by Lemma~\ref{lem-coil-unique-gridding}, their $M$-griddings, $\alpha^\gridded$ and $\omega^\gridded$, are unique.
Moreover, since their $M$-sum $\omega^\gridded\boxplus\alpha^\gridded$ is contained in $\xi^\gridded$, we know that this is how $\alpha$ and $\omega$ are gridded.
Furthermore, by Corollary~\ref{cor-sandwich}, no point of $\xi-p$ that succeeds $\alpha^\gridded$ and precedes $\omega^\gridded$ is misplaced.
Thus we may assume that $\alpha$ is a suffix of $\sigma$ and that $\omega$ is a prefix of $\tau$.

Let $c$ denote the cell that contains the points $v_{m-\ell}$ (belonging to $\alpha^\gridded$) and $v_{m+\ell}$ (belonging to $\omega^\gridded$), and let $S_c$ denote the entries of $\beta_p^\gridded$ that lie in cell $c$. Note that $S_c$ consists of entries from $\alpha^\gridded$ and $\omega^\gridded$, as well as possibly some other entries of $\beta_p$, but all of the entries must form a monotone sequence. Without loss of generality, we may assume that the entries in $S_c$ form a monotone increasing sequence, oriented from bottom left to top right.

We now consider the action of re-inserting $p$ into $\beta_p^\gridded$. It must be placed in cell $c$, and within this cell it must be placed above and to the right of $v_{m+\ell}$, and below and to the left of $v_{m-\ell}$. However, were $S_c\cup\{p\}$ still monotone increasing, then we would have a valid $M$-gridding of $\beta$, which is not possible. Thus there exists $p'\in S_c$ such that $\{p,p'\}$ is a copy of 21. Furthermore, since $S_c$ is monotone, we conclude that $p'$ must also lie above and to the right of $v_{m+\ell}$, and below and to the left of $v_{m-\ell}$.

Now let $q=v_{m-\ell}$ and $r=v_{m-2\ell}$, so that in $\beta_p^\gridded$ the points $p,q,r$ belong to cell $c$, and are (from left to right) successive entries of the coil.
Consider $\beta_r=\beta - r$.
We have $\beta_r\in\Grid(M)$, and in any gridding $\beta_r^\gridded$, the coil points $v_1,\dots,v_{m-2\ell-1}$ and $v_{m-2\ell+1},\dots,v_K$ form $M$-coils each of length at least $(2B+1)L$, this bound being tight in the former case.
By the same argument as that applied to $\sigma$ and $\tau$, we conclude that in $\beta_r^\gridded$ the cell $c$ must contain $v_{m+\ell}$, $p$, and $q$. Furthermore, cell $c$ must also contain $p'$ (since $p'$ is sandwiched by $v_{m+\ell}$ and $q$), but $\{p,p'\}$ was a copy of 21, while $\{p',q\}$ is a copy of $12$. This is a contradiction to the monotonicity of the entries in cell $c$ of $\beta_r^\gridded$, and completes the proof.
\end{proof}

We require one final ingredient in order to prove Theorem~\ref{thm-finite-basis}. Given a permutation class $\C$, the \emph{one-point extension} of $\C$ is the class
\[\C^{+1} = \{ \pi : \pi - p\in\C\text{ for some point }p\text{ of }\pi\}.\]

\begin{lemma}[{See Brignall and Vatter~\cite[Theorem 4.5]{bv:lwqo-for-pp:}}]\label{lem-lwqo-one-point-extension}
If $\C$ is an lwqo permutation class, then the one-point extension class $\CCC^{+1}$ is also lwqo.
\end{lemma}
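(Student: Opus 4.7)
The plan is to prove this by a standard labelling argument: given a permutation $\pi \in \C^{+1}$, we select a point $p$ with $\pi - p \in \C$ and then encode the position of $p$ into enriched labels attached to $\pi - p$. For each point $q$ of $\pi - p$ we record not only its original label, but also whether $q$ lies to the left or right of $p$ by position, and whether $q$ lies below or above $p$ by value. In this way the problem of well quasi-ordering $\C^{+1}\wr L$ reduces to well quasi-ordering $\C\wr L'$ where $L' = L \times \{<,>\} \times \{<,>\}$, together with remembering the label of $p$ itself.

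More precisely, given any wqo label set $(L,\leq_L)$ and an infinite sequence $(\pi_1,\ell_1), (\pi_2,\ell_2), \dots$ in $\C^{+1}\wr L$, I would choose for each $i$ a point $p_i$ of $\pi_i$ such that $\sigma_i := \pi_i - p_i$ belongs to $\C$, and record $\lambda_i := \ell_i(p_i) \in L$. On the remaining points of $\sigma_i$, define an $L'$-labelling $\ell_i'$ by $\ell_i'(q) = (\ell_i(q), h_i(q), v_i(q))$, where $h_i(q)\in\{<,>\}$ records whether $q$ precedes or follows $p_i$ in position, and $v_i(q)$ records the analogous information for value. Endow the sets $\{<,>\}$ with the antichain ordering so that $L'$ is wqo (by Proposition~\ref{prop-direct-prod-wqo} with $L$ and the two finite antichains), and consider the sequence of pairs $(\lambda_i, (\sigma_i,\ell_i'))$ in $L \times (\C\wr L')$. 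Since $\C$ is lwqo, the set $\C\wr L'$ is wqo, and so, again by Proposition~\ref{prop-direct-prod-wqo}, the product $L \times (\C\wr L')$ is wqo. Hence we can find $i<j$ with $\lambda_i \leq_L \lambda_j$ and $(\sigma_i,\ell_i') \leq (\sigma_j,\ell_j')$ in the labelled containment order.

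The key verification is then that such a pair yields $(\pi_i,\ell_i) \leq (\pi_j,\ell_j)$ in $\C^{+1}\wr L$. Given the embedding $\sigma_i \hookrightarrow \sigma_j$ preserving $L'$-labels, each point $q$ of $\sigma_i$ maps to a point $q'$ of $\sigma_j$ with $h_i(q) = h_j(q')$ and $v_i(q) = v_j(q')$ (because the second and third coordinates of $L'$ form antichains), as well as $\ell_i(q) \leq_L \ell_j(q')$. Extending this embedding by sending $p_i \mapsto p_j$ produces the desired labelled embedding of $\pi_i$ into $\pi_j$: the positional and value relations between $p_i$ and the image of $\sigma_i$ match those between $p_j$ and its image in $\sigma_j$, so the resulting map is order-preserving, and the label condition at $p_i$ is exactly $\lambda_i \leq_L \lambda_j$. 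The only real subtlety in the argument is the observation that the point $p_i$ need not be canonically determined by $\pi_i$; however, we only need to make \emph{some} choice for each $i$, and the wqo conclusion does not depend on any consistency between these choices. There is no significant obstacle beyond this bookkeeping.
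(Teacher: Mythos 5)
Your proof is correct. The paper does not give its own argument for this lemma but cites Brignall and Vatter~\cite[Theorem 4.5]{bv:lwqo-for-pp:}; your self-contained encoding argument, which records the quadrant of each surviving point relative to the deleted point via enriched labels and then invokes lwqo of $\C$ together with Proposition~\ref{prop-direct-prod-wqo}, is exactly the standard route taken there, and the extension of the embedding by $p_i\mapsto p_j$ is verified correctly.
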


We are finally in a position to complete the proof that $\Grid(M)$ is finitely based when $M$ is unicyclic.

\begin{proof}[Proof of Theorem~\ref{thm-finite-basis}]
By Proposition~\ref{prop-pseudoforest-on-a-pmm}, we can assume that $M$ is a unicyclic partial multiplication matrix.

Let $B$ be the basis of $\CCC=\Grid(M)$, and recall that $B$ must be an antichain. By Lemma~\ref{lem-basis-bound}, there is an absolute bound $K$ on the length of the longest coil contained in any basis element of $B$. The same statement holds for any one-point deletion of an element in $B$.

Let $\DDD$ denote the subclass of $\CCC$ comprising all permutations whose coils have length less than $K$. By Theorem~\ref{thm-lwqo-char}, $\DDD$ is labelled well quasi-ordered.

Take $\beta\in B$. Any one-point deletion $\beta^-$ must, by definition, belong to $\CCC$, but since $\beta^-$ contains only coils of length less than $K$, we must in fact have $\beta^-\in\DDD$. Thus, $\beta\in \DDD^{+1}$. Hence $B\subset \DDD^{+1}$, and $\DDD^{+1}$ is lwqo by Lemma~\ref{lem-lwqo-one-point-extension}, which means that $B$ must be finite.
\end{proof}

%
%
\subsection{Two grid classes that are not finitely based}\label{subsec-not-finitely-based}

Our proof of Theorem~\ref{thm-finite-basis} relies on Lemma~\ref{lem-coil-gridding-off-cycle}, which limits how many points of a coil can be gridded in cells that are not part of the unique cycle. By contrast, if $M$ is a gridding matrix which has two (or more) cycles that are identical (that is, the submatrices corresponding to the rows and columns involved in each cycle are equal), then any coil that can be gridded in one cycle can equally well be gridded in the other. Our counterexamples to Conjecture 2.3 of~\cite{Huczynska2006} harness this property.

\begin{prop}\label{prop-bicyclic-inf-basis}
The classes $\Grid(M)$ and $\Grid(N)$, where $M = \gcfour{5}{-1,0,0,1,1}{0,-1,1,0,0}{-1,1,-1,0,0}{1,0,0,-1,0}$ and $N = \gcfour{6}{0,-1,0,0,1,1}{0,0,-1,1,0,0}{-1,0,1,-1,0,0}{0,1,0,0,-1,0}$, are not finitely based.
\end{prop}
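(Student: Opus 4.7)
The plan is to construct, for each of the matrices $M$ and $N$, an explicit infinite antichain of basis elements, thereby demonstrating that the corresponding grid class has no finite basis. The construction hinges on the structural feature noted just before the proposition: both $M$ and $N$ contain two $4$-cycles whose associated $2\times 2$ submatrices coincide (each being $\gctwo{2}{-1,1}{1,-1}$), so that any coil admits at least two distinct griddings, one on each cycle. This ambiguity is exactly what Lemmas~\ref{lem-coil-gridding-off-cycle} and~\ref{lem-basis-bound} rule out in the unicyclic case, and it is the ingredient we shall exploit.

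For each sufficiently large $n$, I would define a permutation $\beta_n$ consisting of a coil $\xi_n$ of length approximately $n$ together with a small fixed collection of ``obstruction'' points placed in cells that do not lie on either cycle. For $\Grid(N)$, these obstructions would sit in the two leaf cells $(\mathrm{col}\,1,\mathrm{row}\,2)$ and $(\mathrm{col}\,6,\mathrm{row}\,4)$, which lie in distinct components of $G_N$; for $\Grid(M)$, the obstructions would exploit the bridge cell $(\mathrm{col}\,1,\mathrm{row}\,2)$ and the leaf cell $(\mathrm{col}\,5,\mathrm{row}\,4)$, both within the unique component of $G_M$. The obstructions will be positioned in the plane so that each one is consistent with exactly one of the two cycle-griddings of $\xi_n$, but not the other. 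Their joint placement then rules out both cycle-griddings simultaneously, forcing $\beta_n\notin\Grid(M)$ (resp.\ $\notin\Grid(N)$).

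I would then verify that each $\beta_n$ is minimal forbidden by a case analysis on the deleted point. Deleting an obstruction eliminates one of the two conflicting constraints, leaving the coil free to be gridded on the surviving cycle; the other obstruction can then be absorbed into the appropriate leaf or bridge cell. Deleting an interior coil point $v_i$ invokes Lemma~\ref{lem-coil-splits} to split $\xi_n$ into two shorter coils $\sigma$ and $\tau$, which may then be gridded independently, one on each of the two cycles; assigning each obstruction to the piece with which it is compatible resolves the conflict. Removing an endpoint of $\xi_n$ is handled analogously with one piece empty. The antichain property $\beta_m\not\le\beta_n$ for $m\ne n$ would then be established by a labelling argument modelled on Lemma~\ref{lem-coil-antichains}: marking the endpoints of $\xi_n$ and the obstruction points with distinguishing labels forces any putative containment to send endpoints to endpoints and obstructions to obstructions, which is incompatible with the differing coil lengths.

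The main obstacle is in pinning down the exact combinatorial placement of the obstruction points and then verifying that deletion of an interior coil point genuinely restores griddability. The delicate issue is that after $\xi_n$ splits into $\sigma$ and $\tau$, the surviving obstructions may still conflict with the available cycle-assignments of the two pieces, depending on where $v_i$ lay along the coil. Handling this requires careful positioning of the obstructions relative to the endpoints of $\xi_n$, together with the uniqueness-of-gridding results for long coils (Lemma~\ref{lem-coil-unique-gridding}) to control which coil-splittings are actually realisable under a candidate gridding. The different topologies of $G_M$ (one component, two cycles joined by a bridge) and $G_N$ (two disjoint cycle-plus-leaf components) mean that the two constructions, though parallel in spirit, will need to be carried out and verified separately.
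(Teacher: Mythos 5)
Your high-level strategy — build a coil in a small grid, append two ``obstruction'' points in non-cycle cells, and show each resulting permutation is a basis element — matches the paper's. However, there are three substantive issues with the proposed execution, two of unnecessary complication and one genuine gap.

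The genuine gap is in the non-griddability step. You propose to establish $\beta_n\notin\Grid(M)$ (and $\notin\Grid(N)$) by positioning the obstructions so that each is ``consistent with exactly one of the two cycle-griddings of $\xi_n$, but not the other,'' and you lean on Lemma~\ref{lem-coil-unique-gridding} to control the realisable griddings. But Lemma~\ref{lem-coil-unique-gridding} concerns \emph{cyclic} matrices, where $G_M$ is literally a cycle; it gives uniqueness of griddings on that cycle. Here $M$ and $N$ are pseudoforests with two cycles, and a coil admits many griddings (in particular, on either cycle and with misplaced points in the extra cells), so uniqueness of the coil's gridding is exactly what fails and cannot be invoked. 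The paper instead finds a specific \emph{short} subpermutation, $\lambda=3516472$ (the leftmost obstruction plus the first six coil points), shows that $\lambda$ has a unique embedding in $\pi_k$ and a unique $M$-gridding confined to the middle two rows, and then propagates this forced gridding along the coil until the rightmost obstruction has no legal cell. Without an analogue of this anchoring-by-a-small-uniquely-griddable-pattern argument, the claim that both cycle-griddings are simultaneously ruled out is asserted rather than proved, and it is precisely the subtle part.

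The two unnecessary complications: first, you plan to verify the antichain property via a labelling argument à la Lemma~\ref{lem-coil-antichains}, but once each $\pi_k$ is shown to be a basis element you get the antichain for free, since the basis of any class is by definition an antichain. Second, you plan to treat $M$ and $N$ ``separately,'' but $\Grid(N)\subseteq\Grid(M)$, so a single sequence suffices: show $\pi_k\notin\Grid(M)$ (hence $\notin\Grid(N)$) while every one-point deletion lies in $\Grid(N)$ (hence in $\Grid(M)$). The paper exploits this to run one argument for both classes. Your proposal is therefore on the right track conceptually but does not yet contain a working proof of the crucial non-griddability, and it overbuilds elsewhere.
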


Note that in Proposition~\ref{prop-bicyclic-inf-basis}, $G_M$ comprises a single component with two identical cycles, while $G_N$ has two unicyclic components (and again the two cycles are identical).

\begin{proof}
	We claim that the sequence of permutations
	\[\pi_1=3\,5\,1\,6\,4\,8\,2\,7,\quad
		\pi_2=5\,9\,1\,7\,3\,8\,6\,10\,4\,12\,2\,11,\quad
		\pi_3=7\,13\,1\,11\,3\,9\,5\,10\,8\,12\,6\,14\,4\,16\,2\,15,\ \dots\]
is an infinite antichain in the basis of each of these classes.	These permutations are contained in $\grid{\gctwo{4}{0,-1,1,1}{-1,1,-1,0}}$, and the permutation $\pi_k$ is constructed within this class by taking a coil of length $4k+1$, and adding a single point to each of the cells corresponding to the edges not on the cycle -- see Figure~\ref{fig-antichain-basis}.

\begin{figure}
{\centering
\begin{tikzpicture}[scale=0.4]
	\plotpermgrid{3,5,1,6,4,8,2,7}
	\foreach \x in {1.5,3.5,7.5} \draw[ultra thick] (\x,0.5) -- ++(0,8);
	\draw[ultra thick] (0.5,4.5) -- ++(8,0);
	\draw[thin,gray] (3)--(4)--(6)--(5)--(1)--(2)--(8)--(7);
	\begin{scope}[shift={(10,0)},scale=0.67]
		\plotpermgrid{5,9,1,7,3,8,6,10,4,12,2,11}
		\foreach \x in {1.5,5.5,11.5} \draw[ultra thick] (\x,0.5) -- ++(0,12);
		\draw[ultra thick] (0.5,6.5) -- ++(12,0);
		\draw[thin,gray] (5)--(6)--(8)--(7)--(3)--(4)--(10)--(9)--(1)--(2)--(12)--(11);
	\end{scope}
	\begin{scope}[shift={(20,0)},scale=0.5]
		\plotpermgrid{7,13,1,11,3,9,5,10,8,12,6,14,4,16,2,15}
		\foreach \x in {1.5,7.5,15.5} \draw[ultra thick] (\x,0.5) -- ++(0,16);
		\draw[ultra thick] (0.5,8.5) -- ++(16,0);
		\draw[thin,gray] (7)--(8)--(10)--(9)--(5)--(6)--(12)--(11)--(3)--(4)--(14)--(13)--(1)--(2)--(16)--(15);
	\end{scope}
\end{tikzpicture}\par}
\caption{Griddings with respect to the matrix $\gctwo{4}{0,-1,1,1}{-1,1,-1,0}$ of the first three elements of the infinite antichain $\{\pi_1,\pi_2,\dots\}$, which is contained in the bases of both $\Grid(M)$ and $\Grid(N)$ (proof of Proposition \ref{prop-bicyclic-inf-basis}).}\label{fig-antichain-basis}
\end{figure}

To establish the claim, first note that $\Grid(N)\subseteq \Grid(M)$.
It suffices to show that each $\pi_k$ does not lie in $\Grid(M)$, while the removal of any point results in a permutation in $\Grid(N)$.

First, consider removing some point $p$ from $\pi_k$, to form the permutation $\pi_k - p$. If $p$ is the leftmost point, then the same gridding as was used to construct $\pi_k$ shows that $\pi_k-p$ belongs to $\grid{\gctwo{3}{-1,1,1}{1,-1,0}}$, and $\gctwo{3}{-1,1,1}{1,-1,0}$ is a submatrix of $N$, which establishes that $\pi_k-p\in\Grid(N)$. A similar argument applies if $p$ is the rightmost point.
Lastly, if $p$ is a point of the coil, then $\pi_k-p$ can be written as a $\boxplus$ sum of a permutation in $\Grid(\gctwo{3}{-1,1,1}{1,-1,0})$ and a permutation in $\Grid(\gctwo{3}{0,-1,1}{-1,1,-1})$.
This partition of the points of $\pi_k-p$ naturally defines an $N$-gridding of $\pi_k-p$, and establishes that $\pi_k-p\in\Grid(N)$ for any point $p$.


It remains to show that $\pi_k\not\in\Grid(M)$. To do this, we consider the permutation $\lambda=3516472$, and note that (1) $\lambda \leq \pi_k$ (since $\lambda$ is formed from the leftmost point of $\pi_k$ together with the first six points of the coil) and (2) $\lambda$ has a unique gridding in $\grid{\gctwo{4}{0,-1,1,1}{-1,1,-1,0}}$ (since no point can be placed in the rightmost column, thus points in the upper row must avoid $231$ and $132$, from which it follows routinely that there is a single gridding). These two facts are enough to establish that there is exactly one embedding of $\lambda$ in $\pi_k$.

We claim further that the following is a unique $M$-gridding of $\lambda$:

{\centering
\begin{tikzpicture}[scale=.4]
	\draw[thin,black!20] (0.05,0.05) grid ++(7.9,7.9);
	\plotperm{3,5,1,6,4,7,2}
	\foreach \x in {1.5,3.5,7.3,7.7}
		\draw[thick] (\x,0) -- ++(0,8);
	\foreach \y in {.5,4.5,7.5}
		\draw[thick] (0,\y) -- ++(8,0);
\end{tikzpicture}
\par}

This claim can be easily established by an exhaustive computer search, but below we give an outline of a direct proof.

\begin{quote}
Consider any $M$-gridding of $\lambda$. First, note that in this gridding no point can be in the rightmost column, as this would require at least the top six points to be in the top row, but these points form the pattern $245361$ which is not in $\Grid(\gcone{3}{-1,1,1})$.

Next, the leftmost point must be in the leftmost column. If not, then $\lambda\in\grid{\gcfour{3}{-0,0,1}{-1,1,0}{1,-1,0}{0,0,-1}}\subseteq \grid{\gctwo{2}{-1,1}{1,-1}}=\av(2143,3412)$, yet $\lambda$ contains both 2143 and 3412.

Since the rightmost column is empty, the top \emph{two} rows of any gridding must avoid $231$ and $132$, and thus the top two rows can contain at most the top three points of~$\lambda$. In particular, the leftmost point of $\lambda$ must be in one of the lower two rows. Furthermore, the leftmost point cannot be in the bottom row, as then the lowest three points (which form a copy of $312$) are in the bottom row, and this is not possible. Thus the leftmost point must lie in the first column, and in the second row.

Now, suppose that the top point is placed in the top row. It clearly cannot be in the leftmost column, so it would need to be in the second column from the right. In turn, this implies that the rightmost point is in the same column, but in the bottom row. We now follow the points round the coil, with every point belonging either to the top row or the bottom, which eventually leads to a contradiction with the placement of the leftmost point. Thus, there are no points in the top row.

A similar argument shows that the bottom point cannot be in the bottom row. Consequently, $\lambda$ must embed in the middle two rows, but since we know that $\lambda$ has a unique gridding as a member of $\grid{\gctwo{4}{0,-1,1,1}{-1,1,-1,0}}$, the claim follows.
\end{quote}

To complete the proof, we now attempt to grid $\pi_k$.
The leftmost point and the first six points of the coil of $\pi_k$ form a copy of $\lambda$, and thus have a unique $M$-gridding that uses only the middle two rows. We now consider each successive point from the coil in turn: each must also be placed in the middle two rows in any gridding. This eventually forces the rightmost point to be placed in the middle two rows, but this is impossible. Thus, $\pi_k\not\in\Grid(M)$.
\end{proof}

%
%
%
%
%
%
%
%
\section{Well quasi-ordering in cyclic classes}\label{sec-wqo}

We now turn our attention to characterising well quasi-ordering (without labels) in subclasses of grid classes. Here, we restrict our scope to cyclic grid classes only; we will discuss obstructions to the broader question of wqo in subclasses of pseudoforest grid classes in the concluding remarks.

Throughout this section, $M$ denotes a partial multiplication matrix whose graph is a cycle of length $\ell$, and we consider a subclass $\C\subseteq\Grid(M)$.

First, clearly if $\C$ contains only bounded length coils then it is lwqo by Theorem~\ref{thm-lwqo-char}, and hence it is wqo. On the other hand, if $\C$ contains arbitrarily long coils, then we cannot immediately conclude that the class is not well quasi-ordered. Indeed, it is shown in~\cite{bev:lwqo} that the class
\[
\av\!\left(\!\!\begin{array}{c}2143, 2413, 3412, 314562, 412563, 415632, 431562,\\ 512364, 512643, 516432, 541263, 541632, 543162\end{array}\!\!\right)
\]
is wqo, yet this class is contained in $\grid{\gctwo{2}{-1,1}{1,-1}}$ and admits arbitrarily long coils.

Roughly speaking, to construct an antichain from a collection of coils, we need to use additional points to mimic the labelling used in Lemma~\ref{lem-coil-antichains}, in which the first and last points of the coil are given a different label from the interior vertices. In the case of cyclic classes, there is essentially only one way in which this can happen, which we now describe.

Given a gridded coil $\OOO = v_1,v_2,\dots,v_n$, let $\alpha_\OOO^\gridded$ be the $M$-gridded permutation with point set
\[
	\{v_1^1,v_1^2,v_2,\dots,v_{n-1},v_n^1,v_n^2\},
\]
in which the point $v_1$ of $\OOO$ is inflated to a pair of points $v_1^1,v_1^2$, forming a copy of 12 or 21 (according to the entry of $M$ corresponding to the cell containing $v_1$), and similarly $v_n$ is inflated to the pair $v_n^1,v_n^2$. We call these \emph{end-inflated gridded coils}; see Figure~\ref{fig-inflated-coil-example} for an example.

\begin{figure}
{\centering
\begin{tikzpicture}[scale=0.25]
\plotpermgrid{4,20,6,18,7,8,16,5,9,3,11,2,1,13,10,15,12,17,14,19}
\draw[thick] (7.5,0.5) -- (7.5,20.5);
\draw[thick] (14.5,0.5) -- (14.5,20.5);
\draw[thick] (0.5,8.5) -- (20.5,8.5);
\draw[thick] (0.5,14.5) -- (20.5,14.5);
\draw[gray] (7)--(8) (1)--(2);
\path [draw=gray,postaction={on each segment={mid arrow=black!75!gray}}]
          ($(7)!.5!(8)$)--(16)--(15)--(10)--(9)--(5)
        --(6)--(18)--(17)--(12)--(11)--(3)
        --(4)--(20)--(19)--(14)--(13)--($(1)!.5!(2)$);
  \draw[->] (1.5,0) -- (6.5,0);
  \draw[<-] (8.5,0) -- (13.5,0);
  \draw[<-] (15.5,0) -- (19.5,0);
  \draw[->] (0,1.5) -- (0,7.5);
  \draw[<-] (0,9.5) -- (0,13.5);
  \draw[<-] (0,15.5) -- (0,19.5);
\end{tikzpicture} \par}
\caption{The end-inflated gridded coil $\alpha^\gridded_\OOO$, where $\OOO$ is a gridded coil of length 18 in the cyclic class $\gridhash{\protect\gcthree{3}{-1,0,1}{0,1,1}{1,-1,0}}$.}\label{fig-inflated-coil-example}
\end{figure}

Note that $v_1^i,v_2,\dots,v_{n-1},v_n^j$ is order isomorphic to $v_1,\dots,v_n$, for any pair $i,j\in\{1,2\}$, and thus each such subpermutation of $\alpha_\OOO^\gridded$ is a coil. By Lemma~\ref{lem-coil-unique-gridding}, if $n\ge (\ell+1)\ell^2+1$ then there are no other $M$-griddings of the underlying permutation $v_1,\dots,v_n$, and thus by considering the subpermutations $v_1^i,v_2,\dots,v_{n-1},v_n^j$ we conclude that $\alpha_\OOO^\gridded$ is the only $M$-gridding of the ungridded permutation $\alpha_\OOO$. We call $\alpha_\OOO$ an \emph{end-inflated coil}.

We now construct a collection of $2\ell^2$ infinite antichains, using these permutations $\alpha_\OOO$. As noted in Subsection~\ref{subsec-coils}, there are $2\ell$ coils of any specified length, determined by the cell containing the first point and which of the two neighbouring cells contains the second point. Now consider the collection of \emph{all} coils of length at least $(\ell+1)\ell^2+1$ in $\Grid(M)$.
We reiterate that each such ungridded coil has a unique $M$-gridding, and hence we can allow ourselves the liberty of conflating these coils with their gridded versions.
Next, we partition the coils under consideration according to the two cells containing the first two points, and also the cell containing the last point.

More precisely, we say that a coil $\OOO$ has \emph{type} $(s_1,s_2,f)$ if its first entry is in cell $s_1$, its second is in $s_2$, and its final entry is in $f$. Note that $s_2$ must be one of the two neighbouring cells of $s_1$, and any two coils of the same type have lengths that differ by a multiple of $\ell$. Now let
\[
	\AAA_{(s_1,s_2,f)} = \big\{\alpha_\OOO : \OOO \text{ has type }(s_1,s_2,f) \text{ and }|\alpha_\OOO|\ge (\ell+1)\ell^2+3\big\}.
\]
(Note that the length of these end-inflated coils accounts for a coil of length at least $(\ell+1)\ell^2+1$ plus the two additional points required to inflate the ends.)

\begin{prop}\label{prop-cycle-antichains}
For any triple of cells $(s_1,s_2,f)$ of $M$ in which $s_2$ is in the same row or column as $s_1$, the set $\AAA_{(s_1,s_2,f)}$ is an infinite antichain.
\end{prop}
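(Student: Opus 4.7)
I would first note that $\AAA_{(s_1, s_2, f)}$ is infinite: for each integer $n \equiv f \pmod{\ell}$ with $n \ge (\ell+1)\ell^2 + 1$, the discussion in Subsection~\ref{subsec-coils} produces a gridded coil of type $(s_1, s_2, f)$ and length $n$, and hence a corresponding end-inflated coil $\alpha_\OOO \in \AAA_{(s_1, s_2, f)}$. For the antichain property, I would argue that if $\alpha_\OOO, \alpha_{\OOO'} \in \AAA_{(s_1, s_2, f)}$ with $n := |\OOO| < |\OOO'| =: n'$, then no embedding $\phi \colon \alpha_\OOO \hookrightarrow \alpha_{\OOO'}$ can exist.

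Suppose for a contradiction that such $\phi$ exists. Since both $\alpha_\OOO$ and $\alpha_{\OOO'}$ have unique $M$-griddings (Lemma~\ref{lem-coil-unique-gridding}, applied as in the paragraph preceding the proposition), $\phi$ must be cell-preserving. The key tool is the observation, made just before the proposition, that each of the four subpermutations
\[
\sigma_{ij} \;:=\; v_1^i,\; v_2,\; \dots,\; v_{n-1},\; v_n^j \qquad (i,j \in \{1,2\})
\]
of $\alpha_\OOO$ is order-isomorphic to the plain coil $\OOO$. Hence each image $\phi(\sigma_{ij})$ is a length-$n$ sub-coil of $\alpha_{\OOO'}$ of type $(s_1, s_2, f)$, and all four such sub-coils share the common interior $\phi(v_2), \dots, \phi(v_{n-1})$.

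The technical heart of the argument is a classification of the length-$n$ sub-coils of $\alpha_{\OOO'}$. Writing $\OOO'$ as a concatenation of successive \emph{rounds} of the $\ell$-cycle, I would show, using the unique gridding together with the row- and column-orientation structure developed in Subsection~\ref{subsec-coils}, that each such sub-coil arises from a contiguous block of rounds of $\OOO'$, together with a choice between $v_1'^1$ and $v_1'^2$ at the start whenever this block contains $\OOO'$'s first round, and between $v_{n'}'^1$ and $v_{n'}'^2$ at the end whenever it contains $\OOO'$'s last round; moreover, the interior of the sub-coil determines the block uniquely. Consequently, all four images $\phi(\sigma_{ij})$ must use the same block of rounds.

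A short case analysis on this common block then produces the desired contradiction via the injectivity of $\phi$. If the block contains $\OOO'$'s first round but not its last, all four sub-coils end at the common cell-$f$ point $v_n'$, forcing $\phi(v_n^1) = \phi(v_n^2)$; the case ``last round only'' is symmetric and forces $\phi(v_1^1) = \phi(v_1^2)$; containing both rounds is impossible because $n < n'$; and containing neither uniquely determines the sub-coil, so all four images coincide. The main obstacle will be the classification step, in particular establishing that no ``skipped'' or ``interleaved'' selection of rounds of $\OOO'$ yields a valid length-$n$ sub-coil of the required type; this should follow from the alternating row- and column-orientation structure described in Subsection~\ref{subsec-coils}, but will require careful bookkeeping with the orientation digraph.
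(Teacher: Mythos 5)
Your approach is correct but genuinely different from the paper's. Both proofs start identically: uniqueness of griddings (from Lemma~\ref{lem-coil-unique-gridding}) forces any embedding to be cell-preserving, and the contiguity of sub-coil embeddings (which the paper also only asserts, referencing ``an argument similar to the one used in the proof of Lemma~\ref{lem-coil-antichains}'') is needed in both. From there the paper passes to orientation digraphs and finishes by degree-counting: the only vertex of outdegree $2$ in $D_{\alpha^\gridded_\QQQ}$ is $u_1^1$ and the only vertices of outdegree $1$ are $u_1^2, u_2, \dots, u_{\ell-1}$, the image must miss an inflated end, and then the $\ell$ low-outdegree source vertices would have to map to the $\ell$ low-outdegree target vertices $v_j,\dots,v_{j+\ell-1}$, which is impossible because the induced digraph on the latter is a directed cycle whereas the one on the former is not. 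You instead exploit the four sub-coils $\sigma_{ij}$ of $\alpha_\OOO$: since all four images share the interior $\phi(v_2),\dots,\phi(v_{n-1})$, contiguity forces them onto the same block of $\alpha_{\OOO'}$, and then injectivity of $\phi$ is violated at whichever end of the block is not an inflated endpoint of $\alpha_{\OOO'}$. This buys you a cleaner, more conceptual injectivity contradiction and avoids all the degree bookkeeping, at the cost of needing the slightly stronger statement that the interior determines the block.

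Your worry that the classification ``will require careful bookkeeping with the orientation digraph'' is overstated: contiguity drops out of indivisibility, not from tracking orientations cell by cell. A gridded coil is $M$-indivisible (property~\ref{coil-b} with Lemma~\ref{lem-Dpi-strongly-connected}), so $\phi(\sigma_{ij})$, having a digraph isomorphic to that of $\sigma_{ij}^\gridded$ via Observation~\ref{obs-pi-to-D-pi}, is also indivisible. If $\phi(\sigma_{ij})$ skipped some interior coil point $v_k'$ of $\OOO'$ while having points on both sides, then by Lemma~\ref{lem-coil-splits} and Observation~\ref{obs-divisible-subpermutation} it would be $M$-divisible, a contradiction; the inflated pairs at the ends of $\alpha_{\OOO'}$ do not interfere since they occupy a single cell and cannot both appear in a coil. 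No ``skipped'' or ``interleaved'' selections survive this. Once contiguity is in hand, the interior clearly pins down the index range $[a{+}1,b{-}1]$ and hence the block $[a,b]$, and your case analysis is complete and correct (the ``both ends'' case being vacuous since $n<n'$). So your plan goes through as stated once this short indivisibility argument is inserted.
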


\begin{proof}
	Let $\OOO = v_1,\dots,v_n$ and $\QQQ = u_1,\dots,u_m$ be distinct coils of type $(s_1,s_2,f)$, both of lengths at least $(\ell+1)\ell^2+1$. Without loss of generality assume that $m<n$, which implies that $m\leq n-\ell$. To prove the proposition, we will show that $\alpha_{\QQQ}\not\leq \alpha_\OOO$.
	
	First, as both coils have lengths at least $(\ell+1)\ell^2+1$, their $M$-griddings are unique by Lemma~\ref{lem-coil-unique-gridding}. In order to show that $\alpha_{\QQQ}\not\leq \alpha_\OOO$, it suffices to show that $\alpha_{\QQQ}^\gridded \not\leq \alpha_\OOO^\gridded$. Indeed, by Observation~\ref{obs-pi-to-D-pi}, it suffices to show that the orientation digraph $D_{\alpha^\gridded_{\QQQ}}$ is not an induced subdigraph of $D_{\alpha^\gridded_\OOO}$.

	Without loss of generality, we may assume that in $D_{\alpha^\gridded_{\QQQ}}$, we have $u_1^1\rightarrow u_1^2$ and $u_n^1\rightarrow u_n^2$, and similarly in $D_{\alpha^\gridded_{\OOO}}$.
	
	We claim that the only vertex of outdegree 2 in $D_{\alpha^\gridded_{\QQQ}}$ is $u_1^1$, and that there are precisely $\ell-1$ vertices of outdegree 1, namely $u_1^2,u_2,\dots,u_{\ell-1}$. This claim follows readily by inspecting the following diagram, and by noting that $u_j$ has outdegree $\geq 3$ for all $j\geq\ell$.
	
	{\centering
	\begin{tikzpicture}
	\path[draw=none,use as bounding box] (0.3,-.8) rectangle (9,2.8);
		\node[permpt,label={below:$u_2$}] (u2) at (2,1) {};
		\node[permpt,label={below:$u_3$}] (u3) at (3,1) {};
		\node[permpt,label={left:$u_1^1$}] (u11) at ($(u2)+(150:1)$) {};
		\node[permpt,label={left:$u_1^2$}] (u12) at ($(u2)+(-150:1)$) {};
	
		\node[permpt,label={below:$u_{\ell-1}$}] (ulm1) at (5,1) {};
		\node[permpt,label={[xshift=3pt]below:$u_{\ell}$}] (ul) at (6,1) {};
		\node[permpt,label={[xshift=7pt]below:$u_{\ell+1}$}] (ulp1) at (7,1) {};
		\node[permpt,label={[xshift=9pt]below:$u_{\ell+2}$}] (ulp2) at (8,1) {};
		\draw (u11) edge[mid arrow] (u12)
			(u11) edge[mid arrow] (u2)
			(u12) edge[mid arrow] (u2)
			(u2) edge[mid arrow] (u3)
			(ulm1) edge[mid arrow] (ul)
			(ul) edge[mid arrow] (ulp1)
			(ulp1) edge[mid arrow] (ulp2);
		\draw[dashed] (u3) edge[mid arrow] (ulm1) (ulp2) edge[mid arrow] ++(1,0);
		\draw[gray] (ul) edge[out=130,in=10,mid arrow] (u11)
		 (ul) edge[out=-130,in=-10,mid arrow] (u12)
		 (ulp1) edge[out=130,in=20,mid arrow] (u11)
		 (ulp1) edge[out=-130,in=-20,mid arrow] (u12)
		 (ulp1) edge[out=140,in=30,mid arrow] (u2)
		 (ulp2) edge[out=120,in=30,mid arrow] (u11)
		 (ulp2) edge[out=-120,in=-30,mid arrow] (u12)
		 (ulp2) edge[out=130,in=40,mid arrow] (u2)
		 (ulp2) edge[out=140,in=30,mid arrow] (u3);
	\end{tikzpicture}\par}

A similar comment (replacing `outdegree' with `indegree') applies to the last $\ell$ vertices, and
then the obvious analogues apply to $D_{\alpha^\gridded_\OOO}$.
	
	Now suppose, for a contradiction, that there exists some embedding of $D_{\alpha^\gridded_{\QQQ}}$ in $D_{\alpha^\gridded_\OOO}$. Such an embedding naturally induces an embedding of the coil $\QQQ$ in $\OOO$, and we note that this induced embedding must use a contiguous set of vertices of $D_\OOO$ by an argument similar to the one used in the proof of Lemma~\ref{lem-coil-antichains}. Consequently, since $m\leq n-\ell \leq n-4$, it is not possible to embed $D_{\alpha^\gridded_{\QQQ}}$ in $D_{\alpha^\gridded_\OOO}$ so that at least one vertex in $\{v_1^1,v_1^2\}$ and at least one vertex in $\{v_n^1,v_n^2\}$ is used. We will consider the case in which an embedding uses neither $v_1^1$ nor $v_1^2$; the other case follows by an analogous argument.
	
	Let $j>1$ be the smallest index such that some vertex of $D_{\alpha^\gridded_{\QQQ}}$ maps to  $v_j$. Note that in the induced subdigraph of $D_{\alpha^\gridded_{\QQQ}}$ on $\{v_j,\dots,v_{n-1},v_n^1,v_n^2\}$, only the vertices $v_j,\dots,v_{j+\ell-1}$ have outdegree less than 3.
All these vertices must be used in the embedding of $D_{\alpha^\gridded_{\QQQ}}$, and thus it must be the vertices $u_1^1,u_1^2,u_2,\dots,u_{\ell-1}$ that map to $v_j,\dots,v_{j+\ell-1}$, in some order. However, the digraph induced on $v_j,\dots,v_{j+\ell-1}$ is a directed cycle of length $\ell$, while that on $u_1^1,u_1^2,u_2,\dots,u_{\ell-1}$ is not, and thus clearly there is no such embedding.
\end{proof}	

For a class $\C\subseteq \Grid(M)$ to be wqo, it must clearly have only finite intersection with each $\AAA_{(s_1,s_2,f)}$. We will show that this condition is sufficient: that is, if $\C\cap\AAA_{(s_1,s_2,f)}$ is finite for all types~$(s_1,s_2,f)$, then $\C$ is wqo. To do this, we take a similar approach to the proof of Theorem~\ref{thm-lwqo-char}: for a class $\C$ that has finite intersection with each $\AAA_{(s_1,s_2,f)}$, we seek to identify a refined acyclic gridding matrix $N$ so that permutations in $\C$ can be constructed using members of $\Grid(N)$. However, unlike the case for labelled well-quasi ordering, here it is possible for $\C$ to contain long $M$-coils (providing the ends of such coils cannot be inflated). This means that the earlier approach of simply taking the coil decomposition of each $\pi\in\C$ to construct a suitable $N$ will not work, as there is no bound on the length of such a decomposition.

Instead, consider any indivisible $\pi^\gridded\in\C^\gridded\subseteq \Gridhash(M)$ with coil decomposition $B_1,B_2,\dots,B_m$ and associated coil $v_1,\dots,v_m$. Roughly speaking, we seek to identify indices $i$ and $j$, with $1\leq i<j\leq m$, so that the points in $B_1,\dots,B_{i-1}$ are order isomorphic to a coil (the `leading' coil), the points in $B_i,\dots,B_j$ are order isomorphic to an element of $\Grid(N)$ (the `body'), and those in $B_{j+1},\dots,B_m$ are order isomorphic to another coil (the `trailing' coil). For the body to belong to an acyclic grid class $\Grid(N)$, we need to ensure that $|j-i|$ is bounded.

Since the points in $B_1,\dots,B_{i-1}$ must form a coil, one might hope that each box $B_k$ in this range contains precisely one point, namely $v_k$. One could then choose the index $i$ so that $B_i$ is the first non-singleton box. Analogous comments apply to the boxes $B_{j+1},\dots,B_m$ and the choice of $j$. However, not all coil decompositions of gridded coils are formed entirely of singleton boxes, as illustrated in Figure~\ref{fig-coil-bad-decomposition}. Thus, our first task is to select `good' coil decompositions when this is possible, and this is done in the following technical proposition.

\begin{figure}
{\centering
\begin{tikzpicture}[scale=0.45]
\plotpermgrid{2,12,4,11,6,9,5,7,3,8,1,10}
\draw[thick] (6.5,0.5) -- (6.5,12.5);
\draw[thick] (0.5,6.5) -- (12.5,6.5);
\path [draw=gray,postaction={on each segment={mid arrow=black!75!gray}}]
          (7)--(5)--(6)--(9)--(8)--(3)--(4)--(11)--(10)--(1)--(2)--(12);
  \draw[->] (1.5,0) -- (5.5,0);
  \draw[<-] (7.5,0) -- (11.5,0);
  \draw[->] (0,1.5) -- (0,5.5);
  \draw[<-] (0,7.5) -- (0,11.5);
\begin{scope}[shift={(14,0)}]
\plotpermgrid{2,12,4,11,6,9,5,7,3,8,1,10}
\draw[thick] (6.5,0.5) -- (6.5,12.5);
\draw[thick] (0.5,6.5) -- (12.5,6.5);
\foreach \x/\y/\dx/\dy [count=\n] in {5/5.2/1/.8,5/7/1/2,7/7/3/2,7/3/3/2,3/3/1/2,3/10/1/1,11/10/1/1,11/1/1/1,1/1/1/1,1/11.5/1/1} {
	\draw[thick] (\x,\y) rectangle ++ (\dx,\dy);
	\node at (\x+\dx/2,\y+\dy/2) {\footnotesize $B_{\n}$};
	}
\end{scope}
\end{tikzpicture} \par}
\caption{On the left, a gridded coil of length 12 in $\gridhash{\protect\gctwo{2}{-1,1}{1,-1}}$, and on the right, a coil decomposition of the same gridded coil into boxes $B_1,\dots,B_{10}$ where $B_3$ and $B_4$ are nonsingleton.}\label{fig-coil-bad-decomposition}
\end{figure}


\begin{prop}\label{prop-specific-coil-decomp}
	Let $M$ be a cyclic partial multiplication matrix and let $\ell$ denote the length of the cycle. For any $M$-indivisible $\pi^\gridded\in\Gridhash(M)$ there exists a coil decomposition
	\[
		B_1,B_2,\dots,B_m
	\]
	and associated coil $v_1,\dots,v_m$ such that at least one of the following holds.
	\begin{enumerate}[(a)]
		\item $|B_1|=|B_2|=\cdots = |B_\ell|=1$;
		\item $|B_{\ell+1}|\ge 2$;
		\item For some $1\le i\le \ell$, there exists $x\in B_i\setminus \{v_i\}$ such that $x\rightarrow v_{i+1}$ in $D_{\pi^\gridded}$. (That is, $x$ and $v_i$ have the same relationship to the rest of the coil $v_1,\dots,v_m$.)
	\end{enumerate}
\end{prop}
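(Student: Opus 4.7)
My plan is an extremal argument over the at-most-$\ell$ coil decompositions of $\pi^\gridded$, each determined by the choice of starting vertex from among the cycle last points $z_1, \ldots, z_\ell$ whose existence and interrelations (forming a directed cycle in $D_{\pi^\gridded}$) are guaranteed by Lemma~\ref{lem-lastpoints}. For each candidate starting vertex, let $i$ denote the smallest index $j \ge 1$ with $|B_j| \ge 2$ in the resulting decomposition, with the convention $i = \infty$ if every box is a singleton. I would choose the starting vertex maximizing $i$ and let $D$ denote the resulting decomposition.

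If $i \ge \ell+1$ then $|B_j|=1$ for all $j \le \ell$, which is condition (a), and we are done. Otherwise $i \le \ell$; a short induction then shows that the first $i-1$ boxes of $D$ are singleton cycle points, so $v_j$ for $j < i$ coincides with the corresponding cycle last point, and in particular the cycle last point in cell $C_i$ lies in $B_i$ alongside $v_i$. If $|B_{\ell+1}| \ge 2$, condition (b) holds and we are again done.

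In the remaining case, when both (a) and (b) fail, I claim (c) must hold for $D$, and I would argue this by contradiction. Suppose (c) also fails. Then the cycle last point in $C_i$, which lies in $B_i \setminus \{v_i\}$, has no arrow to $v_{i+1}$; but by the cycle it does have an arrow to the cycle last point in $C_{i+1}$, so $v_{i+1}$ cannot be that point and must lie strictly interior to $C_{i+1}$. I would then consider the decomposition $D'$ obtained by restarting at the cycle last point in $C_i$ and show that it has a strictly longer initial singleton stretch than $D$, contradicting the maximality of $i$. The crux is that the failure of (c) places $v_{i+1}$ strictly between $v_i$ and the other elements of $B_i$ in the ordering of the shared row/column between $C_i$ and $C_{i+1}$, and this positional constraint is precisely what forces the out-neighbours of the cycle last point in $C_i$ to consist solely of the cycle last point in $C_{i+1}$, yielding a singleton $B_2'$ in $D'$.

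The main obstacle I anticipate is extending this single-step improvement into a full cyclic propagation: verifying that $D'$ continues to produce singleton boxes for the first $\ell$ positions, not just the first. I expect this requires maintaining an inductive invariant about the positions of extra points in each subsequent cell relative to its cycle last point, and crucially invoking the failure of (b), $|B_{\ell+1}|=1$ in $D$, together with the (c)-failure at $B_i$, to preclude configurations further along the cycle that would otherwise re-introduce non-singleton boxes into $D'$.
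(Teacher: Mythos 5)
Your proposal replaces the paper's direct construction with an extremal argument over the $\ell$ possible starting vertices, aiming to derive a contradiction by showing that a failure of (a)--(c) at the extremal decomposition $D$ permits a strictly longer initial singleton run. The central ``crux'' step, however, does not hold. The failure of (c) for $D$ yields $v_{i+1}\rightarrow z_i$ (and, more generally, $v_{j+1}\rightarrow x$ for all $x\in B_j\setminus\{v_j\}$ and each relevant $j$), but this does not force the out-neighbourhood of $z_i$ within the next cell to be the singleton $\{z_{i+1}\}$. In the shared row or column of cells $i$ and $i+1$ the order is $v_{i+1}\rightarrow z_i\rightarrow z_{i+1}$; if $|B_{i+1}|\ge 3$, there are further points of $B_{i+1}$ lying somewhere between $v_{i+1}$ and $z_{i+1}$, and the failure of (c) is silent about their position relative to $z_i$ (it constrains them relative to $v_{i+2}$, on the other side of the cell). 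Any such point lying after $z_i$ belongs to $B_2'$, so restarting at $z_i$ need not give a singleton $B_2'$, let alone $\ell$ consecutive singletons.

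The deeper issue is that when $|B_j|\ge 3$ for some $i\le j\le\ell$ the right conclusion is (b), not (a). The paper's proof splits into exactly this dichotomy after deducing (from the failure of (c)) that $|B_j|\ge 2$ for all $i\le j\le\ell$. If $|B_j|=2$ for every such $j$ (so $B_j=\{v_j,z_j\}$), then restarting at $z_i$ really does give $\ell$ singleton boxes, which is the argument you give. But if some $|B_j|\ge 3$, the paper instead restarts at $z_j$ and uses an extra point $x\in B_j\setminus\{v_j,z_j\}$ (with $v_j\rightarrow x\rightarrow z_j$) to place both $v_j$ and $x$ in $B_{\ell+1}'$, establishing (b); in that case the restarted decomposition may still have non-singleton boxes among its first $\ell$, so there is no improvement to the singleton run and the extremal contradiction is unavailable. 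To repair your argument you would need to add this case split and, in the $|B_j|\ge 3$ case, aim for (b) rather than for a longer initial singleton run.
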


\begin{proof}
Take any coil decomposition $B_1,\dots,B_m$ of $\pi^\gridded$, and let the associated coil be $v_1,\dots,v_m$. Let $z_1,\dots,z_\ell$ denote the last points in each cell (indexed so that $z_i\in B_i$ for $1\le i\le\ell$). Note that~$v_1=z_1$, and indeed $v_i=z_i$ if and only if $|B_i|=1$.

Suppose the decomposition $B_1,\dots,B_m$ satisfies none of (a), (b) or (c); we will
show how to modify it to form another decomposition that does satisfy one of the conditions.
Let $i$ ($\le\ell$) denote the smallest index such that $|B_i|> 1$. Thus $v_i\ne z_i$ (and both of these vertices are contained in box $B_i$), but $v_j=z_j$ for all $j<i$. Furthermore, to avoid condition (c) we know that~$v_{i+1}\rightarrow z_i$ in $D_{\pi^\gridded}$. However, we also know that $z_i\rightarrow z_{i+1}$ which establishes that $v_{i+1} \neq z_{i+1}$, and thus $|B_{i+1}|> 1$. Similar comments apply to the relationship between the points in~$B_{i+1}$ and~$B_{i+2}$ (assuming $i+2\le \ell$), and iterating this argument we establish that
\[
	v_j\neq z_j,\quad \text{and}\quad v_{j+1}\rightarrow z_j
\]
for all $i\le j\le\ell$. The following diagram illustrates a typical configuration (with $\ell=6$ and $i=3$).

{\centering
\begin{tikzpicture}[scale=0.18,label distance=-1pt]
\node[permpt,label={[label distance=-3pt]below right:\footnotesize$z_1=v_1$}] at (10,9) {};
\node[permpt,label={above:\footnotesize$z_2=v_2$}] at (18,10) {};
\node[permpt,label={right:\footnotesize$z_3$}] at (20,19) {};
\node[permpt,label={below:\footnotesize$v_3$}] at (19,17) {};
\node[permpt,label={above:\footnotesize$z_4$}] at (29,20) {};
\node[permpt,label={left:\footnotesize$v_4$}] at (27,18) {};
\node[permpt,label={right:\footnotesize$z_5$}] at (30,29) {};
\node[permpt,label={below:\footnotesize$v_5$}] at (28,27) {};
\node[permpt,label={right:\footnotesize$z_6$}] at (7,30) {};
\node[permpt,label={left:\footnotesize$v_6$}] at (5,28) {};
\node[permpt,label={below:\footnotesize$v_7$}] at (6,7) {};
\draw[thick] (10.5,0.5) -- (10.5,30.5);
\draw[thick] (20.5,0.5) -- (20.5,30.5);
\draw[thick] (0.5,10.5) -- (30.5,10.5);
\draw[thick] (0.5,20.5) -- (30.5,20.5);
\foreach \x in {1.5,11.5,21.5} {
  \draw[->] (\x,0) -- ++(8,0);
  \draw[->] (0,\x) -- ++(0,8);
}
\foreach \x/\y/\dx/\dy [count=\n] in {9/9/1/1,18/9/2/1,18/17/2/3,27/17/3/3,27/27/3/3,5/27/2/3,5/7/2/1} {
	\draw[thick] (\x,\y) rectangle ++ (\dx,\dy);
	}
\end{tikzpicture} \par}

We now split the considerations into two cases, depending on the sizes of the blocks $B_i,\dots,B_\ell$.

\textbf{Case 1:} \textit{$|B_j|\ge 3$ for some $i\le j\le \ell$.}
 Take $x\in B_j\setminus \{z_j,v_j\}$, and note that $v_j\rightarrow x\rightarrow z_j$. Take the coil decomposition $B_1',B_2',\dots$ that begins with $B_1'=\{z_j\}$. Since
\[
	z_j\rightarrow \cdots \rightarrow z_\ell \rightarrow z_1\rightarrow v_2\rightarrow \cdots \rightarrow v_{j-1}\rightarrow \{v_j,x\}
\]
we see that $B_{\ell+1}'$ contains two entries. Thus the coil decomposition $B_1',B_2',\dots$ satisfies (b).

\textbf{Case 2:} \textit{$|B_j|=2$ for all $i\le j\le \ell$.} Thus $B_j$ consists precisely of the points $v_j$ and $z_j$. In this case, take the coil decomposition starting with $B_1''=\{z_i\}$. By our assumptions, it is straightforward to verify that this decomposition begins with $\ell$ singleton boxes, and thus (a) is satisfied by this coil decomposition.
\end{proof}

We can now present our key structural lemma for permutation classes that have finite intersection with the antichains introduced earlier. We let $\AAA = \bigcup_{(s_1,s_2,f)\in [\ell]^3}\AAA_{(s_1,s_2,f)}$ denote the set of all sufficiently long end-inflated $M$-coils.

\begin{lemma}\label{lem-coil-decomp-antichain}
	Let $M$ be a cyclic partial multiplication matrix, and let $\C\subseteq\Grid(M)$ such that $\C\cap\AAA$ is finite. Then there exists a constant $K$ such that any $M$-indivisible element $\pi^\gridded$ of $\C^\gridded$ has a coil decomposition
	\[
		B_1,\dots,B_m
	\]
	with the property that whenever $|B_i|>1$ and $|B_j|>1$, then $|j-i|\le K$.
\end{lemma}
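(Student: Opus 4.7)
The proof proceeds by contradiction. Assume the conclusion fails: for every $K$ there exists an $M$-indivisible $\pi^\gridded\in\C^\gridded$ no coil decomposition of which places all nonsingleton boxes within distance $K$ of one another. The plan is to extract from such a $\pi$ an end-inflated $M$-coil of length at least $K$; letting $K\to\infty$ would then produce elements of $\AAA$ of unbounded length lying in $\C$. Since $\AAA$ is the union of the finitely many subsets $\AAA_{(s_1,s_2,f)}$, a pigeonhole on the type forces some $\C\cap\AAA_{(s_1,s_2,f)}$ to be infinite, contradicting the hypothesis that $\C\cap\AAA$ is finite.

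For the extraction, I begin with the coil decomposition $B_1,\dots,B_m$ and associated coil $v_1,\dots,v_m$ supplied by Proposition~\ref{prop-specific-coil-decomp}. Let $i$ be the smallest and $j$ the largest index with $|B_k|>1$, so $j-i$ can be made arbitrarily large by hypothesis. Using the freedom in choosing the starting last-point (among the $\ell$ choices $z_1,\dots,z_\ell$), I would arrange that the leading nonsingleton box $B_i$ falls under case (c) of Proposition~\ref{prop-specific-coil-decomp}, which yields $x\in B_i\setminus\{v_i\}$ with $x\to v_{i+1}$ in $D_{\pi^\gridded}$. Because $x$ and $v_i$ lie in a common monotone cell, the pair $(x,v_i)$ automatically forms a 12 or 21 matching the cell orientation, and both send an arrow to $v_{i+1}$; hence $(x,v_i)$ plays exactly the role of $(v_1^1,v_1^2)$ in the end-inflation construction of $\alpha^\gridded_\OOO$ for the sub-coil beginning at $v_i$.

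For the trailing end I would establish the mirror version of Proposition~\ref{prop-specific-coil-decomp}, obtained by reversing the orientation of $D_{\pi^\gridded}$ (equivalently, traversing the coil backwards): the proof is symmetric and produces a $y\in B_j\setminus\{v_j\}$ with $v_{j-1}\to y$, so $(v_j,y)$ supplies the right end-inflation. Putting this together, the subpermutation of $\pi^\gridded$ on $\{x,v_i,v_{i+1},\dots,v_j,y\}$ is a gridded end-inflated $M$-coil of length $j-i+3$. For $K$ sufficiently large that $j-i+3\ge(\ell+1)\ell^2+3$, this lies in $\AAA$, and since it embeds in $\pi\in\C$ it lies in $\C\cap\AAA$, producing the desired contradiction.

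The principal obstacle is arranging the left-end condition (case (c) of Proposition~\ref{prop-specific-coil-decomp}) together with its mirror right-end analogue simultaneously for the same $\pi^\gridded$: the rotational freedom in the choice of starting last-point fixes one end, and one must either apply the ``forwards'' and ``backwards'' analyses separately and reconcile them, or directly prove a two-ended strengthening of Proposition~\ref{prop-specific-coil-decomp}. A secondary subtlety is checking that the subpermutation $\{x,v_i,\dots,v_j,y\}$ satisfies axioms \ref{c1}--\ref{c4} throughout: the interior conditions hold because $v_i,\dots,v_j$ is itself a sub-coil (and hence inherits \ref{c1}--\ref{c3}), while the inflations at the two ends only affect the first and last points and do not disturb the adjacencies along the cycle.
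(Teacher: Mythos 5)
Your overall strategy is sound and matches the paper's in spirit: take a coil decomposition with $|B_i|>1$ and $|B_j|>1$ far apart, take $x\in B_i\setminus\{v_i\}$ and $y\in B_j\setminus\{v_j\}$, and argue that the subpermutation on $\{x,v_i,\dots,v_j,y\}$ (or something close to it) is a gridded end-inflated coil of length comparable to $j-i$, hence a long element of $\C\cap\AAA$. However, the step you flag as ``the principal obstacle'' is a genuine gap, and the way you propose to resolve it is not the way it actually resolves.

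There are two problems. First, your plan treats the two ends symmetrically and proposes a ``mirror version'' of Proposition~\ref{prop-specific-coil-decomp} for the trailing end. That mirror lemma is both unnecessary and awkward: the coil decomposition is a BFS from a fixed ``last point'' $z_1$, which is not time-reversal symmetric, so simply reversing the digraph does not give you another coil decomposition of $\pi^\gridded$. More importantly, the trailing end needs no special argument at all: by the BFS construction, every $y\in B_j$ satisfies $v_{j-1}\rightarrow y$ automatically, so $(v_j,y)$ always gives a valid end-inflation. Second, at the leading end you claim you can ``arrange that $B_i$ falls under case~(c),'' giving $x\rightarrow v_{i+1}$. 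But Proposition~\ref{prop-specific-coil-decomp} only guarantees that \emph{one of} (a), (b), (c) holds for some choice of decomposition; it does not let you force (c), and in general $x\rightarrow v_{i+1}$ may simply fail. The paper's proof confronts this head-on with a dichotomy: either $x\rightarrow v_{i+1}$, in which case $S=\{x,v_i,\dots,v_j,y\}$ works as you describe; or $v_{i+1}\rightarrow x$, in which case one shifts the start back by a full circuit, taking $S=\{x,v_{i-\ell},\dots,v_j,y\}$ (valid when $i>\ell$, using $x\rightarrow v_{i-\ell}\rightarrow v_{i-\ell+1}$). When $i\le\ell$ and the second case arises, conditions (a)/(b)/(c) of Proposition~\ref{prop-specific-coil-decomp} are then invoked to find a replacement point $x'$ in a box of index $>\ell$ (via~(b)) or satisfying $x'\rightarrow v_{i'+1}$ (via~(c)), reducing to one of the earlier subcases at the cost of shortening the extracted coil by at most~$\ell$. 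You would need to supply this case analysis to close the argument; as written, the case $v_{i+1}\rightarrow x$ with $i$ arbitrary is simply not handled.
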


\begin{proof}
	Consider any $M$-indivisible $\pi^\gridded\in \C^\gridded$ and take a coil decomposition $B_1,\dots,B_m$ with associated coil $v_1,\dots,v_m$ which satisfies one (or more) of the conditions in Proposition~\ref{prop-specific-coil-decomp}. Now take $i,j$ (and without loss suppose $i<j$) as in the statement of the lemma, and consider points~$x,y$, distinct from $v_i$ and $v_j$, such that $x\in B_i$ and $y\in B_j$.
	
	In the digraph $D_{\pi^\gridded}$, we know that $v_i\rightarrow x$ and $v_j\rightarrow y$. Furthermore, by construction we know that $v_{j-1}\rightarrow y$, while there are two possibilities for the relationship between $x$ and the coil point~$v_{i+1}$ in the cell $i+1$: either $x\rightarrow v_{i+1}$, or $v_{i+1}\rightarrow x$. We will show in each case how to construct an element of $\AAA$.
	
\textbf{Case 1:} $x\rightarrow v_{i+1}$. We claim that the set of points
	\[
	S=\{x,v_i,v_{i+1},\dots,v_j,y\}
	\]
	forms a gridded subpermutation of $\pi^\gridded$ that is order isomorphic to an element of $\AAA$. To see this, note that $v_i,\dots,v_j$ is a coil, so it suffices to show that $\{v_i,x\}$ and $\{v_j,y\}$ form intervals within the subpermutation defined by the set $S$. Since $v_i$ and $x$ belong to $B_i$, they belong to the same cell in~$\pi^\gridded$. Consequently the only points in $S$ that can separate $\{v_i,x\}$  must belong to $B_{i-1}\cup B_i\cup B_{i+1}$. However, $S\cap (B_{i-1}\cup B_i\cup B_{i+1})=\{v_i,x,v_{i+1}\}$, yet $x\rightarrow v_{i+1}$ and $v_i\rightarrow v_{i+1}$, which establishes that nothing in $S$ separates $\{v_i,x\}$. A similar argument applies to the pair $\{v_j,y\}$, and so the points in $S$ form an element of $\AAA$ of length $j-i+3$.
	
\textbf{Case 2:} $v_{i+1}\rightarrow x$. There are two subcases, depending on whether $i\leq \ell$ or $i>\ell$.
	
	If $i > \ell$, then we claim that the set of points
	\[
	S=\{x,v_{i-\ell},v_{i-\ell+1},\dots,v_j,y\}
	\]
	forms a gridded subpermutation of $\pi^\gridded$ of length $j-i+\ell+3$ that is order isomorphic to an element of $\AAA$. The argument is similar to the previous case: note that $x\rightarrow v_{i-\ell}$ and $v_{i-\ell}\rightarrow v_{i-\ell+1}$ together imply that $x\rightarrow v_{i-\ell+1}$.
	
	If $i\leq \ell$ then the coil decomposition of $\pi^\gridded$ cannot satisfy condition (a) of Proposition~\ref{prop-specific-coil-decomp}, so it must satisfy (b) and/or (c). If the decomposition satisfies (b), then instead of considering $x\in B_i$, we take $x'\in B_{\ell+1}$ with the property that $x'\neq v_{\ell+1}$. The point $x'$ lies in a cell of index greater than $\ell$ so is covered by one of the two earlier arguments, and we obtain an element of $\AAA$ of length at least $j-i-\ell+2$.
	
	On the other hand, if the decomposition satisfies neither~(a) nor~(b), then it must satisfy~(c), which means that there exists some index $i' \leq \ell$ and a point $x'\in B_{i'} \setminus\{v_{i'}\}$ such that $x'\rightarrow v_{i'+1}$. If we consider the point $x'$ instead of $x$, then this case is again covered by one of the three earlier arguments, and we obtain an element of $\AAA$ of length at least $j-i-\ell+2$.

This completes the case analysis.	In every case we have constructed an end-inflated coil in $\AAA$, and the shortest this permutation can be is  $j-i-\ell+2$. If $K'$ denotes the length of the longest element in $\C\cap \AAA$, then we have $j-i-\ell+2\le K'$, and so the result follows with $K= K'-2+\ell$.
\end{proof}

We are now nearly ready to state and prove our characterisation of wqo for subclasses of cyclic classes. As noted earlier, the proof follows roughly the same argument as that for Theorem~\ref{thm-lwqo-char}, with the added complications that arise by the need to handle leading and trailing coils. Thus, before we state and prove our characterisation, we construct our decomposition in one final technical lemma.

\begin{lemma}\label{lem-coil-body-coil-decomp}
	Let $M$ be a cyclic partial multiplication matrix, let $\C\subseteq\Grid(M)$ such that $\C\cap\AAA$ is finite, and let $K$ be as in Lemma~\ref{lem-coil-decomp-antichain}. For any $M$-indivisible element $\pi^\gridded\in\Gridhash(M)$, there exists an acyclic matrix $N$ with at most $K+2$ non-zero entries, such that $\pi^\gridded$ can be reversibly encoded as a triple $(\sigma_\pi^\natural,a_\pi,b_\pi)\in \Gridhash(N)\times\mathbb{N}\times\mathbb{N}$.
\end{lemma}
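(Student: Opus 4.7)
The plan is to use Lemma~\ref{lem-coil-decomp-antichain} to extract a short ``body'' of $\pi^\gridded$ containing all non-singleton blocks of a suitable coil decomposition, and to encode the remainder (two pure coil stretches at either end) by their lengths alone.

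First, apply Lemma~\ref{lem-coil-decomp-antichain} to obtain a coil decomposition $B_1,\dots,B_m$ of $\pi^\gridded$, with associated coil $v_1,\dots,v_m$, whose non-singleton blocks all lie within an index window of width at most $K$. Let $i$ (respectively $j$) be the smallest (respectively largest) index with $|B_i|>1$, so that $j-i\le K-1$. Assuming $2\le i\le j\le m-1$ (the degenerate cases, including that in which $\pi^\gridded$ is itself a coil, are handled by analogous modifications using trivial anchor blocks), set $a_\pi=i-2$ and $b_\pi=m-j-1$, and let $\sigma_\pi^\gridded$ be the gridded subpermutation of $\pi^\gridded$ on the points in $B_{i-1}\cup B_i\cup\dots\cup B_j\cup B_{j+1}$; this spans exactly $j-i+3\le K+2$ blocks.

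Next, refine $\sigma_\pi^\gridded$ to an acyclic gridding $\sigma_\pi^\natural\in\Gridhash(N)$ by replicating the slicing construction from the proof of Lemma~\ref{lem-griddable-acyclic}: between each consecutive pair of body blocks sharing a row (or column) of $M$, introduce two horizontal (or vertical) slicing lines that isolate the pair from all other body blocks in that row (column). Each body block then occupies its own sub-cell, so $N$ has exactly $j-i+3\le K+2$ non-zero entries; and because the $M$-cycle alternates row-sharing and column-sharing steps, these entries form a path of edges in $G_N$ whose vertices are the successive shared sub-rows and sub-columns, so $N$ is acyclic.

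For reversibility, label each non-zero entry of $N$ canonically with the cell of $M$ it refines, so that from the triple $(\sigma_\pi^\natural,a_\pi,b_\pi)$ we recover $\sigma_\pi^\gridded$ together with its placement in $M$; in particular the anchor singletons $v_{i-1}\in B_{i-1}$ and $v_{j+1}\in B_{j+1}$, and the first points $v_i\in B_i$, $v_j\in B_j$ of the non-singleton portion, are all recovered. The ordered pair $(v_{i-1},v_i)$ identifies the cyclic direction of the leading coil, and axioms~\ref{c1}--\ref{c4} then uniquely determine its backward extension by $a_\pi$ further points, namely $v_1,\dots,v_{i-2}$. Symmetrically $(v_j,v_{j+1})$ and $b_\pi$ determine the trailing coil, and the three pieces reassemble uniquely into $\pi^\gridded$.

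The main obstacle will be the acyclicity check: whenever the body winds around the $M$-cycle multiple times (i.e.\ when $j-i\ge\ell$), one must verify that non-consecutive body blocks occupying a common $M$-cell are placed in distinct sub-rows and sub-columns of $N$, and hence introduce no extraneous edges in $G_N$. A secondary technical point is the clean treatment of the degenerate cases when the leading coil, the trailing coil, or the non-singleton portion is empty.
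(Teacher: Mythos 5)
Your proposal follows the paper's proof essentially step for step: apply Lemma~\ref{lem-coil-decomp-antichain} to get a coil decomposition whose non-singleton blocks are confined to a short index window, take the body $\sigma_\pi^\gridded$ on $B_{i-1}\cup\cdots\cup B_{j+1}$, refine via the slicing construction of Lemma~\ref{lem-griddable-acyclic}, and record the lengths of the leading and trailing coil stretches; the reversibility argument (iteratively re-growing the two coil ends, each step being uniquely forced by axioms~\ref{c1}--\ref{c4}) is also the same. A few small points: Lemma~\ref{lem-coil-decomp-antichain} gives $|j-i|\le K$, not $j-i\le K-1$ as you assert, so your derivation of the bound is off by one (the paper in fact has the same ambiguity between $K+2$ and $K+3$, but it is immaterial). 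The two ``obstacles'' you flag at the end are exactly the details the paper resolves: the acyclicity of $N$, even when the body winds around the cycle several times, is already established in the proof of Lemma~\ref{lem-griddable-acyclic}, and the degenerate cases are treated explicitly (set $i=j=2$ when every block is a singleton, pad with an empty $B_{j+1}$ when $j=m$, and handle the length-$1$ indivisible separately). You would do well to spell these out rather than leaving them as acknowledged gaps, but there is no structural flaw in the proposal.
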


\begin{proof}
Consider any $M$-indivisible gridded permutation $\pi^\gridded \in\C^\gridded$. If $\pi^\gridded$ occupies a single cell, then it has length 1. In this case, the lemma is satisfied by constructing $N$ to have the same dimensions as $M$, but with a single non-zero entry that corresponds to the cell containing the one point of $\pi^\gridded$. We set $\sigma_\pi^\natural=\pi^\gridded$, and $a_\pi=b_\pi=0$.

Thus we may assume from now on that $\pi^\gridded$ occupies all $\ell$ cells of the cycle. Take a coil decomposition $B_1,\dots,B_m$ of $\pi^\gridded$ that satisfies Lemma~\ref{lem-coil-decomp-antichain}, and note that $m\geq \ell$.

If there are boxes with more that one point, let $i$ (resp. $j$) be the smallest (resp. the largest) index of such a box. Otherwise set $i=j=2$. Thus, the only nonsingleton boxes are among
	\[
		B_i,B_{i+1},\dots,B_j,
	\]
	and by Lemma~\ref{lem-coil-decomp-antichain} we have $|j-i|\leq K$.

	Now let $\sigma$ be the subpermutation of $\pi^\gridded$ formed from the points in $B_{i-1}\cup B_i\cup\cdots \cup B_{j+1}$.
	Note that  $i>1$ since $|B_1|=1$. If $B_{j+1}$ does not exist (which occurs only if $j=m$), then we `pad' our coil decomposition by creating an empty final box $B_{j+1}$, placed in the same cell as $B_{j-\ell+1}$, and preceding $B_{j-\ell+1}$ according to that cell's orientation. 	
	
	We grid $\sigma$ according to the coil decomposition: let $N$ denote the acyclic partial multiplication matrix whose non-zero entries correspond to the boxes $B_{i-1},\dots,B_{j+1}$ (essentially following the same process as in the proof of Lemma~\ref{lem-griddable-acyclic}). By construction, $\sigma\in\Grid(N)$, and let $\sigma^\natural$ denote the $N$-gridding inherited from the coil decomposition of $\pi^\gridded$. (Note, we use $\natural$ to emphasise that this is an $N$-gridding.)
	
 	Next, we specify the two positive integers $a_\pi$ and $b_\pi$. We set $a_\pi=i-1$ to record the number of coil points that precede the first non-singleton block $B_{i}$ of $\pi^\gridded$. Similarly, we set $b_\pi=m-j$ to record the number of coil points that follow the last non-singleton block $B_{j}$. Note that $a_\pi\geq 1$, and that $b_\pi=0$ if and only if $j=m$ (indicating that there is no final coil).
 	
 	To complete the proof, we need to show that the triple $(\sigma^\natural,a_\pi,b_\pi) \in \Gridhash(N)\times\mathbb{N}\times\mathbb{N}$ is a reversible encoding of $\pi^\gridded$: that is, we must demonstrate how to recover $\pi^\gridded$ from this triple.
This is clear for the encodings of permutations of length $1$, so we need only consider those of permutations of length $\geq \ell$.
We begin by constructing the boxes $B_{i-1},\dots,B_{j+1}$ of $\pi^\gridded$, which contain precisely the points in the corresponding cells of $\sigma^\natural$. The boxes $B_{i-1}$ and $B_{j+1}$ are easily identified.

 	We now re-insert the coil points at the beginning and the end of $\pi^\gridded$, which we do iteratively.
 	Given the placement of a singleton cell $B_k = \{v_k\}$, there is a unique way in which to insert the preceding singleton cell $B_{k-1}=\{v_{k-1}\}$: we have $v_{k-1}\rightarrow v_k$, while for any $k'>k$ such that $B_{k'}$ is in the same row or column of $\pi^\gridded$, we have $x \rightarrow v_{k-1}$ for all $x\in B_{k'}$: that is, $B_{k-1}$ comes after all entries in its row or column except for $v_k$. On the other hand, the position of~$v_{k-1}$ relative to any other entries in $\pi^\gridded$ is determined by $M$.
 	Thus, we may place $v_{k-1}$ relative to all later points in a unique way.
 	A similar process allows us to reinsert entries to the other end of the coil, and the numbers of times we iterate these steps is governed by the integers $a_\pi$ and $b_\pi$, respectively.
 	Note that the case where $b_\pi=0$ corresponds precisely to the case that~$B_{j+1}$ is empty.
 	 Finally, we forget the partition of the points of $\pi^\gridded$ into the boxes $B_i$ in favour of the underlying $M$-gridding, observing that $B_i$, $B_{i+\ell}$, $B_{i+2\ell}$, \dots end up in one cell, $B_{i+1}$, $B_{i+1+\ell}$, \dots in another, and so on. Thus, we have recovered $\pi^\gridded$, and the proof is complete.
\end{proof}


\begin{thm}\label{thm-cycle-wqo-char}
Let $M$ be a cycle partial multiplication matrix, and let $\C\subseteq \Grid(M)$. Then $\C$ is wqo if and only if $\C$ contains only finitely many end-inflated $M$-coils.
\end{thm}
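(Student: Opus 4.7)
The forward direction follows by pigeonhole. The set $\AAA$ of sufficiently long end-inflated $M$-coils decomposes as the finite union $\AAA = \bigcup_{(s_1,s_2,f)}\AAA_{(s_1,s_2,f)}$, with at most $2\ell^2$ parts, each an antichain by Proposition~\ref{prop-cycle-antichains}. So if $\C \cap \AAA$ is infinite, then $\C \cap \AAA_{(s_1,s_2,f)}$ is infinite for some triple, yielding an infinite antichain in $\C$. Since end-inflated coils not in $\AAA$ have bounded length, this is equivalent to $\C$ containing infinitely many end-inflated $M$-coils, and so $\C$ is not wqo.

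For the converse, suppose $\C \cap \AAA$ is finite. Because the map $\pi^\gridded \mapsto \pi$ is an order-preserving surjection from $\C^\gridded$ onto $\C$, Lemma~\ref{lem-order-preserving-reflecting}(i) reduces the problem to showing $\C^\gridded$ is wqo. The wqo version of Lemma~\ref{lem-indiv-wqo} further reduces this to showing that the set $\III^\gridded$ of $M$-indivisible elements of $\C^\gridded$ is wqo.

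Let $K$ be the constant supplied by Lemma~\ref{lem-coil-decomp-antichain}, whose hypothesis is precisely our assumption. Each $\pi^\gridded \in \III^\gridded$ is reversibly encoded via Lemma~\ref{lem-coil-body-coil-decomp} as a triple $(\sigma_\pi^\natural, a_\pi, b_\pi) \in \Gridhash(N_\pi) \times \mathbb{N} \times \mathbb{N}$, with $N_\pi$ an acyclic refinement of $M$ containing at most $K+2$ non-zero entries. Only finitely many such refinements $N_\pi$ arise, so by partitioning $\III^\gridded$ according to the associated matrix, and noting that a finite union of wqo sets is wqo, it suffices to treat the encodings for a single fixed matrix $N$. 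For such $N$, Proposition~\ref{prop-no-cycles} gives that $\Grid(N)$ is lwqo (hence wqo), and by Lemma~\ref{lem-gridded-ungriddded-lwqo} the same holds for $\Gridhash(N)$. Combined with the wqo of $\mathbb{N}$ under its standard order and two applications of Proposition~\ref{prop-direct-prod-wqo}, the product $\Gridhash(N) \times \mathbb{N} \times \mathbb{N}$ is wqo under componentwise comparison.

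It remains, and is the crux of the argument, to verify that the encoding is order-reflecting, whereupon Lemma~\ref{lem-order-preserving-reflecting}(ii) completes the proof. Suppose that $(\sigma_{\pi_1}^\natural, a_{\pi_1}, b_{\pi_1}) \leq (\sigma_{\pi_2}^\natural, a_{\pi_2}, b_{\pi_2})$ componentwise. An $N$-gridded embedding of $\sigma_{\pi_1}^\natural$ into $\sigma_{\pi_2}^\natural$ naturally yields an $M$-gridded embedding of the corresponding bodies, since every non-zero cell of $N$ refines a unique cell of $M$. Because $a_{\pi_1} \leq a_{\pi_2}$, the uniqueness of the iterative backwards reconstruction described in the proof of Lemma~\ref{lem-coil-body-coil-decomp} lets us extend the body embedding to the leading coils by mapping the leading coil of $\pi_1^\gridded$ onto the last $a_{\pi_1}$ points of the leading coil of $\pi_2^\gridded$; the analogous extension using $b_{\pi_1} \leq b_{\pi_2}$ handles the trailing coil. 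The main obstacle to making this rigorous is confirming that these three partial embeddings combine without conflict into a single $M$-gridded embedding of $\pi_1^\gridded$ into $\pi_2^\gridded$; this amounts to checking, via property~\ref{coil-c} and the cell-by-cell propagation in the reconstruction, that the reconstructed coil points never interfere with body points in the other permutation.
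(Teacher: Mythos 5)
Your proposal is correct and follows essentially the same route as the paper: both directions use the same lemmas (Proposition~\ref{prop-cycle-antichains} for the forward direction; Lemmas~\ref{lem-coil-decomp-antichain}, \ref{lem-coil-body-coil-decomp}, \ref{lem-indiv-wqo}, \ref{lem-order-preserving-reflecting}, Propositions~\ref{prop-no-cycles}, \ref{prop-direct-prod-wqo} for the converse), in the same roles. The one point where you stop slightly short is the "main obstacle" paragraph: the anchoring that makes the three partial embeddings compatible is precisely the observation (made explicitly in the paper) that because $\sigma_\pi^\natural$ begins and ends with singleton boxes $B_{i-1}$ and $B_{j+1}$, any $N$-gridded embedding $\sigma_\pi^\natural \leq \sigma_\tau^\natural$ must send those endpoint singletons to the endpoint singletons of $\sigma_\tau^\natural$; once those endpoints are pinned, the deterministic reconstruction of Lemma~\ref{lem-coil-body-coil-decomp} extends the embedding coil point by coil point with no freedom and hence no possibility of interference, so the inequalities $a_{\pi_1}\leq a_{\pi_2}$ and $b_{\pi_1}\leq b_{\pi_2}$ suffice to guarantee room for all reconstructed points.
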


\begin{proof}
	One direction is immediate from Proposition~\ref{prop-cycle-antichains}: If $\C\cap\AAA$ is not finite, then $\C\cap \AAA_{(s_1,s_2,f)}$ is infinite for some triple of cells $(s_1,s_2,f)$, thus $\C$ contains an infinite antichain.
	
	Now suppose that $\C\cap\AAA$ is finite, and let $K$ be the constant from Lemma~\ref{lem-coil-decomp-antichain}. By Lemma~\ref{lem-coil-body-coil-decomp}, for each $M$-indivisible $\pi^\gridded\in\C^\gridded$, there exists an acyclic matrix $N$ with at most $K+2$ non-zero entries such that $\pi^\gridded$ can be encoded as a triple $(\sigma_\pi^\natural,a_\pi,b_\pi)\in \Gridhash(N)\times\mathbb{N}\times\mathbb{N}$. Note that $\Gridhash(N)$ is labelled well quasi-ordered by Proposition~\ref{prop-no-cycles}, and thus $\Gridhash(N)\times\mathbb{N}\times\mathbb{N}$ is well quasi-ordered by Proposition~\ref{prop-direct-prod-wqo}, when $\mathbb{N}$ is endowed with the usual (total) ordering on natural numbers.
 	
 	In general, two $M$-indivisible permutations in $\C^\gridded$ do not need to be encoded using the same acyclic matrix. However, every such matrix contains at most $K+2$ non-zero entries, and so there can be only finitely many distinct matrices arising from the encodings provided by Lemma~\ref{lem-coil-body-coil-decomp}.
 	
    For any such acyclic gridding matrix $N$, define
 	\[
	\C_N = \{\pi^\gridded \in\C^\gridded : \pi^\gridded \text{ is $M$-indivisible and }
	\sigma_\pi^\gridded\in\Grid^\gridded(N)\}.
	\]
	We need to show that any such set $\C_N$ is well quasi-ordered. Since $\Gridhash(N)\times\mathbb{N}\times\mathbb{N}$ is well quasi-ordered, this will follow by Lemma \ref{lem-order-preserving-reflecting}(ii) if we can show that the mapping
	\[
		\phi: \C_N \to \Gridhash(N)\times\mathbb{N}\times\mathbb{N},
	\]
	which sends each $\pi^\gridded$ to its encoding $(\sigma^\natural_\pi,a_\pi,b_\pi)$,
	is order-reflecting.
	
	To this end consider $\pi^\gridded$ and $\tau^\gridded$ in $\C_N$ whose encodings $(\sigma_\pi^\natural,a_\pi,b_\pi)$ and $(\sigma_\tau^\natural,a_\tau,b_\tau)$ have the property that
 	\[
 		(\sigma_\pi^\natural,a_\pi,b_\pi) \le (\sigma_\tau^\natural,a_\tau,b_\tau).
 	\]
 	This means that $\sigma_\pi^\natural \leq \sigma_\tau^\natural$ (as $N$-gridded permutations), $a_\pi\leq a_\tau$, and $b_\pi\leq b_\tau$.
 	
 	To show that $\pi^\gridded\leq \tau^\gridded$, we begin with an embedding that witnesses $\sigma_\pi^\natural \leq \sigma_\tau^\natural$, noting that the points in the singleton cells of $\sigma_\pi^\natural$ at the beginning and end (when they exist) must embed into the corresponding points in the singleton cells of $\sigma_\tau^\natural$. We now follow the process described above to embed successive points at the beginning and end of the coil decomposition of $\pi^\gridded$ into the corresponding points of $\tau^\gridded$. Since $a_\pi\leq a_\tau$, and $b_\pi\leq b_\tau$, we are guaranteed to embed all such points of $\pi^\gridded$ before we run out of points in $\tau^\gridded$. Thus, $\pi^\gridded\leq \tau^\gridded$, which completes the proof that $\C_N$ is well quasi-ordered.
 		
 	Since every $M$-indivisible permutation in $\C^\gridded$ is encoded using some matrix $N$, and since there are only finitely many possible matrices $N$, the set of all $M$-indivisible permutations in $\C^\gridded$ is contained in a finite union of  well quasi-ordered sets $\C_N$, and is thus well quasi-ordered.
	Lemma~\ref{lem-indiv-wqo} now gives that $\C^\gridded$ is well quasi-ordered.

 	Finally the fact that $\C$ is well quasi-ordered now follows by observing that the mapping
 	\[
 		\C^\gridded \to \C,\ \pi^\gridded\mapsto \pi,
 	\]
 	that removes the gridding (first considered in the proof of Lemma~\ref{lem-gridded-ungriddded-lwqo}) is order-preserving.
\end{proof}


%
%
%
%
%
%
%
\section{Concluding remarks}

\paragraph{Enumeration} We have not considered the enumeration of pseudoforest grid classes in this study. It is known (by~\cite{albert:geometric-grid-:}) that acyclic grid classes (and their subclasses) all possess rational generating functions. In his PhD thesis, Bevan~\cite[Theorem 4.6]{bevan:thesis:} used a `diagonalisation' argument to show that the \emph{gridded} permutations in a pseudoforest grid class have an algebraic generating function. He further conjectures that all pseudoforest grid classes have algebraic generating functions.

The barrier to making progress on the question of enumeration lies in our ability (or lack of) to handle permutations that possess multiple griddings. Even if Conjecture~\ref{conj-unique-griddings} could be proved, this is likely to be insufficient for what would be needed for a direct approach. Alternatively, one might wonder whether it is possible to combine the methods in~\cite{albert:geometric-grid-:} (where rationality is established by subtracting `bad' griddings from the set of all griddings, and showing that both sets of griddings are encoded by regular languages) with those in~\cite{bevan:thesis:} (where pseudoforest classes are formed from acyclic classes by identifying the points in pairs of cells), but this has so far not met with success.

\paragraph{Bases} While we have roughly indicated a boundary between grid classes that are finitely based and those that are not, there is certainly still scope for this to be tightened further. The construction employed in Proposition~\ref{prop-bicyclic-inf-basis} only works in cases where the gridding matrix $M$ possesses two identical cycles that are arranged around each other in a very particular way. Other constructions may well exist, but it is probably also the case that the methods used here to show that unicyclic classes are finitely based can be applied slightly more generally.

\paragraph{Well quasi-ordering for unicyclic classes}
The question of well quasi-ordering in unicyclic grid classes, let alone pseudoforest grid classes, is more nuanced than the arguments given in this paper for cyclic classes. While we know that every infinite antichain necessarily must include elements that contain arbitrarily long coils, there are multiple ways for these to be `anchored' in unicyclic classes. An example is given in Figure~\ref{fig-untied-widdershins}. Further results are likely possible, but may descend rapidly into a technical case analysis.

\begin{figure}
{\centering
\begin{tikzpicture}[scale=0.25]
\plotpermgrid{3,5,1,6,4,8,2,7}
\draw[thick] (1.5,0.5) -- (1.5,8.5);
\draw[thick] (7.5,0.5) -- (7.5,8.5);
\draw[thick] (3.5,0.5) -- (3.5,8.5);
\draw[thick] (0.5,4.5) -- (8.5,4.5);
\draw[gray] (3)--(4)--(6)--(5)--(1)--(2)--(8)--(7);
\begin{scope}[shift={(10,0)}]
\plotpermgrid{5,9,1,7,3,8,6,10,4,12,2,11}	
\draw[gray] (5)--(6)--(8)--(7)--(3)--(4)--(10)--(9)--(1)--(2)--(12)--(11);
\draw[thick] (1.5,0.5) -- (1.5,12.5);
\draw[thick] (11.5,0.5) -- (11.5,12.5);
\draw[thick] (5.5,0.5) -- (5.5,12.5);
\draw[thick] (0.5,6.5) -- (12.5,6.5);
\end{scope}
\begin{scope}[shift={(24,0)}]
\plotpermgrid{7,13,1,11,3,9,5,10,8,12,6,14,4,16,2,15}	
\draw[gray] (7)--(8)--(10)--(9)--(5)--(6)--(12)--(11)--(3)--(4)--(14)--(13)--(1)--(2)--(16)--(15);
\draw[thick] (1.5,0.5) -- (1.5,16.5);
\draw[thick] (15.5,0.5) -- (15.5,16.5);
\draw[thick] (7.5,0.5) -- (7.5,16.5);
\draw[thick] (0.5,8.5) -- (16.5,8.5);
\end{scope}
\end{tikzpicture} \par}
\caption{Three elements from a variant of the `Widdershins' antichain, which is contained in $\grid{\gctwo{4}{0,-1,1,-1}{-1,1,-1,0}}$.
}\label{fig-untied-widdershins}
\end{figure}

\paragraph{Polycyclic classes} We have significantly extended our understanding of pseudoforest grid classes in this article, but the corresponding questions for polycyclic grid classes still seem out of reach.
 The first issue is that not all gridding matrices can be replaced by partial multiplication matrices, since Proposition~\ref{prop-doubling-equal} does not apply to matrices that contain a negative cycle connected to another cycle. Indeed, in his PhD thesis, Waton~\cite[Proposition 4.5.14]{waton:on-permutation-:} gives the following example. With $M = \gctwo{3}{1,1,1}{-1,1,1}$, $\Grid(M)$ can be expressed as the union of two distinct proper subclasses, comprising the permutations that can be drawn on the following two diagrams.

{\centering
\begin{tikzpicture}[scale=.4]
\foreach \x in {1,3,5}
	\draw[thin,dashed,black!50] (\x,0) -- ++(0,4) (\x+7,0) -- ++(0,4);
\foreach \x in {2,4}
	\draw[thin,black!50] (\x,0) -- ++(0,4) (\x+7,0) -- ++(0,4);
\foreach \y in {1,3}
	\draw[thin,dashed,black!50] (0,\y) -- ++(6,0) (7,\y) -- ++(6,0);
\foreach \y in {2}
	\draw[thin,black!50] (0,\y) -- ++(6,0) (7,\y) -- ++(6,0);
\foreach \x/\y in {2/0,4/0,0/2,2/2} {
	\draw[thick] (\x+.1,\y+.1) -- ++ (.8,.8) ++ (.2,.2) -- ++ (.8,.8)
		(\x+7.1,\y+.1) -- ++ (.8,.8) ++ (.2,.2) -- ++ (.8,.8);
}
\draw[thick] (.1,1.9) -- ++(.8,-.8) ++(.2,-.2) -- ++(.8,-.8)
	 (7.1,1.9) -- ++(.8,-.8) ++(.2,-.2) -- ++(.8,-.8)
	(4.1,2.1) edge[out=70,in=-160] (5.9,3.9)
	(11.1,2.1) edge[out=20,in=-110] (12.9,3.9);
\end{tikzpicture}
\par}
Note that both of these pictures are essentially formed from the doubled matrix $M^{\times2}$, but with deformations in the top right corner. In particular, cell $C_{54}$ in the left picture and $C_{63}$ in the right can be non-empty, and thus there are permutations in $\Grid(M)$ that are not present in $\Grid(M^{\times 2})$.

For polycyclic gridding matrices that \emph{can} be expressed as partial multiplication matrices, studies concerning the permutation pattern matching (PPM) problem (in which one investigates the complexity of counting the number of times a given pattern $\pi$ is contained in a `text' permutation $\tau$) both illustrate why these classes are so much harder, and perhaps also offer a glimpse of what the theory might look like.

In his PhD thesis, Opler~\cite[Theorem 6.1]{opler:phd} shows that the tree-width%
\footnote{Tree-width is a parameter most commonly associated with graph structure; its application to the PPM problem originates with Ahal and Rabinovich~\cite{ahal:on-the-complexity:}. For our purposes, it suffices to note that tree-width provides a measure of the structural complexity of a class.} %
 of acyclic grid classes is constant, that of pseudoforest grid classes grows like $\sqrt{n}$ (where~$n$ is the length of the permutation), and that of polycyclic classes grows linearly in $n$. This both agrees with the increase in structural complexity that we have exhibited in this paper between acyclic and pseudoforest classes, and also hints that the structure of polycyclic classes is an order of magnitude more complex again.
 
Indeed, the proof of~\cite[Theorem 6.1]{opler:phd} (which incorporates work from two extended abstracts of Jel\'inek, Opler and Pek\'arek~\cite{jelinek:a-complexity-dichotomy:,jelinek:long-paths:}) provides some insight into the structure we should expect. The following two properties that a permutation class $\C$ can possess are of particular note: the \emph{long path property}, which means that $\C$ contains, for any $k$, a monotone grid class whose graph is path of length $k$, and the \emph{deep tree property}, which means that for any $k$, the class $\C$ contains a grid class whose graph is a subdivision of a binary tree of depth $k$.  

Clearly, an acyclic grid class has neither property, while pseudoforest grid classes have the long path property but not the deep tree property, see~\cite[Lemma 3.5]{jelinek:a-complexity-dichotomy:} or~\cite[Proposition 2.34]{opler:phd} for the full proof. In the context of this paper, long paths can be constructed by considering a refined gridding of permutations whose coil decompositions are arbitrarily long, with arbitrarily many interleaved points in each set of the partition. On the other hand, that we can decompose any permutation in a pseudoforest grid class using the $M$-sum and coil decompositions goes some way towards explaining why these classes do \emph{not} possess the deep tree property.

Polycyclic classes, on the other hand, do possess the deep tree property (and thus also the long path property), see~\cite[Proposition 6]{jelinek:long-paths:} or~\cite[Proposition 2.41]{opler:phd} for the full proof. This leads us to finish with the following question: given polycyclic classes have the deep tree property, what could a `coil decomposition of polycyclic classes' look like?

\paragraph{Acknowledgements} We are grateful to Michal Opler for discussions during the preparation of this article, and to the two referees whose careful reading prompted several improvements to an earlier version.

\bibliographystyle{plainurl}
\bibliography{refs}

\end{document}
%
%
%
%
%
%
%
%
%
%
%
%
%
%
%
%
%
%
%
%
%
%
%
%
%
%
%
%
%
%
%
%
%
%
%
%
%
%
%
%
%
%
%
%
%
%

%
%
%
%
\section{Orphan from WQO section}
\subsection{Unicyclic classes}

The question of well quasi-orderability in unicyclic grid classes is more nuanced. While we know that every infinite antichain necessarily must include elements that contain arbitrarily long coils, there are multiples ways for these to be `anchored', and the work to catalogue them all would take too long in this current paper. Thus, we will merely highlight some examples of the types of issue that can arise.

First, consider the matrix $M=\gctwo{4}{1,0,-1,1}{0,-1,1,-1}$. The cell graph $G_M$ comprises a cycle with two leaves off adjacent vertices on the cycle, and is thus unicyclic. The class $\Grid(M)$ contains an infinite antichain, three elements of which are shown in Figure~\ref{fig-tied-widdershins}, and the general construction comprises a coil in which the first and last entries are `tied' by a single entry. This antichain first appears in Murphy's PhD~\cite{murphy:restricted-perm:}. Note, in particular, that the two cells that are not part of the cycle each only need to contain a single entry for the construction of the antichain.

\begin{figure}
{\centering
\begin{tikzpicture}[scale=0.25]
\plotpermgrid{7,3,5,1,6,4,8,2}
\draw[thick] (1.5,0.5) -- (1.5,8.5);
\draw[thick] (2.5,0.5) -- (2.5,8.5);
\draw[thick] (4.5,0.5) -- (4.5,8.5);
\draw[thick] (0.5,4.5) -- (8.5,4.5);
\draw[gray] (4)--(6)--(5)--(1)--(2)--(8)--(7)--(3)--(4);

\begin{scope}[shift={(10,0)}]
\plotpermgrid{11,5,9,1,7,3,8,6,10,4,12,2}	
\draw[gray] (6)--(8)--(7)--(3)--(4)--(10)--(9)--(1)--(2)--(12)--(11)--(5)--(6);
\draw[thick] (1.5,0.5) -- (1.5,12.5);
\draw[thick] (2.5,0.5) -- (2.5,12.5);
\draw[thick] (6.5,0.5) -- (6.5,12.5);
\draw[thick] (0.5,6.5) -- (12.5,6.5);
\end{scope}
\begin{scope}[shift={(24,0)}]
\plotpermgrid{15,7,13,1,11,3,9,5,10,8,12,6,14,4,16,2}	
\draw[gray] (8)--(10)--(9)--(5)--(6)--(12)--(11)--(3)--(4)--(14)--(13)--(1)--(2)--(16)--(15)--(7)--(8);
\draw[thick] (1.5,0.5) -- (1.5,16.5);
\draw[thick] (2.5,0.5) -- (2.5,16.5);
\draw[thick] (8.5,0.5) -- (8.5,16.5);
\draw[thick] (0.5,8.5) -- (16.5,8.5);
\end{scope}
\end{tikzpicture} \par}
\caption{Three elements from the `tied Widdershins' antichain, which is contained in $\grid{\gctwo{4}{1,0,-1,1}{0,-1,1,-1}}$.}\label{fig-tied-widdershins}
\end{figure}

For our second example, take the matrix $N=\gctwo{4}{0,-1,1,-1}{-1,1,-1,0}$. The first three elements of an antichain in $\Grid(N)$ are shown in Figure~\ref{fig-untied-widdershins}. This antichain can be obtained from  by moving the first entry of each permutation in the tied Widdershins antichain to the other end. To make matters worse, this example can also be shown to belong to the class
\[
	\grid{\gcsix{6}{0,0,0,0,1,0}{0,0,0,0,0,1}{0,-1,0,1,0,0}{0,0,-1,1,0,0}{-1,0,1,-1,0,0}{0,1,0,0,-1,0}}.
\]
This class has two complications: first, while there is a single cycle, it is possible for a coil to be gridded with several points in the `tendril'. Second, the `anchor' on the right of the permutation is placed in a cell that corresponds to an edge in a completely different component of the graph of the matrix.

Our third example is an antichain in $\grid{\gctwo{4}{1,-1,1,-1}{0,1,-1,0}}$, again using just one entry in the two cells that are not on the cycle:

{\centering
\begin{tikzpicture}[scale=0.2]
\plotpermgrid{4,3,1,6,2,5}
\draw[thick] (1.5,0.5) -- (1.5,6.5);
\draw[thick] (5.5,0.5) -- (5.5,6.5);
\draw[thick] (3.5,0.5) -- (3.5,6.5);
\draw[thick] (0.5,2.5) -- (6.5,2.5);
\draw[gray] (4)--(3)--(1)--(2)--(6)--(5);
\begin{scope}[shift={(10,0)}]
\plotpermgrid{6,7,1,5,3,8,4,10,2,9}	
\draw[gray] (6)--(5)--(3)--(4)--(8)--(7)--(1)--(2)--(10)--(9);
\draw[thick] (1.5,0.5) -- (1.5,10.5);
\draw[thick] (9.5,0.5) -- (9.5,10.5);
\draw[thick] (5.5,0.5) -- (5.5,10.5);
\draw[thick] (0.5,4.5) -- (10.5,4.5);
\end{scope}
\begin{scope}[shift={(24,0)}]
\plotpermgrid{8,11,1,9,3,7,5,10,6,12,4,14,2,13}	
\draw[gray] (8)--(7)--(5)--(6)--(10)--(9)--(3)--(4)--(12)--(11)--(1)--(2)--(14)--(13);
\draw[thick] (1.5,0.5) -- (1.5,14.5);
\draw[thick] (13.5,0.5) -- (13.5,14.5);
\draw[thick] (7.5,0.5) -- (7.5,14.5);
\draw[thick] (0.5,6.5) -- (14.5,6.5);
\end{scope}
\end{tikzpicture} \par}

At this point, it is reasonable to suspect that one can find an antichain in \emph{any} arrangement with two extra cells added to any cycle, and moreover that there is an antichain each of whose elements have precisely one entry in each of the extra cells. The consequences of this would be that a wqo subclass of a unicyclic grid either contains bounded length coils, or has arbitrarily long coils but the `extra' cells can only interact with a bounded central portion.

Our last two examples relate to cycles with a single additional cycle. First, in $\grid{\gctwo{3}{-1,-1,1}{0,1,-1}}$ we have the following antichain (related to the previous one by moving the last entry to the beginning):

{\centering
\begin{tikzpicture}[scale=0.2]
\plotpermgrid{5,4,3,1,6,2}
\draw[thick] (2.5,0.5) -- (2.5,6.5);
\draw[thick] (4.5,0.5) -- (4.5,6.5);
\draw[thick] (0.5,2.5) -- (6.5,2.5);
\draw[gray] (4)--(3)--(1)--(2)--(6)--(5);
\begin{scope}[shift={(10,0)}]
\plotpermgrid{9,6,7,1,5,3,8,4,10,2}	
\draw[gray] (6)--(5)--(3)--(4)--(8)--(7)--(1)--(2)--(10)--(9);
\draw[thick] (2.5,0.5) -- (2.5,10.5);
\draw[thick] (6.5,0.5) -- (6.5,10.5);
\draw[thick] (0.5,4.5) -- (10.5,4.5);
\end{scope}
\begin{scope}[shift={(24,0)}]
\plotpermgrid{13,8,11,1,9,3,7,5,10,6,12,4,14,2}	
\draw[gray] (8)--(7)--(5)--(6)--(10)--(9)--(3)--(4)--(12)--(11)--(1)--(2)--(14)--(13);
\draw[thick] (2.5,0.5) -- (2.5,14.5);
\draw[thick] (8.5,0.5) -- (8.5,14.5);
\draw[thick] (0.5,6.5) -- (14.5,6.5);
\end{scope}
\end{tikzpicture} \par}

This uses two entries in the non-cycle cell; furthermore, it is important that they form a 21 pattern, because if the first two entries appeared in the opposite order, then the higher of the two entries could be gridded in the cycle, and would thus just be a continuation of the coil. However, a minor modification to this example provides an antichain in  $\grid{\gctwo{3}{1,-1,1}{0,1,-1}}$, which again only requires two entries in the cell:

{\centering
\begin{tikzpicture}[scale=0.2]
\plotpermgrid{4,5,6,3,1,7,2}
\draw[thick] (2.5,0.5) -- (2.5,7.5);
\draw[thick] (4.5,0.5) -- (4.5,7.5);
\draw[thick] (0.5,2.5) -- (7.5,2.5);
\draw[gray] (4)--(3)--(1)--(2)--(7)--($(5)!0.5!(6)$);
\begin{scope}[shift={(10,0)}]
\plotpermgrid{6,9,10,7,1,5,3,8,4,11,2}	
\draw[gray] (6)--(5)--(3)--(4)--(8)--(7)--(1)--(2)--(11)--($(9)!0.5!(10)$);
\draw[thick] (2.5,0.5) -- (2.5,11.5);
\draw[thick] (6.5,0.5) -- (6.5,11.5);
\draw[thick] (0.5,4.5) -- (11.5,4.5);
\end{scope}
\begin{scope}[shift={(24,0)}]
\plotpermgrid{8,13,14,11,1,9,3,7,5,10,6,12,4,15,2}	
\draw[gray] (8)--(7)--(5)--(6)--(10)--(9)--(3)--(4)--(12)--(11)--(1)--(2)--(15)--($(13)!0.5!(14)$);
\draw[thick] (2.5,0.5) -- (2.5,15.5);
\draw[thick] (8.5,0.5) -- (8.5,15.5);
\draw[thick] (0.5,6.5) -- (15.5,6.5);
\end{scope}
\end{tikzpicture} \par}

%
%
%
%
%
%
\section{Legacy material A}
\subsection{Characterising gridded containment}

\commentrb{These are the summary of some rough notes on various whiteboards.}

Let $\pi^\gridded\in\Gridhash(M)$, and let $v_1,\dots,v_n$ denote a set of points in $\pi^\gridded$ that form a maximal length gridded coil. We say that $v_i$ is \emph{special} if there exists $x\in D_{\pi^\gridded}$, distinct from $v_1,\dots,v_n$, such that
$x\rightarrow v_i$ and $x\not\rightarrow v_j$ for every $j>i$, or
$v_i\rightarrow x$ and $v_j\not\rightarrow x$ for $j<i$.

Take a gridded coil $v_1,\dots,v_n$ and suppose that $v_i$ is special with $x\rightarrow v_i$. The point $x$ must lie in the same row or column as $v_i$. Furthermore, since $v_i\rightarrow v_{i+1}$ but we cannot have $x\rightarrow v_{i+1}$, it follows that $x$ cannot share a row or column with $v_{i+1}$, which in particular means that $x$ is not in the same cell as $v_i$. We have two options for the position of $x$ (See Figure~\ref{fig-special points}):
\begin{enumerate}
	\item In the same cell as $v_{i-1}$ (which must exist);
	\item Not in a cell on the cycle, but in the row/column common to $v_{i-1}$ and $v_i$.
\end{enumerate}
Similar observations hold if $v_i\rightarrow x$, but in this case we can have $v_{i+1}$ and $x$ sharing a cell.

\begin{figure}
{\centering
\begin{tikzpicture}[scale=0.45]
\foreach \x/\y in {-7/6,-7/10,0/0,0/4,0/6,0/10,0/12,7/0,7/4}
	\draw[dotted] (\x,\y) -- ++(-1.5,0) (\x+4,\y) -- ++(1.5,0);
\foreach \x/\y in {-7/6,-3/6,0/0,4/0,7/0,11/0,0/6,4/6}
	\draw[dotted] (\x,\y) -- ++(0,-1) (\x,\y+4) -- ++(0,1);
	\draw[dotted] (0,12) -- ++(0,-1) (4,12) -- ++(0,-1);
\draw [draw=none,fill=black!20] (1.5,1.5) rectangle ++ (1,1.5) rectangle ++(0.5,1) (1.5,12) rectangle ++(1.5,2);
\draw[dashed] (1.5,-0.5) -- ++(0,14.5)
	(3,-0.5) -- ++(0,14.5)
	(-7.5,1.5) -- ++(19,0);
\draw[->] (-6,-1) -- ++(2,0);
\draw[->] (1,-1) -- ++(2,0);
\draw[->] (8,-1) -- ++(2,0);
\draw[->] (-8,1) -- ++(0,2);
\draw[->] (-8,7) -- ++(0,2);
\draw (-7,6) rectangle ++(4,4);
\draw (0,0) rectangle (4,4);
\draw (7,0) rectangle ++(4,4);
\draw (0,6) rectangle ++(4,4);
\draw (0,14) -- ++(0,-2) -- ++(4,0) -- ++(0,2);
\node[permpt,label={[label distance=-3pt]below right:\footnotesize$v_{i+\ell-2}$}] at (8.5,1) {};
\node[permpt,label={[label distance=-3pt]below right:\footnotesize$v_{i+\ell-1}$}] at (1,1.5) {};
\node[permpt,label={[label distance=-3pt]below right:\footnotesize$v_{i+\ell}$}] at (1.5,7) {};
\node[permpt,label={[label distance=-3pt]below right:\footnotesize$v_{i+\ell+1}$}] at (-6,7.5) {};
\node[permpt,label={[label distance=-3pt]above:\footnotesize$v_{i-2}$}] at (10,2.5) {};
\node[permpt,label={[label distance=0pt]right:\footnotesize$v_{i-1}$}] at (2.5,3) {};
\node[permpt,label={[label distance=-3pt]above right:\footnotesize$v_{i}$}] at (3,8.5) {};
\node[permpt,label={[label distance=-3pt]above:\footnotesize$v_{i+1}$}] at (-4.5,9) {};
\end{tikzpicture} \par}
\caption{The shaded regions represent the possible areas for placing a point $x$ such that $v_i$ is a special point with $x\rightarrow v_i$. Note that $x$ cannot lie in the same cell as $v_i$.}\label{fig-special points}	
\end{figure}

\begin{figure}
{\centering
\begin{tikzpicture}[scale=0.45]
\foreach \x/\y in {-7/6,-7/10,0/0,0/4,0/6,0/10,7/0,7/4}
	\draw[dotted] (\x,\y) -- ++(-1.5,0) (\x+4,\y) -- ++(1.5,0);
\foreach \x/\y in {-7/6,-3/6,0/0,4/0,7/0,11/0,0/6,4/6,14/6}
	\draw[dotted] (\x,\y) -- ++(0,-1) (\x,\y+4) -- ++(0,1);
	\draw[dotted] (12.5,6) -- ++(1.5,0) (12.5,10) -- ++(1.5,0);
\draw [draw=none,fill=black!20] (1.5,1.5) rectangle ++ (1,1.5) rectangle ++(0.5,1) (14,7) rectangle ++(2,1.5);
\draw[dashed] (-4.5,-0.5) -- ++(0,11)
	(-7.5,7) -- ++(23.5,0)
	(-7.5,8.5) -- ++(23.5,0);
\draw[->] (-6,-1) -- ++(2,0);
\draw[->] (1,-1) -- ++(2,0);
\draw[->] (8,-1) -- ++(2,0);
\draw[->] (-8,1) -- ++(0,2);
\draw[->] (-8,7) -- ++(0,2);
\draw (-7,6) rectangle ++(4,4);
\draw (0,0) rectangle (4,4);
\draw (7,0) rectangle ++(4,4);
\draw (0,6) rectangle ++(4,4);
\draw (16,6) -- ++(-2,0) -- ++(0,4) -- ++(2,0);
\node[permpt,label={[label distance=-3pt]below right:\footnotesize$v_{i-2}$}] at (8.5,1) {};
\node[permpt,label={[label distance=-3pt]below right:\footnotesize$v_{i-1}$}] at (1,1.5) {};
\node[permpt,label={[label distance=-3pt]below right:\footnotesize$v_{i}$}] at (1.5,7) {};
\node[permpt,label={[label distance=-3pt]below right:\footnotesize$v_{i+1}$}] at (-6,7.5) {};
\node[permpt,label={[label distance=-3pt]above:\footnotesize$v_{i-\ell-2}$}] at (10,2.5) {};
\node[permpt,label={[label distance=0pt]right:\footnotesize$v_{i-\ell-1}$}] at (2.5,3) {};
\node[permpt,label={[label distance=-3pt]above right:\footnotesize$v_{i-\ell}$}] at (3,8.5) {};
\node[permpt,label={[label distance=-3pt]above:\footnotesize$v_{i-\ell+1}$}] at (-4.5,9) {};
\end{tikzpicture} \par}
\caption{The shaded regions represent the possible areas for placing a point $x$ such that $v_i$ is a special point with $v_i\rightarrow x$. Note that $x$ cannot lie in the same cell as $v_i$.}\label{fig-special points-2}	
\end{figure}
Case 1 is somewhat akin to the usual "anchoring" (e.g. of the Widdershins or increasing oscillating antichain): we double up a point on the coil. Somewhat curiously, the point we double up is $v_{i-1}$ if $x\to v_i$, or $v_{i+1}$ if $v_i\to x$.

Case 2 essentially makes use of the gridding (which can be thought of as a labelling) to anchor entries. However, this is the mechanism for the `tied-by-one' antichains.

A \emph{decorated gridded coil of length $n$} is a permutation of length $n+2$ which comprises a gridded coil $v_1,\dots,v_n$ in which $v_1$ and $v_n$ are special points, together with two associated points.

\begin{lemma}\label{lem-gridded-unlabelled-antichains}
Let $M$ be any partial multiplication matrix, and let $\sigma^\gridded$ and $\pi^\gridded$ be decorated gridded $M$-coils of lengths $m$ and $n$ with $m<n$. Then $\sigma^\gridded$ does not embed in $\pi^\gridded$.
\end{lemma}

\begin{proof}
TODO.	
\end{proof}

\begin{prop}
Let $\C^\gridded\subseteq \Gridhash(M)$, where $M$ is a partial multiplication matrix in which every component is unicyclic or a tree. Then $\C^\gridded$ is well quasi-ordered with respect to gridded containment if and only if $\C^\gridded$ contains finitely many decorated gridded coils.
\end{prop}

\begin{proof}
\noterb{This direction is surely not as easy as written.}One direction is immediate: by Lemma~\ref{lem-gridded-unlabelled-antichains}, if there is no bound on the number of decorated gridded coils, then we may construct an infinite antichain comprising this infinite collection of gridded coils.

For the other direction, let $k$ denote the length of the longest decorated gridded coil in $\C^\gridded$. We want to do something similar to \cite[Theorem 9.3]{abrv:321-subclasses:}, where we encode the extra sections of coils before the first and after the last decoration using an integer as a label on the endpoints of the coil. Classes containing bounded length coils are lwqo, and this is essentially a labelling by the wqo set $\mathbb{N}\cup\{\bullet\}$.
\end{proof}

%
%
%
%
%
\subsection{Ungridded well quasi-ordering}

Start with a gridded antichain element $\pi^\gridded$ with special points. In Cases 1 or 2 of the above section, we are reasonably `happy': we should be able to argument that the presence of such special points translates to the ungridded situation.

In case 3, suppose $x\rightarrow v_1$. We have two distinct scenarios:

A: The coil cannot be extended to include $x$, for example by regridding $\pi^\gridded$. This means that $x$ is placed in $\pi^\gridded$ in such a way that the other entries of $\pi^\gridded$ prevent it from going in the right place.

B: The coil can be extended to include $x$, if we regrid $\pi^\gridded$.

We are, again, quite `happy' in subcase A. For subcase B, we could regrid to include $x$ in the cycle, and now we need to explore the consequences of this regridding. Here's an example of the sort of thing we need to worry about.

Let $M=\gcfive{5}{0,0,0,1,1}{-1,0,1,0,0}{0,-1,1,0,0}{0,1,-1,0,0}{1,0,0,-1,0}$. Notice that we've added a `tendril' that basically enables us to grid several more coil points. Here is a coil whose 5th point is placed in the `wrong' cell (i.e.\ case 3), but where the coil continues some distance further before we encounter a point (the rightmost point) that we can use to anchor at the end:

\begin{tikzpicture}[scale=0.3]
\plotpermgrid{28,1,26,3,24,5,22,7,20,9,18,11,16,13,17,15,19,14,21,12,23,10,25,8,27,6,29,4,31,2,30}
\draw (15) -- (17) -- (16) -- (13) -- (14) -- (19);
\foreach \x in {9.5,14.5,25.5,30.5}
	\draw (\x,0.5) -- ++(0,31);
\foreach \y in {7.5,15.5,18.5,28.5}
	\draw (0.5,\y) -- ++(31,0);
\end{tikzpicture}

With this in mind, we need to  `chase' points around a possible coil: we can appeal to Lemma~\ref{lem-coil-gridding-off-cycle} to argue that there can be at most $2\ell$ such points in each non-cycle cell, and thus there is a limit on how much further we might need to look before we can find a way to anchor (or conclude that no anchor exists).

In summary, we expect the ungridded antichains to arise from special points of types 1 or 2, or of type 3 where we may need to follow the coil for some bounded additional distance.

%
%
%
%
%
%
\section{Cyclic classes}

{\color{red}TODO: Describe the (unlabelled) \emph{coil antichain} that we get from a coil by blowing up the first and last points of finite prefixes. Notation: $v_1^{[2]},v_2,\dots,v_n^{[2]}$. We should observe these are the antichains that appear in Murphy and Vatter~\cite{murphy:profile-classes:}.}

An \emph{opening fork} is a gridded permutation given by an initial segment $v_1^{[2]},v_2,\dots,v_\ell$ of a coil antichain. Similarly, a \emph{closing fork} is a gridded permutation given by an inflated coil $v_1,v_2,\dots,v_\ell^{[2]}$.

\begin{lemma}\label{lem-inflated-coil-embeddings}
	Let $v_1,v_2,\dots,v_n$ be a gridded coil in $\Grid(M)$ for some cyclic oriented grid $M$. Then for $m<n$, $v_1^{[2]},v_2,\dots,v_m^{[2]}$ does not embed in $v_1^{[2]},v_2,\dots,v_n^{[2]}$ as a gridded permutation.
\end{lemma}

\begin{proof}
Label the two entries corresponding to $v_1^{[2]}$ as $\{a_1,a_2\}$, and the two entries of $v_m^{[m]}$ as $\{z_1,z_2\}$. Note that $\xi_1^\gridded = a_1,v_2,\dots,v_{m-1},z_1$ and $\xi_2^\gridded=a_2,v_2,\dots,v_{m-1},z_2$ are both gridded coils. By Lemma~\ref{lem-coil-embeddings}, any embedding $\phi$ of $\xi_1^\gridded$ into the gridded coil $v_1,\dots,v_n$ satisfies $\phi(v_i) = v_{j+i}$ for some $j$. Now consider jointly embedding $\xi_2^\gridded$: since $\phi(v_2,\dots,v_{m-1})$ is fixed, we conclude that $\phi(a_1)=\phi(a_2)$ and $\phi(z_1)=\phi(z_2)$,\noterb{This isn't quite right: depends on $j$} both of which contradict the fact that $\phi$ was taken to be an embedding.

Thus, we conclude that the only way to embed an inflated coil entry is into another inflated coil entry. Since embeddings of gridded coils into longer coils must be contiguous by Lemma~\ref{lem-coil-embeddings}, the statement of the lemma follows.
\end{proof}

Clearly, any class that has infinite intersection with a coil is not wqo.

\begin{prop}
	Any class $\C$ that contains infinite coil antichains is not wqo. \commentrb{This wording/language is a bit stupid!}
\end{prop}	

 We claim the following:

\begin{thm}Any subclass of a cyclic grid class $\Grid(M)$ that has only finite intersection with every coil in $\Grid(M)$ is well quasi-ordered.\end{thm}

\begin{proof}
Let $k+2$ denote the number of entries in the longest gridded coil antichain element. Now take any $M$-indivisible $\pi^\gridded$, and, following the proof of Lemma~\ref{lem-griddable-acyclic}, label the cells of the cycle $c_1,c_2,\dots,c_\ell$, and denote the last entries of $\pi^\gridded$ in these cells by $z_1,\dots,z_\ell$, labelled in such a way that $z_{i-1}\rightarrow z_{i}$ for all $i=2,\dots,\ell$, and $z_\ell\rightarrow z_1$. We now iteratively construct the following sets of points:\noterb{This is a repeat of Lemma~\ref{lem-griddable-acyclic} proof at this stage.}

\begin{itemize}
\item Set $S_1 = \{z_1\}$, $R_1 = \{ p\in\pi^\gridded:p\neq z_1\}$, and $p_1=z_1$.

\item For $i>1$:
Let $S_i=\{p\in R_{i-1}: p\text{ is in cell }j\equiv i \pmod{\ell}\text{ and }p_{i-1}\rightarrow p\}$, $R_i = R_{i-1}\setminus S_i$, and set $p_i$ to be the smallest entry in $S_i$ (with respect to the orientation).

\item When $S_{i+1}=\emptyset$, stop. Note that we will also have $R_i=\emptyset$, otherwise there are no arrows pointing from $\cup_{j\leq i}S_j$ to an entry of $R_i$, contradicting the requirement that $D_{\pi^\gridded}$ be strongly connected.
\end{itemize}

Let $m$ denote the index of the final non-empty set $S_i$, noting that, unlike Lemma~\ref{lem-griddable-acyclic}, we no longer have a bound on $m$. With the coil $p_1,p_2,\dots,p_m$ and the partition $S_1,\dots,S_m$ so constructed, now (for convenience) set $s_i=|S_i|$. We are going to identify potential starting places for opening  forks, based primarily (but not exclusively) on the sizes of the $s_i$.
\begin{enumerate}[(1)]
\item\label{forks-1} If for $\ell <i<m-\ell$ we have $s_i\geq 2$, then pick $q_i\in S_i$ distinct from $p_i$. If $q_i\rightarrow p_{i+1}$ then $\{q_i,p_i\},p_{i+1},\dots,p_{i+\ell-1}$ is an opening fork.

Otherwise, we have $p_{i+1}\rightarrow q_i$, and in this case observe that $\{p_{i-\ell},q_i\},p_{i-\ell+1},\dots, p_{i-1}$ is an opening fork.
\item\label{forks-2} For $i<\ell$, if $s_i\geq 2$ and $z_i\rightarrow p_{i+1}$, then $p_{i+1}=z_{i+1}$ so $z_i\rightarrow p_{i+1}$, and \[\{z_i,p_i\},p_{i+1},\dots,p_{i+\ell-1}\] is an opening fork.

Note that this case must happen if $s_i\geq 2$ and $s_{i+1}=1$ (since then $p_{i+1}=z_{i+1}$).
\item\label{forks-3} For $i<\ell$, if $s_i\geq 3$ then to avoid the case above we have $s_j\geq 2$ for all $i\leq j\leq \ell$. In particular, $z_j$ and $p_j$ are distinct in the range $i\leq j\leq \ell$.

In this case, take $q_i\in S_i$ distinct from $p_i$ and $z_i$. If $q_i\rightarrow p_{i+1}$ then $\{p_i,q_i\},p_{i+1},\dots,p_{i+\ell-1}$ is an opening fork.

Thus we may suppose that $p_{i+1}\rightarrow q_i$, and, by transitivity, this also gives $p_{i+1}\rightarrow z_i$. Now observe that $\{q_i,z_i\}\rightarrow z_{i+1}$, and hence $\{q_i,z_i\}, z_{i+1},\dots,z_\ell, p_1,p_2,\dots,p_{i-1}$ is an opening fork, noting in particular that it can be extended to a coil using $p_i,p_{i+1},\dots$ since $z_j$ is distinct from $p_j$ for all $j$ satisfying $i\leq j\leq \ell$.
\item\label{forks-4} For $i=\ell$, if $s_i\geq 3$ then take $q_\ell$ distinct from $p_\ell$ and $z_\ell$. The sequence \[\{q_\ell,z_\ell\},p_1,p_2,\dots,p_{\ell-1}\] is an opening fork.
\end{enumerate}
While the above list of opening forks may not be exhaustive, it places strong restrictions on the potential values for $s_1,\dots,s_{i-1}$ prior to encountering the earliest fork. Let $i_o$ now denote the index of the first set $S_{i_o}$ which contains one (or more) entries from an opening fork. (If no such $i$ exists, set $i_o=\infty$.)

If $i_o>\ell$, then by~\ref{forks-1} we see that $s_i=1$ for all $\ell<i<i_o$. Furthermore, by~\ref{forks-2} and~\ref{forks-3} if $s_i\geq2$ for some $1\leq i\leq \ell$, then we have $s_i=s_{i+1}=\cdots=s_\ell = 2$. In this case, we have the following potential sequences for $s_1,s_2,\dots,s_{i_o-1}$:
\begin{align*}
1,\dots,1,1,\dots,&1,1,\dots,1\\
1,\dots,1,2,\dots,&\underset{\underset{\ell}{\uparrow}}{2},1,\dots,1.
\end{align*}
We note that both of these are coils (and thus contain no opening forks, as expected). This is clear if the sequence $s_1,s_2,\dots,s_{i_o-1}$ comprises all 1s (since then $S_i =\{p_i\}$), while for the second sequence, note that the sets $S_i$ comprising two elements are among the first $\ell$ entries, and thus contain the entries $z_i$ and $p_i$. Furthermore, by \ref{forks-2} we must have $p_{i+1}\rightarrow z_i$. Thus, if $j$ represents the index of the first $S_j$ to contain two entries, the sequence $z_j,z_{j+1},\dots,z_\ell,p_1,\dots$ is a coil.

If $i_o\leq \ell$, then to avoid a case analysis we simply set $i_o=2$, and note that the true value of $i_o$ is no greater than $\ell$. (Note: it is possible that the first set $S_1$ contains an entry of an opening fork, but we still set $i_o=2$ as $S_1$ will be handled by the analysis in a moment.)

Having fixed $i_o$, we now consider the sets $S_j$ for $j> i_o+k+\ell$. If any such set $S_j$ has size 2 or more, then we may take $q_j\in\S_j$, distinct from $p_j$, such that $p_{j-\ell+1},\dots,p_{j-1},\{p_j,q_j\}$ forms a closing fork. In particular, since the earliest opening fork appears in a set with index at most $i_o+\ell$, we can find a gridded coil antichain with at least $j-(i_o+\ell)+2> k+2$ entries, which is a contradiction. Thus, $|S_j|\leq 1$ for all $j>i_o+k+\ell$.

We now consider the entries of $\pi^\gridded$ that lie in the sets $S_{i_o-1},S_{i_o},\dots, S_{i_o+k+\ell+1}$. We will use $\beta^\gridded$ to refer to the `body' gridded permutation corresponding to these entries. By construction, the gridding of $\beta^\gridded$ can be refined using the sets $S_j$ into $\beta^{\ggridded}$ on a grid whose cell graph is a path of length at most $k+\ell+2$. Furthermore, we have $S_{i_o-1}=\{p_{i_o-1}\}$ and $S_{i_o+k+\ell+1} = \{p_{i_o+k+\ell+1}\}$ are both singletons (or, potentially, $S_{i_o+k+\ell+1}$ is empty if $m<i_o+k+\ell+1$).

To record the coil $p_1,p_2,\dots,p_{i_o-1}$, it now suffices to record the length $n_1=i_o-1$, noting in particular that the placement of entries is unambiguous by construction, and since $S_{i_o-1}$ contains a single entry. Similarly, if $p_{i_o+k+\ell+1}$ exists, we can record the length $n_2=m-(i_o+k+\ell-1)+1$ of the coil following $\beta^\gridded$.

We have now described an injective map $\phi: \pi^\gridded \mapsto (n_1,\beta^{\ggridded},n_2)$ which is order-preserving: setting $\phi(\sigma^\gridded) = (m_1,\alpha^{\ggridded},m_2)$ and $\phi(\pi^\gridded) =(n_1,\beta^{\ggridded},n_2)$, we have that $\sigma^\gridded\leq \pi^\gridded$ if and only if $m_1\leq n_1$, $m_2\leq n_2$ and $\alpha^{\ggridded}\leq \beta^{\ggridded}$. \noterb{This if and only if may be a lie, and we don't need that full strength anyway.} Since the image of $\phi$ is the Cartesian product of three well quasi-ordered sets, we conclude that the set of $M$-indivisible permutations in $\C$ is well quasi-ordered.
\end{proof}

\section{Orphaned material from the LWQO section}

\begin{lemma}\label{lem-coil-embeddings}
Let $M$ be a cyclic matrix with orientation $\omega$ and cycle length $\ell$, and let $u_1,\dots,u_m$ and $v_1,\dots,v_n$ be two gridded coils in $M$.
If \[\phi: \{u_1,\dots,u_m\} \mapsto \{v_1,\dots,v_n\}\] is an embedding of the shorter coil into the longer, then there exists $j$ such that $\phi(u_{i}) = v_{j+i}$ for all $1\leq i \leq m$.
\end{lemma}

\begin{proof}
First, the image of $u_1,\dots,u_m$ under any embedding $\phi$ is a contiguous set of entries of $v_1,\dots,v_n$. This is because $u_1,\dots,u_m$ is (as a digraph) strongly connected, but any noncontiguous collection of entries from $v_1,\dots,v_n$ has a `gap', and by Lemma~\ref{lem-coil-splits} is thus not strongly connected.

We begin by observing that since any sequence $u_1,\dots,u_\ell$ forms a directed cycle, it must embed into a directed cycle in $v_1,\dots,v_n$, and thus embeds into a contiguous segment $v_i,\dots,v_{i+\ell-1}$ in such a way that $u_{j+1}$ embeds into the entry immediately following $u_j$, unless $\phi(u_j) = v_{i+\ell-1}$ in which case $\phi(u_{j+1})=v_i$.

Now consider the gridded coil $u_1,\dots,u_{\ell+1}$. We note that $u_1$ and $u_2$ are the only entries with indegree 2. Thus, the only possible embedding of $u_1,\dots,u_\ell$ that can be extended to include $u_{\ell+1}$ is the one claimed in the lemma.

For the general case $u_1,\dots,u_m$ with $m>\ell+1$, we now argue inductively: both the gridded coils $u_1,\dots,u_{m-1}$ and $u_2,\dots,u_m$ can only embed into $v_1,\dots,v_n$ in the way claimed in the lemma, and this completes the proof.

\commentrb{The above may be improved. I re-introduced it to use later.}
\end{proof}

We can now construct an ungridded labelled infinite antichain.

\begin{prop}\label{prop-coil-ungridded-antichains}
	Let $(v_i)_{i\in\mathbb{N}}$ be an (infinite) coil defined on the oriented cyclic matrix $M$. Then the sequence of (ungridded) permutations $\{\pi_i = v_1,v_2,\dots,v_i: i\geq \gridbound \}$ in which the first and last entries of the sequence are labelled differently from all the other elements forms a labelled infinite antichain.
\end{prop}

\begin{proof}
	For a contradiction, suppose that there exist $i,j$ such that $\pi_i$ embeds in $\pi_j$ in such a way that $\{v_1,v_i\}$ embeds in $\{v_1,v_j\}$ (as required by the labelling).
	
	For any such embedding, any $M$-gridding of $\pi_j$ induces an $M$-gridding of $\pi_i$, but by Lemma~\ref{lem-coil-unique-gridding} both $\pi_i$ and $\pi_j$ have unique griddings, so $\pi^\gridded_i$ embeds in $\pi^\gridded_j$ in such a way that $\{v_1,v_i\}$ embeds in $\{v_1,v_j\}$. This contradicts Lemma~\ref{lem-coil-antichains}.
\end{proof}

%
%
%
\subsection{Rectangle growth}

{\color{red}It's not clear whether we need this subsection in this paper. Keeping it here for now in case it's useful.}

Let $M$ be a $k\times \ell$ gridding matrix, equipped with a fixed (but possible arbitrary) orientation, so that every cell has an origin coordinate $(0,0)$ and a top coordinate of $(1,1)$. Let $\pi$ be an $M$-gridded permutation, and $S$ a set of points in $\pi$.

For each row and column of the gridded permutation $\pi$, identify the greatest (with respect to the orientation of $M$) point in $S$. This defines a set $L(S)=\{h_1,\dots,h_\ell,v_1,\dots,v_k\}$ of $\ell$ horizontal and $k$ vertical lines (one per row/column of $M$).

Define the \emph{span} and \emph{weak span} of $S$ as follows:
\begin{align*}
\Span(S) &= \bigcup_{(i,j)\in [k]\times [\ell]} ([0,v_i]\times [0,h_j]) \cap \pi\\
\wkSpan(S) &= \bigcup_{i\in [k]} ([0,v_i]\times [0,1]) \cup \bigcup_{j\in [\ell]} ([0,1]\times [0,h_j])
\end{align*}
Informally (as the above is probably riddled with inconsistent notation), for a cell whose origin is in the bottom left corner, the span of $S$ comprises all points contained in the bottom left, up to the horizontal and vertical lines in $L(S)$ that cut this cell. The weak span comprises all the points that are either below the horizontal line or to the left of the vertical line, or both.

Critically, it is the existence of points in $\Span\setminus\wkSpan$ that enables us to continue the process of rectangle growth. We have the following observation.

\begin{lemma}
An $M$-gridded permutation $\pi$ is $M$-indivisible if and only if whenever $\wkSpan(S)\setminus \Span(S) = \varnothing$ we have $\Span(S)=\pi$.
\end{lemma}

\begin{proof}
If $\Span(S)\neq \pi$ for some set $S$ that otherwise satisfies the hypothesis in the lemma, then the set of lines $L(S)$ witnesses that $\pi$ is $M$-divisible, with one part comprising $\Span(S)$ and the other part $\pi \setminus \Span(S)$.

Conversely, suppose $\pi$ is $M$-divisible. Write $\pi$ as an $M$ sum $\pi_1\boxplus \pi_2$, and then let $S$ comprise all points in $\pi_1$. Clearly, $\Span(S) = \pi_1\neq \pi$, and $\Span(S) = \wkSpan(S)$.
\end{proof}

For an $M$-indivisible permutation $\pi$ and a set $S$ of points in $\pi$, define $S_1 = S$, and for $i\geq 2$ let $S_i = \wkSpan(S_{i-1})$. The \emph{rectangle growth} of $S$ is\note{TODO: Check indices are not ``off by one'' from what we expect.}
\[\rg(S) = \min \{k : S_k=\pi\}.\]
The \emph{rectangle growth} of $\pi$ is the largest of these,
\[\rg(\pi) = \max_{S\subseteq\pi} \rg(S).\]
Note that if $S\subseteq T$, then $\rg(S)\geq \rg(T)$, which means that $\rg(\pi)$ will always be realised by starting with a set comprising a \emph{single} point of $\pi$.

Rectangle growth is fundamentally related to Murphy-Vatter sequences for constructing antichains.\note{TODO: We ought to define Murphy-Vatter sequence.}

\begin{prop}
Let $\pi$ be an $M$-gridded permutation for some gridding matrix $M$. If $\rg(\pi)=r$, then there exists a Murphy-Vatter sequence of length at least $r$ inside $\pi$.

Conversely, if $\pi$ is a Murphy-Vatter sequence on $r$ points, then$\rg(\pi)=r$.
\end{prop}

\begin{proof}
For the first part, pick  $S_1=\{s\}$ such that $\rg(S_1)=\rg(\pi)$. For $i=2,\dots,r$, let $S_i = \wkSpan(S_{i-1}$. Now, choose $s_r \in S_r\setminus S_{r-1}$. By symmetry, we may suppose that $s_r$ lies in a cell oriented from bottom left to top right, and also that $s_r$ lies to the right of all points of $S_{r-1}$ in the same column.

Since $s_r\in\wkSpan(S_{r-1})$, there must exist $s_{r-1}\in S_{r-1}$ in the same row as, but higher than, $s_r$. Note that $s_{r-1}\not\in S_{r-2}$, else we would have $s_r \in \wkSpan(S_{r-2})=S_{r-1}$. Now, from $s_{r-1}$, we identify $s_{r-2} \in S_{r-2}$ similarly, then $s_{r-3}\in S_{r-3}$, and so on. By inspection, $s_1,\dots,s_r$ is a Murphy-Vatter sequence.

For the converse, let $s_1,\dots,s_r$ be a Murphy-Vatter sequence, and consider $\rg(\{s_1\}$. If $r=1$ then clearly $\rg(\{s_1\})=1$. By induction on $r$, we may assume $\rg(\{s_1,\dots,s_{r-1}\})=r-1$. Now by construction we have $s_r\not\in\wkSpan(\{s_1,\dots,s_{r-2}\})=\Span(\{s_1,\dots,s_{r-1})$ but $s_r$ is in the weak span of $\{s_1,\dots,s_{r-1}\}$, thus the rectangle growth is $r$.\note{Technically, I think all we have shown is $\rg \geq r$.}
\end{proof}

Our task now is to look for Murphy-Vatter sequences in finitely based classes. For example, we may want to claim that it suffices to look at all Murphy-Vatter sequences of length $f(m)$, where $m=\max_{\pi\in\text{basis}}|\pi|$ and $f$ is some suitably-growing function. We would then wish to appeal to an argument to guarantee an infinite periodic Murphy-Vatter sequence. The difficulty, of course, is to identify an appropriate gridding matrix over which to look for rectangle growth.

We have examples where the basic gridding matrix has bounded rectangle growth, but where a refined matrix can contain arbitrarily large rectangle growth. Thus, when we find that we have bounded rectangle growth in some matrix, the task is to refine the matrix in such a way as to reduce the number of cycles the matrix contains, and to then study this structure.

%
%
%
%
%
\subsection{Older stuff -- basis}

Every basis element $\beta$ of a monotone grid class $\CCC=\Grid(M)$ has the following properties:
\begin{bullets}
  \item $\beta$ has no gridding in $\CCC$.
  \item $\beta$ has a gridding in one or more one-point extension of $\CCC$. \\[3pt]
  Notation and terminology: $\CCC^\bullet=\Grid(M^\bullet)$, where $M^\bullet$ has entries from $\{0,1,-1,\bullet,1^\bullet,-1^\bullet\}$  and is identical to $M$ except for the addition of a \emph{spot} to a single cell.
  \item If $\beta^-$ is \emph{any} one-point contraction of $\beta$, then $\beta^-$ has a gridding in $\CCC$.
\end{bullets}

\subsection*{Prerequisites for this section}
\begin{bulletnums}
  \item Any antichain in a one-point extension of a unicyclic grid class, $\CCC^\bullet$, contains elements with arbitrarily long \emph{spirals} (a.k.a. ``$M$-components'' or ``Murphy--Vatter sequences'').

      An $n$-point spiral gridded in a positive $k$-cycle consists of $n/k$ \emph{circuits} around the cycle, each cell containing at least $\floor{n/k}$ points in each cell. Successive points in a cell (from successive circuits) form a monotone sequence.

      A negative $k$-cycle in $\Grid(M)$ needs to be considered as a positive $2k$-cycle in $\Grid(M^{\times2})$ as far as spirals are concerned.

  \item A long enough spiral $\sigma$ contains a run $\rho$ of many consecutive points, such that in any gridding of $\sigma$, the run $\rho$ has a unique gridding in (the cells composing the cycle of) $\CCC^\bullet$. That is, each point in $\rho$ has only one cell in which it can be gridded.

      In a cyclic class, $\rho$ appears to consist of all of $\sigma$ (assuming it has at least one full circuit).
      A single cell not in the cycle can only take points from one of the two circuits at the ends of the spiral.
      Each additional cell not in the cycle can take points from one additional circuit.
      So, it appears that, in a unicyclic class consisting of a (positive) $k$-cycle and $\ell$ additional non-zero cells,
      if $\sigma$ has $n$ points, then $\rho$ has at least $n-k\ell$ points.\footnote{\label{fnWQO}It appears that we could allow any class in the non-cycle cells without needing to modify this argument very much.}

  \item If $\sigma$ is a sufficiently long spiral with uniquely-gridded run $\rho$,
      and $\sigma^-$ is a one-point contraction of $\sigma$ formed by deleting a cell-medial point $x$ from $\rho$,
      then in any gridding of $\sigma^-$, the \emph{split run} $\rho\setminus\{x\}$ still has a unique gridding (the same as for $\rho$).

      To be able to choose a cell-medial $x$ from any cell of the cycle, we need $\rho$ to contain at least three circuits.
      The split run $\rho\setminus\{x\}$ consists of (the $M$-sum of) two runs, $\rho_1$ and $\rho_2$ say. \emph{A~priori}, it might be expected that the ends of $\rho_1$ and $\rho_2$ adjacent to $x$ in $\rho$ might be griddable in some other cell, but this does not seem to be the case. (If it is, we still have sufficiently long runs in $\rho_1$ and $\rho_2$ with a unique gridding.)
\end{bulletnums}

\begin{figure}[ht]
\begin{center}
  \begin{tikzpicture}[scale=0.30]
    \plotperm{7,9,5,4,2,10,12,13,11,1,8,3,6}
    \draw (5.5,.5) -- (5.5,13.5);
    \draw (8.5,.5) -- (8.5,13.5);
    \draw (.5,6.5) -- (13.5,6.5);
  \end{tikzpicture}
  $\qquad\qquad$
  \begin{tikzpicture}[scale=0.30]
    \draw[blue,thick] (3,.5) -- (3,13.5);
    \draw[blue,thick] (5,.5) -- (5,13.5);
    \draw[blue,thick] (9,.5) -- (9,13.5);
    \draw[blue,thick] (.5,2) -- (13.5,2);
    \draw[blue,thick] (.5,5) -- (13.5,5);
    \draw[blue,thick] (.5,11) -- (13.5,11);
    \plotperm{7,9,5,4,2,10,12,13,11,1,8,3,6}
    \setplotptradius{5pt}
    \plotperm[white]{0,0,5,0,2, 0, 0, 0,11}
    \setplotptradius{4pt}
    \plotperm[blue]{0,0,5,0,2, 0, 0, 0,11}
    \draw (5.5,.5) -- (5.5,13.5);
    \draw (8.5,.5) -- (8.5,13.5);
    \draw (.5,6.5) -- (13.5,6.5);
  \end{tikzpicture}
\end{center}
\caption{The gridding of a permutation in \protect\gctwo{3}{1,1,-1}{-1,0,1}, shown at the right with three rigid points}
\label{figGriddings}
\end{figure}

\subsection*{Terminology and notation}
Let's refer to an $M$-gridded permutation whose underlying permutation is~$\pi$
as an \emph{$M$-gridded~$\pi$}.
If $\pi_\gridded$ is an $M$-gridded $\pi$ and $p$ is a point of $\pi$, then let $\pi_\gridded(p)$ denote the cell of $M$ containing~$p$.

\subsection*{Rigidity}
Let $\pi_\gridded$ be an $M$-gridded $\pi$, and $R$ be a set of points of $\pi$.
Let
\[
S(\pi_\gridded,R) \;=\;
\{
\pi'_\gridded \::\: \pi'_\gridded \text{~is an $M$-gridded $\pi$ and~} \pi'_\gridded(r) = \pi_\gridded(r) \text{~for all~} r\in R
\}
\]
be the set of $M$-gridded $\pi$s for which the points of $R$ are in the same cells as they are in $\pi_\gridded$.
We say that the points in $R$ are \emph{rigid}.

Let $M(\pi_\gridded,R)$ be the subdivision of $M$ constructed by adding an additional row divider and an additional column divider through each point of $R$ in $\pi_\gridded$. Let's call these additional row and column dividers \emph{rigid} dividers.
These rigid dividers are always in the same place in a plot of any $M$-gridded permutation in $S(\pi_\gridded,R)$, the original dividers being constrained to lie between the rigid dividers as they do in $\pi_\gridded$.
See Figure~\ref{figGriddings}.

\textbf{Observation.} Any point of $\pi_\gridded$ that is between two rigid points in the same cell of $M$ is in that cell in every member of $S(\pi_\gridded,R)$.

\subsection*{Outline of proof}
Assume $\CCC=\Grid(M)$ has an infinite basis and let $\beta$ be a basis element with a sufficiently long spiral~$\sigma$.

By the prerequisites, there is a run $\rho$ of many consecutive points in $\sigma$ which has the same unique gridding in \emph{any} gridding of $\sigma$ in \emph{any} one-point extension of $\CCC$.

Let $\beta^\bullet_\gridded$ be an $M^\bullet$-gridded $\beta$ for some one-point extension $M^\bullet$ of $M$.
The set of $M$-gridded permutations
$S(\beta^\bullet_\gridded,\rho)$, in which the points in $\rho$ are rigid,
is precisely the set of all $M^\bullet$-gridded~$\beta$s, whatever the choice of $\beta^\bullet_\gridded$.

Let $s$ be the point that is gridded on the spot in $\beta^\bullet_\gridded$.

We need to deal with a specific case.
Suppose that in $\beta^\bullet_\gridded$, the point $s$ is between two rigid points $r_1$ and $r_2$ in the same cell $C$ of $M^\bullet$ (making a 1324 or 4231 in that cell with $r_1$ and $r_2$ acting as the 1 and 4).
Then, by the Observation, the point $s$ is in $C$ in every member of $S(\beta^\bullet_\gridded,\rho)$.
Moreover, by the prerequisites, $r_1$ and $r_2$ are in $C$ in every gridding of $\beta$ in \emph{any} one-point extension of $\CCC$.
Therefore the points \emph{between} $r_1$ and $r_2$ are also in $C$ in all such griddings, forming a 1324 or 4231.
Thus $C$ is always the cell with the spot and $\beta$ only has griddings in this particular one-point extension of $\CCC$.
Let's call this situation the \emph{bad case}.

Our goal is to find some one-point contraction of $\beta$ for which it can be shown that if it has a gridding in $\CCC$ then so does $\beta$, contradicting the fact that $\beta$ is a basis element of $\CCC$.

Let $\beta^-$ be a one-point contraction of $\beta$, where the deleted point $x$ is a cell-medial (rigid) point of $\rho$, between two rigid points $r$ and $r'$ in the same cell.
In the bad case, choose $x$ from some cell other than the cell with the spot (otherwise the choice is free).

In all cases, in every gridding of $\beta$ in any one-point extension $\CCC^\bullet$ of $\CCC$, the point gridded on the spot is outside the rectangular region formed by the rigid dividers through $r$ and $r'$.
Thus if $\beta^-$ were to have a gridding in $\CCC$, the reinstatement of $x$ would yield a gridding of $\beta$ in $\CCC$.

%
%
%
%
%
%
%
%
\section{Enumeration of unicyclic classes}

This is not yet established. We know that the \emph{gridded} permutations in a unicyclic grid class have an algebraic generating function (this follows by diagonalising the regular language of the gridded permutations on an associated acyclic grid~\cite[Chapter~4]{bevan:thesis:}). Can we do any of the following:
\begin{enumerate}
\item Choose some kind of greedy gridding (such as is done in~\cite{albert:geometric-grid-:}), and diagonalise those (the gridding would need to match
\commentdb{If a permutation can be gridded with the same number of points in the two ``end'' cells, then the greedy gridding must be such a gridding});
\item Diagonalise the regular language, then carefully factor out a single greedy gridding (this means subtracting stuff from a context-free class, which is not usually a good idea);
\item Exploit structural insights from the above (e.g. enumerate $M$-indivisibles) to count more directly?
\end{enumerate}

Bonus question: can we derive results for wqo subclasses of unicyclic classes? Must they be rational? \commentrb{The more I think about this, the more I'm certain our 321 paper can be adapted to prove it. This is almost surely too much to add to this paper, though.}

\section{Recently old stuff from section 2}

\subsection{Murphy-Vatter sequences}

We now focus our attention solely on \emph{unicyclic} partial multiplication matrices. Furthermore, we will restrict our attention to matrices which possess a single component, because of the following.

\begin{prop}[{See Vatter~\cite[Proposition 2.9]{vatter:small-permutati:}}]\label{prop-wqo-components}
If the grid classes of the connected components of $\Grid(M)$ are well quasi-ordered, then so is $\Grid(M)$.
\end{prop}

Our aim is to describe the $M$-indivisible objects of unicyclic grids in such a way as to characterise labelled well quasi-ordering precisely.
Critical to our structural description will be \emph{Murphy-Vatter sequences} (or \emph{MV-sequences} for short), so-named because of their first appearance in \cite{murphy:profile-classes:} where they were used to characterise well quasi-ordering in grid classes.

\noterb{This is the 2nd attempt at a definition. It may still be better to define via the digraphs. Is an MV sequence a gridded permutation, or does it merely \emph{induce} one?}Given a unicyclic partial multiplication matrix $M$ with orientation $\omega$, we say that the gridded permutation $\mu^\gridded$ is a \emph{Murphy-Vatter sequence} for $M$ if its entries belong entirely to the cells on a cycle, and if they can be ordered $p_1,\dots,p_m$ such that:
\begin{enumerate}[label=\textbf{MV\arabic*}]
\item\label{mv1} $p_1$ is the first entry in its cell.
\item\label{mv2} For $i\geq 2$, $p_i$ shares a row or column (but not a cell) with $p_{i-1}$, and precedes $p_{i-1}$ in their common orientation.
\item\label{mv3} For $2\leq i < m$, any entry in $p_{i+1},\dots,p_m$ that shares a row or column with $p_{i-1}$ succeeds $p_{i-1}$ in their common orientation.
\end{enumerate}

We make a few observations about MV-sequences: In the associated digraph $D_{\mu^\gridded}$, we have $p_i\rightarrow p_{i-1}$ for all $i\geq 2$, and either $p_i \rightarrow p_j$ or $p_i$ and $p_j$ are not adjacent for all $i,j$ with $j>i+1$. Thus, $p_1\leftarrow p_2\leftarrow\dots\leftarrow p_m$ is a directed path, and any other directed edges that exist point from an earlier entry to a later one. While consecutive entries of an MV sequence must share a row or column, this comment about $D_{\mu^\gridded}$ also gives us the following.

\begin{obs}
For all $1 < i  < m$, the entries $p_{i-1}$ and $p_{i+1}$ do not lie in a common row or column.
\end{obs}

This observation underpins the desired property of MV sequences, that they must `wind' around the cycle in the same direction from beginning to end.

\commentrb{I think it might help to be much more explicit about these MV sequences: they visit each cell of the cycle in turn. This will help us to relate it to earlier results about $M$-divisibility, as well as handling embeddings/antichain constructions that appear shortly.}

\begin{lemma}\label{lem-mv-firstpoints}For a Murphy-Vatter sequence $p_1,\dots,p_m$ containing at least one point in each cell of the cycle, the associated digraph is strongly connected.

Furthermore, the removal of any entry $p_2,\dots,p_{m-1}$ produces a gridded permutation that is $M$-divisible.
\end{lemma}

\begin{proof}
For the first part, since there is at least one entry in every cell of the cycle, there exists $a>2$ such that $p_1$ and $p_a$ share a row and/or column, and by~\ref{mv3} we therefore have $p_1\rightarrow p_a$ (and, by \ref{mv2}, $p_a\rightarrow p_{a-1}\rightarrow \dots \rightarrow p_1$). Similarly, there exists $z<m-1$ such that $p_z\rightarrow p_m$ (and $p_m\rightarrow p_{m-1}\rightarrow \dots \rightarrow p_z$). Choose $a$ maximal, and $z$ minimal with this property, which means $p_a$ is the latest entry in its cell, and $p_z$ is the first entry in its cell. We claim that $a\geq z$, from which it follows by \ref{mv2} that there exists a directed path from $p_a$ to $p_z$, and hence that $D_{\mu^\gridded}$ is strongly connected.

\noterb{This isn't very pretty}Suppose, for a contradiction, that $a<z$. By \ref{mv2} we have $p_a \leftarrow p_{a+1} \leftarrow \dots \leftarrow p_z$. Since $p_a$ is the latest entry in its cell, we see that no entries in $p_{a+1},\dots,p_m$ can be placed in the same cell as $p_a$. In other words, after $p_a$, the MV sequence does not follow the whole cycle around again. However, we know that $p_z\rightarrow p_m$, which means that the sequence $p_z,p_{z+1},\dots,p_m$ must visit every cell of the cycle at least once, a contradiction.

The second part of the lemma follows by \ref{mv2}: the removal of $p_i$ means that there are no arrows pointing from any entry in $\{p_{i+1},\dots,p_m\}$ to any entry in $\{p_1,\dots,p_{i-1}\}$, and hence by Lemma~\ref{lem-Dpi-strongly-connected} the permutation must be $M$-divisible.
\end{proof}

Furthermore, MV-sequences possess strong embedding properties:

\begin{lemma}[Essentially due to Murphy and Vatter~\cite{murphy:profile-classes:}]\label{lem-mv-embeddings}
Let $M$ be a unicyclic partial multiplication matrix with orientation $\omega$ and cycle length $c$. For integers $m,n$ with $c\leq m\leq n$, let $p_1,\dots,p_m$ and $q_1,\dots,q_n$ be two MV-sequences in the same grid, and suppose that $\phi: \{p_1,\dots,p_m\} \mapsto \{q_1,\dots,q_n\}$ is an embedding of the shorter gridded permutation into the longer.
Then, for all $1\leq i \leq m$ there exists $j$ such that $\phi(p_{i}) = q_{j+i}$.
\end{lemma}

\begin{proof}
Fix $j$ so that $\phi(p_1) = q_{j+1}$. Now, since $m\geq c$, the length of the cycle of $M$, we observe that $p_1,\dots,p_m$ has at least one entry in each cell of the cycle, and thus by Lemma~\ref{lem-mv-firstpoints} the associated digraph $D_{p_1,\dots,p_m}$ is strongly connected. \commentrb{TODO. A helpful observation might be that the digraphs of MV-sequences have shortest directed cycle equal to $c$.}
\end{proof}

\begin{lemma}\label{lem-mv-antichains}
Let $p_1,p_2,\dots$ be an (infinite) MV-sequence in the grid class of some oriented (unicyclic) partial multiplication matrix $M$. Then the sequence of gridded permutations $\{\pi_i = p_1,p_2,\dots,p_i: i\geq 1\}$ in which the first and last entries of the sequence are labelled differently from all the other elements forms a labelled infinite antichain.
\end{lemma}

\begin{proof}
By Lemma~\ref{lem-mv-embeddings}, any embedding $\phi$ of $\pi_i$ into $\pi_j$ must be contiguous. However, the labelling of $\pi_i$ and $\pi_j$ requires that $\phi(p_1)=p_1$ and $\phi(p_i)=p_j$, which is impossible (and so no embedding can exist).
\end{proof}

Finally, we also need to argue that MV-sequences continue to possess this embedding property when we remove the grid lines of $M$. That is, we need to argue that the ungridded permutations still form an infinite labelled antichain. Note that we require the existence of a single component in $M$ at this point to avoid the entire MV sequence being gridded in two identical components.

\begin{lemma}[Murphy and Vatter~{\cite[Lemma 4.2]{murphy:profile-classes:}}]\label{lem-mv-unique-gridding}
Let $M$ be a unicyclic grid containing at most $\ell$ non-empty cells. Then every MV sequence of length at least $(\ell+1)\ell^2+1$ is uniquely griddable.
\end{lemma}

\begin{proof}[Sketch of proof]
\commentrb{This is done in~\cite{murphy:profile-classes:}, it is quite long. Clearly the `real' bound is much smaller than this -- is there a neater method? e.g.\ one using the directed graph?}
Let $\pi$ be an MV sequence containing $n\geq (\ell+1)\ell^2+1$ points, and let $\pi^\gridded$ denote the standard gridding of $\pi$ (i.e.\ the gridding used to generate the MV sequence). Now consider some other gridding $\pi^\natural$. By the pigeonhole principle, there exists some collection of $\ell+2$ points that belong to a single cell in each of $\pi^\gridded$ and $\pi^\natural$ (though note: not necessarily the same cell of $M$ yet).

Now use these entries first to conclude that there are $\ell+1$ entries in the two neighbouring cells on the cycle that lie together in a cell in both griddings, and $\ell$ entries in the next cells on the cycle, and so on.\noterb{This seems suboptimal: we can go both ways round the cycle...} By following these entries around the cycle (which has length at most $\ell$), we conclude that all the entries so far considered in fact lie in the same cells in both griddings. It remains to collect the remaining entries into the correct cells, and this is readily checked using the properties of MV sequences.
\end{proof}

As a direct consequence of the above two lemmas, we obtain.

\begin{prop}\label{prop-long-mv-sequences}
Any permutation class which contains arbitrarily long MV sequences around some cycle is not labelled well quasi-ordered.
\end{prop}
